\definecolor{light-gray}{gray}{0.8}
\newtheoremstyle{myplain}
  {9pt}
  {9pt}
  {\itshape}
  {\parindent}
  {\scshape}
  {:}
  {.5em}
  {}
\newtheoremstyle{mydefinition}
  {9pt}
  {9pt}
  {\itshape}
  {\parindent}
  {\scshape}
  {:}
  {.5em}
  {}
\newtheoremstyle{myremark}
  {9pt}
  {9pt}
  {}
  {\parindent}
  {\scshape}
  {:}
  {.5em}
  {}
\theoremstyle{myplain}
\newtheorem{theorem}{Theorem}[section]
\newtheorem{lemma}{Lemma}[section]
\theoremstyle{mydefinition}
\newtheorem{algorithm}{Algorithm}[section]
\newtheorem{assumption}{Assumption}[section]
\theoremstyle{myremark}
\newtheorem{remark}{Remark}[section]
\renewcommand{\cite}{\citet}
\newcommand{\uni}{\stackrel{u}{\rightarrow}}
\newcommand{\eig}{\operatorname{eig}}
\def\argmax{\mathop{\rm arg\,max}}
\def\argmin{\mathop{\rm arg\,min}}
\newcommand{\G}{\mathbb{G}}
\newcommand{\HH}{\mathbb{Z}}
\newcommand{\N}{\mathbb{N}}
\newcommand{\R}{\mathbb{R}}
\newcommand{\E}{\mathbb{E}}
\newcommand{\cC}{\mathcal{C}}
\newcommand{\cJ}{\mathcal{J}}
\newcommand{\cM}{\mathcal{M}}
\newcommand{\cP}{\mathcal{P}}
\newcommand{\cR}{\mathcal{R}}
\newcommand{\cT}{\mathcal{T}}
\newcommand{\cX}{\mathcal{X}}
\newcommand{\F}{\mathcal{F}}
\newcommand{\EI}{\mathbb{EI}}
\newcommand{\bP}{\mathbb{P}}
\newcommand{\bQ}{\mathbb Q}
\newcommand{\dsM}{\mathcal{M}}
\newcommand{\gcM}{\mathcal{S}}
\newcommand{\larhon}{\frac{\lambda \rho }{\sqrt n}}
\newcommand{\tlarhon}{\tfrac{\lambda \rho }{\sqrt n}}
\newcommand{\thetnprime}{(\theta_n +\rho /\sqrt{n} B^d)\cap \Theta}
\numberwithin{equation}{section}
\numberwithin{figure}{section}
\newenvironment{parindent1}{\begin{adjustwidth}{1cm}{}}{\end{adjustwidth}}
\tikzstyle{lemma} = [rectangle, minimum width=3cm, minimum height=1cm,text centered, draw=black, fill=white!100]
\tikzstyle{comment} = [rectangle, rounded corners, minimum width=3cm, minimum height=1cm,text centered, draw=black, fill=gray!30]
\tikzstyle{line} = [draw, -latex']
\begin{document}

\title{Confidence Intervals for\linebreak
Projections of Partially Identified Parameters\thanks{We are grateful to Elie Tamer and three anonymous reviewers for very useful suggestions
that substantially improved the paper. We thank for their comments Ivan Canay and seminar and conference participants at Amsterdam, Bonn, BC/BU joint workshop, Brown, Cambridge, Chicago, Cologne, Columbia, Cornell, CREST, Duke, ECARES, Harvard/MIT, Kiel, Kobe, Luxembourg, Mannheim, Maryland, Michigan, Michigan State, NUS, NYU, Penn, Penn State, Rochester, Royal Holloway, SMU, Syracuse, Toronto, Toulouse, UCL, UCLA, UCSD, Vanderbilt, Vienna, Yale, Western, and Wisconsin as well as CEME, Cornell-Penn State IO/Econometrics 2015 Conference, ES Asia Meeting 2016, ES European Summer Meeting 2017, ES North American Winter Meeting 2015, ES World Congress 2015, Frontiers of Theoretical Econometrics Conference (Konstanz), KEA-KAEA International Conference, Notre Dame Second Econometrics Workshop, Verein f\"{u}r Socialpolitik Ausschuss f\"{u}r \"{O}konometrie 2017. We are grateful to Undral Byambadalai, Zhonghao Fu, Debi Mohapatra, Sida Peng, Talal Rahim, Matthew Thirkettle, and Yi Zhang for excellent research assistance. A MATLAB package implementing the method proposed in this paper, \cite{KMST_code}, is available at \url{https://molinari.economics.cornell.edu/programs/KMSportable_V3.zip}.  We are especially grateful to Matthew Thirkettle for his contributions to this package. We gratefully acknowledge financial support through NSF grants SES-1230071 and SES-1824344 (Kaido), SES-0922330 and SES-1824375 (Molinari), and SES-1260980 and SES-1824375 (Stoye).}}
\author{Hiroaki Kaido\thanks{Department of Economics, Boston University, hkaido@bu.edu.}
\and Francesca Molinari\thanks{Department of Economics,
Cornell University, fm72@cornell.edu.}
\and J\"{o}rg Stoye\thanks{Department of Economics, Cornell University, stoye@cornell.edu.}}

\maketitle

\vspace{-1cm}
\begin{abstract}
\small
We propose a bootstrap-based \emph{calibrated projection} procedure to build confidence intervals for single components and for smooth functions of a partially identified parameter vector in moment (in)equality models. The method controls asymptotic coverage uniformly over a large class of data generating processes.
The extreme points of the calibrated projection confidence interval are obtained by extremizing the value of the function of interest subject to a proper relaxation of studentized sample analogs of the moment (in)equality conditions. The degree of relaxation, or critical level, is calibrated so that the function of $\theta $, not $\theta $ itself, is uniformly asymptotically covered with prespecified probability. This calibration is based on repeatedly checking feasibility of linear programming problems, rendering it computationally attractive.

Nonetheless, the program defining an extreme point of the confidence interval is generally nonlinear and potentially intricate.
We provide an algorithm, based on the response surface method for global optimization, that approximates the solution rapidly and accurately, and we establish its rate of convergence. The algorithm is of independent interest for optimization problems with simple objectives and complicated constraints.
An empirical application estimating an entry game illustrates the usefulness of the method. Monte Carlo simulations confirm the accuracy of the solution algorithm, the good statistical as well as computational performance of calibrated projection (including in comparison to other methods), and the algorithm's potential to greatly accelerate computation of other confidence intervals.

\medskip
\small

\noindent \textbf{Keywords:} Partial identification; Inference on projections; Moment inequalities; Uniform inference.
\end{abstract}

\vfill
\thispagestyle{empty}
\pagebreak
\onehalfspacing

\pagenumbering{arabic}

\etocdepthtag.toc{mtchapter}
\etocsettagdepth{mtchapter}{subsection}
\etocsettagdepth{mtappendix}{none}

\section{Introduction}
\label{sec:introduction}
This paper provides novel confidence intervals for projections and smooth functions of a parameter vector $\theta \in \Theta \subset \R^{d}$, $d<\infty$, that is partially or point identified through a finite number of moment (in)equalities. In addition, we develop a new algorithm for computing these confidence intervals and, more generally, for solving optimization problems with ``black box'' constraints, and obtain its rate of convergence.

Until recently, the rich literature on inference for moment (in)equalities focused on confidence sets for the entire vector $\theta$, usually obtained by test inversion as
\begin{align}
	\mathcal C_n(c_{1-\alpha})\equiv \left\{\theta\in\Theta:T_n(\theta )\leq c_{1-\alpha}(\theta )\right\},\label{eq:cs}
\end{align}
where the test statistic $T_n(\theta)$ aggregates violations of the sample analog of the moment (in)equalities and the critical value $c_{1-\alpha}(\theta)$ controls asymptotic coverage, often uniformly over a large class of data generating processes (DGPs). 
However, applied researchers are frequently interested in a specific component (or function) of $\theta$, e.g., the returns to education. Even if not, they may simply want to report separate confidence intervals for components of a vector, as is standard practice in other contexts. Thus, consider inference on the projection $p^{\prime }\theta$, where $p$ is a known unit vector.
To date, it is common to report as confidence set the corresponding projection of $\mathcal C_n(c_{1-\alpha})$ or the interval
\begin{align}
	CI^{proj}_n=\left[\inf_{\theta\in\mathcal C_n(c_{1-\alpha})} p^{\prime}\theta, \sup_{\theta\in\mathcal C_n(c_{1-\alpha})} p^{\prime }\theta \right],\label{eq:def_ci_AS}
\end{align}
which will miss any ``gaps" in a disconnected projection but is much easier to compute. This approach yields asymptotically valid but typically conservative and therefore needlessly large confidence regions. The potential severity of this effect is easily appreciated in a point identified example. Given a $\sqrt{n}$-consistent estimator $\hat{\theta}_n \in \R^d$ with limiting covariance matrix equal to the identity matrix, the usual 95\% confidence interval for $\theta_k$ equals $[\hat{\theta}_{n,k}-1.96,\hat{\theta}_{n,k}+1.96]$. Yet the analogy to $CI^{proj}_n$ would be projection of a 95\% confidence ellipsoid, which with $d=10$ yields $[\hat{\theta}_{n,k}-4.28,\hat{\theta}_{n,k}+4.28]$ and a true coverage of essentially $1$.

Our first contribution is to provide a bootstrap-based \emph{calibrated projection} method to largely anticipate and correct for the conservative effect of projection. The method uses an estimated critical level $\hat{c}_{n,1-\alpha}$ calibrated so that the projection of $\mathcal C_n(\hat{c}_{n,1-\alpha})$ covers $p'\theta$ (but not necessarily $\theta$) with probability at least $1-\alpha$. As a confidence region for the true $p'\theta$, one may report this projection, i.e.
\begin{align}
\{p'\theta:\theta \in \cC_n(\hat{c}_{n,1-\alpha})\}, \label{eq:def_with_gaps}
\end{align}
or, for computational simplicity and presentational convenience, the interval
\begin{align}
	CI_n \equiv \left[\inf_{\theta\in\mathcal C_n(\hat{c}_{n,1-\alpha})} p^{\prime}\theta, \sup_{\theta\in\mathcal C_n(\hat{c}_{n,1-\alpha})} p^{\prime }\theta \right].\label{eq:def_ci}
\end{align}
We prove uniform asymptotic validity of both over a large class of DGPs.

Computationally, calibration of $\hat{c}_{n,1-\alpha}$ is relatively attractive: We linearize all constraints around $\theta $, so that coverage of $p^\prime\theta$ can be calibrated by analyzing many linear programs. Nonetheless, computing the above objects is challenging in moderately high dimension.
This brings us to our second contribution, namely a general method to accurately and rapidly compute confidence intervals whose construction resembles \eqref{eq:def_ci}. Additional applications within partial identification include projection of confidence regions defined in \cite{CHT}, \cite{AS}, or \cite{AShiECMA}, as well as (with minor tweaking; see Appendix \ref{sec:EAM_for_BCS}) the confidence interval proposed in \cite[BCS henceforth]{BCS14_subv} and further discussed later. In an application to a point identified setting, \cite[Supplement Section S.3]{fre:rev17} use our method to construct uniform confidence bands for an unknown function of interest under (nonparametric) shape restrictions. They benchmark it against gridding and find it to be accurate at considerably improved speed. More generally, the method can be broadly used to compute confidence intervals for optimal values of optimization problems with estimated constraints.

Our algorithm (henceforth called E-A-M for Evaluation-Approximation-Maximization) is based on the response surface method, thus it belongs to the family of \emph{expected improvement algorithms} \citep[see e.g.][and references therein]{jonesa2001,jonesefficient1998}. \cite{Bull_Convergence_2011} established convergence of an expected improvement algorithm for unconstrained optimization problems where the objective is a ``black box" function. The rate of convergence that he derives depends on the smoothness of the black box objective function. We substantially extend his results to show convergence, at a slightly slower rate, of our similar algorithm for constrained optimization problems in which the constraints are sufficiently smooth ``black box" functions. Extensive Monte Carlo experiments (see Appendix \ref{sec:MC} and Section 5 of \cite{KMS_2017}) confirm that the E-A-M algorithm is fast and accurate.

\textbf{Relation to existing literature.}
The main alternative inference prodedure for projections -- introduced in \cite{Romano_Shaikh2008aJSPIWP} and significantly advanced in BCS -- is based on profiling out a test statistic. The classes of DGPs for which calibrated projection and the profiling-based method of BCS (BCS-profiling henceforth) can be shown to be uniformly valid are non-nested.\footnote{See \cite[Section 4.2 and Supplemental Appendix F]{KMS_2017} for a comparison of the statistical properties of calibrated projection and BCS-profiling, summarized here at the end of Section \ref{sec:main:conv}.}

Computationally, calibrated projection has the advantage that the bootstrap iterates over linear as opposed to nonlinear programming problems. While the ``outer" optimization problems in \eqref{eq:def_ci} are potentially intricate, our algorithm is geared toward them. Monte Carlo simulations suggest that these two factors give calibrated projection a considerable computational edge over profiling, though profiling can also benefit from the E-A-M algorithm. Indeed, in Appendix \ref{sec:MC} we replicate the Monte Carlo experiment of BCS and find that adapting E-A-M to their method improves computation time by a factor of about $4$, while switching to calibrated projection improves it by a further factor of about $17$.

In an influential paper,
\cite[][PPHI henceforth]{PakesPorterHo2011} also use linearization but, subject to this approximation, directly bootstrap the sample projection. This is valid only under stringent conditions.\footnote{The published version of PPHI, i.e. \cite{PPHI_ECMA}, does not contain the inference part. \cite[Section 4.2]{KMS_2017} show that calibrated projection can be much simplified under the conditions imposed by PPHI.} Other related articles that explicitly consider inference on projections include 
\cite{ber:mol08}, \cite{BontempsMagnacMaurin2012E},
\cite{Kaido12},
and \cite{KT15}.
None of these establish uniform validity of confidence sets. \cite{CCOT} establish uniform validity of MCMC-based confidence intervals for projections, but aim at covering the projection of the entire identified region $\Theta_I(P)$ (defined later) and not just of the true $\theta$. \cite{GMO16} use our insight in the context of set identified spatial VARs.

Regarding computation, previous implementations of projection-based inference \citep[e.g.,][]{CilibertoTamer09,Grieco14,dic:mor16} reported the smallest and largest value of $p^\prime \theta$ among parameter values $\theta \in \mathcal C_n(c_{1-\alpha})$ that were discovered using, e.g., grid-search or simulated annealing with no cooling. This becomes computationally cumbersome as $d$ increases because it typically requires a number of evaluation points that grows exponentially with $d$. In contrast, using a probabilistic model, our method iteratively draws evaluation points from regions that are considered highly relevant for finding the confidence interval's end point. In applications, this tends to substantially reduce the number of evaluation points.

\textbf{Structure of the paper.} Section \ref{sec:setup} sets up notation and describes our approach in detail, including computational implementation of the method and choice of tuning parameters.
 Section \ref{sec:main:res} establishes uniform asymptotic validity of $CI_n$, and Section \ref{sec:main:conv} shows that our algorithm converges at a specific rate which depends on the smoothness of the constraints.
Section \ref{sec:KT15} reports the results of an empirical application that revisits the analysis in \cite[Section 8]{KT15}. Section \ref{sec:conclusion} draws conclusions.
The proof of convergence of our algorithm is in Appendix \ref{app:EAM_conv}. Appendix \ref{sec:EAM_for_BCS} shows that our algorithm can be used to compute BCS-profiling confidence intervals. Appendix \ref{sec:MC} reports the results of Monte Carlo simulations comparing our proposed method with that of BCS. All other proofs, background material for our algorithm, and additional results are in the Online Appendix.\footnote{Appendix \ref{sec:background} provides convergence-related results and background material for our algorithm and describes how to compute $\hat{c}_{n,1-\alpha}(\theta)$.
Appendix \ref{app:all_ass} presents the assumptions under which we prove uniform asymptotic validity of $CI_n$.
Appendix \ref{sec:verify_examples} verifies, for a number of canonical partial identification problems, the assumptions that we invoke to show validity of our inference procedure and for our algorithm.
Appendix \ref{sec:app_A} contains the proof of Theorem \ref{thm:validity}. Appendix \ref{app:Lemma} collects Lemmas supporting this proof.
}

\section{Detailed Explanation of the Method}
\label{sec:setup}
\subsection{Setup and Definition of $CI_n$}
Let $X_i\in\mathcal X\subseteq\mathbb R^{d_X}$ be a random vector with distribution $P$, let $\Theta\subseteq\mathbb R^{d}$ denote the parameter space, and let $m_j:\cX\times\Theta\to\R$ for $j=1,\dots,J_1+J_2$ denote known measurable functions characterizing the model. The true parameter value $\theta$ is assumed to satisfy the moment inequality and equality restrictions
\begin{align}
&E_{P}[m_j(X_i,\theta)]\le 0,~ j=1,..., J_1 \label{eq:Theta_I1}\\
&E_{P}[m_j(X_i,\theta)]= 0,~j=J_1+1,..., J_1+J_2.\label{eq:Theta_I2}
\end{align}
The identification region $\Theta_I(P)$ is the set of parameter values in $\Theta$ satisfying \eqref{eq:Theta_I1}-\eqref{eq:Theta_I2}. For a random sample $\{X_i,i=1,..., n\}$ of observations drawn from $P$, we write
\begin{align}
\bar m_{n,j}(\theta) &\equiv \textstyle{n^{-1}\sum_{i=1}^n m_j(X_i,\theta)},~~j=1,\dots, J_1+J_2 \label{eq:m_bar}\\
\hat{\sigma}_{n,j} &\equiv \textstyle{(n^{-1}\sum_{i=1}^n [m_j(X_i,\theta)]^2-[\bar m_{n,j}(\theta)]^2)^{1/2}},~~j=1,\dots, J_1+J_2\label{eq:sigma_hat}
\end{align}
for the sample moments and the analog estimators of the population moment functions' standard deviations $\sigma_{P,j}$. 
The confidence interval in \eqref{eq:def_ci} then is
\begin{align}
\label{eq:def_ci_KMS}
CI_n= [-s(-p,\mathcal C_n(\hat{c}_{n,1-\alpha})),s(p,\mathcal C_n(\hat{c}_{n,1-\alpha}))]
\end{align}
with
\begin{align}
\label{eq:CI}
s(p,\mathcal C_n(\hat{c}_{n,1-\alpha}))\equiv\sup_{\theta\in\Theta} ~p'\theta \text{   s.t. } \sqrt{n} \frac{\bar{m}_{n,j}(\theta )}{\hat{\sigma}_{n,j}(\theta )}\leq \hat{c}_{n,1-\alpha}(\theta),~j=1,\dots ,J
\end{align}%
and similarly for $(-p)$. Henceforth, to simplify notation, we write $\hat{c}_n$ for $\hat{c}_{n,1-\alpha}$. We also define $J\equiv J_1+2J_2$ moments, where $\bar m_{n,J_1+J_2+k}(\theta)=-\bar m_{J_1+k}(\theta)$ for $k=1,\dots,J_2$. That is, we treat moment equality constraints as two opposing inequality constraints.

For a class of DGPs $\mathcal P$ that we specify below, define the asymptotic size of $CI_n$ by\footnote{Here we focus on the confidence interval $CI_n$ defined in \eqref{eq:def_ci}. See Appendix \ref{cor:math:proj} for the analysis of the confidence region given by the mathematical projection in \eqref{eq:def_with_gaps}.}
\begin{align}
	\liminf_{n\to\infty}\inf_{P\in\mathcal P}\inf_{\theta\in\Theta_I(P)}P(p'\theta\in CI_n). \label{eq:asy_size}
\end{align}
We next explain how to control this size and then how to compute $CI_n$.

\subsection{Calibration of $\hat{c}_n(\theta)$}\label{sec:boot}
Calibration of $\hat{c}_n$ requires careful analysis of the moment restrictions' local behavior at each point in the identification region.
This is because the extent of projection conservatism depends on (i) the asymptotic behavior of the sample moments entering the inequality restrictions, which can change discontinuously depending on whether they bind at $\theta$ or not, and (ii) the local geometry of the identification region at $\theta$, i.e. the shape of the constraint set formed by the moment restrictions. Features (i) and (ii) can be quite different at different points in $\Theta_I(P)$, making uniform inference challenging. In particular, (ii) does not arise if one only considers inference for the entire parameter vector, and hence is a new challenge requiring new methods.

To build an intuition, fix $P\in\mathcal P$ and $\theta\in\Theta_I(P)$. The projection of $\theta$ is covered when
\begin{align}
\begin{Bmatrix} \inf_{\vartheta\in\Theta} p'\vartheta   \\ \text{s.t.} \; \frac{\sqrt n \bar{m}_{n,j}(\vartheta)}{\hat\sigma_{n,j}(\vartheta)}\le  \hat{c}_n(\vartheta), \forall j \end{Bmatrix} & \le   p'\theta  \le  \begin{Bmatrix} \sup_{\vartheta\in\Theta} p'\vartheta   \\ \text{s.t.} \; \frac{\sqrt n \bar{m}_{n,j}(\vartheta)}{\hat\sigma_{n,j}(\vartheta)}\le  \hat{c}_n(\vartheta), \forall j \end{Bmatrix} \notag \\
	 \Longleftrightarrow  \begin{Bmatrix} \inf_{\lambda\in \sqrt n(\Theta-\theta)} p'\lambda  \\ \text{s.t.} \; \frac{\sqrt n\bar{m}_{n,j}\left(\theta+\lambda/\sqrt n\right)}{\hat\sigma_{n,j}\left(\theta+\lambda/\sqrt n\right)}\le  \hat{c}_n\left(\theta+\lambda/\sqrt n\right), \forall j \end{Bmatrix} & \leq 0  \leq \begin{Bmatrix} \sup_{\lambda\in \sqrt n(\Theta-\theta)} p'\lambda   \\ \text{s.t.} \; \frac{\sqrt n\bar{m}_{n,j}\left(\theta+\lambda/\sqrt n\right)}{\hat\sigma_{n,j}\left(\theta+\lambda/\sqrt n\right)}\le  \hat{c}_n\left(\theta+\lambda/\sqrt n\right), \forall j \end{Bmatrix} \notag \\
	  \Longleftarrow  \begin{Bmatrix} \inf_{\lambda\in \sqrt n(\Theta-\theta) \cap \rho B^d} p'\lambda  \\ \text{s.t.} \; \frac{\sqrt n\bar{m}_{n,j}\left(\theta+\lambda/\sqrt n\right)}{\hat\sigma_{n,j}\left(\theta+\lambda/\sqrt n\right)}\le  \hat{c}_n\left(\theta+\lambda/\sqrt n\right), \forall j \end{Bmatrix} & \leq 0  \leq \begin{Bmatrix} \sup_{\lambda\in \sqrt n(\Theta-\theta) \cap \rho B^d} p'\lambda   \\ \text{s.t.} \; \frac{\sqrt n\bar{m}_{n,j}\left(\theta+\lambda/\sqrt n\right)}{\hat\sigma_{n,j}\left(\theta+\lambda/\sqrt n\right)}\le  \hat{c}_n\left(\theta+\lambda/\sqrt n\right), \forall j \end{Bmatrix}.
	 \label{eq:heuristic}
\end{align}
Here, we first substituted $\vartheta=\theta+\lambda/\sqrt n$ and took $\lambda$ to be the choice parameter; intuitively, this localizes around $\theta$ at rate $1/\sqrt{n}$. We then make the event smaller by adding the constraint $\lambda \in \rho B^d$, with $B^d \equiv [-1,1]^d$ and $\rho \geq 0$ a tuning parameter. We motivate this step later.

Our goal is to set the probability of \eqref{eq:heuristic} equal to $1-\alpha$. To ease computation, we approximate \eqref{eq:heuristic} by linear expansion in $\lambda$ of the constraint set. For each $j$, add and subtract $\sqrt n E_P[m_j(X_i,\theta+\lambda/\sqrt n)]/\hat\sigma_{n,j}(\theta+\lambda/\sqrt n)$ and apply the mean value theorem to obtain
\begin{eqnarray}
	\frac{\sqrt n\bar m_{n,j}\left(\theta+\lambda/\sqrt n\right)}{\hat\sigma_{n,j}\left(\theta+\lambda/\sqrt n\right)}
	= \bigl(\mathbb{G}_{n,j}\left(\theta+\lambda/\sqrt n\right)  + D_{P,j}(\bar\theta)\lambda+\sqrt n\gamma_{1,P,j}(\theta)\bigr)\frac{\sigma_{P,j}\left(\theta+\lambda/\sqrt n\right)}{\hat{\sigma}_{n,j}\left(\theta+\lambda/\sqrt n\right)} \label{eq:mean_val_exp}.
\end{eqnarray}
Here $\mathbb G_{n,j}(\cdot) \equiv \sqrt n (\bar m_{n,j}(\cdot)-E_P[m_j(X_i,\cdot)])/\sigma_{P,j}(\cdot)$ is a normalized empirical process indexed by $\theta\in \Theta$,
$D_{P,j}(\cdot)\equiv\nabla_\theta \{E_P[m_j(X_i,\cdot)]/\sigma_{P,j}(\cdot)\}$ is the gradient of the normalized moment, $\gamma_{1,P,j}(\cdot)\equiv E_P(m_j(X_i,\cdot))/\sigma_{P,j}(\cdot)$ is the studentized population moment,
and the mean value $\bar\theta$ lies componentwise between $\theta$ and $\theta+\lambda/\sqrt n$.\footnote{\label{ftn:bar_theta_j}The mean value $\bar\theta$ changes with $j$ but we omit the dependence to ease notation.}

We formally establish that the probability of the last event in \eqref{eq:heuristic} can be approximated by the probability that $0$ lies between the optimal values of two stochastic linear programs. The components that characterize these programs can be estimated. Specifically, we replace $D_{P,j}(\cdot)$ with a uniformly consistent (on compact sets) estimator, $\hat{D}_{n,j}(\cdot)$,\footnote{See Online Appendix \ref{sec:verify_examples} for such estimators in some canonical moment (in)equality examples.} and the process $\mathbb G_{n,j}(\cdot)$ with its simple nonparametric bootstrap analog, $\mathbb G^b_{n,j}(\cdot)\equiv n^{-1/2}\sum_{i=1}^{n} (m_{j}(X_{i}^{b},\cdot)-\bar{m}_{n,j}(\cdot))/\hat{\sigma}_{n,j}(\cdot )$.\footnote{BCS approximate $\G_{n,j}(\cdot)$ by $n^{-1/2}\sum_{i=1}^{n} [(m_{j}(X_{i},\cdot)-\bar{m}_{n,j}(\cdot))/\hat{\sigma}_{n,j}(\cdot )]\chi_i$ with $\{\chi_i\sim N(0,1)\}_{i=1}^n$ i.i.d. This approximation is equally valid in our approach, and can be faster as it avoids repeated evaluation of $m_{j}(X^b_{i},\cdot)$.} Estimation of $\gamma_{1,P,j}(\theta)$ is more subtle because it enters \eqref{eq:mean_val_exp} scaled by $\sqrt{n}$, so that a sample analog estimator will not do. However, this specific issue is well understood in the moment inequalities literature. Following \cite[][AS henceforth]{AS} and others \citep{Bugni2009E,Canay2010JE,Stoye09}, we shrink this sample analog toward zero, leading to conservative (if any) distortion in the limit. Formally, we estimate $\gamma_{1,P,j}(\theta)$ by $\varphi(\hat{\xi}_{n,j}(\theta))$, where $\varphi:\R^J_{[\pm\infty]} \mapsto \R^J_{[\pm\infty]}$ is one of the Generalized Moment Selection (GMS henceforth) functions proposed by AS,
\begin{align}
\label{eq:hat_xi}
\hat{\xi}_{n,j}(\theta)\equiv\begin{cases}
 \kappa_n^{-1}\sqrt n\bar m_{n,j}(\theta)/\hat\sigma_{n,j}(\theta) & j=1,\dots,J_1\\
 0 & j=J_1+1,\dots,J,
 \end{cases}
\end{align}
and $\kappa_n\to \infty$ is a user-specified thresholding sequence.\footnote{\label{fn:eq:zeta}A common choice of $\varphi$ is given component-wise by
	\begin{align*}
		\varphi_j(x)=\begin{cases}
			0&\text{if}~~x\ge -1\\
			-\infty&\text{if}~~x< -1.
		\end{cases}
	\end{align*}
Restrictions on $\varphi$ and the rate at which $\kappa_n$ diverges are imposed in Assumption \ref{as:GMS}.
While for concreteness here we write out the ``hard thresholding" GMS function, Theorem \ref{thm:validity} below applies to all but one of the GMS functions in AS, namely to $\varphi^1-\varphi^4$, all of which depend on $\kappa_n^{-1}\sqrt n\bar m_{n,j}(\theta)/\hat\sigma_{n,j}(\theta)$. We do not consider GMS function $\varphi^5$, which depends also on the covariance matrix of the moment functions.}
In sum, we replace the random constraint set in \eqref{eq:heuristic} with the (bootstrap based) random polyhedral set\footnote{Here, we implicitly assume that $\Theta$ is a polyhedral set. If it is instead defined by smooth convex (in)equalities, these can be linearized too.}
\begin{align}
\Lambda_n^b (\theta, \rho ,c)
\equiv \bigl\{ \lambda \in \sqrt{n}(\Theta-\theta) \cap \rho B^{d}:\mathbb{G}_{n,j}^{b}(\theta )+\hat{D}
_{n,j}(\theta)\lambda +\varphi_j(\hat{\xi}_{n,j}(\theta))\leq c,j=1,\dots,J\bigr\} \label{eq:Lambda_n}.
\end{align}
The critical level $\hat{c}_n(\theta)$ to be used in \eqref{eq:CI} then is
\begin{align}
\hat c_n(\theta)&\equiv \inf\left\{c\in\mathbb R_+:P^*\left(\min_{\lambda \in \Lambda_n^b (\theta ,\rho ,c)}p^{\prime }\lambda \leq 0\leq \max_{\lambda \in \Lambda_n^b (\theta ,\rho ,c)}p^{\prime }\lambda\right)\ge 1-\alpha\right\} \label{eq:event_kms} \\
&= \inf\bigl\{c\in\mathbb R_+:P^*(\Lambda_n^b (\theta ,\rho ,c)\cap \{p^{\prime }\lambda =0\}\neq \emptyset)\ge 1-\alpha\bigr\},\label{eq:def:c_hat}
\end{align}
where $P^*$ denotes the law of the random set $\Lambda_n^b (\theta ,\rho ,c)$ induced by the bootstrap sampling process, i.e. by the distribution of $(X_1^b,\dots,X_n^b)$ conditional on the data. Expression \eqref{eq:def:c_hat} uses convexity of $\Lambda_n^b (\theta, \rho ,c)$ and reveals that the probability inside curly brackets can be assessed by repeatedly checking feasibility of a linear program.\footnote{We implement a program in $\mathbb{R}^d$ for simplicity but, because $p'\lambda=0$,  
one could reduce this to $\mathbb{R}^{d-1}$.} We describe in detail in Online Appendix \ref{sec:bisection} how we compute $\hat{c}_n(\theta)$ through a root-finding algorithm.

We conclude by motivating the ``$\rho$-box constraint" in \eqref{eq:heuristic}, which is a major novel contribution of this paper. The constraint induces conservative bias but has two fundamental benefits: First, it ensures that the linear approximation of the feasible set in \eqref{eq:heuristic} by \eqref{eq:Lambda_n} is used only in a neighborhood of $\theta$, and therefore that it is uniformly accurate. More subtly, it ensures that coverage induced by a given $c$ depends continuously on estimated parameters even in certain intricate cases. This renders calibrated projection valid in cases that other methods must exclude by assumption.\footnote{In \eqref{eq:Lambda_n}, set $(\mathbb{G}_{n,1}^b(\cdot),\mathbb{G}_{n,2}^b(\cdot))\sim N(0,I_2)$, $p=\hat{D}_{n,1}=\hat{D}_{n,2}=(0,1)$, $\varphi_1(\cdot)=\varphi_2(\cdot)=0$, and $\alpha=.05$. Then simple algebra reveals that (with or without $\rho$-box) $\hat{c}_n(\cdot)=\Phi^{-1}(\sqrt{.95}) \approx 1.95$. If $\hat{D}_{n,1}=(0,1-\delta)$ and $\hat{D}_{n,2}=(0,1-\delta)$, then without $\rho$-box we have $\hat{c}_n(\cdot)=\Phi^{-1}(.95)/\sqrt{2}\approx 1.16$ for any small $\delta>0$, and we therefore cannot expect to get $\hat{c}_n(\cdot)$ right if gradients are estimated. With $\rho$-box, $\hat{c}_n(\cdot) \to 1.95$ as $\delta \to 0$, so the problem goes away. This stylized example is relevant because it resembles polyhedral identified sets where one face is near orthogonal to $p$. It violates assumptions in BCS and PPHI.}

\subsection{Computation of $CI_n$ and of Similar Confidence Intervals}
\label{sec:Computation}
Projection based methods as in \eqref{eq:def_ci_AS} and \eqref{eq:def_ci} have nonlinear constraints involving a critical value which in general is an unknown function, with unknown gradient, of $\theta$.
Similar considerations often apply to critical values used to build confidence intervals for optimal values of optimization problems with estimated constraints.
When the dimension of the parameter vector is large, directly solving optimization problems with such constraints can be expensive even if evaluating the critical value at each $\theta$ is cheap.

This concern motivates this paper's second main contribution, namely a novel algorithm for constrained optimization problems of the following form:
\begin{align}
p'\theta^*\equiv \sup_{\theta\in\Theta}&~ p'\theta\notag\\
\text{s.t. }&~	g_j(\theta)\le c(\theta),~j=1,..., J,\label{eq:general_problem}
\end{align}
where $\theta^*$ is an optimal solution of the problem and $g_j(\cdot),j=1,...,J$ as well as $c(\cdot)$ are fixed functions of $\theta$. In our own application, $g_j(\theta)=\sqrt n\bar m_{n,j}(\theta)/\hat \sigma_{n,j}(\theta)$ and, for calibrated projection, $c(\theta)=\hat c_n(\theta)$.\footnote{\label{fn:stochastic}We emphasize that, in analyzing the computational problem, we take the data, including bootstrap data, as given. Thus, while an econometrician would usually think of $\sqrt n\bar m_{n,j}(\theta)/\hat \sigma_{n,j}(\theta)$ and $\hat c_n(\theta)$ as random variables, for this section's purposes they are indeed just functions of $\theta$.}

The key issue is that evaluating $c(\cdot)$ is costly.\footnote{For simplicity and to mirror our motivating application, we suppose that $g_j(\cdot)$ is easy to compute. The algorithm is easily adapted to the case where it is not. Indeed, in Appendix \ref{sec:EAM_for_BCS}, we show how E-A-M can be employed to compute BCS-profiling confidence intervals, where the profiled test statistic itself is costly to compute and is approximated together with the critical value.} Our algorithm does so at relatively few values of $\theta$. Elsewhere, it approximates $c(\cdot)$ through a probabilistic model that gets updated as more values are computed. We use this model to determine the next evaluation point but report as tentative solution the best value of $\theta$ at which $c(\cdot)$ was computed, \textit{not} a value at which it was merely approximated. Under reasonable conditions, the tentative optimal values converge to $p'\theta^*$ at a rate (relative to iterations of the algorithm)  that is formally established in Section \ref{sec:main:conv}.

After drawing an initial set of evaluation points that we set to grow linearly with $d$, the algorithm has three steps called E, A, and M below. 
\medskip

\noindent\underline{\textbf{Initialization:}} Draw randomly (uniformly) over $\Theta$ a set $(\theta^{(1)},...,\theta^{(k)})$ of initial evaluation points. Evaluate $c(\theta^{(\ell)})$ for $\ell=1,...,k-1$. Initialize $L=k$.
\medskip

\noindent\underline{\textbf{E-Step:}} Evaluate $c(\theta^{(L)})$ and record the tentative optimal value
\begin{align}
p'\theta^{*,L}\equiv\max\bigl\{p'\theta^{(\ell)}:\ell \in\{1,..., L\}, \bar g(\theta)\le c(\theta^{(\ell)})\bigr\},\label{eq:E_step_max}
\end{align}
with $\bar g(\theta)=\max_{j=1,...,J}g_j(\theta)$. 
		\medskip

\noindent\underline{\textbf{A-step:}} Approximate $\theta\mapsto c(\theta)$ by a flexible auxiliary model. We use a Gaussian-process regression model (or kriging), which for a mean-zero Gaussian process $\zeta(\cdot)$ indexed by $\theta$ and with constant variance $\varsigma^2$ specifies
	\begin{align}
		\Upsilon^{(\ell)}&=\mu+\zeta(\theta^{(\ell)}), ~\ell=1,..., L,\label{eq:EAM:gauss_prior}\\
		Corr(\zeta(\theta),\zeta(\theta'))&=K_\beta(\theta-\theta'),~ \theta,\theta' \in \Theta\label{eq:EAM:corr},
	\end{align}
	where $\Upsilon^{(\ell)}=c(\theta^{(\ell)})$ and $K_\beta$ is a kernel with parameter vector $\beta \in \bigtimes_{h=1}^d[\underline{\beta}_h,\overline{\beta}_h]\subset \R^d_{++}$; e.g., $K_\beta(\theta-\theta')=\exp(-\sum_{h=1}^d|\theta_h-\theta'_h|^{2}/\beta_h)$. The unknown parameters $(\mu,\varsigma^2)$ can be estimated by running a GLS regression of $\mathbf\Upsilon=(\Upsilon^{(1)},...,\Upsilon^{(L)})'$ on a constant with the given correlation matrix. The unknown parameters $\beta$ can be estimated by a (concentrated) MLE.

	The (best linear) predictor of the critical value and its gradient at $\theta$ are then given by
	\begin{align}
		c_L(\theta)&=\hat\mu+\mathbf{r}_L(\theta)'\mathbf{R}_L^{-1}(\mathbf \Upsilon-\hat\mu \mathbf 1),\label{eq:cLdef}\\
		\nabla_\theta c_L(\theta)&=\hat\mu+\mathbf{Q}_L(\theta)\mathbf{R}_L^{-1}(\mathbf \Upsilon-\hat\mu \mathbf 1),\label{eq:cLgrad}
	\end{align}
	where $\mathbf r_L(\theta)$ is a vector whose $\ell$-th component is $Corr(\zeta(\theta),\zeta(\theta^{(\ell)}))$ as given above with estimated parameters, $\mathbf Q_L(\theta)=\nabla_\theta \mathbf r_L(\theta)'$, and $\mathbf R_L$ is an $L$-by-$L$ matrix whose $(\ell,\ell')$ entry is $Corr(\zeta(\theta^{(\ell)}),\zeta(\theta^{(\ell')}))$ with estimated parameters.
	This surrogate model has the property that its predictor satisfies $c_L(\theta^{(\ell)})=c(\theta^{(\ell)}), \ell=1,..., L$. Hence, it provides an analytical interpolation, with analytical gradient, of evaluation points of $c(\cdot)$.\footnote{See details in \cite{jonesefficient1998}. We use the DACE MATLAB kriging toolbox (\url{http://www2.imm.dtu.dk/projects/dace/}) for this step in our empirical application and Monte Carlo experiments.} The uncertainty left in $c(\cdot)$ is captured by the variance
\begin{align}
\hat\varsigma^2s^2_L(\theta)= \hat\varsigma^2\left(1-\mathbf r_L(\theta)'\mathbf R_L^{-1}\mathbf r_L(\theta)+\frac{(1-\mathbf 1'\mathbf R_L^{-1}\mathbf r_L(\theta))^2}{\mathbf 1'\mathbf R_L^{-1}\mathbf 1}\right). \label{eq:variance}
\end{align}

\noindent\underline{\textbf{M-step:}} With probability $1-\epsilon$, obtain the next evaluation point $\theta^{(L+1)}$ as
	\begin{align}
		\theta^{(L+1)}\in\argmax_{\theta\in\Theta}\EI_L(\theta)=\argmax_{\theta\in\Theta} (p'\theta-p'\theta^{*,L})_+\Big(1-\Phi\Big(\frac{\bar g(\theta)-c_L(\theta)}{\hat\varsigma s_L(\theta)}\Big)\Big),\label{eq:max_ei}
	\end{align}
where $\EI_L(\theta)$ is the \emph{expected improvement function}.\footnote{Heuristically, $\EI_L(\theta)$ is the expected improvement gained from analyzing parameter value $\theta$ for a Bayesian whose current beliefs about $c$ are described by the estimated model. Indeed, for each $\theta$, the maximand in \eqref{eq:max_ei} multiplies improvement from learning that $\theta$ is feasible with this Bayesian's probability that it is.} 
	This step can be implemented by standard nonlinear optimization solvers, e.g. MATLAB's \verb1fmincon1 or \verb1KNITRO1 (see Appendix \ref{sec:Mstep_nlp} for details). With probability $\epsilon$, draw $\theta^{(L+1)}$ randomly from a uniform distribution over $\Theta$. Set $L \leftarrow L+1$ and return to the E-step.
\medskip

The algorithm yields an increasing sequence of tentative optimal values $p'\theta^{*,L},L=k+1,k+2,...$, with $\theta^{*,L}$ satisfying the \emph{true} constraints in \eqref{eq:general_problem} but the sequence of evaluation points leading to it obtained by maximization of expected improvement defined with respect to the \emph{approximated} surface. Once a convergence criterion is met, $p'\theta^{*,L}$ is reported as the end point of $CI_n$. We discuss convergence criteria in Appendix \ref{sec:MC}.

 The advantages of E-A-M are as follows. First, we control the number of points at which we evaluate the critical value; recall that this evaluation is the expensive step. Also, the initial $k$ evaluations can easily be parallelized. For any additional E-step, one needs to evaluate $c(\cdot)$ only at a single point $\theta^{(L+1)}$.
 The M-step is crucial for reducing the number of additional evaluation points.
  To determine the next evaluation point, it trades off ``exploitation'' (i.e. the benefit of drawing a point at which the optimal value is high) against ``exploration'' (i.e. the benefit of drawing a point in a region in which the approximation error of $c$ is currently large) through maximizing expected improvement.\footnote{It is also possible to draw multiple points in each iteration \citep{Schonlau_Global_1998}, as we do in our implementation of the method.} Finally, the algorithm simplifies the M-step by providing constraints and their gradients for program \eqref{eq:max_ei} in closed form, thus greatly aiding fast and stable numerical optimization. The price is the additional approximation step. In the empirical application in Section \ref{sec:KT15} and in the numerical exercises of Appendix \ref{sec:MC}, this price turns out to be low.
\subsection{Choice of Tuning Parameters}
\label{sec:expl_rho}
Practical implementation of calibrated projection and the E-A-M algorithm is detailed in \cite{KMST_code}. It involves setting several tuning parameters, which we now discuss.

Calibration of $\hat{c}_n$ in \eqref{eq:def:c_hat} must be tuned at two points, namely the use of GMS and the choice of $\rho$. The trade-offs in setting these tuning parameters are apparent from inspection of \eqref{eq:Lambda_n}. GMS is parameterized by a shrinkage function $\varphi$ and a sequence $\kappa_n$ that controls the rate of shrinkage. In practice, choice of $\kappa_n$ is more delicate. A smaller $\kappa_n$ will make $\Lambda_n^b$ larger, hence increase bootstrap coverage probability for any given $c$, hence reduce $\hat{c}_n$ and therefore make for shorter confidence intervals -- but the uniform asymptotics will be misleading, and finite sample coverage therefore potentially off target, if $\kappa_n$ is too small. We follow the industry standard set by AS and recommend $\kappa_n=\sqrt{\log n}$.

The trade-off in choosing $\rho$ is similar but reversed. A larger $\rho$ will expand $\Lambda_n^b$ and therefore make for shorter confidence intervals, but (our proof of) uniform validity of inference requires $\rho<\infty$. Indeed, calibrated projection with $\rho=0$ will disregard any projection conservatism and (as is easy to show) exactly recovers projection of the AS confidence set. Intuitively, we then want to choose $\rho$ large but not too large.

To this end, we heuristically calibrate $\rho$ based on how much conservative distortion one is willing to accept in well-behaved cases. This distortion -- denote it $\eta$, for which we suggest a numerical value of $0.01$ -- is compared against a bound on conservative distortion that is itself likely to be conservative but data free and trivial to compute. In particular, we set
\begin{align}
\label{eq:beta_rho}\small
\rho=\Phi^{-1}\left(\tfrac{1}{2}+\tfrac{1}{2}\left(1-\eta/\tbinom{J_1+J_2}{d}\right)^{1/d}\right).
\end{align}
The underlying heuristic is as follows: If all basic solutions (i.e., intersections of exactly $d$ constraints) that potentially define vertices of $\Lambda^b_n$ realize inside the $\rho$-box, then the $\rho$-box cannot affect the values in \eqref{eq:event_kms} and hence not whether coverage obtains in a given bootstrap sample. Conversely, the probability that at least one basic solution realizes outside the $\rho$-box bounds from above the conservative distortion. This probability is, of course, dependent on unknown parameters. Our data free approximation imputes multivariate standard normal distributions for all basic solutions and Bonferroni adjustment to handle their covariation.\footnote{To reproduce the expression, recall that if $a\equiv\tbinom{J_1+J_2}{d}$ random variables in $\mathbb{R}^d$ are individually multivariate standard normal, then a Bonferroni upper bound on the probability that \textit{not} all of them realize inside the $\rho$-box equals
$a\bigl(1-\left(1-2\Phi(-\rho)\right)^d\bigr).$ Also, if Bonferroni is replaced with an independence assumption, the expression changes to $\rho=\Phi^{-1}\bigl(\tfrac{1}{2}+\tfrac{1}{2}(1-\eta)^{1/ad}\bigr)$. The numerical difference is negligible for moderate $J_1+J_2$.}

The E-A-M algorithm also has two tuning parameters.
One is $k$, the initial number of evaluation points.
The other is $\epsilon$, the probability of drawing $\theta^{(L+1)}$ randomly from a uniform distribution on $\Theta$ instead of by maximizing $\mathbb{EI}_L$.
In calibrated projection use of the E-A-M algorithm there is a single ``black box" function, $\hat{c}_n(\theta)$. We therefore suggest setting $k=10d+1$, similarly to the recommendation in \cite[p. 473]{jonesefficient1998}. In our Monte Carlo exercises we experimented with larger values, e.g. $k=20d+1$, and found that the increased number had no noticeable effect on the computed $CI_n$.
If a user applies our E-A-M algorithm to a constrained optimization problem with \emph{many} ``black box" functions to approximate, we suggest using a larger number of initial points.

The role of $\epsilon$ \citep[e.g.,][p. 2889]{Bull_Convergence_2011} is to trade off the greediness of the $\mathbb{EI}_L$ maximization criterion with the overarching goal of global optimization. \cite[pp. 28-29]{sut:bar98} explore the effect of setting $\epsilon=0.1$ and $0.01$ on different optimization problems, and find that for sufficiently large $L$, $\epsilon=0.01$  performs  better.
In our own simulations we have found that drawing \emph{both} a uniform point and computing the value of $\theta$ for each $L$ (thereby sidestepping the choice of $\epsilon$) is fast and accurate, and that is what we recommend doing.

\section{Theoretical Results}
\label{sec:main}
\subsection{Asymptotic Validity of Inference}
\label{sec:main:res}
In this section we establish that $CI_n$ is uniformly asymptotically valid in the sense of ensuring that \eqref{eq:asy_size} equals at least $1-\alpha$. The result applies to: (i) Confidence intervals for one projection; (ii) joint confidence regions for several projections, in particular confidence hyperrectangles for subvectors; (iii) confidence intervals for smooth nonlinear functions $f:\Theta \mapsto \R$. Examples of the latter extension include policy analysis and estimation of partially identified counterfactuals as well as demand extrapolation subject to rationality constraints.\footnote{In Appendix \ref{cor:math:proj}, we show that the result actually applies to the mathematical projection in \eqref{eq:def_with_gaps}.}
\begin{theorem}\label{thm:validity}
Suppose Assumptions \ref{as:momP_AS}, \ref{as:GMS}, \ref{as:correlation}, \ref{as:momP_KMS}, and \ref{as:bcs1} hold. Let $0<\alpha < 1/2$.
\begin{enumerate}[label=(\Roman*)]
\item \label{thm:validity:basic} Let $CI_n$ be as defined in \eqref{eq:def_ci}, with $\hat{c}_n$ as in \eqref{eq:def:c_hat}. Then:
\begin{align}
\liminf_{n\to\infty}\inf_{P\in\mathcal P}\inf_{\theta\in\Theta_I(P)}P(p'\theta\in CI_n)\ge 1-\alpha.\label{eq:coverage_control}
\end{align}
\item \label{cor:thm:validity} Let $p^1,\dots,p^h$ denote unit vectors in $\R^d$, $h \le d$. Then:\begin{align}
\liminf_{n\to\infty}\inf_{P\in\mathcal P}\inf_{\theta\in\Theta_I(P)}P(p^{k \prime}\theta \in CI_{n,k},k=1,\dots,h)\ge 1-\alpha,
\end{align}
where $CI_{n,k}=\left[\inf_{\theta\in\mathcal C_n(\hat{c}^h_n)} p^{k \prime}\theta, \sup_{\theta\in\mathcal C_n(\hat{c}^h_n)} p^{k \prime }\theta \right]$ and $\hat c^h_n(\theta)\equiv \inf\{c\in\mathbb R_+:P^*(\Lambda_n^b (\theta ,\rho ,c)\cap \{\cap_{k=1}^h\{p^{k \prime }\lambda =0\}\}\neq \emptyset)\ge 1-\alpha\}$.
\item \label{thm:nonlinear}	 Let $CI_n^f$ be a confidence interval whose lower and upper points are obtained solving
\begin{align*}
\inf_{\theta \in \Theta} / \sup_{\theta \in \Theta} f(\theta) \text{  s.t. }\sqrt{n}\bar{m}_{n,j}(\theta )/\hat{\sigma}_{n,j}(\theta )\leq
\hat{c}^f_n(\theta),~j=1,... ,J,
\end{align*}
where $\hat c^f_n(\theta)\equiv \inf\{c \geq 0:P^*(\Lambda_n^b (\theta ,\rho ,c)\cap \{\|\nabla_\theta f(\theta )\|^{-1}\nabla_\theta f(\theta )\lambda =0\}\neq \emptyset)\ge 1-\alpha\}$.
Suppose that there exist $\varpi>0$ and $M<\infty$ such that $\inf_{P\in\mathcal P}\inf_{\theta\in\Theta_I(P)}\Vert \nabla f(\theta)\Vert\ge \varpi$ and $\sup_{\theta,\bar{\theta}\in\Theta}\Vert \nabla f(\theta)-\nabla f(\bar{\theta})\Vert\le M \Vert \theta -\bar{\theta} \Vert$, where $\nabla_\theta f(\theta)$ is the gradient of $f(\theta)$.\footnote{Because the function $f$ is known, these conditions can be easily verified in practice (especially if the first one is strengthened to hold over $\Theta$).} Let $0<\alpha < 1/2$. Then:
\begin{align}
\liminf_{n\to\infty}\inf_{P\in\mathcal P}\inf_{\theta\in\Theta_I(P)}P(f(\theta)\in CI_n^f)\ge 1-\alpha.
\end{align}
\end{enumerate}
\end{theorem}
All assumptions can be found in Online Appendix \ref{sec:AssRes}.
Assumptions \ref{as:momP_AS} and \ref{as:bcs1} are mild regularity conditions typical in the literature; see, e.g., Definition 4.2 and the corresponding discussion in BCS.
Assumption \ref{as:GMS} is based on AS and constrains the GMS function $\varphi(\cdot)$ as well as the rate at which $\kappa_n$ diverges.
Assumption \ref{as:momP_KMS} requires normalized population moments to be sufficiently smooth and consistently estimable.
Assumption \ref{as:correlation} is our key departure from the related literature.
In essence, it requires that the correlation matrix of the moment functions corresponding to close-to-binding moment conditions has
eigenvalues uniformly bounded from below.\footnote{Assumption \ref{as:correlation} allows for high correlation among moment inequalities that cannot cross. This covers equality constraints but also entry games as the ones studied in \cite{CilibertoTamer09}.}
Under this condition, we are able to show that in the limit problem corresponding to \eqref{eq:heuristic} --where constraints are replaced with their local linearization using population gradients and Gaussian processes-- the probability of coverage increases continuously in $c$. If such continuity is directly assumed (Assumption \ref{ass:continuity_limit_cov}), Theorem \ref{thm:validity} remains valid (Online Appendix \ref{app:proofs_alt_cont}). While the high level Assumption \ref{ass:continuity_limit_cov} is similar in spirit to a key condition (Assumption A.2) in BCS, we propose Assumption \ref{as:correlation} due to its familiarity and ease of interpretation; a similar condition is required for uniform validity of standard point identified Generalized Method of Moments inference. 
In Online Appendix \ref{sec:verify_3.3} we verify that our assumptions hold in some of the canonical examples in the partial identification literature:  mean with missing data, linear regression and best linear prediction with interval data (and discrete covariates), entry games with multiple equilibria (and discrete covariates), and semi-parametric binary regression models with discrete or interval valued covariates \citep[as in][]{mag:mau08}.

Assumptions \ref{as:momP_AS}-\ref{as:bcs1} define the class of DGPs over which our proposed method  yields uniformly asymptotically valid coverage. This class is non-nested with the class of DGPs over which the profiling-based methods of \cite{Romano_Shaikh2008aJSPIWP} and BCS are uniformly asymptotically valid. \cite[Section 4.2 and Supplemental Appendix F]{KMS_2017} show that in well behaved cases, calibrated projection and BCS-profiling are asymptotically equivalent. They also provide conditions under which calibrated projection has lower probability of false coverage in finite sample, thereby establishing that the two methods' finite sample power properties are non-ranked.

\subsection{Convergence of the E-A-M Algorithm}
\label{sec:main:conv}
We next provide formal conditions under which the sequence $p'\theta^{*,L}$ generated by the E-A-M algorithm converges to the true end point of $CI_n$ as $L\to\infty$ at a rate that we obtain.
Although $p'\theta^{*,L}=\max\{p'\theta^{(\ell)}:\ell \in\{1,..., L\}, \bar g(\theta)\le c(\theta^{(\ell)})\}$, so that $\theta^{*,L}$ satisfies the \emph{true} constraints for each $L$, the sequence of evaluation points $\theta^{(\ell)}$ is mostly obtained through expected improvement maximization (M-Step) with respect to the \emph{approximating} surface $c_L(\cdot)$.
Because of this, a requirement for convergence is that the function $c(\cdot)$ is sufficiently smooth, so that the approximation error in $|c(\theta)-c_L(\theta)|$ vanishes uniformly in $\theta$ as $L\to\infty$.\footnote{As in \cite{Bull_Convergence_2011}, our convergence result accounts for the fact that the parameters of the Gaussian process prior in \eqref{eq:EAM:gauss_prior} are re-estimated for each iteration of the A-step using the ``training data" $\{\theta^\ell,c(\theta^\ell)\}_{\ell=1}^L$.}
We furthermore assume that the constraint set in \eqref{eq:general_problem} satisfies a degeneracy condition introduced to the partial identification literature by \cite[Condition C.3]{CHT}.\footnote{\cite[eq. (4.6)]{CHT} impose the condition on the population identified set.}
In our application, the condition requires that $\cC_n(\hat{c}_n)$ has an interior and that the inequalities in \eqref{eq:CI}, when evaluated at points in a (small) $\tau$-contraction of $\cC_n(\hat{c}_n)$, are satisfied with a slack that is proportional to $\tau$.
Theorem \ref{thm:eiconv} below establishes that these conditions jointly ensure convergence of the E-A-M algorithm at a specific rate. This is a novel contribution to the literature on response surface methods for constrained optimization.

In the formal statement below, the expectation $E_{\bQ}$ is taken with respect to the law of $(\theta^{(1)},...,\theta^{(L)})$ determined by the Initialization step and the M-step but conditioning on the sample. We refer to Appendix \ref{app:EAM_conv} for a precise definition of $E_{\bQ}$ and a proof of the theorem.
\begin{theorem}\label{thm:eiconv}
	Suppose $\Theta \subset \R^d$ is a compact hyperrectangle with nonempty interior, that $\|p\|=1$, and that Assumptions \ref{as:kernel}, \ref{as:smoothness}, and \ref{as:degeneracy} hold.
	Let the evaluation points $(\theta^{(1)},\cdots,\theta^{(L)})$ be drawn according to the Initialization and M-steps. Then
\begin{align}
\|p'\theta^*-p'\theta^{*,L}\|_{L^1_{\mathbb Q}}=O\Big(\Big(\frac{L}{\ln L}\Big)^{-\nu/d}(\ln L)^\delta\Big),\label{eq:eiconv_thm}
\end{align}
where $\|\cdot\|_{L^1_{\mathbb Q}}$ is the $L^1$-norm under $\mathbb Q$, $\delta\ge 1+\chi,$ and the constants $0<\nu\le \infty$ and $0<\chi<\infty$ are defined in Assumption \ref{as:kernel}. If $\nu=\infty$, the statement in \eqref{eq:eiconv_thm} holds for any $\nu<\infty.$
\end{theorem}
The requirement that $\Theta$ is a compact hyperrectangle with nonempty interior can be replaced by a requirement that $\Theta$ belongs to the interior of a closed hyperrectangle in $\R^d$. Assumption \ref{as:kernel} specifies the types of kernel to be used to define the correlation functional in \eqref{eq:EAM:corr}.
Assumption \ref{as:smoothness} collects requirements on differentiability of $g_j(\theta),j=1,\dots,J$, and smoothness of $c(\theta)$.
Assumption \ref{as:degeneracy} is the degeneracy condition discussed above.

To apply Theorem \ref{thm:eiconv} to calibrated projection, we provide low level conditions (Assumption \ref{as:Lipschitz_m_over_sigma} in Online Appendix \ref{app:as:Lipschitz}) under which the map $\theta \mapsto \hat c_n(\theta)$ uniformly stochastically satisfies a Lipschitz-type condition. To get smoothness, we work with a mollified version of $\hat{c}_n$, denoted $\hat c_{n,\tau_n}$ in equation \eqref{eq:c_hat_mollified}, where $\tau_n=o(n^{-1/2})$.\footnote{For a discussion of mollification, see e.g. \cite[Example 7.19]{Rockafellar_Wets2005aBK}.} Theorem \ref{cor:eam_conv} in the Online Appendix shows that $\hat c_n$ and $\hat c_{n,\tau_n}$ can be made uniformly arbitrarily close, and that $\hat c_{n,\tau_n}$ yields valid inference as in \eqref{eq:coverage_control}. In practice, we directly apply the E-A-M steps to $\hat c_n$.

The key condition imposed in Theorem \ref{cor:eam_conv} is Assumption \ref{as:Lipschitz_m_over_sigma}. It requires that the GMS function used is Lipschitz in its argument,\footnote{\label{fn:gms}This requirement rules out the GMS function in footnote \ref{fn:eq:zeta}, but it is satisfied by other GMS functions proposed by AS.} and that the standardized moment functions are Lipschitz in $\theta$. In Online Appendix \ref{sec:verify_ass_EAM} we establish that the latter condition is satisfied by some canonical examples in the moment (in)equality literature: mean with missing data, linear regression and best linear prediction with interval data (and discrete covariates), entry games with multiple equilibria (and discrete covariates), and semi-parametric binary regression models with discrete or interval valued covariates \citep[as in][]{mag:mau08}.\footnote{For these same examples we verify the differentiability requirement in Assumption \ref{as:smoothness} on $g_j(\theta)$.}

\label{sentence:error:negligible}The E-A-M algorithm is proposed as a method to implement our statistical procedure, not as part of the statistical procedure itself. As such, its approximation error is not taken into account in Theorem \ref{thm:validity}. Our comparisons of the confidence intervals obtained through the use of E-A-M as opposed to directly solving problems \eqref{eq:CI} through the use of MATLAB's \texttt{fmincon} in our empirical application in the next section suggest that such error is minimal.

\section{Empirical Illustration: Estimating a Binary Game}
\label{sec:KT15}
We employ our method to revisit the study in \cite[Section 8]{KT15} of ``what explains the decision of an airline to provide service between two airports." We use their data and model specification.\footnote{The data, which pertains to the second quarter of the year 2010, is downloaded from \url{http://qeconomics.org/ojs/index.php/qe/article/downloadSuppFile/371/1173}.} Here we briefly summarize the set-up and refer to \cite{KT15} for a richer discussion.

The study examines entry decisions of two types of firms, namely Low Cost Carriers ($LCC$) versus Other Airlines ($OA$).  A market is defined as a trip between two airports, irrespective of intermediate stops.  The entry decision $Y_{\ell,i}$ of player $\ell\in\{LCC,OA\}$ in market $i$ is recorded as a $1$ if a firm of type $\ell$ serves market $i$ and $0$ otherwise.  Firm $\ell$'s payoff equals $Y_{\ell,i}(Z_{\ell,i}'\vartheta_\ell+\delta_i Y_{-\ell,i}+u_{\ell,i})$, where $Y_{-\ell,i}$ is the opponent's entry decision. Each firm enters if doing so generates non-negative payoffs.
The observable covariates in the vector $Z_{\ell,i}$ include the constant and the variables $W_i^{size}$ and $W_{\ell,i}^{pres}$. The former is market size, a market-specific variable common to all airlines in that market and defined as the population at the endpoints of the trip. The latter is a firm-and-market-specific variable measuring the market presence of firms of type $\ell$ in market $i$ \citep[see][p. 356 for its exact definition]{KT15}. While $W_i^{size}$ enters the payoff function of both firms, $W_{LCC,i}^{pres}$ (respectively, $W_{OA,i}^{pres}$) is excluded from the payoff of firm $OA$ (respectively, $LCC$). 
Each of market size and of the two market presence variables are transformed into binary variables based on whether they realized above or below their respective median.
This leads to a total of 8 market types, hence $J_1=16$ moment inequalities and $J_2=16$ moment equalities.
The unobserved payoff shifters $u_{\ell,i}$ are assumed to be i.i.d. across $i$ and to have a bivariate normal distribution with $E(u_{\ell,i})=0$, $Var(u_{\ell,i})=1$, and $Corr(u_{LCC,i},u_{OA,i})=r$ for each $i$ and $\ell\in\{LCC,OA\}$, where the correlation $r$ is to be estimated. Following \cite{KT15}, we assume that the strategic interaction parameters $\delta_{LCC}$ and $\delta_{OA}$ are negative, that $r\ge 0$, and that the researcher imposes these sign restrictions. To ensure that Assumption \ref{as:momP_KMS} is satisfied,\footnote{This assumption, common in the literature on projection inference, requires that $D_{P,j}(\theta)$ are Lipschitz in $\theta$ and have bounded norm. But $\partial(\{E_P[m_j(X,\cdot)]/\sigma_{P,j}(\cdot)\})/\partial r$ includes a denominator equal to $(1-r^2)^2$. As $r\to 1$, this leads to a violation of the assumption and to numerical instability.} we furthermore assume that $r\le 0.85$ and use this value as its upper bound in the definition of the parameter space.

The results of the analysis are reported in Table \ref{tab:empirical}, which displays $95\%$ nominal confidence intervals (our $CI_n$ as defined in equations \eqref{eq:def_ci_KMS}-\eqref{eq:CI}) for each parameter. The output of the E-A-M algorithm is displayed in the accordingly labeled column. The next column shows a robustness check, namely the output of MATLAB's \texttt{fmincon} function, henceforth labelled ``direct search," that was started at each of a widely spaced set of feasible points that were previously discovered by the E-A-M algorithm. We emphasize that this is a robustness or accuracy check, not a horse race: Direct search mechanically improves on E-A-M because it starts (among other points) at the point reported by E-A-M as optimal feasible. Using the standard \texttt{MultiStart} function in MATLAB instead of the points discovered by E-A-M produces unreliable and extremely slow results. 
In 10 out of 18 optimization problems that we solved, the E-A-M algorithm's solution came within its set tolerance ($0.005$) from the direct search solution. The other optimization problems were solved by E-A-M with a minimal error of less than $5\%$. 

Table \ref{tab:empirical} also reports computational time of the E-A-M algorithm, of the subsequent direct search, and the total time used to compute the confidence intervals. The direct search greatly increases computation time with small or negligible benefit.
Also, computational time varied substantially across components.
We suspect this might be due to the shape of the level sets of $\max_{j=1,\dots,J}\sqrt n \bar m_{n,j}(\theta)/\hat\sigma_{n,j}(\theta)$: By manually searching around the optimal values of the program, we verified that the level sets in specific directions can be extremely thin, rendering search more challenging.

Comparing our findings with those in \cite{KT15}, we see that the results qualitatively agree.
The confidence intervals for the interaction effects ($\delta_{LCC}$ and $\delta_{OA}$) and for the effect of market size on payoffs ($\vartheta^{size}_{LCC}$ and $\vartheta^{size}_{OA}$) are similar to each other across the two types of firms. 
The payoffs of $LCC$ firms seem to be impacted more than those of $OA$ firms by market presence.
On the other hand, monopoly payoffs for $LCC$ firms seem to be smaller than for $OA$ firms.\footnote{Monopoly payoffs are those associated with a market with below-median
size and below-median market presence (i.e., the constant terms).} 
The confidence interval on the correlation coefficient is quite large and includes our upper bound of 0.85.\footnote{Being on the boundary of the parameter space is not a problem for calibrated projection; indeed, it is accounted for in the calibration of $\hat{c}_n$ in equations \eqref{eq:Lambda_n}-\eqref{eq:def:c_hat}.}

For most components, our confidence intervals are narrower than the corresponding 95\% credible sets reported in \cite{KT15}.\footnote{For the interaction parameters $\delta$, Kline and Tamer's upper confidence points are lower than ours; for the correlation coefficient $r$, their lower confidence point is higher than ours.}
However, the intervals are not comparable for at least two reasons: We impose a stricter upper bound on $r$ and we aim to cover the projections of the true parameter value as opposed to the identified set.

Overall, our results suggest that in a reasonably sized, empirically interesting problem, calibrated projection yields informative confidence intervals. Furthermore, the E-A-M algorithm appears to accurately and quickly approximate solutions to complex smooth nonlinear optimization problems.

\section{Conclusion}
\label{sec:conclusion}
This paper proposes a confidence interval for linear functions of parameter vectors that are partially identified through finitely many moment (in)equalities.
The extreme points of our \emph{calibrated projection} confidence interval are obtained by minimizing and maximizing $p^\prime \theta$ subject to properly relaxed sample analogs of the moment conditions.
The relaxation amount, or critical level, is computed to insure uniform asymptotic coverage of $p^\prime \theta$ rather than $\theta$ itself. Its calibration is computationally attractive because it is based on repeatedly checking feasibility of (bootstrap) linear programming problems.
Computation of the extreme points of the confidence intervals is furthermore attractive thanks to an application of the response surface method for global optimization; this is a novel contribution of independent interest.
Indeed, one key result is a convergence rate for this algorithm when applied to constrained optimization problems in which the objective function is easy to evaluate but the constraints are ``black box" functions.
The result is applicable to any instance when the researcher wants to compute confidence intervals for optimal values of constrained optimization problems.
Our empirical application and Monte Carlo analysis show that, in the DGPs that we considered, calibrated projection is fast and accurate, and also that the E-A-M algorithm can greatly improve computation of other confidence intervals.

\ifx\undefined\BySame
\newcommand{\BySame}{\leavevmode\rule[.5ex]{3em}{.5pt}\ }
\fi
\ifx\undefined\textsc
\newcommand{\textsc}[1]{{\sc #1}}
\newcommand{\emph}[1]{{\em #1\/}}
\let\tmpsmall\small
\renewcommand{\small}{\tmpsmall\sc}
\fi

\newpage
\small
\begin{appendix}
\small
\section{Convergence of the E-A-M Algorithm}
\label{app:EAM_conv}
In this appendix, we provide details on the algorithm used to solve the outer maximization problem as described in Section \ref{sec:Computation}.
Below, let $(\Omega,\mathcal F)$ be a measurable space and $\omega$ a generic element of $\Omega$.
Let $L\in \mathbb N$ and let $(\theta^{(1)},...,\theta^{(L)})$ be a measurable map on $(\Omega,\mathcal F)$ whose law is specified below.
The value of the function $c$ in \eqref{eq:general_problem} is unknown ex ante. Once the evaluation points $\theta^{(\ell)},\ell=1,..., L$ realize, the corresponding values of $c$, i.e. $\Upsilon^{(\ell)}\equiv c(\theta^{(\ell)}),\ell=1,..., L$, are known.
We may therefore define the information set
\begin{align}
	\F_L\equiv\sigma(\theta^{(\ell)},\Upsilon^{(\ell)},\ell=1,...,L).
\end{align}
Let $\mathcal C_L\equiv\{\theta^{(\ell)}:\ell \in\{1,\cdots, L\}, g_j(\theta^{(\ell)})\le c(\theta^{(\ell)}),j=1,\cdots,J\}$
be the set of feasible evaluation points. Then $\text{argmax}_{\theta\in \mathcal C_L} p'\theta$ is measurable with respect to $\mathcal F_L$ and we take a measurable selection $\theta^{*,L}$ from it.

Our algorithm iteratively determines evaluation points based on the \emph{expected improvement} criterion \citep{jonesefficient1998}.
For this, we formally introduce a model that describes the  uncertainty associated with the values of $c$ outside the current evaluation points.
Specifically, the unknown function $c$ is modeled as a Gaussian process such that\footnote{We use $\bP$ and $\E$ to denote the probability and expectation for the prior and posterior distributions of $c$ to distinguish them from $P$ and $E$ used for the sampling uncertainty for $X_i$.}
\begin{align}
\E[c(\theta)]=\mu,~\mathbb{C}\text{ov}(c(\theta),c(\theta'))=\varsigma^2 K_{\beta}(\theta-\theta'),\label{eq:muK}
\end{align}
 where $\beta=(\beta_1,...,\beta_d)\in\mathbb R^d$ controls the length-scales of the process. Two values $c(\theta)$ and $c(\theta')$ are highly correlated when $\theta_k-\theta'_k$ is small relative to $\beta_k$.
Throughout,  we assume $\underline{\beta}_k\le \beta_k\le \overline\beta_k$ for some $0<\underline{\beta}_k<\overline{\beta}_k<\infty$ for $k=1,...,d$. We let $\bar\beta=(\bar\beta_1,...,\bar\beta_d)'\in\mathbb R^d$.  Specific suggestions on the forms of $K_\beta$ are given in Appendix \ref{sec:RKHS}.

For a given $(\mu,\varsigma,\beta)$, the posterior distribution of $c$ given $\mathcal F_L$ is then another Gaussian process whose mean $c_L(\cdot)$ and variance $\varsigma^2 s^2_L(\cdot)$ are given as follows \citep[][Section 4.1.3]{Santner:2013aa}:
\begin{align}
	c_L(\theta)&=\mu+\mathbf{r}_L(\theta)'\mathbf{R}_L^{-1}(\mathbf \Upsilon-\mu \mathbf 1)\\
	\varsigma^2s^2_L(\theta)&= \varsigma^2\biggl(1-\mathbf r_L(\theta)'\mathbf R_L^{-1}\mathbf r_L(\theta)+\frac{(1-\mathbf 1'\mathbf R_L^{-1}\mathbf r_L(\theta))^2}{\mathbf 1'\mathbf R_L^{-1}\mathbf 1}\biggr).
\end{align}

Given this, the expected improvement function can be written as
\begin{align}
\EI_L(\theta)&\equiv \E[(p'\theta-p'\theta^{*,L})_+1\{\bar g(\theta)\le c(\theta)\}|\F_L]\notag\\
&=(p'\theta-p'\theta^{*,L})_+\bP(c(\theta)\ge \max_{j=1,...,J}g_j(\theta)|\F_L)\notag\\
&=(p'\theta-p'\theta^{*,L})_+\bP\left(\frac{c(\theta)-c_L(\theta)}{\varsigma s_L(\theta)}\ge \frac{\max_{j=1,...,J}g_j(\theta)- c_L(\theta)}{\varsigma s_L(\theta)}\Big|\F_L\right)\notag\\
&=(p'\theta-p'\theta^{*,L})_+\left(1-\Phi\left(\frac{\bar g(\theta)-c_L(\theta)}{\varsigma s_L(\theta)}\right)\right),\label{eq:eidef}
\end{align}
The evaluation points $(\theta^{(1)},...,\theta^{(L)})$ are then generated according to the following algorithm (\textbf{M-step} in Section \ref{sec:Computation}).
\begin{algorithm}\label{alg:evalpts}
Let $k\in\mathbb N$.

\noindent
Step 1: Initial evaluation points $\theta^{(1)},...,\theta^{(k)}$ are drawn uniformly over $\Theta$  independent of $c$.

\noindent
Step 2: For $L\ge k$,	with probability $1-\epsilon$, let $\theta^{(L+1)}=\text{argmax}_{\theta\in\Theta}\EI_L(\theta).$ With probability $\epsilon$,  draw $\theta^{(L+1)}$  uniformly at random from $\Theta$.
\end{algorithm}
Below, we use  $\bQ$ to denote the law of $(\theta^{(1)},...,\theta^{(L)})$ determined by the algorithm above.
We also note that $\theta^{*,L+1}=\argmax_{\theta\in\mathcal C_{L+1}}p'\theta$ is a function of the evaluation points and therefore is a random variable whose law is governed by $\bQ$.
We let
\begin{align}
\mathcal C&\equiv\{\theta\in\Theta:\bar g(\theta)-c(\theta)\le 0\}. \label{eq:calc}
\end{align}

We require that the kernel used to define the correlation functional for the Gaussian process in \eqref{eq:EAM:corr} satisfies some basic regularity conditions. For this, let $\hat K_\beta=\int e^{-2\pi i x'\xi}K_\beta(x)dx$ denote the Fourier transform of $K_\beta$. Note also that, for real valued functions $f,g$, $f(y)=\mathit\Theta(g(y))$ means $f(y)=O(g(y))$ as $y\to\infty$ and $\liminf_{y\to\infty}f(y)/g(y)>0$.
\begin{assumption}[Kernel Function]\label{as:kernel}
(i) $K_\beta$ is continuous and integrable; (ii) $\hat K_\beta=\hat k_\beta(\|x\|)$ for some nonincreasing function $\hat k_\beta:\mathbb R_+\to\mathbb R_+$; (iii) As $x\to\infty$ either $\hat K_\beta(x)=\mathit{\Theta}(\|x\|^{-2\nu-d})$ for some $\nu>0$ or $\hat K_\beta(x)=O(\|x\|^{-2\nu-d})$ for all $\nu>0$; (iv) $K_\beta$ is $k$-times continuously differentiable for $k=\lfloor 2\nu\rfloor$, and at the origin $K$ has $k$-th order Taylor approximation $P_k$ satisfying $|K(x)-P_k(x)|=O(\|x\|^{2\nu}(-\ln\|x\|)^{2\chi})$ as $x\to 0$, for some $\chi>0.$
\end{assumption}

Assumption \ref{as:kernel} is essentially the same as Assumptions 1-4 in \cite{Bull_Convergence_2011}.
When a kernel satisfies the second condition of Assumption \ref{as:kernel} (iii), i.e. $\hat K_\beta(x)=O(\|x\|^{-2\nu-d}),\forall\nu>0$, we say $\nu=\infty.$
Assumption \ref{as:kernel} is satisfied by popular kernels such as the Mat\'{e}rn kernel (with $0<\nu<\infty$ and $\chi=1/2$) and the Gaussian kernel ($\nu=\infty$ and $\chi=0$). These kernels are discussed in Appendix \ref{sec:RKHS}.

Finally, we require that the functions $g_j$ are differentiable with continuous Lipschitz gradient,\footnote{This requirement holds in the canonical partial identification examples discussed in Online Appendix \ref{sec:verify_examples}, using the same arguments as in Online Appendix \ref{sec:verify_ass_EAM}, provided $\hat{\sigma}_{n,j}(\theta)>0$.} that the function $c$ is smooth, and we impose on the constraint set $\cC$ (which is a confidence set in our application) a degeneracy condition inspired by \cite[Condition C.3]{CHT}.\footnote{\cite{CHT} impose the degeneracy condition on the population identified set.}
Below $\mathcal H_{\beta}(\Theta)$ is the reproducing kernel Hilbert space (RKHS) on $ \Theta\subseteq \mathbb R^d$ determined by the kernel used to define the correlation functional in \eqref{eq:EAM:corr}. The norm on this space is $\|\cdot\|_{\mathcal H_{\beta}}$; see Online Appendix \ref{sec:RKHS} for details.
\begin{assumption}[Continuity and Smoothness]\label{as:smoothness}
(i) For each $j=1,\dots,J$, the function $g_j(\theta)$ is differentiable in $\theta$ with Lipschitz continuous gradient.
(ii) The function $c:\Theta \mapsto \mathbb{R}$ satisfies $\|c\|_{\mathcal H_{\bar\beta}}\le R$ for some $R>0$, where $\bar\beta=(\bar\beta_1,\cdots,\bar\beta_d)'$.
\end{assumption}
\begin{assumption}[Degeneracy]\label{as:degeneracy}
There exist constants $(C_1,M,\tau_1)$ such that for all $\varpi\in [0,\tau_1]$,
\begin{align*}
& \max_{j} g_j(\theta)-c(\theta)\le  -C_1\varpi, \text{ for all } \theta\in \mathcal C^{-\varpi},\\
& d_H(\mathcal C^{-\varpi},\mathcal C)\le M\varpi,
\end{align*}
where $\mathcal C^{-\varpi}\equiv\{\theta\in\mathcal C:d(\theta,\Theta\setminus \mathcal C)\ge \varpi\}$.
\end{assumption}
Assumptions \ref{as:smoothness}-\ref{as:degeneracy} jointly imply a linear minorant property on $\max_j(g_j(\theta)-c(\theta))_+$:
\begin{align}
\exists C_2>0,\tau_2>0: ~\max_j(g_j(\theta)-c(\theta))_+\ge C_2\min\{d(\theta,\mathcal C),\tau_2\}.\label{as:pm}
\end{align}
To see this, define $f_j(\theta)\equiv g_j(\theta)-c(\theta)$, so that the l.h.s. of the above inequality is $\max_j f_j(\theta)$. By Assumptions \ref{as:smoothness}-\ref{as:degeneracy} and compactness of $\Theta$,  $f_j(\cdot)$ is differentiable with Lipschitz continuous gradient. Let $\tilde{D}_j(\cdot)$ denote its gradient and let $\tilde{M}$ denote the corresponding Lipschitz constant. Let $\varepsilon=C_1/(M\tilde{M}J)$, where $(C_1,M)$ are from Assumption \ref{as:degeneracy}. We will show that, for constants $(C_2,\tau_2)$ to be determined, (i) $d(\theta,\cC) \leq \varepsilon \Rightarrow \max_j f_j(\theta) \geq C_2 d(\theta,\cC)$ and (ii) $d(\theta,\cC) \geq \varepsilon \Rightarrow \max_j f_j(\theta) \geq C_2 \tau_2$, so that the minimum between these bounds applies to any $\theta$.

To see (i), write $\theta=\theta^*+r$, where $\theta^*$ is the projection of $\theta$ onto $\cC$. Fix a sequence $\varpi_m \to 0$. By assumption \ref{as:degeneracy}, there exists a corresponding sequence $\theta^*_m \to \theta^*$ with (for $m$ large enough) $\Vert \theta^*_m - \theta^* \Vert \leq M\varpi_m$ but also $\max_j f_j(\theta^*_m) \leq -C_1 \varpi_m$. Let $t_m \equiv (\theta^*_m - \theta^*)/\Vert \theta^*_m-\theta^* \Vert$ be the sequence of corresponding directions. Then for any accumulation point $t$ of $t_m$ and any active constraint $j$ (i.e., $f_j(\theta^*)=0$; such $j$ necessarily exists due to continuity of $f_j(\cdot)$), one has $\tilde{D}_j(\theta^*)t \leq -C_1/M$.
We note for future reference that this finding implies $\Vert \tilde{D}_j(\theta^*) \Vert \geq C_1/M$. It also implies that the Mangasarian-Fromowitz constraint qualification holds at $\theta^*$, hence $r$ (being in the normal cone of $\cC$ at $\theta^*$) is in the positive span of the active constraints' gradients. Thus $j$ can be chosen such that $f_j(\theta^*)=0$ and $\tilde{D}_j(\theta^*)r \geq \Vert \tilde{D}_j(\theta^*)\Vert \Vert r \Vert / J$. For any such $j$, write
\begin{eqnarray*}
f_j(\theta) &=& f_j(\theta^*)+ \int_0^1 \frac{df_j(\theta^*+kr)}{dk}dk \\
&=& 0+ \int_0^1 \tilde{D}_j(\theta^*+kr)r dk \\
&=& \int_0^1 \left( \tilde{D}_j(\theta^*)r + \bigl(\tilde{D}_j(\theta^*+kr) - \tilde{D}_j(\theta^*)\bigr)r\right) dk \\
&\geq & \Vert\tilde{D}_j(\theta^*)\Vert \Vert r \Vert /J + \int_0^1 (-\tilde{M}k\Vert r \Vert)\Vert r \Vert
dk \\
&\geq &\tfrac{C_1}{MJ} \Vert r \Vert - \tilde{M}\Vert r \Vert^2/2 \\
&\geq &\tfrac{C_1}{2MJ} \Vert r \Vert.
\end{eqnarray*}
In the inequality steps, we successively substituted bounds stated before the display, evaluated the integral in $k$, and (in the last step) used $\Vert r \Vert \leq \varepsilon$. This establishes (i), where $C_2=C_1/(2MJ)$.
Next, by continuity of $\max_j f_j(\cdot)$ and compactness of the constraint set, $\tau \equiv \min_\theta \{\max_j f_j(\theta):d(\theta,\cC) \geq \varepsilon\}$ is well-defined and strictly positive. This establishes (ii) with $\tau_2=\tau/C_2$.

\subsection{Proof of Theorem \ref{thm:eiconv}}
For each $L\in\mathbb N$, let
\begin{align}
	r_L\equiv \Big(\frac{L}{\ln L}\Big)^{-\nu/d}( \ln L)^\chi.
\end{align}
\begin{proof}[Proof of Theorem \ref{thm:eiconv}]
	First, note that
\begin{align}
\|p'\theta^*-p'\theta^{*,L}\|_{L^1_{\mathbb Q}}=E_{\bQ}\big[\left|p'\theta^*-p'\theta^{*,L}\right|\big]	=	 E_{\bQ}\big[p'\theta^*-p'\theta^{*,L}\big],
\end{align}
where the last equality follows form $p'\theta^*-p'\theta^{*,L+1}\ge 0, \bQ-a.s.$ Hence, it suffices to show
\begin{align}
	E_{\bQ}\big[p'\theta^*-p'\theta^{*,L}\big]=O\Big(\Big(\frac{L}{\ln L}\Big)^{-\nu/d}( \ln L)^\delta\Big).
\end{align}

Let $(\Omega,\mathcal F)$ be a measurable space. Below, we let $L\ge 2k$.
Let $0<\nu<\infty$. Let $0<\eta<\epsilon$ and $A_L\in\mathcal F$ be the event that at least $\lfloor \eta L\rfloor$ of the points $\theta^{(k+1)},\cdots,\theta^{(L)}$ are drawn independently from a uniform distribution on $\Theta.$ Let $B_L\in\mathcal F$ be the event that one of the points $\theta^{(L+1)},\cdots,\theta^{(2L)}$ is chosen by maximizing the expected improvement. For each $L$, define the mesh norm:
\begin{align}
h_L\equiv \sup_{\theta\in\Theta}\min_{\ell=1,\cdots L}\|\theta-\theta^{(\ell)}\|.\label{eq:eiconv1}
\end{align}
For a given $\bar M>0$, let $C_L\in\mathcal F$ be the event that $h_L\le \bar M(L/\ln L)^{-1/d}$. We then let
\begin{align}
D_L\equiv A_L \cap B_L\cap C_L.	\label{eq:eiconv2}
\end{align}
For each $\omega\in D_L$, let
\begin{align}
\ell(\omega,L)\equiv\inf\{\tilde \ell\in\mathbb N:L\le\tilde\ell\le 2L,\theta^{(\tilde\ell)}\in\argmax_{\theta\in\Theta}\EI_{\tilde\ell-1}(\theta)\}.\label{eq:eiconv2a}
\end{align}
This is a (random) index that is associated with the first maximizer of the expected improvement between $L$ and $2L$.

Let  $\varepsilon_L=(L/\ln L)^{-\nu/d}(\ln L)^\delta$ for $\delta\ge 1+\chi$ and note that $\varepsilon_L$ is a positive sequence such that $\varepsilon_L\to 0$ and $r_L=o(\varepsilon_L)$.
We further define the following events:
\begin{align}
E_{1L}&\equiv\{\omega\in\Omega:0<\bar g(\theta^{(\ell(\omega,L))})-c(\theta^{(\ell(\omega, L))})\le \varepsilon_{\ell(\omega,L)}\}\label{eq:eiconv4a}\\
E_{2L}&\equiv\{\omega\in\Omega:-\varepsilon_{\ell(\omega,L)}\le\bar g(\theta^{(\ell(\omega,L))})-c(\theta^{(\ell(\omega, L))})<0\}\label{eq:eiconv4b}\\
E_{3L}&\equiv\{\omega\in\Omega:|\bar g(\theta^{(\ell(\omega,L))})-c(\theta^{(\ell(\omega, L))})|>\varepsilon_{\ell(\omega,L)}\}.\label{eq:eiconv4c}
\end{align}
Note that $D_L$ can be partitioned into $D_L\cap E_{1L}$, $D_L\cap E_{2L}$, and $D_L\cap E_{3L}.$
By Lemmas \ref{lem:star-current}, \ref{lem:star-current_inner}, and \ref{lem:case3}, there exists a constant $M>0$ such that, respectively,
\begin{align}
&\sup_{\omega\in D_L\cap E_{1L}}|p'\theta^*-p'\theta^{*,\ell(\omega,L)}|/\varepsilon_{\ell(\omega,L)}\le M\label{eq:eiconv5a}\\
&\sup_{\omega\in D_L\cap E_{2L}}|p'\theta^*-p'\theta^{*,\ell(\omega,L)}|/\varepsilon_{\ell(\omega,L)}\le M\label{eq:eiconv5b}\\
&\sup_{\omega\in D_L\cap E_{3L}}|p'\theta^*-p'\theta^{*,\ell(\omega,L)}|/\exp(-M\eta_{\ell(\omega,L)})\le M,\label{eq:eiconv5c}
\end{align}
where $\eta_L\equiv\varepsilon_L/r_L$.  Note that
\begin{align}
	\eta_L=\varepsilon_L/r_L=(\ln L)^{\delta-\chi}.\label{eq:eiconv6}
\end{align}
Hence, by taking $M$ sufficiently large so that $M> \nu/d$,
\begin{align}
\exp(-M\eta_L)=\exp\left( -M(\ln L)^{\delta-\chi}\right)\le \exp\left(-M\ln L\right) =L^{-M}=O(L^{-\nu/d})=O(\varepsilon_L),\label{eq:eiconv6a}
\end{align}
where the inequality follows from $M(\ln L)^{\delta-\chi}\ge M\ln L$ by $\delta\ge 1+\chi$.
By \eqref{eq:eiconv5a}-\eqref{eq:eiconv6a},
\begin{align}
	\sup_{\omega\in D_L}|p'\theta^*-p'\theta^{*,\ell(\omega,L)}|/\varepsilon_{\ell(\omega,L)}\le M,\label{eq:eiconv6b}
\end{align}
for some constant $M>0$ for all $L$ sufficiently large.
Since $L\le \ell(\omega,L) \le 2L$, $p'\theta^{*,L}$ is non-decreasing in $L$, and $\varepsilon_L$ is non-increasing in $L$, we have
\begin{align}
p'\theta^*-p'\theta^{*,2L}	\le M (L/\ln L)^{-\nu/d}(\ln L)^\delta\le M(2L/\ln 2L)^{-\nu/d}(\ln 2L)^\delta \label{eq:eiconv8}
\end{align}
where the last equality follows from $L^{-\nu/d}= 2^{\nu/d} (2L)^{-\nu/d}$ and $\ln L\le \ln 2L$.

Now consider the case $\omega\notin D_L$. By \eqref{eq:eiconv2},
\begin{align}
\bQ(D_L^c)\le \bQ(A^c_L)+\bQ(B^c_L)+\bQ(C^c_L).\label{eq:eiconv9}
\end{align}
Let $Z_\ell$ be a Bernoulli random variable such that $Z_\ell=1$ if $\theta^{(\ell)}$ is randomly drawn from a uniform distribution.
Then, by the Chernoff bounds \citep[see e.g.][p.48]{boucheron2013concentration},
\begin{align}
	\bQ(A^c_L)=\bQ(\sum_{\ell=k+1}^LZ_\ell< \lfloor \eta L\rfloor)\le \exp(-(L-k+1)\epsilon(\epsilon-\eta)^2/2).\label{eq:eiconv10}
\end{align}
Further, by the definition of $B_L$,
\begin{align}
\bQ(B^c_L)	=\epsilon^L,\label{eq:eiconv11}
\end{align}
and finally by taking $\bar M$ large upon defining the event $C_L$ and applying Lemma 12 in \cite{Bull_Convergence_2011},
one has
\begin{align}
\bQ(C^c_L) =O(L^{-\gamma}),	\label{eq:eiconv12}
\end{align}
for any $\gamma>0$. Combining \eqref{eq:eiconv9}-\eqref{eq:eiconv12}, for any $\gamma>0$,
\begin{align}
\bQ(D^c_L)=	O(L^{-\gamma}).\label{eq:eiconv13}
\end{align}
Finally, noting that $p'\theta^*-p'\theta^{*,2L}$ is bounded by some constant $M>0$ due to the boundedness of $\Theta$, we have
\begin{multline}
E_{\bQ}\big[p'\theta^*-p'\theta^{*,2L}\big]=\int_{D_L}	p'\theta^*-p'\theta^{*,2L}d\bQ+\int_{D_L^c}p'\theta^*-p'\theta^{*,2L}d\bQ\\
= O((2L/\ln 2L)^{-\nu/d}(\ln 2L)^\delta)+O(2L^{-\gamma}),\label{eq:eiconv14}
\end{multline}
where the second equality follows from \eqref{eq:eiconv8} and \eqref{eq:eiconv13}. Since $\gamma>0$ can be made aribitrarily large, one may let the second term on the right hand side of \eqref{eq:eiconv14} converge to 0 faster than the first term. Therefore
\begin{align}
	E_{\bQ}\big[p'\theta^*-p'\theta^{*,2L}\big]= O((2L/\ln 2L)^{-\nu/d}(\ln 2L)^\delta),
\end{align}
which establishes the claim of the theorem for $0<\nu<\infty$. When the second condition of Assumption \ref{as:kernel} (iii) holds (i.e.,  $\nu=\infty$), the argument above holds for any $0<\nu<\infty.$
\end{proof}

\subsection{Auxiliary Lemmas for the Proof of Theorem \ref{thm:eiconv}}

Let $D_L$ be defined as in \eqref{eq:eiconv2}. The following lemma shows that on $D_L\cap E_{1L}$, $p'\theta^*$ and $p'\theta^{(\ell(\omega,L))}$ are close to each other, where we recall that $\theta^{(\ell(\omega,L))}$ is the expected improvement maximizer (but does not belong to $\cC$ for $\omega \in E_{1L}$).
\begin{lemma}\label{lem:star_ell}
Suppose Assumptions \ref{as:kernel}, \ref{as:smoothness}, and \ref{as:degeneracy} hold.	Let $\varepsilon_L$ be a positive sequence such that $\varepsilon_L\to 0$ and $r_L=o(\varepsilon_L)$.
Then, there exists a constant $M>0$ such that $\sup_{\omega\in D_L\cap E_{1L}}|p'\theta^*-p'\theta^{(\ell(\omega,L))}|/\varepsilon_{\ell(\omega,L)}\le M$  for all $L$ sufficiently large.
\end{lemma}

\begin{proof}
We show the result by contradiction.
Let $\{\omega_L\}\subset\Omega$  be a  sequence such that $\omega_L\in D_L\cap E_{1L}$ for all $L$.
First, assume that, for any $M>0$, there is a  subsequence  such that $|p'\theta^*-p'\theta^{(\ell(\omega_L,L))}|> M \varepsilon_{\ell(\omega_L,L)}$ for all $L$. This occurs if it contains a further subsequence along which, for all $L$, (i) $p'\theta^{(\ell(\omega_L,L))} - p'\theta^*>M\varepsilon_{\ell(\omega_L,L)}$ or (ii) $p'\theta^*-p'\theta^{(\ell(\omega_L,L))}>M\varepsilon_{\ell(\omega_L,L)}$.

\bigskip
\noindent
Case (i): $p'\theta^{(\ell(\omega_L,L))}-p'\theta^*>M\varepsilon_{\ell(\omega_L,L)}$ for all $L$ for some subsequence.

To simplify notation, we select a further subsequence $\{a_L\}$ of $\{L\}$  such that
for any $a_L<a_{L'}$, $\ell(\omega_{a_L},a_L)<\ell(\omega_{a_{L'}},a_{L'})$. This then  induces a sequence $\{\theta^{(\ell)}\}$ of expected improvement maximizers such that  $p'\theta^{(\ell)}-p'\theta^*>M\varepsilon_\ell$ for all $\ell,$ where each $\ell$ equals $\ell(\omega_{a_L},a_L)$ for some $a_L\in\mathbb N$. In what follows, we therefore omit the arguments of $\ell$, but this sequence's dependence on $(w_{a_L},a_L)$ should be implicitly understood.

Recall that $\cC$ defined in equation \eqref{eq:calc} is a compact set and that $\Pi_{\mathcal C}\theta^{(\ell)}=\argmin_{\theta\in\mathcal C}\|\theta^{(\ell)}-\theta\|$ denotes the projection of $\theta^{(\ell)}$ on $\mathcal C$. 
Then
\begin{align}
 p'\theta^{(\ell)}-p'\theta^*&= (p'\theta^{(\ell)}-p'\Pi_{\mathcal C}\theta^{(\ell)})+(p'\Pi_{\mathcal C}\theta^{(\ell)}-p'\theta^*)\notag\\
&\le \|p\|\|\theta^{(\ell)}-\Pi_{\mathcal C}\theta^{(\ell)}\|+(p'\Pi_{\mathcal C}\theta^{(\ell)}-p'\theta^*)\le d(\theta^{(\ell)},\mathcal C)
,\label{eq:star_ell-1}
\end{align}
where the first inequality follows from the Cauchy-Schwarz inequality, and the second inequality follows from $p'\Pi_{\mathcal C}\theta^{(\ell)}-p'\theta^*\le 0$ due to $\Pi_{\mathcal C}\theta^{(\ell)} \in \mathcal C$.
Therefore, by equation \eqref{as:pm}, for any $M>0$
\begin{align}
\bar{g}(\theta^{(\ell)})-c(\theta^{(\ell)})_+\ge C_2d(\theta^{(\ell)},\mathcal C)> C_2M\varepsilon_\ell,\label{eq:star_ell0}
\end{align}
for all $\ell$ sufficiently large, where the last inequality follows from $p'\theta^{(\ell)}-p'\theta^*>M\varepsilon_\ell$. Take $M$ such that $C_2M>1$. Then $(\bar g(\theta^{(\ell)})-c(\theta^{(\ell)}))/\varepsilon_\ell>C_2M>1$ 
for all $\ell$ sufficiently large, contradicting $\omega_L\in E_{1L}$.

\bigskip
\noindent
Case (ii): Similar to Case (i), we work with a further subsequence along which $p'\theta^*-p'\theta^{(\ell)}>M\varepsilon_\ell$ for all $\ell$. Recall that along this subsequence, $\theta^{(\ell)} \notin \mathcal C$ because $0<\bar g(\theta^{(\ell)})-c(\theta^{(\ell)})\le \varepsilon_\ell$. We will construct $\tilde\theta^{(\ell)}\in\mathcal C^{-\varepsilon_\ell}$ s.t. $\mathbb{EI}_{\ell-1}(\tilde\theta^{(\ell)})>\mathbb{EI}_{\ell-1}(\theta^{(\ell)})$, contradicting the definition of $\theta^{(\ell)}$.

By Assumption \ref{as:degeneracy},
\begin{align}
d_H(\mathcal C^{-\varepsilon_\ell},\mathcal C)\le M \varepsilon_\ell,\label{eq:star_ell1}
\end{align}
for all $\ell$ such that $\varepsilon_\ell\le \tau_1$.
By the Cauchy-Schwarz inequality, for any $\tilde\theta$,
\begin{align}
p'\theta^*-p'\tilde\theta\le \|p\|\|\theta^*-\tilde\theta\|.\label{eq:star_ell2}
\end{align}
Therefore, minimizing both sides with respect to $\tilde \theta\in \mathcal C^{-\varepsilon_\ell}$ and noting that $\|p\|=1$, we obtain
\begin{align}
p'\theta^*-\sup_{\tilde\theta\in \mathcal C^{-\varepsilon_\ell}}p'\tilde\theta \le \inf_{\tilde\theta\in \mathcal C^{-\varepsilon_\ell}}\|\theta^*-\tilde\theta\|.\label{eq:star_ell3}
\end{align}
Further, noting that $\theta^*\in\mathcal C$,
\begin{align}
\inf_{\tilde\theta\in \mathcal C^{-\varepsilon_\ell}}\|\theta^*-\tilde\theta\|\le \sup_{\theta\in\mathcal C}\inf_{\tilde\theta\in \mathcal C^{-\varepsilon_\ell}}\|\theta-\tilde\theta\|\le d_H(\mathcal C^{-\varepsilon_\ell},\mathcal C).\label{eq:star_ell4}
\end{align}
By \eqref{eq:star_ell1}-\eqref{eq:star_ell4},
\begin{align}
p'\theta^*-\sup_{\theta\in \mathcal C^{-\varepsilon_\ell}} p'\theta \le M \varepsilon_\ell,\label{eq:star_ell5}
\end{align}
for all $\ell$ sufficiently large.
Therefore, for all $\ell$ sufficiently large, one has
\begin{align}
p'\theta^*-\sup_{\theta\in \mathcal C^{-\varepsilon_\ell}} p'\theta  <p'\theta^*-p'\theta^{(\ell)}, \label{eq:star_ell6}
\end{align}
implying existence of $\tilde \theta^{(\ell)}\in\mathcal C^{-\varepsilon_\ell}$ s.t.
\begin{align}
p'\tilde\theta^{(\ell)}>p'\theta^{(\ell)}.\label{eq:star_ell7}
\end{align}
By Lemma \ref{lem:bds}, for $t(\theta)\equiv (\bar g(\theta)-c(\theta))/s_\ell(\theta)$, one can write
\begin{align}
\mathbb{EI}_{\ell-1}(\theta^{(\ell)}) &\le (p'\theta^{(\ell)}-p'\theta^{*,\ell-1})_+\Bigl(1-\Phi\Bigl(\frac{t(\theta^{(\ell)})-R}{\varsigma}\Bigr)\Bigr)\\
&\le (p'\theta^{(\ell)}-p'\theta^{*,\ell-1})_+(1-\Phi(-R/\varsigma)), \label{eq:star_ell8}
\end{align}
where the last inequality uses $t(\theta^{(\ell)})>0$. Lemma \ref{lem:bds} also yields
\begin{align}
\mathbb{EI}_{\ell-1}(\tilde\theta^{(\ell)})&\ge (p'\tilde\theta^{(\ell)}-p'\theta^{*,\ell-1})_+\Bigl(1-\Phi\Bigl(\frac{t(\tilde \theta^{(\ell)})+R}{\varsigma}\Bigr)\Bigr)\notag\\
&> (p'\theta^{(\ell)}-p'\theta^{*,\ell-1})_+\Bigl(1-\Phi\Bigl(\frac{t(\tilde \theta^{(\ell)})+R}{\varsigma}\Bigr)\Bigr)\label{eq:star_ell9}
\end{align}
for all $\ell$ sufficiently large, where the second inequality follows from \eqref{eq:star_ell7}. Next, by Assumption \ref{as:degeneracy},
\begin{align}
t(\tilde \theta^{(\ell)})=\frac{\bar g(\tilde\theta^{(\ell)})-c(\tilde\theta^{(\ell)})}{s_\ell(\tilde\theta^{(\ell)})}\le \frac{-C_1\varepsilon_\ell}{s_\ell(\tilde\theta^{(\ell)})}\label{eq:star_ell10}
\end{align}
for all $\ell$ sufficiently large.
Note that  $s_\ell(\tilde\theta^{(\ell)})=O(r_\ell)$ by \eqref{eq:eiconv3a} and $r_\ell=o(\varepsilon_\ell)$ by assumption. Hence,  $t(\tilde \theta^{(\ell)})\to -\infty$.
This in turn implies
\begin{align}
\mathbb{EI}_{\ell-1}(\tilde\theta^{(\ell)})> (p'\theta^{(\ell)}-p'\theta^{*,\ell-1})_+(1-\Phi(-R/\varsigma))\label{eq:star_ell11}
\end{align}
for all $\ell$ sufficiently large. \eqref{eq:star_ell8} and \eqref{eq:star_ell11} jointly establish the desired contradiction.
\end{proof}

The next lemma shows that on $D_L\cap E_{1L}$, $p'\theta^*$ and $p'\theta^{*,(\ell(\omega,L))}$ are close to each other, where we recall that $\theta^{*,(\ell(\omega,L))}$ is the optimum value among the available feasible points (it belongs to $\cC$).
\begin{lemma}\label{lem:star-current}
  Suppose Assumptions \ref{as:kernel}, \ref{as:smoothness}, and \ref{as:degeneracy} hold.	Let $\varepsilon_L$ be a positive sequence such that $\varepsilon_L\to 0$ and $r_L=o(\varepsilon_L)$.
  Then, there exists a constant $M>0$ such that $\sup_{\omega\in D_L\cap E_{1L}}|p'\theta^*-p'\theta^{*,\ell(\omega,L)}|/\varepsilon_{\ell(\omega,L)}\le M$ for all $L$ sufficiently large.
\end{lemma}

\begin{proof}
We show below $p'\theta^*-p'\theta^{*,\ell(\omega,L)-1}=O(\varepsilon_{\ell(\omega,L)})$ uniformly over $D_L\cap E_{1L}$ for some decreasing sequence $\varepsilon_\ell$ satisfying the assumptions of the lemma. The claim then follows by re-labeling $\varepsilon_\ell$.

Suppose by contradiction that, for any $M>0$, there is a  subsequence $\{\omega_{a_L}\}\subset \Omega$ along which $\omega_{a_L}\in D_{a_L}$ and $|p'\theta^*-p'\theta^{*,\ell(\omega_{a_L},a_L)-1}|>M\varepsilon_{\ell(\omega_{a_L},a_L)}$ for all $L$ sufficiently large. To simplify notation, we select a subsequence $\{a_L\}$ of $\{L\}$  such that
 for any $a_L<a_{L'}$, $\ell(\omega_{a_L},a_L)<\ell(\omega_{a_{L'}},a_{L'})$. This then  induces a sequence  such that    $|p'\theta^*-p'\theta^{*,\ell-1}|>M\varepsilon_\ell$ for all $\ell,$ where each $\ell$ equals $\ell(\omega_{a_L},a_L)$ for some $a_L\in\mathbb N$.
Similar to the proof of Lemma \ref{lem:star_ell}, we omit the arguments of $\ell$ below and construct a sequence of points $\tilde\theta^{(\ell)}\in\mathcal C^{-\varepsilon_{\ell}}$ such that $\mathbb{EI}_{\ell-1}(\tilde\theta^{(\ell)})>\mathbb{EI}_{\ell-1}(\theta^{(\ell)})$.

Arguing as in \eqref{eq:star_ell1}-\eqref{eq:star_ell4}, one may find a sequence of points $\tilde\theta^{(\ell)}\in \mathcal C^{-\varepsilon_\ell}$ such that
\begin{align}
p'\theta^*-p'\tilde\theta^{(\ell)}\le M_1 \varepsilon_\ell,\label{eq:star-current1}
\end{align}
for some $M_1>0$ and for all $\ell$ sufficiently large. Furthermore, by Lemma \ref{lem:star_ell},
\begin{align}
|p'\theta^*-p'\theta^{(\ell)}|\le M_2\varepsilon_\ell, \label{eq:star-current2}
\end{align}
for some $M_2>0$ and for all $\ell$ sufficiently large.
Arguing as in \eqref{eq:star_ell8},
\begin{align}
\mathbb{EI}_{\ell-1}(\theta^{(\ell)})&\le (p'\theta^{(\ell)}-p'\theta^{*,\ell-1})_+\bigl(1-\Phi(-R/\varsigma)\bigr)\notag\\
&= (p'\theta^*-p'\theta^{*,\ell-1}-(p'\theta^*-p'\theta^{(\ell)}))_+\bigl(1-\Phi(-R/\varsigma)\bigr)\notag\\
&\le (p'\theta^*-p'\theta^{*,\ell-1})\bigl(1-\Phi(-R/\varsigma)\bigr)+|p'\theta^*-p'\theta^{(\ell)}|,\label{eq:star-current3}
\end{align}
where the last inequality follows from the triangle inequality, $p'\theta^*-p'\theta^{*,\ell-1}\ge 0$, and $1-\Phi(\frac{-R}{\varsigma})\le 1.$
Similarly, by Lemma \ref{lem:bds},
\begin{align}
\mathbb{EI}_{\ell-1}(\tilde\theta^{(\ell)})&\ge (p'\tilde\theta^{(\ell)}-p'\theta^{*,\ell-1})_+\Bigl(1-\Phi\Bigl(\frac{t(\tilde\theta^{(\ell)})+R}{\varsigma}\Bigr)\Bigr)\notag\\
&=(p'\theta^*-p'\theta^{*,\ell-1}-(p'\theta^*-p'\tilde\theta^{(\ell)}))_+\Bigl(1-\Phi\Bigl(\frac{t(\tilde\theta^{(\ell)})+R}{\varsigma}\Bigr)\Bigr)\notag\\
&\ge (p'\theta^*-p'\theta^{*,\ell-1})\Bigl(1-\Phi\Bigl(\frac{t(\tilde\theta^{(\ell)})+R}{\varsigma}\Bigr)\Bigr)-(p'\theta^*-p'\tilde\theta^{(\ell)}),\label{eq:star-current4}
\end{align}
where the last inequality holds for all $\ell$ sufficiently large because  $p'\theta^*-p'\tilde\theta^{(\ell)}\in(0, M_2\varepsilon_\ell]$ and one can find a subsequence $p'\theta^*-p'\theta^{*,\ell-1}>M_2\varepsilon_\ell$ so that $p'\theta^*-p'\theta^{*,\ell-1}-(p'\theta^*-p'\tilde\theta^{(\ell)})>0$ for all $\ell$ sufficiently large.

Subtracting \eqref{eq:star-current3} from \eqref{eq:star-current4} yields
\begin{align}
&~\mathbb{EI}_{\ell-1}(\tilde\theta^{(\ell)})-\mathbb{EI}_{\ell-1}(\theta^{(\ell)})\notag\\
\ge &~ (p'\theta^*-p'\theta^{*,\ell-1})\Bigl(\Phi\Bigl(\frac{-R}{\varsigma}\Bigr)-\Phi\Bigl(\frac{t(\tilde\theta^{(\ell)})+R}{\varsigma}\Bigr)\Bigr)-(p'\theta^*-p'\tilde\theta^{(\ell)})-|p'\theta^*-p'\theta^{(\ell)}|\notag\\
\ge &~ (p'\theta^*-p'\theta^{*,\ell-1})\Bigl(\Phi\Bigl(\frac{-R}{\varsigma}\Bigr)-\Phi\Bigl(\frac{t(\tilde\theta^{(\ell)})+R}{\varsigma}\Bigr)\Bigr)- (M_1+M_2) \varepsilon_\ell,\label{eq:star-current5}
\end{align}
where the last inequality follows from \eqref{eq:star-current1} and \eqref{eq:star-current2}.
Note that there is a constant $\zeta>0$ s.t.
\begin{align}
\Phi\Bigl(\frac{-R}{\varsigma}\Bigr)-\Phi\Bigl(\frac{t(\tilde\theta^{(\ell)})+R}{\varsigma}\Bigr)>\zeta,\label{eq:star-current6}
\end{align}
due to $t(\tilde\theta^{(\ell)})\to-\infty$ by \eqref{eq:star_ell10}, \eqref{eq:eiconv3a}, and $r_\ell=o(\varepsilon_\ell)$. Therefore, for all $\ell$ sufficiently large,
\begin{align}
\mathbb{EI}_{\ell-1}(\tilde\theta^{(\ell)})-\mathbb{EI}_{\ell-1}(\theta^{(\ell)})>M\zeta\varepsilon_\ell - (M_1+M_2)\varepsilon_\ell.\label{eq:star-current7}
\end{align}
One may take $M$ large enough so that, for some  positive constant $\gamma$, $M\zeta\varepsilon_\ell - (M_1+M_2)\varepsilon_\ell>\gamma\varepsilon_\ell$ for all $\ell$ sufficiently large, which implies $\mathbb{EI}_{\ell-1}(\tilde\theta^{(\ell)})-\mathbb{EI}_{\ell-1}(\theta^{(\ell)})>0$ for all $\ell$ sufficiently large. However, this contradicts the assumption that $\theta^{(\ell)}\notin\mathcal C^{-\varepsilon_\ell}$ is the expected improvement maximizer.
\end{proof}

The next lemma shows that on $D_L\cap E_{2L}$, $p'\theta^*$ and $p'\theta^{*,(\ell(\omega,L))}$ are close to each other.
\begin{lemma}\label{lem:star-current_inner}
  Suppose Assumptions \ref{as:kernel}, \ref{as:smoothness}, and \ref{as:degeneracy} hold.	Let $\{\varepsilon_L\}$ be a positive sequence such that $\varepsilon_L\to 0$ and $r_L=o(\varepsilon_L)$.
  Then, there exists a constant $M>0$ such that $\sup_{\omega\in D_L\cap E_{2L}}|p'\theta^*-p'\theta^{*,\ell(\omega,L)}|/\varepsilon_{\ell(\omega,L)}\le M$   for all $L$ sufficiently large.
\end{lemma}

\begin{proof}
Note that, for any $L\in\mathbb N$, $\omega\in D_L\cap E_{2L}$, and $\ell=\ell(\omega,L)$, $\theta^{(\ell)}$ satisfies $\bar g(\theta^{(\ell)})-c(\theta^{(\ell)})\le 0$, hence $p'\theta^{^*,\ell}\ge p'\theta^{(\ell)}$, which in turn implies
\begin{align}
0\le p'\theta^*-p'\theta^{*,\ell}\le p'\theta^*-p'\theta^{(\ell)}.
\end{align}
Therefore, it suffices to show the existence of $M>0$ that ensures $(p'\theta^*-p'\theta^{(\ell(\omega,L))})_+\le M\varepsilon_{\ell(\omega,L)}$ uniformly over $D_L\cap E_{2L}$ for all $L$.
Suppose by contradiction that, for any $M>0$, there is a  subsequence $\{\omega_{a_L}\}\subset \Omega$ along which $\omega_{a_L}\in D_{a_L}\cap E_{2a_L}$ and $p'\theta^*-p'\theta^{(\ell(\omega_{a_L},a_L))}>M\varepsilon_{\ell(\omega_{a_L},a_L)}$ for all $L$ sufficiently large. Again, we select a subsequence $\{a_L\}$ of $\{L\}$  such that
for any $a_L<a_{L'}$, $\ell(\omega_{a_L},a_L)<\ell(\omega_{a_{L'}},a_{L'})$. This then  induces a sequence $\{\theta^{(\ell)}\}$ of expected improvement maximizers such that    $(p'\theta^*-p'\theta^{(\ell)})_+>M\varepsilon_\ell$ for all $\ell,$ where each $\ell$ equals $\ell(\omega_{a_L},a_L)$ for some $a_L\in\mathbb N$.

Similar to the proof of Lemma \ref{lem:star_ell}, we omit the arguments of $\ell$ below and prove the claim by contradiction.
Below, we assume that, for any $M>0$, there is a further subsequence along which $p'\theta^*-p'\theta^{(\ell)}>M\varepsilon_\ell$ for all $\ell$ sufficiently large.

Now let $\varepsilon_\ell'=\tilde C \varepsilon_\ell$ with $\tilde C>0$ specified below. By Assumption \ref{as:degeneracy}, for all $\tilde\theta\in\mathcal C^{-\varepsilon_\ell'}$, it holds that
\begin{align}
\bar g(\tilde\theta)-c(\tilde \theta)\le -\tilde C C_1 \varepsilon_\ell,
\end{align}
for all $\ell$ sufficiently large. Noting that $-\varepsilon_\ell\le \bar g(\theta^{(\ell)})-c(\theta^{(\ell)})$  and  taking $\tilde C$ such that $\tilde CC_1>1$, it follows that $\theta^{(\ell)}\notin \mathcal C^{-\varepsilon_\ell'}$ for all $\ell$ sufficiently large.

Arguing as in \eqref{eq:star_ell1}-\eqref{eq:star_ell4}, one may find a sequence of points $\tilde\theta^{(\ell)}\in \mathcal C^{-\varepsilon_\ell'}$ such that
\begin{align}
p'\theta^*-p'\tilde\theta^{(\ell)}\le M_1 \varepsilon'_\ell= M_1\tilde C\varepsilon_\ell,\label{eq:star-current_inner1}
\end{align}
This and the assumption that one can find a subsequence such that $p'\theta^*-p'\theta^{(\ell)}>M_1\tilde C\varepsilon_\ell$ for all $\ell$ imply
\begin{align}
p'\theta^*-p'\tilde\theta^{(\ell)}<p'\theta^*-p'\theta^{(\ell)},\label{eq:star_current_inner2}
\end{align}
for all $\ell$ sufficiently large. Now mimic the argument along \eqref{eq:star_ell8}-\eqref{eq:star_ell11} to deduce
\begin{align}
\mathbb{EI}_{\ell-1}(\tilde\theta^{(\ell)})>\mathbb{EI}_{\ell-1}(\theta^{(\ell)})
\end{align}
for all $\ell$ sufficiently large. However, this contradicts the assumption that $\theta^{(\ell)}\notin \mathcal C^{-\varepsilon_\ell'}$ is the expected improvement maximizer.
\end{proof}

The next lemma shows that on $D_L\cap E_{3L}$, $p'\theta^*$ and $p'\theta^{*,(\ell(\omega,L))}$ are close to each other.
\begin{lemma}\label{lem:case3}
  Suppose Assumptions \ref{as:kernel}, \ref{as:smoothness}, and \ref{as:degeneracy} hold.	Let $\varepsilon_L=(L/\ln L)^{-\nu/d}(\ln L)^\delta$ for $\delta\ge 1+\chi$. Let $\eta_L=\varepsilon_L/r_L=(\ln L)^{\delta-\chi}$.  Then there exists a constant $M>0$ such that $\sup_{\omega\in D_L\cap E_{3L}}|p'\theta^*-p'\theta^{*,\ell(\omega,L)}|/\exp(-M\eta_{\ell(\omega,L)})\le M$ for all $L$ sufficiently large.
\end{lemma}

\begin{proof}
Let $\{\omega_L\}\subset\Omega$ be a sequence such that $\omega_L\in D_L$ for all $L$. 
Since $\omega_L \in B_L$, there is $\ell=\ell(\omega_L,L)$ such that $L\le \ell\le 2L$ and $\theta^{(\ell)}$ is chosen by maximizing the expected improvement.
For later use, we note that, for any $\tilde M>0$,
it  can be shown that $\exp(-\tilde M\eta_{L-1})/\exp(-\tilde M\eta_L)\to 1$, which in turn implies that there exists a constant $C>1$ such that
\begin{align}
	\exp(-\tilde M\eta_{L-1})\le C\exp(-\tilde M\eta_L),\label{eq:etabound2}
\end{align}
for all $L$ sufficiently large.

 For $\theta\in\Theta$ and $L\in\mathbb N$, let $\mathbb{I}_L(\theta)\equiv (p'\theta-p'\theta^{*,L})_+1\{\bar g(\theta)\le c(\theta)\}.$
 Recall that $\theta^*$ is an optimal solution to \eqref{eq:general_problem}.  Then, for all $L$ sufficiently large,
\begin{align}
p'\theta^*-p'\theta^{*,\ell-1}&\stackrel{(1)}{=} \mathbb{I}_{\ell-1}(\theta^*)	
\stackrel{(2)}{\le} \EI_{\ell-1}(\theta^*)\bigl(1-\Phi(R/\varsigma)\bigr)^{-1} 
\stackrel{(3)}{\le} \EI_{\ell-1}(\theta^{(\ell)})\bigl(1-\Phi(R/\varsigma)\bigr)^{-1}\notag\\
&\stackrel{(4)}{\le} \Bigl(\mathbb{I}_{\ell-1}(\theta^{(\ell)})+M_1 \exp(-\tilde M\eta_{\ell-1})\Bigr)\bigl(1-\Phi(R/\varsigma)\bigr)^{-1} \notag\\
&\stackrel{(5)}{\le} \Bigl(\mathbb{I}_{\ell-1}(\theta^{(\ell)})+M_2 \exp(-\tilde M\eta_{\ell})\Bigr)\bigl(1-\Phi(R/\varsigma)\bigr)^{-1} \notag\\
  &\stackrel{(6)}{\le} \Bigl(\mathbb{I}_{\ell-1}(\theta^{*,\ell})+M_2 \exp(-\tilde M\eta_{\ell})\Bigr)\bigl(1-\Phi(R/\varsigma)\bigr)^{-1}\notag\\
  &\stackrel{(7)}{\le} \Bigl(\mathbb{EI}_{\ell-1}(\theta^{*,\ell})+2M_2 \exp(-\tilde M\eta_{\ell})\Bigr)\bigl(1-\Phi(R/\varsigma)\bigr)^{-1}\notag\\
  &\stackrel{(8)}{\le} \Bigl(\mathbb{EI}_{\ell-1}(\theta^{(\ell-1)})+2M_2 \exp(-\tilde M\eta_{\ell})\Bigr)\bigl(1-\Phi(R/\varsigma)\bigr)^{-1}\notag\\
  &\stackrel{(9)}{\le} \Bigl(\mathbb{I}_{\ell-1}(\theta^{(\ell-1)})+3M_2 \exp(-\tilde M\eta_{\ell})\Bigr)\bigl(1-\Phi(R/\varsigma)\bigr)^{-1}\notag\\
  &\stackrel{(10)}{\le} 3M_2 \exp(-\tilde M\eta_{\ell})\bigl(1-\Phi(R/\varsigma)\bigr)^{-1}\notag,
\end{align}
where (1) follows by construction, (2) follows from  Lemma \ref{lem:bds} (ii), (3) follows from $\theta^{(\ell)}$ being the maximizer of the expected improvement, (4) follows from Lemma \ref{lem:bds2},
 (5) follows from \eqref{eq:etabound2} with $M_2=CM_1$, (6) follows from $\theta^{*,\ell}=\text{argmax}_{\theta\in \mathcal C_\ell} p'\theta$, (7) follows from Lemma \ref{lem:bds2}, (8) follows from $\theta^{(\ell-1)}$ being the expected improvement maximizer, (9) follows from Lemma \ref{lem:bds2}, and (10) follows from $\mathbb{I}_{\ell-1}(\theta^{(\ell-1)})=0$
 due to the definition of $\theta^{*,\ell-1}$. This establishes the claim.
\end{proof}

For evaluation points $\theta_L$ such that $|\bar g(\theta_L)-c(\theta_L)|>\varepsilon_L$, the following lemma is an analog of Lemma 8 in \cite{Bull_Convergence_2011}, which links the expected improvement to the actual improvement achieved by a new evaluation point $\theta$.
\begin{lemma}\label{lem:bds2}Suppose $\Theta\subset\mathbb R^d$ is bounded and $p\in \mathbb S^{d-1}$.
Suppose the evaluation points $(\theta^{(1)},\cdots,\theta^{(L)})$ are drawn by Algorithm \ref{alg:evalpts} and let Assumptions \ref{as:kernel} and \ref{as:smoothness}-(ii) hold.
For $\theta\in\Theta$ and $L\in\mathbb N$, let $\mathbb{I}_L(\theta)\equiv (p'\theta-p'\theta^{*,L})_+1\{\bar g(\theta)\le c(\theta)\}.$
Let $\{\varepsilon_L\}$ be a positive sequence such that $\varepsilon_L\to 0$ and $r_L=o(\varepsilon_L)$. Let $\eta_L\equiv\varepsilon_L/r_L.$
 Then, for any sequence $\{\theta_L\}\subset\Theta$ such that $|\bar g(\theta_L)-c(\theta_L)|>\varepsilon_L$,
\begin{align}
\mathbb{I}_L(\theta_L)-\gamma_L\le \EI_L(\theta_L)\le \mathbb{I}_L(\theta_L)+\gamma_L,\label{eq:lembds1}
\end{align}
 where $\gamma_L=O(\exp(-M\eta_L))$.
\end{lemma}

\begin{proof}[\rm \textbf{Proof of Lemma \ref{lem:bds2}}]
If $s_L(\theta_L)=0$, then the posterior variance of $c(\theta_L)$ is zero. Hence, $\EI_L(\theta_L)=\mathbb{I}_L(\theta_L)$, and the claim of the lemma holds.

Suppose $s_L(\theta_L)>0$. We first show the upper bound. Let $u\equiv (\bar g(\theta_L)-c_L(\theta_L))/s_L(\theta_L)$ and $t\equiv (\bar g(\theta_L)-c(\theta_L))/s_L(\theta_L)$.
By Lemma 6 in \cite{Bull_Convergence_2011}, we have $|u-t|\le R.$ Starting from Lemma \ref{lem:bds}(i), we can write
\begin{align}
	\EI_L(\theta_L)&\le (p'\theta_L-p'\theta^{*,L})_+\Big(1-\Phi\Big(\frac{t-R}{\varsigma}\Big)\Big)\notag\\
	&= (p'\theta_L-p'\theta^{*,L})_+(1\{\bar g(\theta_L)\le c(\theta_L)\}+1\{\bar g(\theta_L)> c(\theta_L)\})\Big(1-\Phi\Big(\frac{t-R}{\varsigma}\Big)\Big)\notag\\
	&\le \mathbb{I}_L(\theta_L)+(p'\theta_L-p'\theta^{*,L})_+1\{\bar g(\theta_L)> c(\theta_L)\}\Big(1-\Phi\Big(\frac{t-R}{\varsigma}\Big)\Big),\label{eq:bds1}
\end{align}
where the last inequality used $1-\Phi(x)\le 1$ for any $x\in\mathbb R$.
Note that one may write
\begin{align}
	1\{\bar g(\theta_L)> c(\theta_L)\}\Big(1-\Phi\Big(\frac{t-R}{\varsigma}\Big)\Big)&=1\{\bar g(\theta_L)> c(\theta_L)\}\Big(1-\Phi\Big(\frac{\bar g(\theta_L)- c(\theta_L)-s_L(\theta_L)R}{\varsigma s_L(\theta_L)}\Big)\Big).\label{eq:bds2}
\end{align}
To be clear about the hyperparameter value at which  we evaluate $s_L$, we will write $s_L(\theta_L;\beta)$.
By the hypothesis that $\|c\|_{\mathcal H_{\bar\beta}}\le R$ and Lemma 4 in \cite{Bull_Convergence_2011}, we have
\begin{align}
\|c\|_{\mathcal H_{\beta_L}}\le R^2\prod_{k=1}^d(\overline{\beta}_k/\underline{\beta}_k) \equiv S.	\label{eq:eiconv3}
\end{align}
Note that there are $\lfloor\eta L\rfloor$ uniformly sampled points, and $K_\beta$ is associated with index $\nu\in (0,\infty)$.
As shown in the proof of Theorem 5 in \cite{Bull_Convergence_2011}, this ensures that
\begin{align}
	\sup_{\beta\in \prod_{k=1}^d[\underline\beta_k,\overline\beta_k]}s_L(\theta_L;\beta)=O(h_L^\nu(\ln L)^\chi)=O(r_L).\label{eq:eiconv3a}
\end{align}
Below, we simply write this result $s_L(\theta_L)=O(r_L).$
 This, together with $|\bar g(\theta_L)- c(\theta_L)|>\varepsilon_L$ and the fact that $1-\Phi(\cdot)$ is decreasing, yields
\begin{align}
1\{\bar g(\theta_L)> c(\theta_L)\}\Big(1-\Phi\Big(\frac{\bar g(\theta_L)- c(\theta_L)-s_L(\theta_L)R}{\varsigma s_L(\theta_L)}\Big)\Big)&\le 1-\Phi\Big(\frac{\varepsilon_L}{ \varsigma s_L(\theta_L)}-\frac{R}{\varsigma}\Big)\notag\\
&\le 1-\Phi(M_1\eta_L-M_2),\label{eq:bds4z}
\end{align}
for some $M_1>0$ and where $M_2=R/\varsigma$.
Note that, by the triangle inequality,
\begin{align}
   1-\Phi(M_1\eta_L-M_2)\le  1-\Phi(M_1\eta_L)+| (1-\Phi(M_1\eta_L-M_2))- (1-\Phi(M_1\eta_L))|,\label{eq:bds4a}
\end{align}
and
\begin{align}
   1-\Phi(M_1\eta_L)\le \frac{1}{M_1\eta_L} \phi(M_1\eta_L)=O(\exp(-M\eta_L)),\label{eq:bds4}
\end{align}
for some $M>0$, 
where $\phi$ is the density of the standard normal distribution, and the  inequality follows from $1-\Phi(x)\le \phi(x)/x$. The second term on the right hand side of \eqref{eq:bds4a}
can be bounded as
\begin{align}
  | (1-\Phi(M_1\eta_L-M_2))- (1-\Phi(M_1\eta_L))|\le \phi(\tilde\eta_L)M_2=O(\exp(-M\eta_L))\label{eq:bds4b}
\end{align}
by the mean value theorem, where $\tilde\eta_L$ is a point between $M_1\eta_L$ and $M_1\eta_L-M_2$.
 The claim of the lemma then follows from  \eqref{eq:bds1}, \eqref{eq:bds4z}-\eqref{eq:bds4b}, and $(p'\theta_L-p'\theta_L^{*,L})$ being bounded because $\Theta$ is bounded.

 Similarly, for the lower bound, we have
 \begin{align}
 	\EI_L(\theta_L)&
 	\ge (p'\theta_L-p'\theta^*_L)_+\Big(1-\Phi\Big(\frac{t+R}{\varsigma}\Big)\Big)\notag\\
 	&\ge (p'\theta_L-p'\theta^*_L)_+1\{\bar g(\theta_L)\le c(\theta_L)\}\Big(1-\Phi\Big(\frac{t+R}{\varsigma}\Big)\Big)\notag\\
 	&\ge \mathbb{I}_L(\theta_L)-(p'\theta_L-p'\theta^*_L)_+1\{\bar g(\theta_L)\le c(\theta_L)\}\Phi\Big(\frac{t+R}{\varsigma}\Big).\label{eq:bds5}
 \end{align}
 Note that we may write
 \begin{align}
 	1\{\bar g(\theta_L)\le c(\theta_L)\}\Phi\Big(\frac{t+R}{\varsigma}\Big)=1\{\bar g(\theta_L)<  c(\theta_L)\}\Phi\Big(\frac{\bar g(\theta_L)- c(\theta_L)+s_L(\theta_L)R}{\varsigma s_L(\theta_L)}\Big),\label{eq:bds6}
 \end{align}
by $|\bar g(\theta_L)-c(\theta_L)|>\varepsilon_L$. Arguing as in \eqref{eq:bds5} and noting that $\Phi$ is increasing, one has
 \begin{align}
 1\{\bar g(\theta_L)<  c(\theta_L)\}\Phi\Big(\frac{\bar g(\theta_L)- c(\theta_L)+s_L(\theta_L)R}{\varsigma s_L(\theta_L)}\Big)&\le \Phi\Big(\frac{-\varepsilon_L}{\varsigma s_L(\theta_L)}+M_2\Big)\notag\\
 &\le \Phi(-M_1\eta_L+M_2),\label{eq:bds7}
 \end{align}
 for some $M_1>0$ and $M_2>0$. By the triangle inequality,
 \begin{align}
 	\Phi(-M_1\eta_L+M_2)\le \Phi(-M_1\eta_L) + |\Phi(-M_1\eta_L+M_2)-\Phi(-M_1\eta_L)|,\label{eq:bds8}
 \end{align}
 where arguing as in \eqref{eq:bds4},
 \begin{align}
 	\Phi(-M_1\eta_L) =1-\Phi(M_1\eta_L)=O(\exp(-M\eta_L)).\label{eq:bds9}
 \end{align}
 The second term on the right hand side of \eqref{eq:bds8}
 can be bounded as
 \begin{multline}
 	|\Phi(-M_1\eta_L+M_2)-\Phi(-M_1\eta_L)|\\
	=|(1-\Phi(M_1\eta_L-M_2))- (1-\Phi(M_1\eta_L))|\le \phi(\tilde\eta_L)M_2=O(\exp(-M\eta_L)),\label{eq:bds10}
 \end{multline}
 by the mean value theorem, where $\tilde\eta_L$ is a point between $M_1\eta_L$ and $M_1\eta_L-M_2$.
  The claim of the lemma then follows from  \eqref{eq:bds5}-\eqref{eq:bds10}, and $(p'\theta_L-p'\theta_L^{*,L})$ being bounded because $\Theta$ is bounded.
\end{proof}

\begin{lemma}\label{lem:bds}Suppose $\Theta\subset\mathbb R^d$ is bounded and $p\in \mathbb S^{d-1}$  and let Assumptions \ref{as:kernel} and \ref{as:smoothness}-(ii) hold.
Let $t(\theta)\equiv (\bar g(\theta)-c(\theta))/s_L(\theta)$. For $\theta\in\Theta$ and $L\in\mathbb N$, let $\mathbb{I}_L(\theta)\equiv (p'\theta-p'\theta^{*,L})_+1\{\bar g(\theta)\le c(\theta)\}.$
 Then, (i) for any $L\in\mathbb N$ and $\theta\in\Theta$,
\begin{align}
(p'\theta-p'\theta^{*,L})_+\Big(1-\Phi\Big(\frac{t(\theta)+R}{\varsigma}\Big)\Big)\le \EI_L(\theta)\le (p'\theta-p'\theta^{*,L})_+\Big(1-\Phi\Big(\frac{t(\theta)-R}{\varsigma}\Big)\Big).
\end{align}
 Further, (ii) for any $L\in\mathbb N$ and $\theta\in\Theta$ such that $s_L(\theta)>0$,
\begin{align}
\mathbb{I}_L(\theta)	&\le \EI_L(\theta)\Big(1-\Phi\Big(\frac{R}{\varsigma}\Big)\Big)^{-1}.\label{eq:lembds2}
\end{align}
\end{lemma}

\begin{proof}
(i)	Let $u(\theta)\equiv (\bar g(\theta)-c_L(\theta))/s_L(\theta)$ and $t(\theta)\equiv (\bar g(\theta)-c(\theta))/s_L(\theta)$.
	By Lemma 6 in \cite{Bull_Convergence_2011}, we have $|u(\theta)-t(\theta)|\le R.$ Since $1-\Phi(\cdot)$ is decreasing, we have
	\begin{align}
		\EI_L(\theta)&=(p'\theta-p'\theta^{*,L})_+\Big(1-\Phi\Big(\frac{u(\theta)}{\varsigma }\Big)\Big)
		\le (p'\theta-p'\theta^{*,L})_+\Big(1-\Phi\Big(\frac{t(\theta)-R}{\varsigma}\Big)\Big).
		\end{align}
Similarly,
\begin{align}
		\EI_L(\theta)&=(p'\theta-p'\theta^{*,L})_+\Big(1-\Phi\Big(\frac{u(\theta)}{\varsigma }\Big)\Big)
		\ge (p'\theta-p'\theta^{*,L})_+\Big(1-\Phi\Big(\frac{t(\theta)+R}{\varsigma}\Big)\Big).
\end{align}

\noindent
(ii) For the lower bound in \eqref{eq:lembds2}, we have
\begin{align}
	\EI_L(\theta)&
	\ge (p'\theta-p'\theta^{*,L})_+\Big(1-\Phi\Big(\frac{t(\theta)+R}{\varsigma}\Big)\Big)\notag\\
	&\ge (p'\theta-p'\theta^{*,L})_+1\{\bar g(\theta)\le c(\theta)\}\Big(1-\Phi\Big(\frac{t(\theta)+R}{\varsigma}\Big)\Big)\notag\\
	&\ge \mathbb{I}_L(\theta)\bigl(1-\Phi(R/\varsigma)\bigr),
\end{align}
where the last inequality follows from $t(\theta)=(\bar g(\theta)-c(\theta))/s_L(\theta)\le 0$ and the fact that $1-\Phi(\cdot)$ is decreasing.
\end{proof}

\section{Applying the E-A-M Algorithm to Profiling}
\label{sec:EAM_for_BCS}
We describe below how to use the E-A-M procedure to compute BCS-profiling based confidence intervals.
Let $\cT\subset\mathbb R$ denote the parameter space for  $\tau=p'\theta$.
The (one-dimensional) profiling confidence region  is
\begin{align}
\Bigl\{\tau\in\cT:\inf_{\theta:p'\theta=\tau}T_n(\theta)\le c^{MR}_n(\tau)\Bigr\},\label{eq:EAM_prof1}
\end{align}
where $c^{MR}_n$ is the critical value proposed in  \cite{BCS14_subv} and $T_n$ is any test statistic that they allow for. The E-A-M algorithm can be used to compute the
endpoints of this set so that the researcher may report an interval.

For ease of exposition, we discuss below the computation of the right end point of the confidence interval, which is the optimal value of the following problem:\footnote{The left end point is the optimal value of a program that replaces $\max$ with $\min$.}
\begin{align}
\max_{\tau\in\cT}& ~\tau \label{eq:EAM_prof2}\\
\text{s.t.} &~\inf_{\theta\in\Theta:p'\theta=\tau}T_n(\theta)\le c^{MR}_n(\tau).\notag
\end{align}
We then take $c(\tau)\equiv -\inf_{\theta\in\Theta:p'\theta=\tau}T_n(\theta)+c^{MR}_n(\tau)$ as a black-box function and apply the E-A-M algorithm.\footnote{One may view \eqref{eq:EAM_prof2} as a special case of \eqref{eq:general_problem} with a scalar control variable and  a single constraint $g_1(\tau)\le c(\tau)$ with $g_1(\tau)=0$.} We include the profiled statistic in the black-box function because it involves a non-linear optimization problem, which is also relatively expensive.
The modified procedure is as follows. 
\begin{description}
\item[Initialization:]Draw randomly (uniformly) over $\cT\subset\mathbb R$ a set $(\tau^{(1)},\dots,\tau^{(k)})$ of initial evaluation points and evaluate $c(\tau^{(\ell)})$ for $\ell=1,\dots,k-1$. Initialize $L=k$.
\item[E-Step:]Evaluate $c(\tau^{(L)})$ and record the tentative optimal value
\begin{equation*}
\tau^{*,L}\equiv\max\bigl\{\tau^{\ell}:\ell\in\{1,\dots,L\},c(\tau^{(\ell)})\ge 0\bigr\}.
\end{equation*}
	\item[A-step: (Approximation)] Approximate $\tau\mapsto c(\tau)$ by a flexible auxiliary model. We again use the kriging approximation, which for a mean-zero Gaussian process $\zeta(\cdot)$ indexed by $\tau$ and with constant variance $\varsigma^2$ specifies
	\begin{align}
		\Upsilon^{(\ell)}&=\mu+\zeta(\tau^{(\ell)}), ~\ell=1,\dots, L\label{eq:EAM:gauss_prior_prof}\\
		Corr(\zeta(\tau),\zeta(\tau'))&=K_\beta(\tau-\tau'),~ \tau,\tau' \in \mathbb R\label{eq:EAM:corr_prof},
	\end{align}
	where $K_\beta$ is a kernel with a scalar parameter $\beta \in [\underline{\beta},\overline{\beta}]\subset \R_{++}$. The parameters are estimated in the same way as before.

	The (best linear) predictor of $c$ and its derivative  are then given by
	\begin{align}
		c_L(\tau)&=\hat\mu+\mathbf{r}_L(\tau)'\mathbf{R}_L^{-1}(\mathbf \Upsilon-\hat\mu \mathbf 1),\label{eq:cLdef_prof}\\
		\nabla_\tau c_L(\tau)&=\hat\mu+\mathbf{Q}_L(\tau)\mathbf{R}_L^{-1}(\mathbf \Upsilon-\hat\mu \mathbf 1),\label{eq:cLgrad_prof}
	\end{align}
	where $\mathbf r_L(\tau)$ is a vector whose $\ell$-th component is $Corr(\zeta(\tau),\zeta(\tau^{(\ell)}))$ as given above with estimated parameters, $\mathbf Q_L(\tau)=\nabla_\tau \mathbf r_L(\tau)'$, and $\mathbf R_L$ is an $L$-by-$L$ matrix whose $(\ell,\ell')$ entry is $Corr(\zeta(\tau^{(\ell)}),\zeta(\tau^{(\ell')}))$ with estimated parameters. The amount of uncertainty left in $c(\tau)$ is captured by the following variance:
\begin{align}
\hat\varsigma^2s^2_L(\tau)= \hat\varsigma^2\Big(1-\mathbf r_L(\tau)'\mathbf R_L^{-1}\mathbf r_L(\tau)+\frac{(1-\mathbf 1'\mathbf R_L^{-1}\mathbf r_L(\tau))^2}{\mathbf 1'\mathbf R_L^{-1}\mathbf 1}\Big). \label{eq:variance_prof}
\end{align}

	\item[M-step: (Maximization):] With probability $1-\epsilon,$ maximize the expected improvement function $\EI_L$ to obtain the next evaluation point, with:
	\begin{align}
		\tau^{(L+1)}\equiv\argmax_{\tau\in\cT}\EI_L(\tau)=\argmax_{\tau\in\cT} (\tau-\tau^{*,L})_+\Big(1-\Phi\Big(\frac{-c_L(\tau)}{\hat\varsigma s_L(\tau)}\Big)\Big).\label{eq:max_ei_prof}
	\end{align}
With probability $\epsilon$, draw $\tau^{(L+1)}$ randomly from a uniform distribution over $\cT$.
\end{description}

As before, $\tau^{*,L}$ is reported as end point of $CI_n$ upon convergence. In order for Theorem \ref{thm:eiconv} to apply to this algorithm, the profiled statistic $\inf_{\theta\in\Theta:p'\theta=\tau}T_n(\theta)$ and the critical value $\hat{c}_n^{MR}$ need to be sufficiently smooth. We leave derivation of sufficient conditions for this to be the case to future research.

\section{An Entry Game Model and Some Monte Carlo Simulations}
\label{sec:MC}
We evaluate the statistical and numerical performance of calibrated projection and E-A-M in comparison with BCS-profiling in a Monte Carlo experiment run on a server with two Intel Xeon X5680 processors rated at 3.33GHz with 6 cores each and with a memory capacity of 24Gb rated at 1333MHz.
The experiment simulates a two-player entry game in the Monte Carlo exercise of BCS, using their code to implement their method.\footnote{\label{ftn:BCS_code}See \url{http://qeconomics.org/ojs/index.php/qe/article/downloadSuppFile/431/1411}.}

\subsection{The General Entry Game Model}
\label{sec:entry}
We consider a two player entry game based on \cite{CilibertoTamer09}:
\begin{equation*}\small
\begin{tabular}{ccc}
& $Y_2=0$ & $Y_2=1$ \\ \cline{2-3}
$Y_1=0$ & \multicolumn{1}{|c}{$0,0$} & \multicolumn{1}{|c|}{$0,Z_2'\vartheta_1+u_{2}$} \\
\cline{2-3}
$Y_1=1$ & \multicolumn{1}{|c}{$Z_1'\vartheta_1+u_{1},0$} & \multicolumn{1}{|c|}{$Z_1'(\vartheta_1+\Delta_1)+u_{1},Z_2'(\vartheta_2+\Delta_2)+u_{2}$} \\ \cline{2-3}
\end{tabular}
\end{equation*}\normalsize
Here, $Y_\ell$, $Z_\ell$, and $u_\ell$ denote player $\ell'$s binary action, observed characteristics, and unobserved characteristics. The strategic interaction effects $Z_\ell'\Delta_\ell \le 0$ measure the impact of the opponent's entry into the market. We let $X\equiv(Y_1,Y_2,Z_1',Z_2')'$.
We generate $Z=(Z_1,Z_2)$ as an i.i.d. random vector taking values in a finite set whose distribution $p_z=P(Z=z)$ is known. We let $u=(u_1,u_2)$ be independent of $Z$ and such that $Corr(u_1,u_2)\equiv r\in[0,1]$ and $Var(u_\ell)=1,\ell=1,2$. We let $\theta\equiv(\vartheta_1',\vartheta_2',\Delta_1',\Delta_2',r)'.$ For a given set $A \subset \R^2$, we define $G_r(A)\equiv P(u\in A)$. We choose $G_r$ so that the c.d.f. of $u$ is continuous, differentiable, and has a bounded p.d.f.
The outcome $Y=(Y_1,Y_2)$ results from pure strategy Nash equilibrium play. For some value of $Z$ and $u$, the model predicts monopoly outcomes $Y=(0,1)$ and $(1,0)$ as multiple equilibria. When this occurs, we select outcome $(0,1)$ by independent Bernoulli trials with parameter $\mu\in[0,1]$. This gives rise to the following restrictions:
\begin{align}
	&E[1\{Y=(0,0)\}1\{Z=z\}]-G_r((-\infty,-z_1'\vartheta_1)\times(-\infty,-z_2'\vartheta_2))p_z=0 \label{eq:entry5}\\
	&E[1\{Y=(1,1)\}1\{Z=z\}]-G_r([-z_1'(\vartheta_1+\Delta_1),+\infty)\times [-z_2'(\vartheta_2+\Delta_2),+\infty))p_z=0\label{eq:entry6}\\
	&E[1\{Y=(0,1)\}1\{Z=z\}]-G_r((-\infty,-z_1'(\vartheta_1+\Delta_1))\times [-z_2'\vartheta_2,+\infty))p_z\le 0\label{eq:entry7}\\
	-&E[1\{Y=(0,1)\}1\{Z=z\}]
	+\Big[G_r((-\infty,-z_1'(\vartheta_1+\Delta_1))\times [-z_2'\vartheta_2,+\infty)\notag\\
 &\hspace{3.6cm}	-G_r([-z_1'\vartheta_1,-z_1'(\vartheta_1+\Delta_1))\times [-z_2'\vartheta_2,-z_2'(\vartheta_2+\Delta_2))\Big]p_z\le 0.\label{eq:entry8}
\end{align}
We show in Online Appendix \ref{sec:verify_examples} that this model satisfies Assumptions \ref{as:Lipschitz_m_over_sigma} and \ref{as:correlation}-\ref{as:correlation_pair}.\footnote{The specialization in which we compare to BCS also fulfils their assumptions. The assumptions in \cite{PakesPorterHo2011} exclude any DGP that has moment equalities.} Throughout, we analytically compute the moments' gradients and studentize them using sample analogs of their standard deviations.

\subsection{A Comparison to BCS-Profiling}
\label{sec:MC_results}
BCS specialize this model as follows. First, $u_1,u_2$ are independently uniformly distributed on $[0,1]$ and the researcher knows $r=0$. Equality \eqref{eq:entry5} disappears because $(0,0)$ is never an equilibrium. Next, $Z_1=Z_2=[1; \{W_k\}_{k=0}^{d_W}]$, where $W_k$ are observed market type indicators, $\Delta_\ell = [\delta_\ell; 0_{d_W}]$ for $\ell=1,2$, and $\vartheta_1=\vartheta_2 =\vartheta = [0;\{\vartheta^{[k]}\}_{k=0}^{d_W}]$.\footnote{This allows for market-type homogeneous fixed effects but not for player-specific covariates nor for observed heterogeneity in interaction effects.} The parameter vector is $\theta=[\delta_1;\delta_2;\vartheta]$ with parameter space $\Theta = \{\theta \in \R^{2+d_W}: (\delta_1,\delta_2)\in [0,1]^2,~\vartheta_k \in [0,\min\{\delta_1,\delta_2\}],~k=1,\dots,d_W\}$. This leaves 4 moment equalities and 8 moment inequalities (so $J = 16$); compare equation (5.1) in BCS. We set $d_W=3$, $P(W_k=1)=1/4,k=0,1,2,3$, $\theta = [0.4 ; 0.6 ;0.1  ;0.2  ;0.3]$, and $\mu=0.6$. The implied true bounds on parameters are $\delta_1 \in [0.3872,0.4239]$, $\delta_2 \in [0.5834,0.6084]$, $\vartheta^{[1]}\in [0.0996, 0.1006]$, $\vartheta^{[2]} \in [0.1994,0.2010]$, and $\vartheta^{[3]} \in [0.2992,0.3014]$.

The BCS-profiling confidence interval $CI_n^{prof}$ inverts a test of $H_0:p^\prime \theta=\tau$ over a grid for $\tau$. We do not in practice exhaust the grid but search inward from the extreme points of $\Theta$ in directions $\pm p$. At each $\tau$ that is visited, we use BCS code to compute a profiled test statistic and the corresponding critical value $\hat{c}_n^{MR}(\tau)$. The latter is a quantile of the minimum of two distinct bootstrap approximations, each of which solves a nonlinear program for each bootstrap draw. Computational cost quickly increases with grid resolution, bootstrap size, and the number of starting points used to solve the nonlinear programs.

Calibrated projection computes $\hat{c}_n(\theta)$ by solving a series of linear programs for each bootstrap draw.\footnote{We implement this step using the high-speed solver CVXGEN, available from \url{http://cvxgen.com} and described in \cite{CVXGEN}.}  It computes the extreme points of $CI_n$ by solving the nonlinear program \eqref{eq:CI} twice, a task that is much accelerated by the E-A-M algorithm. Projection of \cite{AS} operates very similarly but computes its critical value $\hat{c}^{proj}_n(\theta)$ through bootstrap simulation without any optimization.  

We align grid resolution in BCS-profiling with the E-A-M algorithm's convergence threshold of $0.005$.\footnote{This is only one of several individually necessary stopping criteria. Others include that the current optimum $\theta^{*,L}$ and the expected improvement maximizer $\theta^{L+1}$ (see equation \eqref{eq:max_ei}) satisfy $|p^\prime (\theta^{L+1} - \theta^{*,L})| \le 0.005$. See \cite{KMST_code} for the full list of convergence requirements.} We run all methods with $B=301$ bootstrap draws, and calibrated and ``uncalibrated" (i.e., based on \cite{AS}) projection also with $B=1001$.\footnote{Based on some trial runs of BCS-profiling for $\delta_1$, we estimate that running it with $B=1001$ throughout would take 3.14-times longer than the computation times reported in Table \ref{tab:mcBCSshort}. By comparison, calibrated projection takes only 1.75-times longer when implemented with $B=1001$ instead of $B=301$.} Some other choices differ: BCS-profiling is implemented with their own choice to multi-start the nonlinear programs at 3 oracle starting points, i.e. using knowledge of the true DGP; our implementation of both other methods multi-starts the nonlinear programs from 30 data dependent random points (see \cite{KMST_code} for details).

Table \ref{tab:mcBCSshort} displays results for $(\delta_1,\delta_2)$ and for 300 Monte Carlo repetitions of all three methods. All confidence intervals are conservative, reflecting the effect of GMS. As expected, uncalibrated projection is most conservative, with coverage of essentially $1$. 
Also, BCS-profiling is more conservative than calibrated projection. The most striking contrast is in computational effort. Here, uncalibrated projection is fastest -- indeed, in contrast to received wisdom, this procedure is computationally somewhat easy. This is due to our use of the E-A-M algorithm and therefore part of this paper's contribution. Next, our implementation of calibrated projection beats BCS-profiling with gridding by a factor of about $70$. This can be disentangled into the gain from using calibrated projection, with its advantage of bootstrapping linear programs, and the gain afforded by the E-A-M algorithm. It turns out that implementing BCS-profiling with the adapted E-A-M algorithm (see Appendix \ref{sec:EAM_for_BCS}) improves computation by a factor of about $4$; switching to calibrated projection leads to a further improvement by a factor of about $17$. Finally, Table \ref{tab:mcBCS} extends the analysis to all components of $\theta$ and to 1000 Monte Carlo repetitions. We were unable to compute this for BCS-profiling.

In sum, the Monte Carlo experiment on the same DGP used in BCS yields three interesting findings: (i) The E-A-M algorithm accelerates projection of the \cite{AS} confidence region to the point that this method becomes reasonably cheap; (ii) it also substantially accelerates computation of profiling intervals, and (iii) for this DGP, calibrated projection combined with the E-A-M algorithm has the most accurate size control while also being computationally attractive.

\newpage

\section*{Tables}

\begin{table}[h]
\caption{Results for empirical application, with $\alpha=0.05$, $\rho=6.6055$, $n=7882$, $\kappa_n=\sqrt{\ln n}$. ``Direct search" refers to \texttt{fmincon} performed after E-A-M and starting from feasible points discovered by E-A-M, including the E-A-M optimum.}
\label{tab:empirical}
\begin{center}
{\renewcommand{\arraystretch}{1.3}
\begin{tabular}{c|cc|ccc} 
\hline
 \hline
 & \multicolumn{2}{c} {$CI_n$}  &  \multicolumn{3}{|c}{Computational Time}          \\             
& E-A-M & Direct Search &  E-A-M  & Direct Search & Total  \\
 \hline
$\vartheta^{cons}_{LCC}$         & $[-2.0603,-0.8510]$             & $[-2.0827,-0.8492]$        & 24.73  & ${\color{white}0}32.46$       & ${\color{white}0}57.51$ \\
$\vartheta^{size}_{LCC}$         & $[0.1880,0.4029]$             & $[0.1878,0.4163]$         & 16.18  & 230.28       & 246.49 \\
$\vartheta^{pres}_{LCC}$         & $[1.7510,1.9550]$              & $[1.7426,1.9687]$         & 16.07  & 115.20       & 131.30 \\
$\vartheta^{cons}_{OA}$         & $[0.3957,0.5898]$  & $[0.3942,0.6132]$         & 27.61 & 107.33       & 137.66 \\
$\vartheta^{size}_{OA}$         & $[0.3378,0.5654]$             & $[0.3316,0.5661]$         & 11.90 & 141.73       & 153.66 \\
$\vartheta^{pres}_{OA}$         & $[0.3974,0.5808]$             & $[0.3923,0.5850]$          & 13.53 & 148.20       & 161.75 \\
$\delta_{LCC}$         & $[-1.4423,-0.1884]$            & $[-1.4433,-0.1786]$        & 15.65 & 119.50       & 135.17 \\
$\delta_{OA}$          & $[-1.4701,-0.7658]$            & $[-1.4742,-0.7477]$        & 13.06 & 114.14       & 127.23  \\
$r$         & $[0.1855,0.85]{\color{white}00}$               & $[0.1855,0.85]{\color{white}00}$           & ${\color{white}0}5.37$ & ${\color{white}0}42.38$ & ${\color{white}0}47.78$\\
\hline
\hline
\end{tabular}}
\end{center}
\end{table}

\newpage
\begin{table}[h]
\begin{center}
\small
\caption{Results for Set 1 with $n=4000$, $MCs = 300$, $B = 301$, $\rho=5.04$, $\kappa_n=\sqrt{\ln n}$.}
\label{tab:mcBCSshort}
\bigskip

{\renewcommand{\arraystretch}{1.2}\begin{tabular}{c|c|c|ccccccc}\hline \hline
\multirow{3}{*} & \multirow{3}{*} {$1-\alpha$} &  \multicolumn{8}{c}{Median CI} \\
\cline{3-10}
&         &  \multicolumn{4}{c|}{$CI_n^{prof}$} & \multicolumn{2}{c|}{$CI_n$} & \multicolumn{2}{c}{$CI_n^{proj}$}  \\
\hline
Implementation &         &  \multicolumn{2}{c}{Grid} & \multicolumn{2}{c|}{E-A-M} & \multicolumn{2}{c|}{E-A-M} & \multicolumn{2}{c}{E-A-M}  \\
\hline
\multirow{3}{*} {$\delta_1=0.4$}  & 0.95    &  \multicolumn{2}{c}{[0.330,0.495]} & \multicolumn{2}{c|}{[0.331,0.495]}  & \multicolumn{2}{c|}{[0.336,0.482]} & \multicolumn{2}{c}{[0.290,0.558]} \\
                                  & 0.90    &  \multicolumn{2}{c}{[0.340,0.485]} & \multicolumn{2}{c|}{[0.340,0.485]}  & \multicolumn{2}{c|}{[0.343,0.474]} & \multicolumn{2}{c}{[0.298,0.543]} \\
                                  & 0.85    &  \multicolumn{2}{c}{[0.345,0.475]} & \multicolumn{2}{c|}{[0.346,0.479]}  & \multicolumn{2}{c|}{[0.348,0.466]} & \multicolumn{2}{c}{[0.303,0.537]} \\
                        \hline
\multirow{3}{*} {$\delta_2=0.6$}  & 0.95    &  \multicolumn{2}{c}{[0.515,0.655]} & \multicolumn{2}{c|}{[0.514,0.655]}  & \multicolumn{2}{c|}{[0.519,0.650]} & \multicolumn{2}{c}{[0.461,0.682]}  \\
                                  & 0.90    &  \multicolumn{2}{c}{[0.525,0.647]} & \multicolumn{2}{c|}{[0.525,0.648]}  & \multicolumn{2}{c|}{[0.531,0.643]} & \multicolumn{2}{c}{[0.473,0.675]}  \\
                                  & 0.85    &  \multicolumn{2}{c}{[0.530,0.640]} & \multicolumn{2}{c|}{[0.531,0.642]}  & \multicolumn{2}{c|}{[0.539,0.639]} & \multicolumn{2}{c}{[0.481,0.671]}  \\		
\hline\hline				
\multicolumn{10}{c}{}\\
\multicolumn{10}{c}{}\\

\hline \hline
\multirow{3}{*} & \multirow{3}{*} {$1-\alpha$} &  \multicolumn{8}{c}{Coverage} \\
\cline{3-10}
&         &  \multicolumn{4}{c|}{$CI_n^{prof}$} & \multicolumn{2}{c|}{$CI_n$} & \multicolumn{2}{c}{$CI_n^{proj}$}  \\
\hline
Implementation &         &  \multicolumn{2}{c}{Grid} & \multicolumn{2}{c|}{E-A-M} & \multicolumn{2}{c|}{E-A-M} & \multicolumn{2}{c}{E-A-M}  \\
\cline{3-10}
      &         &  \multicolumn{1}{c}{Lower}        & \multicolumn{1}{c}{Upper} & \multicolumn{1}{c}{Lower}        & \multicolumn{1}{c|}{Upper}  & \multicolumn{1}{c}{Lower}         & \multicolumn{1}{c|}{Upper}  & Lower         & Upper \\
\hline
\multirow{3}{*} {$\delta_1=0.4$}  & 0.95   & \multicolumn{1}{c}{0.997} & \multicolumn{1}{c}{0.990} & \multicolumn{1}{c}{1.000} & \multicolumn{1}{c|}{0.993} & 0.993 & \multicolumn{1}{c|}{0.977} & 1.000  & 1.000 \\
                                  & 0.90   & \multicolumn{1}{c}{0.990} & \multicolumn{1}{c}{0.980} & \multicolumn{1}{c}{0.993} & \multicolumn{1}{c|}{0.977} & 0.987 & \multicolumn{1}{c|}{0.960} & 1.000  & 1.000 \\
                                  & 0.85   & \multicolumn{1}{c}{0.970} & \multicolumn{1}{c}{0.970} & \multicolumn{1}{c}{0.973} & \multicolumn{1}{c|}{0.960} & 0.957 & \multicolumn{1}{c|}{0.930} & 1.000  & 1.000 \\
                       \hline                                   
\multirow{3}{*} {$\delta_2=0.6$}  & 0.95   & \multicolumn{1}{c}{0.987} & \multicolumn{1}{c}{0.993} & \multicolumn{1}{c}{0.990} & \multicolumn{1}{c|}{0.993} & 0.973 & \multicolumn{1}{c|}{0.987} & 1.000  & 1.000  \\
                                  & 0.90   & \multicolumn{1}{c}{0.977} & \multicolumn{1}{c}{0.973} & \multicolumn{1}{c}{0.980} & \multicolumn{1}{c|}{0.977} & 0.940 & \multicolumn{1}{c|}{0.953} & 1.000  & 1.000  \\
                                  & 0.85   & \multicolumn{1}{c}{0.967} & \multicolumn{1}{c}{0.957} & \multicolumn{1}{c}{0.963} & \multicolumn{1}{c|}{0.960} & 0.943 & \multicolumn{1}{c|}{0.927} & 1.000  & 1.000  \\
\hline\hline								  
\multicolumn{10}{c}{}\\
\multicolumn{10}{c}{}\\
\hline \hline
\multirow{3}{*} & \multirow{3}{*} {$1-\alpha$} &  \multicolumn{8}{c}{Average Time} \\
\cline{3-10}
&         &  \multicolumn{4}{c|}{$CI_n^{prof}$} & \multicolumn{2}{c|}{$CI_n$} & \multicolumn{2}{c}{$CI_n^{proj}$}  \\
\hline
Implementation &         &  \multicolumn{2}{c}{Grid} & \multicolumn{2}{c|}{E-A-M} & \multicolumn{2}{c|}{E-A-M} & \multicolumn{2}{c}{E-A-M}  \\
\hline

\multirow{3}{*} {$\delta_1=0.4$}  & 0.95 &  \multicolumn{2}{c}{1858.42} & \multicolumn{2}{c|}{425.49}   & \multicolumn{2}{c|}{26.40} & \multicolumn{2}{c}{18.22} \\
                                  & 0.90 &  \multicolumn{2}{c}{1873.23} & \multicolumn{2}{c|}{424.11}   & \multicolumn{2}{c|}{25.71} & \multicolumn{2}{c}{18.55} \\
                                  & 0.85 &  \multicolumn{2}{c}{1907.84} & \multicolumn{2}{c|}{444.45}   & \multicolumn{2}{c|}{25.67} & \multicolumn{2}{c}{18.18} \\
                        \hline                                                                                                                         
\multirow{3}{*} {$\delta_2=0.6$}  & 0.95 &  \multicolumn{2}{c}{1753.54} & \multicolumn{2}{c|}{461.30}   & \multicolumn{2}{c|}{26.61} & \multicolumn{2}{c}{22.49} \\
                                  & 0.90 &  \multicolumn{2}{c}{1782.91} & \multicolumn{2}{c|}{472.55}   & \multicolumn{2}{c|}{25.79} & \multicolumn{2}{c}{21.38} \\
                                  & 0.85 &  \multicolumn{2}{c}{1809.65} & \multicolumn{2}{c|}{458.58}   & \multicolumn{2}{c|}{25.00} & \multicolumn{2}{c}{21.00} \\
\hline\hline
\end{tabular}}

 \begin{tablenotes}
      \footnotesize
      \item Notes: (1) Projections of $\Theta_I$ are: $\delta_1 \in [0.3872,0.4239]$, $\delta_2 \in [0.5834   ,0.6084]$, $\zeta_1\in [0.0996,0.1006]$, $\zeta_2 \in [0.1994,0.2010]$, $\zeta_3 \in [0.2992,0.3014]$. (2) ``Upper" coverage is for $\max_{\theta \in \Theta_I(P)}p^\prime \theta$, and similarly for ``Lower". (3) ``Average time" is computation time in seconds averaged over MC replications. (4) $CI_n^{prof}$ results from BCS-profiling, $CI_n$ is calibrated projection, and $CI_n^{proj}$ is uncalibrated projection. (5) ``Implementation" refers to the method used to compute the extreme points of the confidence interval.
    \end{tablenotes}
\end{center}
\end{table}

\vspace{0.15in}
\begin{table}[h]
\begin{center}
\small
\caption{Results for Set 1 with $n=4000$, $MCs = 1000$, $B = 999$, $\rho=5.04$, $\kappa_n=\sqrt{\ln n}$.}
\vspace{0.15in}
\label{tab:mcBCS}
\def\arraystretch{0.9}
\begin{tabular}{c|c|cc|cc|cc|cc}\hline \hline
\multirow{2}{*}
                        & \multirow{2}{*} {$1-\alpha$} &  \multicolumn{2}{c|}{Median CI}                       &  \multicolumn{2}{c|}{$CI_n$ Coverage}        &  \multicolumn{2}{c|}{$CI_n^{proj}$ Coverage}         &  \multicolumn{2}{c}{Average Time}       \\
                        &         &  $CI_n$ & $CI_n^{proj}$ & Lower        & Upper  & Lower         & Upper  & $CI_n$ & $CI_n^{proj}$   \\
                        \hline
\multirow{3}{*} {$\delta_1=0.4$}     & 0.95    & [0.333,0.478] & [0.288,0.555]  & 0.988 & 0.982 & 1      & 1     & 42.41 & 22.23 \\
                        		     & 0.90    & [0.341,0.470] & [0.296,0.542]  & 0.976 & 0.957 & 1      & 1     & 41.56 & 22.11 \\
                        		     & 0.85    & [0.346,0.464] & [0.302,0.534]  & 0.957 & 0.937 & 1      & 1     & 40.47 & 19.79 \\
                        \hline                                                                                   
\multirow{3}{*} {$\delta_2=0.6$}     & 0.95    & [0.525,0.653] & [0.466,0.683]  & 0.969 & 0.983 & 1      & 1     & 42.11 & 24.39 \\
                        		     & 0.90    & [0.538,0.646] & [0.478,0.677]  & 0.947 & 0.960 & 1      & 1     & 40.15 & 28.13 \\
                        		     & 0.85    & [0.545,0.642] & [0.485,0.672]  & 0.925 & 0.941 & 1      & 1     & 41.38 & 26.44 \\
                        \hline                                                                                   
\multirow{3}{*} {$\zeta^{[1]}=0.1$}  & 0.95    & [0.054,0.142] & [0.020,0.180]  & 0.956 & 0.958 & 1      & 1     & 40.31 & 22.53 \\
                        			 & 0.90    & [0.060,0.136] & [0.028,0.172]  & 0.911 & 0.911 & 1      & 1     & 36.80 & 24.15 \\
                        			 & 0.85    & [0.064,0.132] & [0.032,0.167]  & 0.861 & 0.860 & 0.999  & 0.999 & 39.10 & 21.81 \\
                        \hline
\multirow{3}{*} {$\zeta^{[2]}=0.2$}  & 0.95    & [0.156,0.245] & [0.121,0.281]  & 0.952 & 0.952 & 1      & 1     & 39.23 & 24.66 \\
                        			 & 0.90    & [0.162,0.238] & [0.128,0.273]  & 0.914 & 0.910 & 0.998  & 0.998 & 41.53 & 21.66 \\
                        			 & 0.85    & [0.165,0.234] & [0.133,0.268]  & 0.876 & 0.872 & 0.996  & 0.996 & 39.44 & 22.83 \\
 \hline
\multirow{3}{*} {$\zeta^{[3]}=0.3$}  & 0.95    & [0.257,0.344] & [0.222,0.379]  & 0.946 & 0.946 & 1      & 1     & 41.45 & 22.91 \\
                        			 & 0.90    & [0.263,0.338] & [0.230,0.371]  & 0.910 & 0.909 & 0.997  & 0.999 & 42.09 & 22.83 \\
                        			 & 0.85    & [0.267,0.334] & [0.235,0.366]  & 0.882 & 0.870 & 0.994  & 0.993 & 42.19 & 23.69 \\
\hline \hline
\end{tabular}

 \begin{tablenotes}
      \footnotesize
      \item \hspace{1.25in} Notes: Same DGP and conventions as in Table \ref{tab:mcBCSshort}.
    \end{tablenotes}
\end{center}
\end{table}

\end{appendix}

\numberwithin{equation}{section}
\numberwithin{figure}{section}
\numberwithin{table}{section}

 \newgeometry{
 letterpaper,
 total={210mm,297mm},
 left=20mm,
 right=20mm,
 top=30mm,
 bottom=30mm,
 }
%


\pagebreak
\onehalfspacing

\pagenumbering{arabic}

\setdisplayskipstretch{0.8}

\small

\begin{appendices}

\begin{center}\huge
Online Appendix: \\
Confidence Intervals for\linebreak
Projections of Partially Identified Parameters 
\par\end{center}\vspace{\baselineskip}

\maketitle
\etocdepthtag.toc{mtappendix}
\etocsettagdepth{mtchapter}{none}
\etocsettagdepth{mtappendix}{subsection}
\tableofcontents

\bigskip

\section*{Structure of the Appendix}
Section \ref{sec:background} states and proofs Theorem \ref{cor:eam_conv}, which establishes convergence-related results for our E-A-M algorithm. It also provides background material for the E-A-M algorithm, and details on the root-finding algorithm that we use to compute $\hat{c}_n(\theta)$. 
Section \ref{sec:AssRes} presents the assumptions under which we prove asymptotic uniform validity of coverage of our procedure.
Section \ref{sec:verify_examples} verifies some of our main assumptions for moment (in)equality models that have received much attention in the literature.
Section \ref{sec:app_A} summarizes the notation we use and the structure of the proof of Theorem \ref{thm:validity},\footnote{Section \ref{subsec:notation} provides in Table \ref{table:notation} a summary of the notation used throughout, and in Figure \ref{fig:flow_app} and Table \ref{table:flow} a flow diagram and heuristic explanation of how each lemma contributes to the proof of Theorem \ref{thm:validity}.} 
and provides a proof of Theorems \ref{thm:validity} (both under our main assumptions and under a high level assumption replacing Assumption \ref{as:correlation} and dropping the $\rho$-box constraints). 
Section \ref{app:Lemma} contains the statements and proofs of the lemmas used to establish Theorems \ref{thm:validity} and \ref{cor:eam_conv}, as well as a rigorous derivation of the almost sure representation result for the bootstrap empirical process that we use in the proof of Theorem \ref{thm:validity}. 

Throughout the Appendix we use the convention $\infty \cdot 0 = 0$. 

\setcounter{section}{3}

\section{Additional Convergence Results and Background Materials for the E-A-M algorithm and for Computation of $\hat{c}_n(\theta)$ }
\label{sec:background}
\subsection{Theorem \ref{cor:eam_conv}: An Approximating Critical Level Sequence for the E-A-M Algorithm}
\subsubsection{Assumption \ref{as:Lipschitz_m_over_sigma}: A Low Level Condition Yielding 
a Stochastic Lipschitz-Type Property for $\hat{c}_n$}
\label{app:as:Lipschitz}
In order to establish convergence of our E-A-M algorithm, we need $\hat{c}_n$ to uniformly  stochastically exhibit a Lipschitz-type property so that its mollified counterpart (see equation \eqref{eq:c_hat_mollified}) is sufficiently smooth and yields valid inference. Below we provide a low level condition under which we are able to establish the Lipschitz-type property. In Appendix \ref{sec:verify_ass_EAM} we verify the condition for the canonical examples in the moment (in)equality literature.
\begin{assumption}
\label{as:Lipschitz_m_over_sigma}
The model $\cP$ for $P$ satisfies: 
\begin{itemize}
	\item[(i)] $|\sigma_{P,j}(\theta)^{-1}m_j(x,\theta)-\sigma_{P,j}(\theta')^{-1}m_j(x,\theta')|\le \bar M(x)\|\theta-\theta'\|$ with $E_P[\bar M(X)^{2}]<M$  for all $\theta,\theta'\in\Theta$, $x\in\mathcal X$, $j=1,\cdots,J$, 
	 and there exists a function $F$ such that $|\sigma_{P,j}(\theta)^{-1}m_j(\cdot,\theta)|\le F(\cdot)$ for all $\theta\in\Theta$ 
	 and $E_P[|F(X)\bar M(X)|^2]<M$.
	\item[(ii)] $\varphi_j$ is Lipschitz continuous in $x\in\mathbb R$ for all $j=1,\dots,J.$
\end{itemize}
\end{assumption}

\subsubsection{Statement and Proof of Theorem \ref{cor:eam_conv}}
For all $\tau>0$ let $\hat{c}_{n,\tau}(\theta)$ be a mollified version of $\hat{c}_n(\theta)$, i.e.:
\begin{align}
\hat{c}_{n,\tau}(\theta)&=\int_{\R^d} \hat{c}_n(\theta-\nu)\phi_\tau(\nu)d\nu=\int_{\R^d} \hat{c}_n(\theta)\phi_\tau(\theta-\nu)d\nu,\label{eq:c_hat_mollified}
\end{align}
where the family of functions $\phi_\tau$ is a mollifier as defined in \cite[Example 7.19]{Rockafellar_Wets2005aBK}. Choose it to be a family of bounded, measurable, smooth functions such that
$\phi_\tau(z) \ge 0 ~\forall z \in \R^d$, $\int_{\R^d} \phi_\tau(z)dz=1$ and with $\mathbb{B_\tau}=\{z:\phi_\tau(z) > 0\}=\{z:\Vert z \Vert \le \tau\}$.
\begin{theorem}\label{cor:eam_conv}	
Suppose Assumptions  \ref{as:momP_AS}, \ref{as:GMS}, \ref{as:momP_KMS}, \ref{as:bcs1} and \ref{as:Lipschitz_m_over_sigma} hold. 
Let $\tau_n$ be a positive sequence such that $\tau_n=n^{-\zeta}$ with $\zeta> 1/2$.
 Let $\{\beta_n\}$ be a positive sequence such that  $\beta_n=o(1)$ and $\|\hat D_n-D_P\|_{\infty}=O_{\mathcal P}(\beta_n)$. 
Let $\varepsilon_n=\kappa_n^{-1} \sqrt n\tau_n\vee  \beta_n$.
Then, 
\begin{enumerate}
\item \label{cor:eam_conv:c_hat_Lip}
\begin{align}
\limsup_{n\to\infty}\sup_{P\in\mathcal P}P\left(\sup_{\|\theta-\theta'\|\le\tau_n}|\hat{c}_n(\theta)-\hat{c}_n(\theta^\prime)| > C\varepsilon_n\right)=0; \label{eq:c_hat_Lip}
\end{align}
\item \label{cor:eam_conv:c_mollified_Lip}
Let $\hat{c}_{n,\tau_n}$ be defined as in  \eqref{eq:c_hat_mollified} with $\tau_n$ replacing $\tau$. Then there exists $C>0$ such that
\begin{align}
\liminf_{n\to\infty}\inf_{P\in\mathcal P}P\Big(\Vert\hat{c}_n-\hat{c}_{n,\tau_n}\Vert_\infty \le C\varepsilon_n\Big)=1; \label{eq:c_mollified_Lip}
\end{align}
\item \label{cor:eam_conv:ctau_valid}
Let Assumption \ref{as:correlation} also hold. Let $\{P_n,\theta_n\}$ be a sequence such that $P_n\in\mathcal P$ and $\theta_n \in \Theta_I(P_n)$ for all $n$ and $\kappa_n^{-1}\sqrt{n}\gamma_{1,P_n,j}(\theta_n) \to \pi_{1j} \in \mathbb R_{[-\infty]},~j=1,\dots,J,$
$\Omega_{P_n} \uni \Omega,$ and
$D_{P_n}(\theta_n) \to D$. Let
\begin{align}
\hat c_{n,\rho,\tau}(\theta)\equiv \inf_{\lambda \in B^d_{n,\rho}}\hat c_{n,\tau}(\theta+\frac{\lambda\rho}{\sqrt n}).\label{eq:def_chat_nrhotau}
\end{align}	
For $c\ge 0$, let $U_n(\theta_n,c)$ be defined as in \eqref{eq:set_U_NL}. Then,
\begin{align}
\liminf_{n \to \infty} P_n\left( U_n(\theta_n,\hat{c}_{n,\rho,\tau_n})\neq \emptyset \right) \ge 1-\alpha.
\label{eq:validity_mollified_hat_c}
\end{align}
\item \label{cor:eam_conv:ctau_smooth}
Fix $P\in\cP$ and $n$. There exists  $R>0$ such that $\|\hat c_{n,\tau_n}\|_{\mathcal H_\beta}\le R$.
\end{enumerate}
\end{theorem}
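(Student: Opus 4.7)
The overall strategy is to establish the four parts in order: Part \ref{cor:eam_conv:c_hat_Lip} is the central technical step, Parts \ref{cor:eam_conv:c_mollified_Lip} and \ref{cor:eam_conv:ctau_smooth} propagate it to the mollified object $\hat c_{n,\tau_n}$, and Part \ref{cor:eam_conv:ctau_valid} combines them with the machinery underlying Theorem \ref{thm:validity} to recover asymptotic coverage for $\hat c_{n,\rho,\tau_n}$.

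For Part \ref{cor:eam_conv:c_hat_Lip}, I would start from the implicit definition \eqref{eq:def:c_hat} of $\hat c_n(\theta)$ and quantify the uniform-in-$\lambda$ drift of the three data-dependent components of the constraints in \eqref{eq:Lambda_n} as $\theta$ moves by $\tau_n$. Under Assumption \ref{as:Lipschitz_m_over_sigma}(i), the standardized moment class has envelope $F$ and Lipschitz modulus $\bar M$ with $E_P[\bar M^2]\le M$, so a standard maximal inequality for the (bootstrapped) empirical process gives $\sup_{\|\theta-\theta'\|\le\tau_n}\|\mathbb{G}_{n,j}^{b}(\theta)-\mathbb{G}_{n,j}^{b}(\theta')\|=o_{\cP}(\varepsilon_n)$. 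The GMS contribution is bounded using Assumption \ref{as:Lipschitz_m_over_sigma}(ii) and the Lipschitz property of $\bar m_{n,j}/\hat\sigma_{n,j}$, yielding a term of order $\kappa_n^{-1}\sqrt n\,\tau_n$. The gradient contribution splits through the triangle inequality into $\|\hat D_{n,j}-D_{P,j}\|_\infty=O_{\cP}(\beta_n)$ and the Lipschitz property of $D_{P,j}$ from Assumption \ref{as:momP_KMS}, and is multiplied by $\|\lambda\|\le\rho$. Aggregating, the constraint set shifts uniformly by $O_{\cP}(\varepsilon_n)$; since $\hat c_n(\theta)$ is determined by a bootstrap probability monotone in $c$, a uniform $\eta$-shift of the constraints translates into an $\eta$-shift of $\hat c_n$, which delivers \eqref{eq:c_hat_Lip}.

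Parts \ref{cor:eam_conv:c_mollified_Lip} and \ref{cor:eam_conv:ctau_smooth} are then short. Part \ref{cor:eam_conv:c_mollified_Lip} follows from Part \ref{cor:eam_conv:c_hat_Lip} and the fact that $\phi_{\tau_n}$ integrates to one and is supported on $\|\nu\|\le\tau_n$:
\begin{equation*}
|\hat c_{n,\tau_n}(\theta)-\hat c_n(\theta)|\le \sup_{\|\nu\|\le\tau_n}|\hat c_n(\theta-\nu)-\hat c_n(\theta)|\le C\varepsilon_n
\end{equation*}
with probability approaching one uniformly in $P\in\cP$. For Part \ref{cor:eam_conv:ctau_smooth}, $\hat c_n$ is uniformly bounded (critical values of a probability statement), so differentiation under the integral yields $\hat c_{n,\tau_n}\in C^\infty$ with derivatives controlled by derivatives of $\phi_{\tau_n}$; the Matérn RKHS norm (see Appendix \ref{sec:RKHS}), which is equivalent to a Sobolev norm of order $\nu+d/2$, is then bounded by a constant $R$ via a standard embedding, uniformly in $n$.

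The main obstacle is Part \ref{cor:eam_conv:ctau_valid}, because the infimum in \eqref{eq:def_chat_nrhotau} is taken over a neighborhood of radius $O(n^{-1/2})$, which exceeds the $\tau_n=o(n^{-1/2})$ scale controlled by Part \ref{cor:eam_conv:c_hat_Lip}. My plan is to first combine Parts \ref{cor:eam_conv:c_mollified_Lip} and \ref{cor:eam_conv:ctau_smooth} with the uniform boundedness of $\nabla \hat c_{n,\tau_n}$ to argue that $\hat c_{n,\rho,\tau_n}(\theta_n)=\hat c_n(\theta_n)+o_{\cP}(1)$, noting that only the limiting distribution of the critical level matters for the coverage statement. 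Then, under Assumption \ref{as:correlation} and the hypothesized convergence of $\kappa_n^{-1}\sqrt n\,\gamma_{1,P_n,j}(\theta_n)$, $\Omega_{P_n}$, and $D_{P_n}(\theta_n)$, I would invoke the almost-sure bootstrap representation used in the proof of Theorem \ref{thm:validity} and the continuous-mapping-type arguments applied to the limiting linear program. Replacing $\hat c_n$ by $\hat c_{n,\rho,\tau_n}$ in that argument only introduces the $o_{\cP}(1)$ slack just established, which is absorbed in the limit and leaves the $1-\alpha$ lower bound on $P_n(U_n(\theta_n,\hat c_{n,\rho,\tau_n})\neq\emptyset)$ intact.
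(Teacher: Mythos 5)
Parts \ref{cor:eam_conv:c_hat_Lip}--\ref{cor:eam_conv:ctau_smooth} of your proposal track the paper's proof closely: you decompose the constraint drift in $\Lambda_n^b$ into the bootstrap empirical process, the gradient estimator, and the GMS term, bound each using Assumptions \ref{as:Lipschitz_m_over_sigma} and \ref{as:momP_KMS}, and then use monotonicity of $c\mapsto P^*(\Lambda_n^b(\theta,\rho,c)\cap\{p'\lambda=0\}\neq\emptyset)$ to transfer the constraint shift into a critical-level shift. The mollification estimate in Part \ref{cor:eam_conv:c_mollified_Lip} and the Sobolev/RKHS embedding in Part \ref{cor:eam_conv:ctau_smooth} are also essentially the paper's arguments.

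Part \ref{cor:eam_conv:ctau_valid} is where there is a genuine gap. You propose to show $\hat c_{n,\rho,\tau_n}(\theta_n)=\hat c_n(\theta_n)+o_{\cP}(1)$ by invoking ``uniform boundedness of $\nabla\hat c_{n,\tau_n}$,'' but this gradient is not uniformly bounded in $n$: mollifying at scale $\tau_n$ makes $\nabla\hat c_{n,\tau_n}$ scale like $\varepsilon_n/\tau_n=\kappa_n^{-1}\sqrt n\vee\beta_n/\tau_n$, which diverges because $\kappa_n=o(\sqrt n)$ and $\tau_n$ can decay faster than $\beta_n$. Even setting this aside, the two-sided approximation you aim for is neither available nor needed. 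The infimum defining $\hat c_{n,\rho,\tau_n}$ ranges over a neighborhood of radius $\rho/\sqrt n$, strictly larger than the $\tau_n$ scale Part \ref{cor:eam_conv:c_hat_Lip} controls, and $\hat c_n(\theta_n)$ itself need not converge: Lemma \ref{lem:cv_convergence} only gives the one-sided bound $\hat c_n(\theta'_n)\ge c^I_n(\theta'_n)+o_{P_n}(1)$ together with $c^I_n(\theta'_n)\to c_{\pi^*}$. The paper's argument for Part \ref{cor:eam_conv:ctau_valid} uses precisely this one-sided structure. From Part \ref{cor:eam_conv:c_mollified_Lip} it gets
\begin{equation*}
\hat c_{n,\rho,\tau_n}(\theta_n)\ge \hat c_{n,\rho}(\theta_n)-e_n \ge c^I_{n,\rho}(\theta_n)-e_n,
\end{equation*}
with $e_n=O_{\cP}(\varepsilon_n)=o_{\cP}(1)$, uses $c^I_{n,\rho}(\theta_n)\stackrel{P_n}{\to}c_{\pi^*}$, and then replicates the decomposition in equation \eqref{eq:val_3_1} of Lemma \ref{lem:val} with $\hat c_{n,\rho,\tau_n}$ in place of $\hat c_{n,\rho}$. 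Because $\mathrm{Pr}(\mathfrak W(c)\neq\emptyset)$ is increasing in $c$ and equals $1-\alpha$ at $c=c_{\pi^*}$, the lower bound on the critical level, not an equality with $\hat c_n(\theta_n)$, is what delivers the $\ge 1-\alpha$ conclusion. You should replace your step ``$\hat c_{n,\rho,\tau_n}(\theta_n)=\hat c_n(\theta_n)+o_{\cP}(1)$'' with this one-sided comparison against $c^I_{n,\rho}$ and route the rest through Lemma \ref{lem:val}.
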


\begin{proof} We establish each part of the theorem separately.

 \textbf{Part \ref{cor:eam_conv:c_hat_Lip}.}
Throughout, let $C>0$ denote a positive constant, which may be different in different appearances.
Define the event
\begin{multline}
E_n\equiv\big\{x^\infty\in\mathcal X^\infty:\|\hat D_n-D_{P}\|_{\infty}\le C \beta_n,~\sup_{\|\theta-\theta'\|\le \tau_n}\|\mathbb G_n(\theta)-\mathbb G_n(\theta'))\|\le (\ln n)^2\tau_n,\\
\sup_{\theta\in\Theta}|\eta_{n,j}(\theta)|\le C/\sqrt n,~\max_{j=1,\cdots,J}\sup_{\|\theta-\theta'\|<\tau_n}|\eta_{n,j}(\theta)-\eta_{n,j}(\theta')|\le C\tau_n\big\}.\label{eq:moll_lip1}
\end{multline}	
Note that $(\ln n)^2\tau_n/(-\tau_n\ln\tau_n)=(\ln n)^2/\zeta \ln n=\ln n/\zeta$, and hence tends to $\infty.$ By Assumption \ref{as:Lipschitz_m_over_sigma}-(i) and arguing as in the proof of Theorem 2 in  \cite{Andrews:1994aa}, condition  \eqref{eq:as_euclidean} in Lemma \ref{lem:se_rate} is satisfied with $v=d$. Also, by Lemma \ref{lem:correl_Lip}, \eqref{eq:correl_holder} in Lemma \ref{lem:se_rate} holds with $\gamma=1$. This therefore ensures the conditions of Lemma \ref{lem:se_rate}.

Similarly, by Assumption \ref{as:Lipschitz_m_over_sigma}-(i) $m^2_j(x,\theta)/\sigma_{P,j}^2(\theta)$ satisfies
\begin{align}
	\Big|\frac{m^2_j(x,\theta)}{\sigma_{P,j}^2(\theta)}-\frac{m^2_j(x,\theta)}{\sigma_{P,j}^2(\theta)}\Big|&\le \Big|\frac{m_j(x,\theta)}{\sigma_{P,j}(\theta)}+\frac{m_j(x,\theta')}{\sigma_{P,j}(\theta')}\Big|\Big|\frac{m_j(x,\theta)}{\sigma_{P,j}(\theta)}-\frac{m_j(x,\theta')}{\sigma_{P,j}(\theta')}\Big|\\
	&\le 2F(x)\bar M(x)\|\theta-\theta'\|.
\end{align}
Let $\bar F(x)\equiv 2F(x)\bar M(x)$.
By Theorem 2.7.11 in \cite{Vaart_Wellner2000aBK}, 
\begin{align}
 N_{[]}(\epsilon \|\bar F\|_{L^2_P},\mathcal M^2_{P},\|\cdot\|_{L^2_P})\le N(\epsilon,\Theta,\|\cdot\|)\le (\text{diam}(\Theta)/\epsilon)^d,
\end{align}
 where $N(\epsilon,\Theta,\|\cdot\|)$ is the covering number of $\Theta$. This ensures
\begin{align}
	\int_0^\infty \sup_{P\in\mathcal P}\sqrt{\ln N_{[]}(\epsilon \|\bar F\|_{L^2_P},\mathcal M^2_{P},\|\cdot\|_{L^2_P})}d\epsilon<\infty.
\end{align}
Further, for any $C>0$ 
\begin{align}
E_P[\bar F^2(X)1\{\bar F(X)>C\}]~\le~ E_P[\bar F^2(X)]P(\bar F(X)>C)	~\le~ 4E_P[|F(X)M(X)|^2]\frac{\|\bar F\|_{L^1_P}}{C}~\le~\frac{4M^2}{C},
\end{align}
which implies $\lim_{C\to\infty}\sup_{P\in\mathcal P}E_P[\bar F^2(X)1\{\bar F(X)>C\}]=0$. By Theorems 2.8.4 and 2.8.2 in \cite{Vaart_Wellner2000aBK}, this implies that $\mathcal S_P$ is Donsker and pre-Gaussian uniformly in $P\in\mathcal P.$ This therefore ensures the conditions of Lemma \ref{lem:eta_rate} (i). Note also that Assumption \ref{as:Lipschitz_m_over_sigma}-(i) ensures the conditions of Lemma \ref{lem:eta_rate} (ii).
Therefore, by Lemmas \ref{lem:se_rate}-\ref{lem:eta_rate} and Assumption \ref{as:momP_KMS}, for any $\eta>0$, there exists $C>0$ such that $\inf_{P\in\mathcal P}P(E_n)\ge1-\eta$ for all $n$ sufficiently large.

Let $\theta,\theta'\in \Theta$. For each $j$, we have
\begin{multline}
\left\vert\mathbb{G}_{n,j}^{b}(\theta )+\rho\hat{D}
_{n,j}(\theta)\lambda +\varphi_j(\hat{\xi}_{n,j}(\theta))-\mathbb{G}_{n,j}^{b}(\theta^\prime)-\rho\hat{D}
_{n,j}(\theta^\prime)\lambda -\varphi_j(\hat{\xi}_{n,j}(\theta^\prime))\right\vert \\\le 
|\mathbb{G}_{n,j}^{b}(\theta)-\mathbb{G}_{n,j}^{b}(\theta')|+\rho\|\hat D_{n,j}(\theta)-\hat D_{n,j}(\theta')\|\sup_{\lambda \in B^d}\|\lambda\|+|\varphi_j(\hat{\xi}_{n,j}(\theta))-\varphi_j(\hat{\xi}_{n,j}(\theta^\prime))|.\label{eq:moll_lip2}
\end{multline}
Assume that the sample path $\{X_i\}_{i=1}^\infty$ is such that the event $E_n$ holds. Conditional on $\{X_i\}_{i=1}^\infty$ and using $\mathbb{G}_{n,j}^{b}(\theta)-\mathfrak G^b_{n,j}(\theta)=\mathfrak G^b_{n,j}(\theta)\eta_{n,j}(\theta)$,
\begin{multline}
|\mathbb{G}_{n,j}^{b}(\theta)-\mathbb{G}_{n,j}^{b}(\theta')|\le |\mathfrak G^b_{n,j}(\theta)-\mathfrak G^b_{n,j}(\theta')|+2\sup_{\theta\in\Theta}|\mathfrak G^b_{n,j}(\theta)|\sup_{\theta\in\Theta}|\eta_{n,j}(\theta)|\\
\le |\mathfrak G^b_{n,j}(\theta)-\mathfrak G^b_{n,j}(\theta')|+2\sup_{\theta\in\Theta}|\mathfrak G^b_{n,j}(\theta)|\frac{C}{\sqrt n}.\label{eq:moll_lip3}
\end{multline}

Define the event $F_n\in \mathcal C$ for the bootstrap weights by
\begin{align}
F_n\equiv\big\{m_n\in Q:\sup_{\|\theta-\theta'\|\le\tau_n}\|\mathfrak G^b_{n}(\theta)-\mathfrak G^b_{n}(\theta')\|\le (\ln n)^2\tau_n,~\sup_{\theta\in\Theta}\|\mathfrak G^b_{n}(\theta)\|\le C\big\}.\label{eq:moll_lip4}
\end{align}
By Lemma \ref{lem:se_rate} (ii) and the asymptotic tightness of $\mathfrak G^b_n$, for any $\eta>0$, there exists a $C$ such that $P^*_n(F_n)\ge 1-\eta$ for all $n$ sufficiently large.
Suppose that the multinomial bootstrap weight $M_n$ is such that
$F_n$ holds. Then, the right hand side of \eqref{eq:moll_lip3} is bounded by
$(\ln n)^2\tau_n+ C/\sqrt n$ for some $C>0$.

Next, by the triangle inequality and Assumption \ref{as:momP_KMS},
\begin{multline}
\|\hat D_{n,j}(\theta)-\hat D_{n,j}(\theta')\|\le \|\hat D_{n,j}(\theta)-D_{P,j}(\theta)\|+\|D_{P,j}(\theta)-D_{P,j}(\theta')\|+\|\hat D_{n,j}(\theta')-D_{P,j}(\theta')\|\\
\le C\beta_n + C\tau_n.\label{eq:moll_lip5}
\end{multline}

Finally, note that by the Lipschitzness of $\varphi_j$,  $|\varphi_j(\hat{\xi}_{n,j}(\theta))-\varphi_j(\hat{\xi}_{n,j}(\theta^\prime))|\le C|\hat{\xi}_{n,j}(\theta)-\hat{\xi}_{n,j}(\theta^\prime)|$ and
\begin{multline}
\hat{\xi}_{n,j}(\theta)-\hat{\xi}_{n,j}(\theta^\prime)
\\
= \kappa_n^{-1}\Big[\sqrt n\Big(\frac{\bar m_{n,j}(\theta)}{ \sigma_{P,j}(\theta)}(1+\eta_{n,j}(\theta))-\frac{E_P[m_j(X,\theta)]}{ \sigma_{P,j}(\theta)}\Big)-\sqrt n\Big(\frac{\bar m_{n,j}(\theta')}{ \sigma_{P,j}(\theta')}(1+\eta_{n,j}(\theta'))-\frac{E_P[m_j(X,\theta')]}{ \sigma_{P,j}(\theta')}\Big)\Big]\\
+\kappa_n^{-1}\sqrt n\Big(\frac{E_P[m_j(X,\theta)]}{ \sigma_{P,j}(\theta)}-\frac{E_P[m_j(X,\theta')]}{ \sigma_{P,j}(\theta')}\Big).\label{eq:moll_lip6}
\end{multline}
Hence,
\begin{multline}
|\hat{\xi}_{n,j}(\theta)-\hat{\xi}_{n,j}(\theta^\prime)|\le \kappa_n^{-1}|\mathbb G_{n,j}(\theta)-\mathbb G_{n,j}(\theta')|\\
+ \kappa_n^{-1}\sqrt n\Big|\frac{\bar m_{n,j}(\theta)}{ \sigma_{P,j}(\theta)}\eta_{n,j}(\theta)-\frac{\bar m_{n,j}(\theta')}{ \sigma_{P,j}(\theta')}\eta_{n,j}(\theta')\Big|+\kappa_n^{-1}\sqrt n D_{P,j}(\bar\theta)\|\theta-\theta'\|.\label{eq:moll_lip7}
\end{multline}
By Lemma \ref{lem:se_rate}, the right hand side of \eqref{eq:moll_lip7} can be further bounded by
\begin{multline}
\kappa_n^{-1}(\ln n)^2\tau_n+\kappa_n^{-1}\sqrt n\Big|\frac{\bar m_{n,j}(\theta)}{ \sigma_{P,j}(\theta)}-\frac{\bar m_{n,j}(\theta')}{ \sigma_{P,j}(\theta')}\Big||\eta_{n,j}(\theta)|\\
+\kappa_n^{-1}\sqrt n\Big|\frac{\bar m_{n,j}(\theta')}{ \sigma_{P,j}(\theta')}\Big||\eta_{n,j}(\theta)-\eta_{n,j}(\theta')|+C\kappa_n^{-1}\sqrt n\tau_n\\
\le \kappa_n^{-1}(\ln n)^2\tau_n+\kappa_n^{-1}\sqrt n\tau_n \frac{C}{\sqrt n}+C\kappa_n^{-1}\sqrt n\tau_n+C\kappa_n^{-1}\sqrt n\tau_n,\label{eq:moll_lip8}
\end{multline}
where the last inequality follows from Condition (i) and Lemma \ref{lem:eta_rate} (ii).

Combining \eqref{eq:moll_lip2}, \eqref{eq:moll_lip3}, \eqref{eq:moll_lip5}, and \eqref{eq:moll_lip6}-\eqref{eq:moll_lip8}, we obtain
\begin{align}
\left\vert\mathbb{G}_{n,j}^{b}(\theta )+\hat{D}
_{n,j}(\theta)\lambda +\varphi_j(\hat{\xi}_{n,j}(\theta))-\mathbb{G}_{n,j}^{b}(\theta^\prime)-\hat{D}
_{n,j}(\theta^\prime)\lambda -\varphi_j(\hat{\xi}_{n,j}(\theta^\prime))\right\vert\le C\varepsilon_n.
\end{align}
In particular, if $\mathbf{1}\left(\Lambda_n^b (\theta ,\rho ,\hat{c}_n(\theta))\cap \{p^{\prime }\lambda =0\}\neq \emptyset \right)=1$, it also holds that $\mathbf{1}\big(\Lambda_n^b (\theta^\prime ,\rho ,\hat{c}_n(\theta)+C\varepsilon_n)\cap\{p^{\prime }\lambda =0\}\neq \emptyset \big)=1$ because
\begin{align*}
\mathbb{G}_{n,j}^{b}(\theta^\prime)+\hat{D}
_{n,j}(\theta^\prime)\lambda +\varphi_j(\hat{\xi}_{n,j}(\theta^\prime)) \le \mathbb{G}_{n,j}^{b}(\theta )+\hat{D}
_{n,j}(\theta)\lambda +\varphi_j(\hat{\xi}_{n,j}(\theta))+C\varepsilon_n \le \hat{c}_n(\theta)+C\varepsilon_n,
\end{align*}
Recalling that $P^*_n(F_n)\ge 1-\eta$ for all $n$ sufficiently large, we then have
\begin{multline}
P^*_n\left(\big\{\Lambda_n^b (\theta' ,\rho ,\hat{c}_n(\theta)+C\varepsilon_n)\cap \{p^{\prime }\lambda =0\}\neq \emptyset\big\}\right)\\	
\ge P^*_n\left(\big\{\Lambda_n^b (\theta' ,\rho ,\hat{c}_n(\theta)+C\varepsilon_n)\cap \{p^{\prime }\lambda =0\}\neq \emptyset\big\}\cap F_n \right)	\\
\ge
P^*_n\left(\big\{\Lambda_n^b (\theta ,\rho ,\hat{c}_n(\theta))\cap \{p^{\prime }\lambda =0\}\neq \emptyset\big\} \cap F_n\right)\ge 1-\alpha-\eta.
\end{multline}
Since $\eta$ is arbitrary, we have
\begin{align*}
\hat c_n(\theta')\le \hat c_n(\theta)+C\varepsilon_n.
\end{align*}
Reversing the roles of $\theta$ and $\theta^\prime$ and noting that $\sup_{P\in\mathcal P}P(E_n)\to 0$  yields the first claim of the lemma.

 \textbf{Part \ref{cor:eam_conv:c_mollified_Lip}.}
To obtain the result in equation~\eqref{eq:c_mollified_Lip}, we use that for any $\theta,\theta^\prime \in \Theta$ such that $\|\theta-\theta'\|\le \tau_n$, $|\hat{c}_n(\theta)-\hat{c}_n(\theta^\prime)| \le C \varepsilon_n$ with probability approaching 1 uniformly in $P\in\mathcal P$ by the result in Part \ref{cor:eam_conv:c_hat_Lip}. This implies
\begin{multline*}
\vert\hat{c}_n(\theta)-\hat{c}_{n,\tau_n}(\theta)\vert=\left\vert \int_{\R^d} \hat{c}_n(\theta-\nu)\phi_{\tau_n}(\nu)d\nu - \hat{c}_n(\theta) \right\vert\le  \int_{\R^d}\left\vert \hat{c}_n(\theta-\nu)- \hat{c}_n(\theta) \right\vert \phi_{\tau_n}(\nu)d\nu \\
=\int_{\mathbb{B}_{\tau_n}}\left\vert \hat{c}_n(\theta-\nu)- \hat{c}_n(\theta) \right\vert \phi_{\tau_n}(\nu)d\nu\le C\varepsilon_n \int_{\mathbb{B}_{\tau_n}}\phi_{\tau_n}(\nu)d\nu \le C\varepsilon_n.
\end{multline*}

 \textbf{Part \ref{cor:eam_conv:ctau_valid}.}
By Part \ref{cor:eam_conv:c_mollified_Lip} and the definition of $\hat c_{n,\rho,\tau}$ in \eqref{eq:def_chat_nrhotau}, it follows that
\begin{align}
\hat c_{n,\rho,\tau_n}(\theta_n)&\ge \hat c_{n,\rho}(\theta_n)-e_n\\
&\ge c^I_{n,\rho}(\theta_n)-e_n,\notag
\end{align}
for some $e_n=O_{\mathcal P}(\varepsilon_n)$, where the second inequality follows from the construction of $c^I_{n,\rho}$ in the proof of Lemma \ref{lem:val}.  
Note that  Lemma \ref{lem:cv_convergence} and the fact that $\varepsilon_n=o_{\mathcal P}(1)$ by Part \ref{cor:eam_conv:c_hat_Lip} imply $c^I_{n,\rho}(\theta_n)-e_n\stackrel{P_n}{\to} c^*_{\pi^*}.$ 
Replicate equation \eqref{eq:val_3_1} with $\hat{c}_{n,\rho,\tau_n}$ replacing $\hat{c}_{n,\rho}$, and mimic the argument following \eqref{eq:val_3_1} in the proof of Lemma \ref{lem:val}. Then, the conclusion of the lemma follows.

 \textbf{Part \ref{cor:eam_conv:ctau_smooth}.}
By the construction of the mollified version of the critical value, we have
$\hat{c}_{n,\tau_n}\in \mathcal C^\infty(\Theta)$ \citep[][Theorem 2.29]{Adams:2003aa}. Therefore it has derivatives of all order.
Using the multi-index notation, for any $s>0$ and $|\alpha|\le s$,
the partial derivative $\nabla^\alpha \hat{c}_{n,\tau_n}$ is bounded by some constant $M>0$ on the compact set $\Theta$, and hence 
\begin{align*}
\int_{\Theta} |\nabla^\alpha \hat{c}_{n,\tau_n}(\theta)|^2d\upsilon(\theta)\le M\upsilon(\Theta)<\infty,
\end{align*}
where $\upsilon$ denote the Lebesgue measure on $\mathbb R^d.$
This ensures $\nabla^\alpha \hat{c}_{n,\tau_n}\in L^2_\upsilon(\Theta)$ for all $|\alpha|\le s$. Hence, $\hat{c}_{n,\tau_n}$ is in the Sobolev-Hilbert space $H^{s}(\Theta^o)$ for any $s>0$. 
Note that when a Mat\'{e}rn kernel with $\nu<\infty$ is used and
 $\hat{c}_{n,\tau_n}$ is continuous,
 Lemma 3 in \cite{Bull_Convergence_2011} implies that the RKHS-norm $\| \cdot\|_{\mathcal H_{\bar\beta}}$ (in $\mathcal H_{\bar\beta}( \Theta)$)  and the Sobolev-Hilbert norm $\|\cdot\|_{H^{\nu+d/2}}$ are equivalent.
  Hence, there is  $R>0$ such that $\|\hat c_{n,\tau_n}\|_{\mathcal H_\beta}\le C\|\hat c_{n,\tau_n}\|_{H^{\nu+d/2}}\le R$.
  
\end{proof}

\subsection{The kernel of the Gaussian Process and its Associated Function Space}\label{sec:RKHS}
Following \cite{Bull_Convergence_2011}, we consider two commonly used classes of kernels. 
The first one is the Gaussian kernel, which is given by
\begin{align}
K_{\beta}(\theta-\theta')=\exp\big(-\sum_{k=1}^d|(\theta_k-\theta'_k)/\beta_k|^{2}\big), ~\beta_k\in[\underline{\beta}_k,\overline{\beta}_k], ~k=1,\cdots,d,	\label{eq:kernel}
\end{align}
where $0<\underline{\beta}_k<\overline{\beta}_k<\infty$ for all $k$.
The second one is the class of Mat\'{e}rn kernels \citep[see, e.g.,][Chapter 4]{ras:wil05} defined by
\begin{align*}
K_{\beta}(\theta-\theta')=\frac{2^{1-\nu}}{D(\nu)}\Big(\sqrt 2\nu\sum_{k=1}^d|(\theta_k-\theta'_k)/\beta_k|^{2}\Big)^\nu k_\nu\Big(\sqrt 2\nu\sum_{k=1}^d|(\theta_k-\theta'_k)/\beta_k|^{2}\Big),~\nu\in(0,\infty),~\nu\notin\mathbb N,
\end{align*}
where $D$ is the gamma function, and $k_\nu$ is the modified Bessel function of the second kind.\footnote{The requirement $\nu\notin\mathbb N$ is not essential for the convergence result. However, it simplifies some of the arguments as one can exploit the $2\nu$-H\"{o}lder continuity of $K_\beta$ at the origin without a log factor \citep[][Assumption 4]{Bull_Convergence_2011}.}
The index $\nu$ controls the smoothness of $K_\beta$. In particular, the Fourier transform $\hat K_\beta(\zeta)$ of the Mat\'{e}rn kernel is bounded from above and below by the order of $\|\zeta\|^{-2\nu-d}$ as $\|\zeta\|\to\infty$, i.e. $\hat K_\beta(\zeta)=\Theta(\|\zeta\|^{-2\nu-d})$. Similarly, the Fourier transform of the Gaussian kernel satisfies $\hat K_\beta(\zeta)=O(\|\zeta\|^{-2\nu-d})$ for any $\nu>0$. Below, we treat the Gaussian kernel as a kernel associated with $\nu=\infty.$

Each kernel is associated with a space of functions $\mathcal H_\beta(\mathbb R^d)$, called the reproducing kernel Hilbert space (RKHS). Below, we give some background on this space and refer to \cite{Steinwart:2008aa,Vaart:2008aa} for further details. 
For $D\subseteq \mathbb R^d$, let $K:D\times D\to\mathbb R$ be a symmetric and positive definite function. $K$ is said to be a reproducing kernel of a Hilbert space $\mathcal H(D)$ if $K(\cdot,\theta')\in \mathcal H(D)$ for all $\theta'\in D$, and
\begin{align*}
f(\theta)=\langle f,K(\cdot,\theta)\rangle_{\mathcal H(D)}
\end{align*}
holds for all $f\in\mathcal H(D)$ and $\theta\in D$. The space $\mathcal H(D)$ is called a reproducing kernel Hilbert space (RKHS) over $D$ if for all $\theta\in D$, the point evaluation functional $\delta_\theta:\mathcal H(D)\to\mathbb R$ defined by $\delta_\theta(f)=f(\theta)$ is continuous. When  $K(\theta,\theta')=K_\beta(\theta-\theta')$ is used as the correlation functional of the Gaussian process, we denote the associated RKHS by $\mathcal H_\beta(D)$.
Using Fourier transforms, the norm on $\mathcal H_\beta(D)$ can be written as
\begin{align}
\|f\|_{\mathcal H_\beta}\equiv \inf_{g|_D=f}\int \frac{\hat g(\zeta)}{\hat K_\beta(\zeta)}d\zeta,	
\end{align}
where  the infimum is taken over functions $g:\mathbb R^d\to\mathbb R$ whose restrictions to $D$ coincide with $f$, and we take $0/0=0$.

The RKHS has a connection to other well-known classes of functions. In particular, when $D$ is a Lipschitz domain, i.e. the boundary of $D$ is locally the graph of a Lipschitz function \citep[][]{Tartar:2007aa} and the kernel is associated with $\nu\in (0,\infty)$,
  $\mathcal H_\beta(D)$ is equivalent to the Sobolev-Hilbert space $H^{\nu+d/2}(D^o)$, which is the space of functions on $D^o$ such that
  \begin{align}
\|f\|^2_{H^{\nu+d/2}}\equiv\inf_{g|_{D^o}=f}\int \frac{\hat g(\zeta)}{(1+\|\zeta\|^2)^{\nu+d/2}}d\zeta	
  \end{align} 
  is finite, where the infimum is taken over functions $g:\mathbb R^d\to\mathbb R$ whose restrictions to $D^o$ coincide with $f$.
  Further, if $\nu=\infty$, $\mathcal H_\beta(D)$ is continuously embedded in $H^{s}(D^o)$ for all $s>0$ \citep[][Lemma 3]{Bull_Convergence_2011}.

Theorem \ref{thm:eiconv} requires that $c$ has a finite RKHS norm. 
This is to ensure that the approximation error made by the best linear predictor  $c_L$ of the Gaussian process regression is controlled uniformly \citep{Narcowich_Refined_2003}.
When a Mat\'{e}rn kernel is used, it suffices to bound the norm in the Sobolev-Hilbert space $H^{\nu+d/2}$ to bound $c$'s RKHS norm. We do so in Theorem \ref{cor:eam_conv} by introducing a mollified version of $\hat c_n$.

\subsection{A Reformulation of the M-step as a Nonlinear Program}\label{sec:Mstep_nlp}
In \eqref{eq:max_ei}, $\theta^{(L+1)}$ is defined as the maximizer of the following maximization problem
	\begin{align}
\max_{\theta\in\Theta} (p'\theta-p'\theta^*_L)_+\Big(1-\Phi\Big(\frac{\bar g(\theta)-c_L(\theta)}{\hat\varsigma s_L(\theta)}\Big)\Big),
	\end{align}
where $\bar g(\theta)=\text{max}_{j=1,\dots,J}g_j(\theta)$. Since $\Phi$ is strictly increasing, one may rewrite the objective function as
\begin{align*}
(p'\theta-p'\theta^*_L)_+\Big(1-\max_{j=1,\dots,J}\Phi\Big(\frac{ g_j(\theta)-c_L(\theta)}{\hat\varsigma s_L(\theta)}\Big)\Big)
=\min_{j=1,\dots,J}(p'\theta-p'\theta^*_L)_+\Big(1-\Phi\Big(\frac{ g_j(\theta)-c_L(\theta)}{\hat\varsigma s_L(\theta)}\Big)\Big).
\end{align*}	
Hence, $\theta^{(L+1)}$ is a solution to the maximin problem:
\begin{align*}
\max_{\theta\in\Theta}\min_{j=1,\dots,J}(p'\theta-p'\theta^*_L)_+\Big(1-\Phi\Big(\frac{ g_j(\theta)-c_L(\theta)}{\hat\varsigma s_L(\theta)}\Big)\Big),
\end{align*}
which can be solved, for example, by Matlab's \verb1fminimax1 function. It can also be rewritten as a nonlinear program:
\begin{align*}
\max_{(\theta,v)\in \Theta\times\mathbb R}&v ~~ \text{s.t. } (p'\theta-p'\theta^*_L)_+\Big(1-\Phi\Big(\frac{ g_j(\theta)-c_L(\theta)}{\hat\varsigma s_L(\theta)}\Big)\Big)\ge v, ~j=1,\dots,J,
\end{align*}
which can be solved by nonlinear optimization solvers,
	e.g. Matlab's \verb1fmincon1 or \verb1KNITRO1.
We note that the objective function and constraints together with their gradients are available in closed form.

\subsection{Root-Finding Algorithm Used to Compute $\hat{c}_n(\theta)$}\label{sec:bisection}
This section explains in detail how $\hat{c}_n(\theta)$ in equation \eqref{eq:def:c_hat} is computed.
For a given $\theta \in \Theta$, $P^*(\Lambda_n^b (\theta ,\rho ,c)\cap \{p^{\prime }\lambda =0\}\neq \emptyset)$ increases in $c$ (with $\Lambda_n^b (\theta ,\rho ,c)$ defined in \eqref{eq:Lambda_n}), and so $\hat c_n(\theta)$ can be quickly computed via a root-finding algorithm, such as the Brent-Dekker Method (BDM), see \cite{brent1971algorithm} and \cite{dekker1969finding}.  To do so, define $h_{\alpha}(c) = \frac{1}{B} \sum_{b=1}^B \psi_b(c) - (1-\alpha)$ where
\[
\psi_b(c(\theta)) = \mathbf{1}(\Lambda_n^b(\theta,\rho,c) \cap \{p'\lambda = 0\}\neq \emptyset).
\]
Let $\bar c(\theta)$ be an upper bound on $\hat c_n(\theta)$ (for example, the asymptotic Bonferroni bound $\bar c(\theta) \equiv \Phi^{-1}(1-\alpha/J))$.  It remains to find $\hat c_n(\theta)$ so that $h_{\alpha}(\hat c_n(\theta)) =  0$ if $h_{\alpha}(0)\leq 0$.  It is possible that $h_{\alpha}(0) > 0$ in which case we output $\hat c_n(\theta) = 0$.  Otherwise, we use BDM to find the unique root to $h_{\alpha}(c)$ on $[0,\bar c(\theta)]$ where, by construction, $h_{\alpha}(\bar c_n(\theta)) \geq 0$.  We propose the following algorithm:

\textbf{Step 0} (Initialize)
\begin{enumerate}[label=(\roman*)]
\item Set $\mathit{Tol}$ equal to a chosen tolerance value;
\item Set $c_L = 0$ and $c_U =\bar c(\theta)$ (values of $c$ that bracket the root $\hat c_n(\theta)$);
\item Set $c_{-1} = c_L$ and $c_{-2}=[]$ to be undefined for now (proposed values of $c$ from $1$ and $2$ iterations prior).  Also set $c_0 = c_L$ and $c_1 = c_U$.
\item Compute $\varphi_j( \hat \xi_{n,j}(\theta))$ $j = 1,\dots,J$;
\item Compute $\hat D_{P,n}(\theta)$;
\item Compute $\G^b_{n,j}$ for $b = 1,\dots,B$, $j = 1,\dots,J$;
\item Compute $\psi_b(c_L)$ and $\psi_b(c_U)$ for $b = 1,\dots,B$;
\item Compute $h_{\alpha}(c_L)$ and $h_{\alpha}(c_U)$.
\end{enumerate}

\textbf{Step 1} (Method Selection)

\begin{parindent1}
Use the BDM rule to select the updated value of $c$, say $c_2$.  The value is updated using one of three methods: Inverse Quadratic Interpolation, Secant, or Bisection.  The selection rule is based on the values of $c_{i}$, $i=-2,-1,0,1$ and the corresponding function values.
\end{parindent1}

\textbf{Step 2} (Update Value Function)

\begin{parindent1}
Update the value of $h_{\alpha}(c_2)$.  We can exploit previous computation and monotonicity function $\psi_b(c_2)$ to reduce computational time:
\begin{enumerate}
\item If $\psi_b(c_L) = \psi_b(c_U) = 0$, then $\psi_b(c_2) = 0$;
\item If $\psi_b(c_L) = \psi_b(c_U) = 1$, then $\psi_b(c_2) = 1$.
\end{enumerate}
\end{parindent1}

\textbf{Step 3} (Update)
\begin{enumerate}[label=(\roman*)]
\item If $h_{\alpha}(c_2) \geq 0$, then set $c_U = c_2$.  Otherwise set $c_L = c_2$.
\item Set $c_{-2} = c_{-1}$, $c_{-1} = c_0$, $c_0 = c_L$, and $c_1 = c_U$.
\item Update corresponding function values $h_{\alpha}(\cdot)$.
\end{enumerate}

\textbf{Step 4} (Convergence)
\begin{enumerate}[label=(\roman*)]
\item If $h_{\alpha}(c_U) \leq \mathit{Tol}$ or if $|c_U - c_L| \leq \mathit{Tol}$, then output $\hat c_n(\theta) = c_U$ and exit. Note: $h_{\alpha}(c_U) \geq 0$, so this criterion ensures that we have \emph{at least} $1-\alpha$ coverage.
\item Otherwise, return to \textbf{Step 1}.
\end{enumerate}
The computationally difficult part of the algorithm is computing $\psi_b(\cdot)$ in \textbf{Step 2}.  This is simplified for two reasons.  First, evaluation of $\psi_b(c)$ entails determining whether a constraint set comprised of $J+2d-2$ linear inequalities in $d-1$ variables is feasible.  This can be accomplished efficiently employing commonly used software.\footnote{Examples of high-speed solves for linear programs include CVXGEN, availiable from \url{http://www.cvxgen.com} and Gurobi, available from \url{http://www.gurobi.com}.}  Second, we exploit monotonicity in $\psi_b(\cdot)$, reducing the number of linear programs needed to be solved.

\section{Assumptions for Asymptotic Coverage Validity}
\label{app:all_ass}
\subsection{Main Assumptions}
\label{sec:AssRes}
We posit that $P$, the distribution of the observed data, belongs to a class of distributions denoted by $\cP$. We write stochastic order relations that hold uniformly over $P \in \cP$ using the notations $o_{\mathcal P}$ and $O_{\mathcal P}$; see Appendix \ref{subsec:notation} for the formal definitions.
Below, $\epsilon$, $\varepsilon$, $\delta$, $\omega$, $\underline{\sigma}$, $M$, $\bar{M}$ denote generic constants which may be different in different appearances but cannot depend on $P$. Given a square matrix $A$, we write $\eig(A)$ for its smallest eigenvalue.
\begin{assumption}
\label{as:momP_AS} (a) $\Theta\subset\mathbb{\ R}^{d}$ is a compact hyperrectangle with nonempty interior.

\noindent (b) All distributions $P \in \cP$ satisfy the following:
\begin{itemize}
\item[(i)] $E_P[m_j(X_i,\theta)]\le 0,~j=1,\dots,J_1$ and $E_{P}[m_j(X_i,\theta)]= 0,~j=J_1+1,\dots, J_1+J_2$ for some $\theta\in\Theta$;

\item[(ii)] $\{X_i, i\ge 1\}$ are i.i.d.;

\item[(iii)] $\sigma^2_{P,j}(\theta)\in(0,\infty)$ for $j=1,\dots, J$ for all $\theta\in\Theta$;

\item[(iv)] For some $\delta>0$ and $M\in (0,\infty)$ and for all $j$, $E_P[\sup_{\theta\in\Theta}|{m_{j}(X_i,\theta)}/{\sigma_{P,j}(\theta)}|^{2+\delta}]\le M$.
\end{itemize}
\end{assumption}
\begin{assumption}
\label{as:GMS}
The function $\varphi_j$ is continuous at all $x\ge 0$ and $\varphi_j(0)=0$; $\kappa_n \to \infty$ and $\kappa_n=o(n^{1/2})$. If Assumption \ref{as:correlation}-\ref{as:correlation_pair} is imposed, $\kappa_n=o(n^{1/4})$.
\end{assumption}

Assumption \ref{as:momP_AS}-(a) requires that $\Theta$ is a hyperrectangle, but can be replaced with the assumption that $\theta$ is defined through a finite number of nonstochastic inequality constraints smooth in $\theta$ and such that $\Theta$ is convex. Compactness is a standard assumption on $\Theta$ for extremum estimation. We additionally require convexity as we use mean value expansions of $E_{P}[m_j(X_i,\theta)]/\sigma_{P,j}(\theta)$ in $\theta$; see \eqref{eq:mean_val_exp}. 
Assumption \ref{as:momP_AS}-(b) defines our moment (in)equalities model. Assumption \ref{as:GMS} constrains the GMS function and the rate at which its tuning parameter diverges. Both \ref{as:momP_AS}-(b) and \ref{as:GMS} are based on \cite{AS} and are standard in the literature,\footnote{Continuity of $\varphi_j$ for $x\ge0$ is restrictive only for GMS function $\varphi^{(2)}$ in \cite{AS}.} although typically with $\kappa_n=o(n^{1/2})$. The slower rate $\kappa_n=o(n^{1/4})$ is satisfied for the popular choice, recommended by \cite{AS}, of $\kappa_n=\sqrt{\ln n}$. 

Next, and unlike some other papers in the literature, we impose restrictions on the correlation matrix of the moment functions. These conditions can be easily verified in practice because they are implied when the correlation matrix of the moment equality functions and the moment inequality functions  specified below have a determinant larger than a predefined constant for any $\theta \in \Theta$. 
\begin{assumption}
\label{as:correlation} 
All distributions $P \in \cP$  satisfy \textbf{one} of the following two conditions for some constants $\omega>0,\underline{\sigma}>0,\epsilon>0,\varepsilon>0,M<\infty$:
\begin{enumerate}
\item \label{as:correlation_base} 
Let $\cJ(P,\theta;\varepsilon)\equiv \left\{j\in \{1,\cdots,J_1\}:E_P[m_j(X_i,\theta)]/\sigma_{P,j}(\theta)\ge -\varepsilon
\right\}$.
Denote 
\begin{align*}
\tilde m(X_i,\theta)&\equiv \left(\{m_j(X_i,\theta)\}_{j \in \cJ(P,\theta;\varepsilon)},m_{J_1+1}(X_i,\theta),\dots,m_{J_1+J_2}(X_i,\theta)\right)',\\
\tilde\Omega_P(\theta)&\equiv Corr_P(\tilde m(X_i,\theta)).
\end{align*}
Then $\inf_{\theta \in \Theta_I(P)}\eig(\tilde\Omega_P(\theta))\ge \omega$.  

\item \label{as:correlation_pair} 
The functions $m_j(X_i,\theta)$ are defined on $\Theta^\epsilon=\{\theta \in \R^d:d(\theta,\Theta)\le \epsilon\}$. 
There exists $R_1\in\N$, $1\le R_1 \le J_1/2$, and measurable functions $t_j:\cX \times \Theta^\epsilon \rightarrow [0,M],~j\in \cR_1\equiv\{1,\dots,R_1\}$, such that for each $j\in \cR_1$,
\begin{align}
m_{j+R_1}(X_i,\theta)&=-m_j(X_i,\theta)-t_j(X_i,\theta).\label{eq:def_paired_ineq}
\end{align} 

For each $j \in \cR_1\cap\cJ(P,\theta;\varepsilon)$ and any choice $\ddot{m}_j(X_i,\theta) \in \{m_{j}(X_i,\theta),m_{j+R_1}(X_i,\theta)\}$, denoting $\tilde\Omega_P(\theta)\equiv Corr_P(\tilde m(X_i,\theta))$, where
\begin{align*}
\tilde m(X_i,\theta)&\equiv \Big(\{\ddot{m}_j(X_i,\theta)\}_{j \in \cR_1 \cap \cJ(P,\theta;\varepsilon)},\\
& \hspace{0.75cm} \{m_j(X_i,\theta)\}_{j \in \cJ(P,\theta;\varepsilon) \setminus \{1,\dots,2R_1\}},m_{J_1+1}(X_i,\theta),\dots,m_{J_1+J_2}(X_i,\theta)\Big)',
\end{align*}
one has
\begin{align}
\inf_{\theta \in \Theta_I(P)}\eig(\tilde\Omega_P(\theta))\ge \omega.\label{eq:eig_cond_paired_ineq}
\end{align}
Finally,
\begin{align}
&\inf_{\theta \in \Theta_I(P)}\sigma_{P,j}(\theta)>\underline{\sigma}~ \text{for}~j=1,\dots,R_1.\label{eq:33primeiii}
\end{align}
\end{enumerate}
\end{assumption}
Assumption \ref{as:correlation}-\ref{as:correlation_base} requires that the correlation matrix of the moment functions corresponding to close-to-binding moment conditions has
eigenvalues uniformly bounded from below. 
This assumption holds in many applications of interest, including: (i) instances when the data is collected by intervals with minimum width;\footnote{\label{fn:interval_pos_width}
Empirically relevant examples are that of: (a) the Occupational Employment Statistics (OES) program at the Bureau of Labor Statistics, which collects wage data from employers as intervals of positive width, and uses these data to construct estimates for wage and salary workers in 22 major occupational groups and 801 detailed occupations; and (b) when, due to concerns for privacy, data is reported as the number of individuals who belong to each of a finite number of cells (for example, in public use tax data).}
(ii) in treatment effect models with (uniform) overlap;
(iii) in static complete information entry games under weak solution concepts, e.g. rationality of level 1, see \cite{ALT}. 

We are aware of two examples in which Assumption \ref{as:correlation}-\ref{as:correlation_base} may fail. 
One are missing data scenarios,
e.g. scalar mean, linear regression, and best linear prediction, with a vanishing probability of missing data. 
The other example, which is extensively simulated in Section \ref{sec:MC}, is the \cite{CilibertoTamer09} entry game model when the solution concept is pure strategy Nash equilibrium. We show in Appendix \ref{sec:verify_3.3} that these examples satisfy Assumption \ref{as:correlation}-\ref{as:correlation_pair}.

\begin{remark}\label{rem:verify_4_3:separable}
Assumption \ref{as:correlation}-\ref{as:correlation_pair} weakens \ref{as:correlation}-\ref{as:correlation_base} by allowing for (drifting to) perfect correlation among moment inequalities that cannot cross. This assumption is often satisfied in moment conditions that are separable in data and parameters, i.e. for each $j=1,\dots,J$,
\begin{align}
E_P[m_j(X_i,\theta)] = E_P[h_j(X_i)]-v_j(\theta),\label{eq:def_separable_model}
\end{align}
for some measurable functions $h_j:\cX \rightarrow \R$ and $v_j:\Theta \rightarrow \R$. Models like the one in \cite{CilibertoTamer09} fall in this category, and we verify Assumption \ref{as:correlation}-\ref{as:correlation_pair} for them in Appendix \ref{sec:verify_3.3}. The argument can be generalized to other separable models. 

In Appendix \ref{sec:verify_3.3}, we also verify Assumption \ref{as:correlation}-\ref{as:correlation_pair} for some models that are not separable in the sense of equation \eqref{eq:def_separable_model}, for example best linear prediction with interval outcome data. The proof can be extended to cover (again non-separable) binary models with discrete or interval valued covariates under the assumptions of \cite{mag:mau08}.
\end{remark}  

In what follows, we refer to pairs of inequality constraints indexed by $\{j,j+R_1\}$ and satisfying \eqref{eq:def_paired_ineq} as ``paired inequalities."\label{verbal_def:paired_ineq} Their presence requires a modification of the bootstrap procedure. This modification exclusively concerns the definition of $\Lambda_n^b(\theta,\rho,c)$ in equation \eqref{eq:Lambda_n}. We explain it here for the case that the GMS function $\varphi_j$ is the hard-thresholding one in footnote \ref{fn:eq:zeta} of the main paper, and refer to Appendix \ref{app:Lemma} equations \eqref{eq:paired_bind_Lboot_smooth}-\eqref{eq:paired_bind_Uboot_smooth} for the general case. 
If
\begin{align*}
\varphi_j(\hat{\xi}_{n,j}(\theta))=0 = \varphi_j(\hat{\xi}_{n,j+R_1}(\theta)),
\end{align*}
we replace $\mathbb{G}_{n,j+R_1}^{b}(\theta )$ with $-\mathbb{G}_{n,j}^{b}(\theta )$ and $\hat{D}_{n,j+R_1}(\theta)$ with $-\hat{D}_{n,j}(\theta)$, so that inequality $\mathbb{G}_{n,j+R_1}^{b}(\theta )+\hat{D}_{n,j+R_1}(\theta)\lambda  \le c$ is replaced with $-\mathbb{G}_{n,j}^{b}(\theta )-\hat{D}_{n,j}(\theta)\lambda \le c$ in equation \eqref{eq:Lambda_n}. In words, when hard threshold GMS indicates that both paired inequalities bind, we pick one of them, treat it as an equality, and drop the other one. In the proof of Theorem \ref{thm:validity}, we show that this tightens the stochastic program.\footnote{When paired inequalities are present, in equation \eqref{eq:CI} instead of $\hat{\sigma}_{n,j}$ we use the estimator $\hat{\sigma}^M_{n,j}$ specified in \eqref{eq:def_hat_sigma_M} in Lemma \ref{lem:eta_conv} p.\pageref{eq:def_hat_sigma_M} of the Appendix for $\sigma_{P,j},j=1,\dots,2R_1$ (with $R_1 \le J_1/2$ defined in the assumption). In equation \eqref{eq:hat_xi} we use $\hat{\sigma}_{n,j}$ for all $j=1,\dots,J$. To ease notation, we do not distinguish the two unless it is needed.} 
The rest of the procedure is unchanged.

Instead of Assumption \ref{as:correlation}, BCS (Assumption 2) impose the following high-level condition: (a) The limit distribution of their profiled test statistic is continuous at its $1-\alpha$ quantile if this quantile is positive; (b) else, their test is asymptotically valid with a critical value of zero. In Appendix \ref{app:proofs_alt_cont}, we show that we can replace Assumption \ref{as:correlation} with a weaker high level condition (Assumption \ref{ass:continuity_limit_cov}) that resembles the BCS assumption but constrains the limiting coverage probability. (We do not claim that the conditions are equivalent.) The substantial amount of work required for us to show that Assumption \ref{as:correlation} implies Assumption \ref{ass:continuity_limit_cov} is suggestive of how difficult these high-level conditions can be to verify.\footnote{Assumption \ref{as:correlation} is used exclusively to obtain the conclusions of Lemma \ref{lem:empt}, \ref{lem:exist_sol} and \ref{lem:cbd}, hence any alternative assumption that delivers such results can be used.}
Moreover, in Appendix \ref{sec:counterexample} we provide a simple example that violates Assumption \ref{as:correlation} and in which all of calibrated projection, BCS-profiling, and the bootstrap procedure in \cite{PakesPorterHo2011} fail. The example leverages the fact that when binding constraints are near-perfectly correlated, the projection may be estimated superconsistently, invalidating the simple nonparametric bootstrap.\footnote{The example we provide satisfies all assumptions explicitly stated in \cite{PakesPorterHo2011}, illustrating an oversight in their Theorem 2.}

Together with imposition of the $\rho$-box constraints, Assumption \ref{as:correlation} allows us to dispense with restrictions on the local geometry of the set $\Theta_I(P)$. Restrictions of this type, which are akin to constraint qualification conditions, are imposed by BCS (Assumption A.3-(a)), \cite[Assumptions A.3-A.4]{PakesPorterHo2011}, \cite[Condition C.2]{CHT}, and elsewhere. In practice, they can be hard to verify or pre-test for. We study this matter in detail in \cite{KMS2}.

We next lay out regularity conditions on the gradients of the moments. 
\begin{assumption}
\label{as:momP_KMS} All distributions $P\in \cP$ satisfy  the following conditions:
\begin{itemize}

\item[(i)] For each $j,$ there exist $D_{P,j}(\cdot)\equiv\nabla_\theta
\{E_P[m_j(X,\cdot)]/\sigma_{P,j}(\cdot)\}$ and its estimator $\hat D_{n,j}(\cdot)$ such that $\sup_{\theta\in\Theta^\epsilon}\|\hat D_{n,j}(\theta)-D_{P,j}(\theta)\|=o_{\cP}(1)$.

\item[(ii)] There exist $M, \bar{M}<\infty$ such that for all $\theta,\tilde{\theta} \in \Theta^\epsilon$ $\max_{j=1,\dots,J} \|D_{P,j}(\theta)-D_{P,j}(\tilde{\theta})\|\le M \|\theta-\tilde{\theta}\|$ and $\max_{j=1,\dots,J} \sup_{\theta \in \Theta_I(P)}\|D_{P,j}(\theta)\|\le  \bar{M}$. 
\end{itemize}
\end{assumption}
Assumption \ref{as:momP_KMS} requires that each of the $J$ normalized population moments is differentiable, that its derivative is Lipschitz continuous, and that this derivative can be consistently estimated uniformly in $\theta$ and $P$.\footnote{The requirements are imposed on $\Theta^\epsilon$. Under Assumption \ref{as:correlation}-\ref{as:correlation_base} it suffices they hold on $\Theta$.} We require these conditions because we use a linear expansion of the population moments to obtain a first-order approximation to the nonlinear programs defining $CI_n$, and because our bootstrap procedure requires an estimator of $D_{P}$. 

A final set of assumptions is on the normalized empirical process. For this,
define the variance semimetric $\varrho_P$ by
\begin{align}
	\varrho_{P}(\theta,\tilde{\theta})\equiv \Big\|\big\{\big[Var_P\big(\sigma_{P,j}^{-1}(\theta)m_j(X,\theta)-\sigma_{P,j}^{-1}(\tilde{\theta})m_j(X,\tilde{\theta})\big)\big]^{1/2}\big\}_{j=1}^J\Big\|.
\end{align}
For each $\theta,\tilde{\theta}\in\Theta$ and $P$, let $Q_P(\theta,\tilde{\theta})$ denote a $J$-by-$J$ matrix whose $(j,k)$-th element is the covariance between 
$m_j(X_i,\theta)/\sigma_{P,j}(\theta)$ and $m_k(X_i,\tilde{\theta}))/\sigma_{P,k}(\tilde{\theta})$.
\begin{assumption}\label{as:bcs1}
All distributions $P \in \cP$ satisfy the following conditions:
\begin{itemize}
\item [(i)]	The class of functions $\{\sigma_{P,j}^{-1}(\theta)m_j(\cdot,\theta):\mathcal X\to\mathbb R,\theta\in\Theta\}$ is measurable for each $j=1,\cdots, J$.
\item [(ii)]  The empirical process $\mathbb G_{n}$ with $j$-th component $\mathbb G_{n,j}$ is uniformly asymptotically $\varrho_P$-equicontinuous. That is, for any $\epsilon>0$,
	\begin{align}
		\lim_{\delta\downarrow 0}\limsup_{n\to\infty}\sup_{P\in\mathcal P}P\left(\sup_{\varrho_P(\theta,\tilde{\theta})<\delta}\|\mathbb G_{n}(\theta)-\mathbb G_{n}(\tilde{\theta})\|>\epsilon\right)=0.
	\end{align}
\item [(iii)] $Q_P$ satisfies
\begin{align}
	\lim_{\delta\downarrow 0}\sup_{\|(\theta_1,\tilde{\theta}_1)-(\theta_2,\tilde{\theta}_2)\|<\delta}\sup_{P\in\mathcal P}\|Q_P(\theta_1,\tilde{\theta}_1)-Q_P(\theta_2,\tilde{\theta}_2)\|=0.	
\end{align}	
\end{itemize}	
\end{assumption}
Under this assumption, the class of  normalized moment functions is uniformly Donsker \citep{Bugni:2015jk}. We use this fact to show validity of our method.

\subsection{High Level Conditions Replacing Assumption \ref{as:correlation} and the $\rho$-Box Constraints}
\label{sec:alt:ass:eig}
Next, we consider two high level assumptions. The first one aims at informally mimicking Assumption A.2 in \cite{BCS14_subv} and replaces Assumption \ref{as:correlation}. The second one replaces the use of the $\rho$-box constraints. Below, for a given set $A \subset \R^d$, let $\Vert A \Vert_H=\sup_{a \in A}\Vert a \Vert$ denote its Hausdorff norm.
\begin{assumption}
\label{ass:continuity_limit_cov}
Consider any sequence $\{P_n,\theta_n\} \in \{(P,\theta):P \in \cP,\theta \in \Theta_I(P)\}$ such that 
\begin{align*}
\kappa_n^{-1}\sqrt{n}\gamma_{1,P_n,j}(\theta_n) &\to \pi_{1j} \in \R_{[-\infty]},~j=1,\dots,J,\\
\Omega_{P_n} &\uni \Omega,\\
D_{P_n}(\theta_n) &\to D.
\end{align*}
Let $\pi^*_{1j}=0$ if $\pi_{1j} = 0$ and $\pi^*_{1j}=-\infty$ if $\pi_{1j} < 0$. Let $\HH$ be a Gaussian process with covariance kernel $\Omega$. Let
\begin{align}
\mathfrak{w}_{j}(\lambda)&\equiv \HH_{j}+\rho D_{j}\lambda+\pi^*_{1,j}.\label{eq:wj_var}
\end{align}
Let
\begin{align}
\mathfrak{W}(c)&\equiv \big\{ \lambda\in \mathfrak B^d_\rho: p^\prime \lambda = 0 \cap \mathfrak w_{j}(\lambda)\le c, \: \forall j=1,\dots,J\big\} \label{eq:set_W_Lpop_var},\\
c_{\pi^*}&\equiv \inf\{c\in\mathbb R_+:\mathrm{Pr}(\mathfrak{W}(c)\ne\emptyset)\ge 1-\alpha\}.\label{eq:def:c_pi_star}
\end{align}
Then:
\begin{enumerate}
\item If $c_{\pi^*}>0$, $\mathrm{Pr}\left(\mathfrak{W}(c)\neq \emptyset \right)$ is continuous and strictly increasing at $c=c_{\pi^*}$.
\item If $c_{\pi^*}=0$, $\lim \inf_{n \to \infty} P_n(U_n(\theta_n,0)\neq \emptyset)\ge 1-\alpha$, where $U_n(\theta_n,c),~c\ge 0$ is as in \eqref{eq:set_U_NL}.
\end{enumerate}
\end{assumption}
\begin{assumption}
\label{ass:continuity_limit_cov_II} 
Consider any sequence $\{P_n,\theta_n\} \in \{(P,\theta):P \in \cP,\theta \in \Theta_I(P)\}$ as in Assumption \ref{ass:continuity_limit_cov}.
Let
$$ \bar{\mathfrak{W}}(c)\equiv \big\{ \lambda\in \mathbb R^d: p^\prime \lambda = 0 \cap \mathfrak w_{j}(\lambda)\le c, \: \forall j=1,\dots,J\big\}, $$
which differs from \eqref{eq:set_W_Lpop_var} by not constraining $\lambda$ to $\mathfrak B^d_\rho$, and let $\bar c \equiv \Phi^{-1}(1-\alpha/J)$ denote the asymptotic Bonferroni critical value. Then for every $\eta>0$ there exists $M_\eta<\infty$ s.t. $\mathrm{Pr}(\Vert \bar{\mathfrak{W}}(\bar{c}) \Vert_H > M_\eta) \leq \eta$. 
\end{assumption}

\subsection{Example of Methods Failure When Assumption \ref{as:correlation} Fails}
\label{sec:counterexample}
Consider one-sided testing with two inequality constraints in $\R^2$. The
constraints are%
\begin{eqnarray*}
\theta _{1}+\theta _{2} &\leq &E_P(X_1) \\
\theta _{1}-\theta _{2} &\leq &E_P(X_2).
\end{eqnarray*}%
The projection of $\Theta_I(P)$ in direction $p=(1,0)$ is $(-\infty,(E_P(X_1)+E_P(X_2))/2]$, the support set is $H(p,\Theta_I)=\{((E_P(X_1)+E_P(X_2))/2,(E_P(X_1)-E_P(X_2))/2)\}$, and the support function takes value $\theta_1^*=(E_P(X_1)+E_P(X_2))/2$.

The random variables $(X_1,X_2)^{\prime }$ have a mixture distribution
as follows:%
\begin{equation*}
\left[\begin{array}{c} X_1 \\ X_2 \end{array}\right] \sim \left\{\begin{array}{ll}N\left( 0,\left[\begin{array}{cc}1 & -1 \\ -1 & 1 \end{array} \right] \right) & \text{with probability }1-1/n, \\ \delta _{(1,1)}\text{ (degenerate)} & \text{otherwise},\end{array}\right. 
\end{equation*}%
hence $E_P(X_1)=E_P(X_2)=\theta_1^*=1/n$. Note in particular the implication that%
\begin{equation*}
\frac{X_1+X_2}{2}=\left\{\begin{array}{ll}0 & \text{with probability }1-1/n, \\1 & \text{otherwise}.\end{array}\right. 
\end{equation*}

The natural estimator of $\theta _{1}^{\ast }$ is $\hat{\theta }_{1}^{\ast }=(\bar{X}_{1}+\bar{X}_{2})/2$. It is distributed as $Z/n$, where $Z$ is Binomial with parameters $(1/n,n)$. For large $n$, the distribution of $Z$ is well approximated as Poisson with parameter $1$. In particular, with probability approximately $e^{-1}\approx 37\%$, every sample realization of $(X_1+X_2)/2$ equals zero. In this case, the following happens: (i) The projection of the sample analog of the identified set is $(-\infty ,0]$, so that a strictly positive critical value or level would be needed to cover the true projection. (ii) Because the empirical distribution of $(X_1+X_2)/2$ is degenerate at zero, the distribution of $(\bar{X}_{1}^{b}+\bar{X}_2^b)/2$ is as well. Hence, all of \cite{PakesPorterHo2011}, \cite{BCS14_subv}, and calibrated projection (each with either parametric or nonparametric bootstrap) compute critical values or relaxation levels of $0$.

This bounds from above the true coverage of all of these methods at $e^{-1}\approx 63\%$. Note that $(m<n)$-subsampling will encounter the same problem. Next we provide some discussion of the example.

\noindent\textbf{Violation of Assumptions.}
The example violates our Assumption \ref{as:correlation} because $Cov(X_1,X_2)\rightarrow 1$. It also violates Assumption 2 in \cite{BCS14_subv}: Their Assumption A2-(b) should apply, but the profiled test statistic on the true null concentrates at $1/n$. The example satisfies the assumptions explicitly stated in \cite{PakesPorterHo2011}, illustrating an oversight in their Theorem 2. (We here refer to the inference part of their 2011 working paper. We identified corresponding oversights in the proof of their Proposition 6.)

The example satisfies the assumptions of \cite{AS} and \cite{Andrews_Guggenberger2009bET}, and both methods work here. The reason is that both focus on the distribution of the criterion function at a fixed $\theta $ and are not affected by the irregularity of $\hat{\theta }_{1}^{\ast }$.

\noindent\textbf{Relation to \cite{Mammen92}.}
In this example, all of \cite{BCS14_subv}, \cite{PakesPorterHo2011}, and our calibrated projection method reduce to one-sided nonparametric percentile bootstrap confidence intervals for $(E_P(X_1)+E_P(X_2))/2$ estimated by $(\bar{X}_{1}+\bar{X}_{2})/2$. By \cite[Theorem 1]{Mammen92}, asymptotic normality of an appropriately standardized estimator, i.e.
\begin{equation*}
\exists \{a_{n}\}:a_{n}\left( (\bar{X}_{1}+\bar{X}_{2})-(E_P(X_1)+E_P(X_2))\right) \overset{d}{\rightarrow }N(0,1),
\end{equation*}
is \textit{necessary and} sufficient for this interval to be valid. This fails (the true limit is recentered Poisson at rate $a_n=n$), so that validity of any of the aforementioned methods would contradict the Theorem. 

\section{Verification of Assumptions for the Canonical Partial Identification Examples}\label{sec:verify_examples}

In this section we verify: (i) Assumption \ref{as:Lipschitz_m_over_sigma} which is the crucial condition in Theorem \ref{cor:eam_conv}, and (ii) Assumption \ref{as:correlation}-\ref{as:correlation_pair}, for the canonical examples in the partial identification literature:
\begin{enumerate}
\item \textbf{Mean with interval data (of which missing data is a special case)}. Here we assume that $W_0,W_1$ are two observable random variables such that $P(W_0\le W_1)=1$. The identified set is defined as
\begin{align}
\Theta_I(P)=\{\theta \in \Theta \subset \R: E_P(W_0)-\theta \le 0, \theta -E_P(W_1)\le 0\}.\label{eq:Theta_I:mean}
\end{align}
\item \textbf{Linear regression with interval outcome data and discrete regressors.} Here the modeling assumption is that $W=Z^\prime \theta +u$, where $Z=[Z_1;\dots;Z_d]$ is a $d\times 1$ random vector with $Z_1=1$. We assume that $Z$ has $k$ points of support denoted $z^1,\dots,z^k \in \R^d$ with $\max_{r=1,\dots,k}\Vert z^r \Vert<M<\infty$. The researcher observes $\{W_0,W_1,Z\}$ with $P(W_0\le W \le W_1|Z=z^r)=1,r=1,\dots,k$.
The identified set is
\begin{align}
\Theta_I(P)=\{\theta \in \Theta \subset \R^d: E_P(W_0|Z=z^r)-z^{r\prime} \theta \le 0, z^{r\prime} \theta-E_P(W_1|Z=z^r) \le 0, r=1,\dots,k\}.\label{eq:Theta_I:OLS}
\end{align}
\item \textbf{Best linear prediction with interval outcome data and discrete regressors.} Here the variables are defined as for the linear regression case.
\cite{ber:mol08} show that the identified set for the parameters of a best linear predictor of $W$ conditional on $Z$ is given by the set $\Theta_I(P)=E_P(ZZ^\prime)^{-1}E_P(Z\mathbf{W})$, where $\mathbf{W}=[W_0,W_1]$ is a random closed set and, with some abuse of notation, $E_P(Z\mathbf{W})$ denotes the Aumann expectation of $Z\mathbf{W}$. \newline
Here we go beyond the results in \cite{ber:mol08} and derive a moment inequality representation for $\Theta_I(P)$ when $Z$ has a discrete distribution. We denote by $u^r$ the vector $u^r=e^{r\prime}(M_{P}^\prime M_{P})^{-1}M_{P}^\prime E_P(ZZ^\prime)$, $r=1,\dots,k$, where $e^r$ is the $r$-th basis vector in $\R^k$ and $M_{P}$ is a $d\times K$ matrix with $r$-th column equal to $P(Z=z^r)z^r$; we let $q^r=u^r E_P(ZZ^\prime)^{-1}$. Observe that for any selection $\tilde{W}\in \mathbf{W}~a.s.$ one has $u^rE_P(ZZ^\prime)^{-1}E_P(Z\tilde{W})=e^{r\prime}[E_P(\tilde{W}|Z=z^1);\dots;E_P(\tilde{W}|Z=z^k)]$, so that the support function in direction $u^r$ is maximized/minimized by setting
$E_P(\tilde{W}|Z=z^r)$ equal to $E_P(W_1|Z=z^r)$ and $E_P(W_0|Z=z^r)$, respectively. Hence, the identified set can be written in terms of moment inequalities as
\begin{align}
\Theta_I(P)=\{\theta \in \Theta \subset \R^d: &~q^r [E_P(Z(Z^\prime\theta -W_0-\mathbf{1}(q^rZ>0)(W_1-W_0)))]\le 0 \notag\\
-&~q^r [E_P(Z(Z^\prime\theta -W_0-\mathbf{1}(q^rZ<0)(W_1-W_0)))]\le0,
 r=1,\dots,k\}.\label{eq:Theta_I:BLP}
\end{align}
The set is expressed through evaluation of its support function, given in \cite[Proposition 2]{BontempsMagnacMaurin2012E}, at directions $\pm u^r$; these are the directions orthogonal to the flat faces of $\Theta_I(P)$.
\item \textbf{Complete information entry games with pure strategy Nash equilibrium as solution concept.} Here again we assume that the vector $Z$ has $k$ points of support with bounded norm, and the identified set is
\begin{align}
\Theta_I(P)=\{\theta \in \Theta \subset \R^d:\text{ equations } \eqref{eq:entry5}, \eqref{eq:entry6}, \eqref{eq:entry7}, \eqref{eq:entry8} \text{ hold for all } Z=z^r,r=1,\dots,k\}.
\end{align}
\end{enumerate}
In the first three examples we let $X\equiv(W_0,W_1,Z)'$. In the last example we let $X\equiv(Y_1,Y_2,Z)'$. Throughout, we propose to estimate $E_P(W_\ell|Z=z^r)$ and $E_P(Y_1=s,Y_2=t|Z=z^r)$, $\ell=0,1$, $(s,t)\in \{0,1\}\times\{0,1\}$ and $r=1,\dots,k$, using
\begin{align}
\hat{E}_n(W_\ell|Z=z^r)&=\frac{\sum_{i=1}^n W_{\ell,i}\mathbf{1}(Z_i=z^r)}{\sum_{i=1}^n \mathbf{1}(Z_i=z^r)},\label{eq:kernel_reg1}\\
\hat{E}_n(Y_1=s,Y_2=t|Z=z^r)&=\frac{\sum_{i=1}^n \mathbf{1}(Y_{1,i}=s,Y_{2,i}=t,Z_i=z^r)}{\sum_{i=1}^n \mathbf{1}(Z_i=z^r)},\label{eq:kernel_reg}
\end{align}
as it is done in, e.g., \cite{CilibertoTamer09}.
We assume that for each of the four canonical examples under consideration, Assumption \ref{as:momP_AS} as well as one of the assumptions below hold. 
\begin{assumption}
\label{as:var:BM} 
The model $\cP$ for $P$ satisfies $\min_{\ell=0,1}\min_{r=1,\dots,k}Var_P(W_\ell|Z=z^r)>\underline{\sigma}>0$ and \newline $\min_{r=1,\dots,k}P(Z=z^r)>\varpi>0$.
\end{assumption}
\begin{assumption}
\label{as:overlap} 
The model $\cP$ for $P$ satisfies: (1) $\eig(M_P^\prime M_P)>\varsigma$;
(2) $\eig(E_P(ZZ^\prime))>\varsigma$;\newline (3) $\eig(Corr_P([vech(ZZ'); W_0]))>\varsigma$ and $\eig(Corr_P([vech(ZZ'); W_1]))>\varsigma$; for some $\varsigma>0$, where $vech(A)$ denotes the half-vectorization of the matrix $A$.
\end{assumption}
\begin{assumption}
\label{as:overlap_game} 
The model $\cP$ for $P$ satisfies $\min_{r=1,\dots,k,(s,t)\in\{0,1\}\times\{0,1\}}P(Y_1=s,Y_2=t,Z=z^r)>\varpi>0$.
\end{assumption}
These are simple to verify low level conditions. We note that \cite{ImbensManski04} and \cite{Stoye09} directly assume the unconditional version of \ref{as:var:BM}, while \cite{ber:mol08} assume \ref{as:var:BM} itself. 

\subsection{Verification of Assumptions \ref{as:Lipschitz_m_over_sigma} and \ref{as:smoothness}-(i)}
\label{sec:verify_ass_EAM}
We show that in each of the four examples $\frac{m_j(x,\theta)}{\sigma_{P,j}(\theta)},~j=1,\dots,J$ is Lipschitz continuous in $\theta \in \Theta$ for all $x \in \mathcal{X}$ and that $D_P$ can be estimated at rate $n^{-1/2}$. The same arguments, with small modification, deliver verification of Assumption \ref{as:smoothness}-(i) provided $\hat{\sigma}_{n,j}(\theta)>0$.

\begin{enumerate}
\item \textbf{Mean with interval data}. Here $\sigma_{P,\ell}(\theta)=\sigma_{P,\ell}$, and under Assumption \ref{as:var:BM} it is uniformly bounded from below. Then
\begin{align*}
\left|\frac{m_j(x,\theta)}{\sigma_{P,j}}-\frac{m_j(x,\theta')}{\sigma_{P,j}}\right|&=\frac{\Vert(\theta'-\theta)\Vert}{\sigma_{P,j}},~~\ell=0,1,\\
D_{P,\ell}(\theta)&=\frac{(-1)^{(1-\ell)}}{\sigma_{P,\ell}},~~\ell=0,1.
\end{align*} 
Assumption \ref{as:var:BM} then guarantees that Assumption \ref{as:Lipschitz_m_over_sigma} is satisfied.
\item \textbf{Linear regression with interval outcome data and discrete regressors.}  Here again $\sigma_{P,\ell r}(\theta)=\sigma_{P,\ell r}$, and under Assumptions \ref{as:var:BM}-\ref{as:overlap} it is uniformly bounded from below. We first consider the rescaled function $\frac{(-1)^j(W_\ell\mathbf{1}(Z=z^r)/P(Z=z^r)-z^{r\prime}\theta)}{\sigma_{P,\ell r}}$:
\begin{align*}
\left|\frac{(-1)^j(W_\ell\mathbf{1}(Z=z^r)/P(Z=z^r)-z^{r\prime}\theta)}{\sigma_{P,\ell r}}-\frac{(-1)^j(W_\ell\mathbf{1}(Z=z^r)/P(Z=z^r)-z^{r\prime}\theta')}{\sigma_{P,\ell r}}\right|&=\Vert z^r \Vert\frac{\Vert(\theta'-\theta)\Vert}{\sigma_{P,\ell r}(\theta)},~~\ell=0,1,
\end{align*} 
so that Assumption \ref{as:Lipschitz_m_over_sigma} is satisfied for these rescaled functions by  Assumptions \ref{as:var:BM}-\ref{as:overlap}. Next, we observe that
\begin{align*}
D_{P,j}=\frac{(-1)^{(1-j)}z^{r\prime}}{\sigma_{P,\ell r}}&,~~\ell=0,1,r=1,\dots,k,
\end{align*} 
and it can be estimated at rate $n^{-1/2}$ by Lemma \ref{lem:eta_rate}. Theorem \ref{cor:eam_conv} then holds observing that $|P(Z=z^r)/(\sum_{i=1}^n \mathbf{1}(Z_i=z^r)/n)-1| = O_{\cP}(n^{-1/2})$ and treating this random element similarly to how we treat $\eta_{n,j}(\cdot)$ in the proof of Theorem \ref{cor:eam_conv}.
 
\item \textbf{Best linear prediction with interval outcome data and discrete regressors.} Here 
\begin{align}
m_r(X_i,\theta)=q^r [Z_i(Z_i^\prime \theta -(W_{0,i}+\mathbf{1}(q^rZ_i>0)(W_{1,i}-W_{0,i})))]
\end{align}
hence is Lipschitz in $\theta$ with constant $Z_iZ_i^\prime$. Under Assumptions \ref{as:var:BM}-\ref{as:overlap}, $Var_P(m_r(X_i,\theta))$ is uniformly bounded from below, and Lipschitz in $\theta$ with a constant that depends on $Z_i^4$. Hence $\frac{m_r(X_i,\theta)}{\sigma_{P,r}(\theta)}$ is Lipschitz in $\theta$ with a constant that depends on powers of $Z$. Because $Z$ has bounded support, Assumption \ref{as:Lipschitz_m_over_sigma} is satisfied. A simple argument yields that $D_P$ can be estimated at rate $n^{-1/2}$.
\item \textbf{Complete information entry games with pure strategy Nash equilibrium as solution concept.} Here again $\sigma_{P,st r}(\theta)=\sigma_{P,str}$, and under Assumptions \ref{as:momP_AS} and \ref{as:overlap_game} it is uniformly bounded from below. 
The result then follows from a similar argument as the one used in Example 2 (Linear regression with interval outcome data and discrete regressors), observing that the rescaled function of interest is now
\begin{align*}
\frac{\mathbf{1}(Y_1=s,Y_2=t,Z=z^r)/P(Z=z^r)-g_{str}(\theta)}{\sigma_{P,str}}&,~~(s,t)\in \{0,1\}\times\{0,1\},r=1,\dots,k,
\end{align*}
and the gradient is
\begin{align*}
\frac{1}{\sigma_{P,str}}\nabla_\theta g_{str}(\theta) &,~~(s,t)\in \{0,1\}\times\{0,1\},r=1,\dots,k,
\end{align*}
where $g_{str}(\theta)$ are model-implied entry probabilities, and hence taking their values in $[0,1]$. The entry models typically posited assume that payoff shocks have smooth distributions (e.g., multivariate normal), yielding that $\nabla_\theta g_{str}(\theta)$ is well defined and bounded.
\end{enumerate}

\subsection{Verification of Assumption \ref{as:correlation}-\ref{as:correlation_pair}}
\label{sec:verify_3.3}
Here we verify Assumption \ref{as:correlation}-\ref{as:correlation_pair} for the canonical examples in the moment (in)equalities literature:
\begin{enumerate}
\item \textbf{Mean with interval data}. In the generalization of this example in \cite{ImbensManski04} and \cite{Stoye09}, equations \eqref{eq:def_paired_ineq}-\eqref{eq:eig_cond_paired_ineq} are satisfied by construction, equation \eqref{eq:33primeiii} is directly assumed.

\item \textbf{Linear regression with interval outcome data and discrete regressors.} Equation \eqref{eq:def_paired_ineq} is satisfied by construction. Given the estimator that we use for the population moment conditions, we verify equation \eqref{eq:33primeiii} for the variances of the limit distribution of the vector $[\sqrt{n}(\hat{E}_n(W_\ell|Z=z^r)-E_P(W_\ell|Z=z^r))]_{\ell\in \{0,1\},r=1,\dots,k}$. We then have that equation \eqref{eq:33primeiii} follows from Assumption \ref{as:var:BM}.
Concerning equation \eqref{eq:33primeiii}, this needs to be verified for the correlation matrix of the limit distribution of a $r\times 1$ random vector that for each $r=1,\dots,k$ equals any choice in $\{\sqrt{n}(\hat{E}_n(W_0|Z=z^r)-E_P(W_0|Z=z^r)),\sqrt{n}(\hat{E}_n(W_1|Z=z^r)-E_P(W_1|Z=z^r))\}$, which suffices for our results to hold. We then have that \eqref{eq:eig_cond_paired_ineq} holds because the correlation matrix is diagonal.
\item \textbf{Best linear prediction with interval outcome data and discrete regressors.} Equation \eqref{eq:def_paired_ineq} is again satisfied by construction. Equation \eqref{eq:eig_cond_paired_ineq} holds under Assumptions \ref{as:var:BM}-\ref{as:overlap}.
Equation \eqref{eq:33primeiii} is verified to hold under Assumption \ref{as:var:BM} in \cite[p. 808]{ber:mol08}. 
\item \textbf{Complete information entry games with pure strategy Nash equilibrium as solution concept.} 
In this case equations \eqref{eq:entry7} and \eqref{eq:entry8} are paired, but the corresponding moment functions differ by the model implied probability of the region of multiplicity, hence equation \eqref{eq:def_paired_ineq} is satisfied by construction. 
Given the estimator that we use for the population moment conditions, we verify equations \eqref{eq:eig_cond_paired_ineq} and \eqref{eq:33primeiii} for the variances and for the correlation matrix of the limit distribution of the vector $\sqrt{n}(\hat{E}_n(Y_1=s,Y_2=t|Z=z^r)-E_P(Y_1=s,Y_2=t|Z=z^r)_{(s,t)\in \{0,1\}\times\{0,1\},r=1,\dots,k})$, which suffices for our results to hold. 
Equation \eqref{eq:eig_cond_paired_ineq} holds provided that $|Corr(Y_{i1}(1- Y_{i2}),Y_{i1} Y_{i2})|<1-\epsilon$ for some $\epsilon>0$ and Assumption \ref{as:overlap_game} holds.\footnote{In more general instances with more than two players, it follows if the multinomial distribution of outcomes of the game (reduced by one element) has a correlation matrix with eigenvalues uniformly bounded away from zero.}
To see that equation \eqref{eq:33primeiii} also holds, note that Assumption \ref{as:overlap_game} yields that $P(Y_{i1}=1,Y_{i2}=0, Z_i=z^r)$ is uniformly bounded away from 0 and 1, thereby implying that for each 
$(s,t)\in \{0,1\}\times\{0,1\},r=1,\dots,k$, $(P(Y_1=s,Y_2=t|Z=z^r)(1-P(Y_1=s,Y_2=t|Z=z^r)))/(P(Z=z^r)(1-P(Z=z^r)))$ is uniformly bounded away from zero. 
\end{enumerate}

\clearpage
\section{Proof of Theorem \ref{thm:validity}} \label{sec:app_A}

\subsection{Notation and Structure of the Proof of Theorem \ref{thm:validity}}\label{subsec:notation}
For any sequence of random variables $\{X_n\}$ and a positive sequence $a_n$, we write $X_n=o_{\mathcal P}(a_n)$ if for any $\epsilon,\eta>0$, there is $N\in\mathbb N$ such that $\sup_{P\in\mathcal P}P(|X_n/a_n|>\epsilon)<\eta,\forall n\ge N$. We write $X_n=O_{\mathcal P}(a_n)$ if for any $\eta>0$, there is a $M\in\mathbb R_+$ and $N\in\mathbb N$ such that $\sup_{P\in\mathcal P}P(|X_n/a_n|>M)<\eta,\forall n\ge N$. 

\setcounter{table}{0} 
\begin{table}[htbp]

\advance\leftskip-1.5cm
\parbox{20cm}{\caption[Table Notation]{Important notation. Here $(P_n,\theta_n)\in \{(P,\theta):P \in \cP,\theta \in \Theta_I(P)\}$ is a subsequence as defined in \eqref{eq:cover3}-\eqref{eq:pi_def} below,  $\theta_n^\prime \in \thetnprime$, $B^d = \{x \in \R^d:|x_i| \le 1, i=1,\dots,d\}$, $B^d_{n,\rho}\equiv \frac{\sqrt n}{\rho }(\Theta-\theta_n)\cap B^d$, $\mathfrak B^d_\rho=\lim_{n\to\infty}B^d_{n,\rho}$, and $\lambda\in\R^d$.}\label{table:notation}} \smallskip

\resizebox{20cm}{!}{

\begin{tabular}{lllllllllllllllll}
\hline \hline
\\ 
$\G_{n,j}(\cdot) $ & $=$ & $ \frac{\sqrt n (\bar{m}_{n,j}(\cdot) - E_P(m_j(X_i,\cdot)))}{\sigma_{P,j}(\cdot)},~j=1,\dots,J$ & Sample empirical process.\\ 
[0.2cm]
$\G_{n,j}^b(\cdot) $ & $=$ & $ \frac{\sqrt n (\bar{m}^b_{n,j}(\cdot) - \bar{m}_{n,j}(\cdot))}{\hat{\sigma}_{n,j}(\cdot)},~j=1,\dots,J$ & Bootstrap empirical process. \\ 
[0.2cm]
$\eta_{n,j}(\cdot) $ & $=$ & $ \frac{\sigma_{P,j}(\cdot)}{\hat{\sigma}_{n,j}(\cdot)}-1,~j=1,\dots,J$ & Estimation error in sample moments' asymptotic standard deviation.\\
[0.2cm]
$D_{P,j}(\cdot) $ & $=$ & $ \nabla_\theta \left(\frac{E_P(m_j(X_i,\cdot))}{\sigma_{P,j}(\cdot)} \right),~j=1,\dots,J$ & Gradient of population moments w.r.t. $\theta$, with estimator $\hat{D}_{n,j}(\cdot) $. \\
[0.2cm]
$\gamma_{1,P_n,j}(\cdot) $ & $=$ & $ \frac{E_{P_n}(m_j(X_i,\cdot))}{\sigma_{P_n,j}(\cdot)},~j=1,\dots,J$ & Studentized population moments. \\
[0.4cm]
$\pi_{1,j}$ & $=$ & $\lim_{n \to \infty} \kappa_{n}^{-1} \sqrt {n} \gamma_{1,P_n,j}(\theta_{n}^\prime)$ & Limit of rescaled population moments, constant $\forall \theta_n^\prime \in \thetnprime$\\
 &  &  & by Lemma \ref{lem:Jstar}.\\
$\pi_{1,j}^*$ & $=$ & $\left\{ \begin{matrix}
   0, & \mathrm{if}~\pi_{1,j}=0,\\
   -\infty, & \mathrm{if}~\pi_{1,j}<0.
\end{matrix}\right.$ & ``Oracle" GMS.\\
[0.4cm]
$\hat{\xi}_{n,j}(\cdot)$ & $=$ & $\left\{\begin{array}{ll}
\kappa_n^{-1}\sqrt n\bar m_{n,j}(\cdot)/\hat\sigma_{n,j}(\cdot), & j=1,\dots,J_1 \\ 
0, & j=J_1+1,\dots,J
\end{array}\right.$  & Rescaled studentized sample moments, set to $0$ for equalities. \\
[0.5cm]
$\varphi^*_j(\xi)$& $=$ & $\begin{cases}
	\varphi_j(\xi)&\pi_{1,j}=0\\
	-\infty&\pi_{1,j}<0\\
	0&j=J_1+1,\cdots,J.
\end{cases}$ & Infeasible GMS that is less conservative than $\varphi_j$.  \\
[0.5cm]
$u_{n,j,\theta_n}(\lambda)$ & $=$ &  $\{\mathbb G_{n,j}(\theta_n+\larhon)+\rho D_{P_n,j}(\bar{\theta}_n)\lambda+\pi^*_{1,j}\}(1+\eta_{n,j}(\theta_n+\larhon))$ & Mean value expansion of nonlinear constraints with sample empirical process \\
 &  &  & and ``oracle" GMS, with $\bar{\theta}_n$ componentwise between $\theta_n$ and $\theta_n+\larhon$.\\
[0.4cm]
$U_n(\theta_n,c)$ & $=$ &  $\big\{\lambda\in B^d_{n,\rho}: p^\prime \lambda = 0 \cap u_{n,j,\theta_n}(\lambda)\le c, \: \forall j=1,\dots,J\big\}$ & Feasible set for nonlinear sample problem intersected with $p^\prime \lambda=0$.\\
[0.4cm]
$\mathfrak{w}_{j}(\lambda)$ & $=$ & $\HH_{j}+\rho D_{j}\lambda+\pi^*_{1,j}$ & Linearized constraints with a Gaussian shift and ``oracle'' GMS.   \\
[0.4cm]
$\mathfrak W(c)$ & $=$ & $\big\{\lambda\in \mathfrak B^d_\rho: p^\prime \lambda = 0 \cap \mathfrak w_{j}(\lambda)\le c, \: \forall j=1,\dots,J\big\}$ & Feasible set for linearized limit problem intersected with $p'\lambda=0$.
\\
[0.4cm]
$c_{\pi^*}$ &$=$ & $\inf\{c\in\mathbb R_+:\mathrm{Pr}(\mathfrak{W}(c)\ne\emptyset)\ge 1-\alpha\}$.& Limit problem critical level.
\\
[0.4cm]
$v^b_{n,j,\theta_n^\prime}(\lambda)$ & $=$ &  $\mathbb G^b_{n,j}(\theta_n^\prime)+\rho \hat{D}_{n,j}(\theta_n^\prime)\lambda+\varphi_j(\hat{\xi}_{n,j}(\theta_n^\prime))$ & Linearized constraints with bootstrap empirical process and sample GMS.\\
[0.4cm]
$V_n^b(\theta_n^\prime,c)$ & $=$ &  $\big\{ \lambda\in B^d_{n,\rho}: p^\prime \lambda = 0 \cap v^b_{n,j,\theta_n^\prime}(\lambda)\le c, \: \forall j=1,\dots,J\big\}$ & Feasible set for linearized bootstrap problem with sample GMS and $p^\prime \lambda=0$. &\\
[0.4cm]
$v^I_{n,j,\theta_n^\prime}(\lambda)$ & $=$ &  $\mathbb G^b_{n,j}(\theta_n^\prime)+\rho \hat{D}_{n,j}(\theta_n^\prime)\lambda+\varphi^*_j(\hat{\xi}_{n,j}(\theta_n^\prime))$ & Linearized constraints with bootstrap empirical process and infeasible sample GMS.\\
[0.4cm]
$V_n^I(\theta_n^\prime,c)$ & $=$ &  $\big\{ \lambda\in B^d_{n,\rho}: p^\prime \lambda = 0 \cap v^I_{n,j,\theta_n^\prime}(\lambda)\le c, \: \forall j=1,\dots,J\big\}$ & Feasible set for linearized bootstrap problem with infeasible sample GMS and $p^\prime \lambda=0$. &\\
[0.4cm]

$\hat c_n(\theta)$ & $=$ &  $\inf\{c\in\mathbb R_+:P^*(V_n^b(\theta,c)\neq \emptyset)\ge 1-\alpha\}$ & Bootstrap critical level.\\
[0.2cm]
$\hat c_{n,\rho}(\theta)$ & $=$ & $\inf_{\lambda \in B^d_{n,\rho}} \hat{c}_n(\theta+\frac{\lambda\rho }{\sqrt n})$ & Smallest value of the bootstrap critical level in a $B^d_{n,\rho}$ neighborhood of $\theta$.\\
[0.2cm]
$\hat \sigma_{n,j}^M(\theta)$ & $=$ & $\hat \mu_{n,j}(\theta)\hat\sigma_{n,j}(\theta)+(1-\hat \mu_{n,j}(\theta))\hat\sigma_{n,j+R_1}(\theta)$ & Weighted sum of the estimators of the standard deviations of paired inequalities \\
[0.2cm]
\hline \hline
\end{tabular}
}
\end{table}

\enlargethispage*{5\baselineskip}
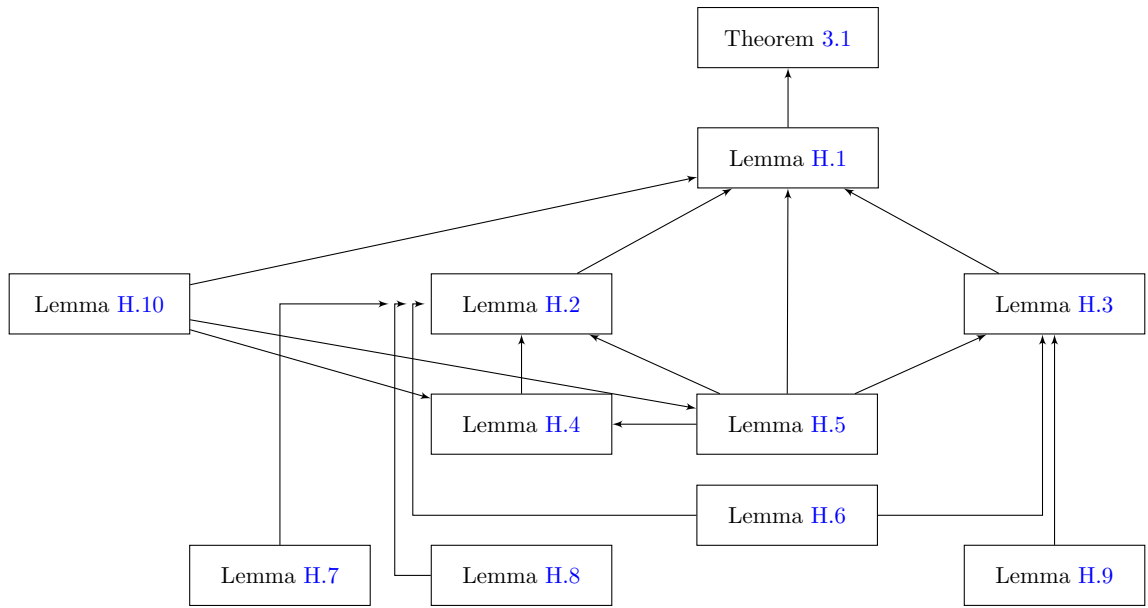
\begin{figure}[h]
	\caption{Structure of Lemmas used in the proof of Theorem \ref{thm:validity}-\ref{thm:validity:basic}.}
	\label{fig:flow_app}
\begin{center}
\begin{tikzpicture}[node distance = 2cm, scale=0.8, every node/.style={transform shape}]
\linespread{0.9}
\node [lemma] (Thm31) {Theorem \ref{thm:validity}};
 
\node [lemma, below of =Thm31] (B2) {Lemma \ref{lem:val}};
 
\node [lemma, below right =2cm of B2] (B3) {Lemma \ref{lem:cv_convergence}};
 
\node [lemma, below left =2cm of B2] (B4) {Lemma \ref{lem:UnonEmpty}};
 
\node [lemma, below  of = B4] (B7) {Lemma \ref{lem:res}};
 
\node [lemma, right = 1.4cm of B7] (B6) {Lemma \ref{lem:Jstar}};
 
\node [lemma, below = 0.5cm of B6] (B1) {Lemma \ref{lem:empt}};

\node [lemma, below = 3.5cm of B3] (B9) {Lemma \ref{lem:pair}};
  

\node [lemma, below =  1.5cm of  B7] (B5) {Lemma \ref{lem:cbd}};
 
\node [lemma, left  = 1cm of B5] (B8) {Lemma \ref{lem:exist_sol}};
 
\node [lemma, left  = 4cm of B4] (B11) {Lemma \ref{lem:eta_conv}};

\draw[line](B11) -- (B2);

\draw[line](B11) -- (B7);

\draw[line](B11) -- (B6);

 
\draw [line] (B8) |- ([xshift=-0.7cm]B4.west);
 
\draw [line] (B5)-| ([xshift=-0.6cm]B4.south west) |- ([xshift=-0.4cm]B4.west);
 
\draw [line] (B1)-| ([xshift=-0.3cm]B4.south west) |- ([xshift=-0.1cm]B4.west);
 
\draw[line](B6) -- (B7);
 
\draw[line](B6) --(B4);
 
\draw[line](B6) --(B3);
 
\draw[line](B6) --(B2);
 
\draw[line](B1) -| ([xshift=-0.2cm]B3.south);
 
\draw[line](B3) -- (B2);
 
\draw[line](B4) -- (B2);
 
\draw[line](B2)--(Thm31);
 
\draw[line](B7)--(B4);
 

\draw[line](B9) -- (B3);
 
 
 
 
 
 
 
 

\end{tikzpicture}
\end{center}
\end{figure}

\begin{table}[!htbp]
\advance\leftskip-1.5cm
\parbox{20cm}{\caption[Table Flow]{Heuristics for the role of each Lemma in the proof of Theorem \ref{thm:validity}. Notes: (i) Uniformity in Theorem \ref{thm:validity} is enforced arguing along subsequences; (ii) When needed, random variables are realized on the same probability space as shown in Lemma \ref{lem:val} and Lemma \ref{cor:asrep} (see Appendix \ref{app:asrep} for details); (iii) Here $(P_n,\theta_n)\in \{(P,\theta):P \in \cP,\theta \in \Theta_I(P)\}$ is a subsequence as defined in \eqref{eq:cover3}-\eqref{eq:pi_def} below; (iv) All results hold for any $\theta_n^\prime \in \thetnprime$.}\label{table:flow}} \smallskip

\resizebox{20cm}{!}{
\begin{tabular}{lllllllllllllllll}
\hline \hline
 \\
Theorem \ref{thm:validity} & $P_n(p'\theta_n \in CI) \geq P_n\left(U_{n}(\theta_{n},\hat{c}_{n,\rho}(\theta_{n}))\neq \emptyset\right).$\\ 
 & Coverage is conservatively estimated by the probability that $U_{n}$ is nonempty. \\ 
[0.15cm]
Lemma \ref{lem:val} & $\liminf P_n\left(U_{n}(\theta_{n},\hat{c}_{n,\rho}(\theta_{n}))\neq \emptyset\right)\geq 1-\alpha.$\\
[0.15cm]
Lemma \ref{lem:UnonEmpty} & $P_n(U(\theta_n,c^I_n(\theta_{n}))\neq \emptyset ,\mathfrak W(c_{\pi^*})=\emptyset )+P_n(U(\theta_n,c^I_n(\theta_{n}))=\emptyset ,\mathfrak W(c_{\pi^*})\neq \emptyset)=o_{\cP}(1).$ \\
 & Argued by comparing $U_n$ and its limit $\mathfrak W$ (after coupling).\\
[0.15cm]
Lemma \ref{lem:cv_convergence} & $P^*_n(V^I_n(\theta'_n,c)\ne\emptyset)-	\Pr(\mathfrak{W}(c)\ne\emptyset)\to 0$ and  $c^I_n(\theta'_n)\stackrel{P_n}{\to}c_{\pi^*}$ if $c_{\pi^*}>0$.\\
 & The bootstrap critical value that uses the less conservative GMS yileds a convergent critical value.\\
[0.15cm]
Lemma \ref{lem:res} & $\sup_{\lambda\in B^d} | \max_{j}(u_{n,j,\theta_{n}}(\lambda)-c^I_{n}(\theta_n))-\max_{j}(\mathfrak w_{j}(\lambda)-c_{\pi^*})|=o_\cP(1)$, and similarly for $\mathfrak w_{j}$ and $v^I_{n,j,\theta_{n}^\prime}$.\\
 &  The criterion functions entering $U_n$ and $\mathfrak W$ converge to each other.
\\
[0.15cm]
Lemma \ref{lem:Jstar} & Local-to-binding constraints are selected by GMS uniformly over the $\rho $-box (intuition: $\rho  n^{-1/2}=o_{\cP}(\kappa _{n}^{-1})$),\\
 & and $\Vert \hat{\xi}_n(\theta_n^\prime) - \kappa_n^{-1}\sqrt{n}\sigma_{P_n,j}^{-1}(\theta_n^\prime)E_{P_n}[m_j(X_i,\theta_n^\prime)] \Vert =o_\cP(1)$. \\
[0.15cm]
Lemma \ref{lem:empt} & $\forall \eta>0 ~\exists \delta>0,  : \Pr(\{\mathfrak W(c)\ne\emptyset\}\cap \{\mathfrak W^{-\delta}(c)=\emptyset\})<\eta$, and similarly for $V^I_n$. \\
 & It is unlikely that these sets are nonempty but become empty upon slightly tightening stochastic constraints.\\
[0.15cm]
Lemma \ref{lem:exist_sol} & Intersections of constraints whose gradients are almost linearly dependent are unlikely to realize inside $\mathfrak{W}$. \\
 & Hence, we can ignore irregularities that occur as linear dependence is approached.\\
[0.15cm]
Lemma \ref{lem:cbd} & If there are weakly more equality constraints than parameters, then $c$ is uniformly bounded away from zero.\\
 & This simplifies some arguments.\\
[0.15cm]
Lemma \ref{lem:pair} & If two paired inequalities are local to binding, then they are also asymptotically identical up to sign.\\
 & This justifies ``merging" them.\\
[0.15cm]
Lemma \ref{lem:eta_conv} & $\eta_{n,j}(\cdot)$ converges to zero uniformly in $P$ and $\theta$.\\
[0.15cm]
\hline \hline
\end{tabular}
}
\end{table}

\pagebreak

\clearpage
\subsection{Proof of Theorem \ref{thm:validity}}\label{sec:proofs:thms}

\subsubsection{Main Proofs}
\textbf{Proof of Theorem \ref{thm:validity}-\ref{thm:validity:basic}.}

Following \cite{Andrews_Guggenberger2009bET}, we index distributions by a vector of nuisance parameters relevant for the asymptotic size. For this, let $\gamma_P\equiv(\gamma_{1,P},\gamma_{2,P},\gamma_{3,P})$, where
 $\gamma_{1,P}=(\gamma_{1,P,1},\cdots,\gamma_{1,P,J})$ with 
\begin{align}
\gamma_{1,P,j}(\theta)=\sigma_{P,j}^{-1}(\theta)E_P[m_j(X_i,\theta)],
~j=1,\cdots, J,\label{eq:cover1}
\end{align}
$\gamma_{2,P}=(s(p,\Theta_I(P)),vech(\Omega_P(\theta)),vec(D_P(\theta)))$, and
$\gamma_{3,P}=P$. We proceed in steps.

\noindent \textbf{Step 1.} Let $\{P_n,\theta_n\} \in \{(P,\theta):P \in \cP,\theta \in \Theta_I(P)\}$ be a sequence such that
\begin{equation}
\liminf_{n\to\infty}\inf_{P\in\mathcal P}	\inf_{\theta \in \Theta_I(P)}P(p^\prime \theta \in CI_n)=\liminf_{n\to\infty}P_n(p^\prime \theta_n \in CI_n),\label{eq:cover2}
\end{equation}
with $CI_n=[-s(-p,\cC_n(\hat{c}_n)),s(p,\cC_n(\hat{c}_n))]$. We then let $\{l_n\}$ be a subsequence of $\{n\}$ such that 
\begin{equation}
\liminf_{n\to\infty}P_n(p^\prime \theta_n \in CI_n)=\lim_{n\to\infty}P_{l_n}(p^\prime \theta_{l_n} \in CI_{l_n}).\label{eq:cover3}
\end{equation}
Then there is a further subsequence $\{a_n\}$ of $\{l_n\}$ such that
\begin{align}
\lim_{a_n \to \infty} \kappa_{a_n}^{-1} \sqrt {a_n} \sigma_{P_{a_n},j}^{-1}(\theta_{a_n})E_{P_{a_n}}[m_j(X_i,\theta_{a_n})] = \pi_{1,j} \in \R_{[-\infty]},~j=1,\dots,J.\label{eq:pi_def}
\end{align}
To avoid multiple subscripts, with some abuse of notation we write $(P_n,\theta_n)$ to refer to $(P_{a_n},\theta_{a_n})$ throughout this Appendix. 
We let
\begin{align}
\pi_{1,j}^*&=\left\{ \begin{matrix}
   0 & \mathrm{if}~\pi_{1,j}=0,\\
   -\infty & \mathrm{if}~\pi_{1,j}<0.
\end{matrix}\label{eq:def_pi_star}
 \right.
\end{align}
The projection of $\theta_n$ is covered when
\begin{align}
& ~-s(-p,\mathcal C_n(\hat{c}_n))\le p'\theta_n  \le s(p,\mathcal C_n(\hat{c}_n))\notag \\
	 \Longleftrightarrow &~ \begin{Bmatrix} \inf p^\prime\vartheta  &   \\ \text{s.t. } \vartheta \in \Theta, & \frac{\sqrt n \bar{m}_{n,j}(\vartheta)}{\hat\sigma_{n,j}(\vartheta)}\le  \hat{c}_n(\vartheta),  \forall j \end{Bmatrix} \le  p'\theta_n  \notag \le  \begin{Bmatrix} \sup p'\vartheta  &   \\ \text{s.t. } \vartheta \in \Theta, & \frac{\sqrt n \bar{m}_{n,j}(\vartheta)}{\hat\sigma_{n,j}(\vartheta)}\le  \hat{c}_n(\vartheta), \forall j \end{Bmatrix}\\
	 \Longleftrightarrow &~  \begin{Bmatrix} \inf_{\lambda} p^\prime\lambda  &   \\ \text{s.t. } \lambda\in \frac{\sqrt n}{\rho }(\Theta-\theta_n), & \frac{\sqrt n\bar{m}_{n,j}(\theta_n+\larhon)}{\hat\sigma_{n,j}(\theta_n+\larhon)}\le  \hat{c}_n(\theta_n+\larhon), \forall j \end{Bmatrix} \leq 0\notag\\
	 &\hspace{0.5in}\leq\begin{Bmatrix} \sup_{\lambda} p'\lambda  &   \\ \text{s.t. } \lambda\in \frac{\sqrt n}{\rho }(\Theta-\theta_n), & \frac{\sqrt n\bar{m}_{n,j}(\theta_n+\larhon)}{\hat\sigma_{n,j}(\theta_n+\larhon)}\le  \hat{c}_n(\theta_n+\larhon), \forall j \end{Bmatrix} \label{eq:rho_appears}\\
	 	 \Longleftrightarrow &~ \begin{Bmatrix} \inf_{\lambda} p^\prime\lambda  &   \\ \text{s.t. } \lambda\in \frac{\sqrt n}{\rho }(\Theta-\theta_n), & \\ \{\mathbb G_{n,j}(\theta_n+\larhon)+\rho  D_{P_n,j}(\bar\theta_n)\lambda+\sqrt{n} \gamma_{1,P_n,j}(\theta_n+\larhon)\}(1+\eta_{n,j}(\theta_n+\larhon))\le  \hat{c}_n(\theta_n+\larhon), \forall j  &  \end{Bmatrix} \leq 0  \notag\\
	 &\hspace{0.5in}\leq\begin{Bmatrix} \sup_{\lambda} p'\lambda  &   \\ \text{s.t. } \lambda\in \frac{\sqrt n}{\rho }(\Theta-\theta_n), & \\ \{\mathbb G_{n,j}(\theta_n+\larhon)+\rho  D_{P_n,j}(\bar\theta_n)\lambda+\sqrt{n} \gamma_{1,P_n,j}(\theta_n)\}(1+\eta_{n,j}(\theta_n+\larhon))\le  \hat{c}_n(\theta_n+\larhon), \forall j  &  \end{Bmatrix} \label{eq:cover_part1},
	 \end{align}
	with $\eta_{n,j}(\cdot)\equiv \sigma_{P,j}(\cdot)/\hat\sigma_{n,j}(\cdot)-1$ and where we localized $\vartheta$ in a $\sqrt n/\rho$-neighborhood of $\Theta-\theta_n$ and we took a mean value expansion yielding, for all $j$,
\begin{align}
\frac{\sqrt n\bar{m}_{n,j}(\theta_n+\larhon)}{\hat\sigma_{n,j}(\theta_n+\larhon)}=\bigl\{\mathbb G_{n,j}(\theta_n+\tfrac{\lambda \rho}{\sqrt{n}})+\rho D_{P_n,j}(\bar\theta_n)\lambda+\sqrt{n} \gamma_{1,P_n,j}(\theta_n)\bigr\}\bigl(1+\eta_{n,j}(\theta_n+\tfrac{\lambda \rho}{\sqrt{n}})\bigr).\label{eq:MVE}
\end{align}	 
Denote $B^d_{n,\rho}\equiv \frac{\sqrt n}{\rho }(\Theta-\theta_n)\cap B^d$, with $B^d = \{x \in \R^d:|x_i| \le 1, i=1,\dots,d\}$.
Then the event in \eqref{eq:cover_part1} is implied by
	 \begin{align}
	 	 		 &\begin{Bmatrix} \inf_{\lambda} p^\prime\lambda  &   \\ \text{s.t. } \lambda\in B^d_{n,\rho}, & \\ \{\mathbb G_{n,j}(\theta_n+\larhon)+\rho  D_{P_n,j}(\bar\theta_n)\lambda+\sqrt{n} \gamma_{1,P_n,j}(\theta_n)\}(1+\eta_{n,j}(\theta_n+\larhon))\le  \hat{c}_n(\theta_n+\larhon), \forall j  &  \end{Bmatrix} \leq 0  \notag\\
	 &\hspace{0.5in}\leq\begin{Bmatrix} \sup_{\lambda} p'\lambda  &   \\ \text{s.t. } \lambda\in B^d_{n,\rho}, & \\ \{\mathbb G_{n,j}(\theta_n+\larhon)+\rho  D_{P_n,j}(\bar\theta_n)\lambda+\sqrt{n} \gamma_{1,P_n,j}(\theta_n)\}(1+\eta_{n,j}(\theta_n+\larhon))\le  \hat{c}_n(\theta_n+\larhon), \forall j  &  \end{Bmatrix}.\label{eq:cover_part2}
	 \end{align}

\noindent \textbf{Step 2.} This step is used only when Assumption \ref{as:correlation}-\ref{as:correlation_pair} is invoked. When this assumption is invoked, recall that
in equation \eqref{eq:CI} we use the estimator specified in Lemma \ref{lem:eta_conv} equation \eqref{eq:def_hat_sigma_M} for $\sigma_{P,j},j=1,\dots,2R_1$ (with $R_1 \le J_1/2$ defined in the statement of the assumption). In equation \eqref{eq:Lambda_n} we use the sample analog estimators of $\sigma_{P,j}$ for all $j=1,\dots,J$.
To keep notation manageable, we explicitly denote the estimator used in \eqref{eq:CI} by $\hat{\sigma}^M_j$ only in this step but in almost all other parts of this Appendix we use the generic notation $\hat{\sigma}_j$. 

For each $j=1,\dots,R_1$ such that 
\begin{align}
\pi^*_{1,j} = \pi^*_{1,j+R_1} = 0, \label{eq:cond_both_CT_binding}
\end{align}
where $\pi^*_{1}$ is defined in \eqref{eq:def_pi_star}, let 
\begin{align}
\tilde\mu_j &=\left\{\begin{matrix}
 1 & \mathrm{if}~ \gamma_{1,P_n,j}(\theta_n)=0=\gamma_{1,P_n,j+R_1}(\theta_n),\\
\frac{\gamma_{1,P_n,j+R_1}(\theta_n)(1+\eta_{n,j+R_1}(\theta_n+\larhon))}{\gamma_{1,P_n,j+R_1}(\theta_n)(1+\eta_{n,j+R_1}(\theta_n+\larhon))+\gamma_{1,P_n,j}(\theta_n)(1+\eta_{n,j}(\theta_n+\larhon))} & \mathrm{otherwise},
\end{matrix}\right. \label{eq:tilde_weight_j}\\
\tilde\mu_{j+R_1}&=\left\{\begin{matrix}
 0 & \mathrm{if}~ \gamma_{1,P_n,j}(\theta_n)=0=\gamma_{1,P_n,j+R_1}(\theta_n),\\
\frac{\gamma_{1,P_n,j}(\theta_n)(1+\eta_{n,j}(\theta_n+\larhon))}{\gamma_{1,P_n,j+R_1}(\theta_n)(1+\eta_{n,j+R_1}(\theta_n+\larhon))+\gamma_{1,P_n,j}(\theta_n)(1+\eta_{n,j}(\theta_n+\larhon))} & \mathrm{otherwise},
\end{matrix}\right. \label{eq:tilde_weight_j_pl_J11}
\end{align}
For each $j=1,\dots,R_1$, replace the constraint indexed by $j$, that is
\begin{align}
\frac{\sqrt n\bar{m}_{n,j}(\theta_n+\larhon)}{\hat\sigma^M_{n,j}(\theta_n+\tlarhon)}\le  \hat{c}_n(\theta_n+\tlarhon),\label{eq:replace_CT_LB}
\end{align}
with the following weighted sum of the paired inequalities
\begin{align}
\tilde\mu_j\frac{\sqrt n\bar{m}_{n,j}(\theta_n+\larhon)}{\hat\sigma^M_{n,j}(\theta_n+\larhon)}-\tilde\mu_{j+R_1}\frac{\sqrt n\bar{m}_{j+R_1,n}(\theta_n+\larhon)}{\hat\sigma^M_{n,j+R_1}(\theta_n+\tlarhon)}\le  \hat{c}_n(\theta_n+\tlarhon), \label{eq:replace_CT_mix_LB}
\end{align}
and for each $j=1,\dots,R_1$, replace the constraint indexed by $j+R_1$, that is
\begin{align}
\frac{\sqrt n\bar{m}_{j+R_1,n}(\theta_n+\larhon)}{\hat\sigma^M_{n,j+R_1}(\theta_n+\larhon)}\le  \hat{c}_n(\theta_n+\tlarhon), \label{eq:replace_CT_UB}
\end{align}
with
\begin{align}
-\tilde\mu_j\frac{\sqrt n\bar{m}_{n,j}(\theta_n+\larhon)}{\hat\sigma^M_{n,j}(\theta_n+\larhon)}+\tilde\mu_{j+R_1}\frac{\sqrt n\bar{m}_{j+R_1,n}(\theta_n+\larhon)}{\hat\sigma^M_{n,j+R_1}(\theta_n+\larhon)}\le  \hat{c}_n(\theta_n+\tlarhon),\label{eq:replace_CT_mix_UB}
\end{align}
It then follows from Assumption \ref{as:correlation}-\ref{as:correlation_pair} that these replacements are conservative  because 
\begin{align*}
\frac{\bar{m}_{j+R_1,n}(\theta_n+\larhon)}{\hat\sigma^M_{n,j+R_1}(\theta_n+\larhon)}\le -\frac{\bar{m}_{n,j}(\theta_n+\larhon)}{\hat\sigma^M_{n,j}(\theta_n+\larhon)},
\end{align*}
and therefore \eqref{eq:replace_CT_mix_LB} implies \eqref{eq:replace_CT_LB} and \eqref{eq:replace_CT_mix_UB} implies \eqref{eq:replace_CT_UB}.

\noindent \textbf{Step 3.} Next, we make the following comparisons:
\begin{align}
\pi_{1,j}^*&= 0 \Rightarrow \pi_{1,j}^* \ge \sqrt{n} \gamma_{1,P_n,j}(\theta_n), \label{eq:pi_star_zero}\\
\pi_{1,j}^*&= -\infty \Rightarrow \sqrt{n} \gamma_{1,P_n,j}(\theta_n) \to -\infty.\label{eq:pi_star_inf}
\end{align}
For any constraint $j$ for which $\pi_{1,j}^*= 0$, \eqref{eq:pi_star_zero} yields that replacing $\sqrt{n} \gamma_{1,P_n,j}(\theta_n)$ in \eqref{eq:cover_part2} with $\pi_{1,j}^*$ introduces a conservative distortion. Under Assumption \ref{as:correlation}-\ref{as:correlation_pair}, for any $j$ such that \eqref{eq:cond_both_CT_binding} holds, the substitutions in \eqref{eq:replace_CT_mix_LB} and \eqref{eq:replace_CT_mix_UB} yield $\tilde\mu_j\sqrt{n} \gamma_{1,P_n,j}(\theta_n)(1+\eta_{n,j}(\theta_n+\larhon))-\tilde\mu_{j+R_1}\sqrt{n} \gamma_{1,P_n,j+R_1}(\theta_n)(1+\eta_{n,j+R_1}(\theta_n+\larhon))=0$, and therefore replacing this term with $\pi_{1,j}^*=0=\pi_{1,j+R_1}^*$ is inconsequential.

 For any $j$ for which $\pi_{1,j}^* = -\infty$, \eqref{eq:pi_star_inf} yields that for $n$ large enough, $\sqrt{n} \gamma_{1,P_n,j}(\theta_n)$ can be replaced with $\pi_{1,j}^*$. To see this, note that by the Cauchy-Schwarz inequality, Assumption \ref{as:momP_KMS} (i)-(ii), and $\lambda\in B^d_{n,\rho}$, it follows that 
\begin{align}
	\rho D_{P_{n},j}(\bar\theta_{n})\lambda \le \rho  \sqrt{d} (\|D_{P_{n},j}(\bar\theta_{n})-D_{P_{n},j}(\theta_{n})\|+\|D_{P_{n},j}(\theta_{n})\|)\le \rho \sqrt{d} (\rho  M/\sqrt n + \bar M),\label{eq:val6}
\end{align}
where $\bar{M}$ and $M$ are as defined in Assumption \ref{as:momP_KMS}-(i) and (ii) respectively, and we used that $\bar{\theta}_{n}$ lies component-wise between $\theta_{n}$ and $\theta_{n}+\larhon$. Using that $\G_{n,j}$ is asymptotically tight by Assumption \ref{as:bcs1}, we have that for any $\tau>0$, there exists a $T>0$ and $N_1\in\mathbb N$ such that for all $n\ge N_1$,
\begin{align}
&P_n \left(\max_{j:\pi^*_{1,j}=-\infty}\{\mathbb G_{n,j}(\theta_n+\tlarhon)+\rho  D_{P_n,j}(\bar\theta_n)\lambda+\sqrt{n} \gamma_{1,P_n,j}(\theta_n)\}(1+\eta_{n,j}(\theta_n+\tlarhon))\le  0 ,~\forall \lambda\in B^d_{n,\rho} \right)>1-\tau/2.\label{eq:pi_star_gamma_vanishes}
\end{align}
To see this, note that $\pi_{ij}^*=-\infty$ if and only if $\lim_{n \to \infty}\frac{\sqrt n}{\kappa_n}\gamma_{1P_nj}(\theta_n)=\pi_{1j}\in [-\infty,0)$. Suppose first that $\pi_{1j}>-\infty$. Then for all $\epsilon>0$ there exists $N_2 \in \N$ such that $
\Bigl\vert \frac{\sqrt n}{\kappa_n}\gamma_{1P_nj}(\theta_n)-\pi_{1j} \Bigr\vert \le \epsilon$, for all $n \ge N_2$. 
Choose $\epsilon>0$ such that $\pi_{1j}+\epsilon <0$. 
Let $N=\max\{N_1,N_2\}$. Then we have
\begin{align}
&~P_n \left(\max_{j:\pi^*_{1,j}=-\infty}\{\mathbb G_{n,j}(\theta_n+\tlarhon)+\rho  D_{P_n,j}(\bar\theta_n)\lambda+\sqrt{n} \gamma_{1,P_n,j}(\theta_n)\}(1+\eta_{n,j}(\theta_n+\tlarhon))\le  0 ,~\forall \lambda\in B^d_{n,\rho} \right)\notag\\
\ge &~P_n\left(\max_{j:\pi^*_{1,j}=-\infty} \{T+\rho  (\bar{M} + \tfrac{\rho  M}{\sqrt{n}})+\sqrt{n} \gamma_{1,P_n,j}(\theta_n)\}(1+\eta_{n,j}(\theta_n+\tlarhon))\le  0 \cap \max_{j:\pi^*_{1,j}=-\infty}\mathbb G_{n,j}(\theta_n+\tlarhon) \le T \right)\notag\\
\ge &~P_n\left(\max_{j:\pi^*_{1,j}=-\infty} \{T+\rho  (\bar{M} + \tfrac{\rho  M}{\sqrt{n}})+\kappa_n (\pi_{1j}+\epsilon)\}(1+\eta_{n,j}(\theta_n+\tlarhon))\le  0 \cap \max_{j:\pi^*_{1,j}=-\infty}\mathbb G_{n,j}(\theta_n+\tlarhon) \le T \right)\notag\\
= &~P_n\left(\max_{j:\pi^*_{1,j}=-\infty} \left\{\frac{T}{\kappa_n}+\frac{\rho }{\kappa_n} (\bar{M} + \tfrac{\rho  M}{\sqrt{n}})+(\pi_{1j}+\epsilon)\right\}(1+\eta_{n,j}(\theta_n+\tlarhon))\le  0 \cap \max_{j:\pi^*_{1,j}=-\infty}\mathbb G_{n,j}(\theta_n+\tlarhon) \le T \right)\notag\\
= 	&~P_n \left(\max_{j:\pi^*_{1,j}=-\infty}\mathbb G_{n,j}(\theta_n+\tlarhon) \le T\right)>1-\tau/2,~\forall n\ge N.\notag
\end{align}
If $\pi_{1j}=-\infty$ the same argument applies a fortiori.
We therefore have that for $n\ge N$,
	 \begin{align}
	 &~P_n \Bigg( \begin{Bmatrix} \inf_{\lambda} p^\prime\lambda   \\ \text{s.t. } \lambda\in B^d_{n,\rho},  \\ \{\mathbb G_{n,j}(\theta_n+\larhon)+\rho D_{P_n,j}(\bar\theta_n)\lambda+\sqrt{n} \gamma_{1,P_n,j}(\theta_n)\}(1+\eta_{n,j}(\theta_n+\larhon))\le  \hat{c}_n(\theta_n+\larhon), \forall j    \end{Bmatrix} \leq 0  \notag\\
	&\hspace{0.5in}\leq\begin{Bmatrix} \sup_{\lambda} p'\lambda    \\ \text{s.t. } \lambda\in B^d_{n,\rho}, \\ \{\mathbb G_{n,j}(\theta_n+\larhon)+\rho D_{P_n,j}(\bar\theta_n)\lambda+\sqrt{n} \gamma_{1,P_n,j}(\theta_n)\}(1+\eta_{n,j}(\theta_n+\larhon))\le  \hat{c}_n(\theta_n+\larhon), \forall j    \end{Bmatrix}\Bigg) \label{eq:replace_pi_star1}\\
	\ge &~P_n \Bigg( \begin{Bmatrix} \inf_{\lambda} p^\prime\lambda     \\ \text{s.t. } \lambda\in B^d_{n,\rho},  \\ \{\mathbb G_{n,j}(\theta_n+\larhon)+\rho D_{P_n,j}(\bar\theta_n)\lambda+ \pi_{1,j}^*\}(1+\eta_{n,j}(\theta_n+\larhon))\le  \hat{c}_n(\theta_n+\larhon), \forall j    \end{Bmatrix} \leq 0  \notag\\
	&\hspace{0.5in}\leq\begin{Bmatrix} \sup_{\lambda} p'\lambda     \\ \text{s.t. } \lambda\in B^d_{n,\rho},  \\ \{\mathbb G_{n,j}(\theta_n+\larhon)+\rho D_{P_n,j}(\bar\theta_n)\lambda+\pi_{1,j}^*\}(1+\eta_{n,j}(\theta_n+\larhon))\le  \hat{c}_n(\theta_n+\larhon), \forall j   \end{Bmatrix}\Bigg)-\tau/2.\label{eq:replace_pi_star2}
	\end{align}
	Since the choice of $\tau$ is arbitrary, the limit of the term in \eqref{eq:replace_pi_star1} is not smaller than the limit of the first term in \eqref{eq:replace_pi_star2}.
Hence, we continue arguing for the event whose probability is evaluated in \eqref{eq:replace_pi_star2}. 

Finally, by definition $\hat{c}_n(\cdot) \ge 0$ and therefore $\inf_{\lambda \in B^d_{n,\rho}}\hat{c}_n(\theta_n+\larhon)$ exists. Therefore, the event whose probability is evaluated in \eqref{eq:replace_pi_star2} is implied by the event
\begin{align}
&\begin{Bmatrix} \inf_{\lambda} p^\prime\lambda     \\ \text{s.t. } \lambda\in B^d_{n,\rho},  \\ \{\mathbb G_{n,j}(\theta_n+\larhon)+\rho D_{P_n,j}(\bar\theta_n)\lambda+\pi_{1,j}^*\}(1+\eta_{n,j}(\theta_n+\larhon))\le  \inf_{\lambda \in B^d_{n,\rho}}\hat{c}_n(\theta_n+\larhon), \forall j    \end{Bmatrix} \leq 0  \notag\\
	 &\hspace{0.5in}\leq\begin{Bmatrix} \sup_{\lambda} p'\lambda     \\ \text{s.t. } \lambda\in B^d_{n,\rho},  \\ \{\mathbb G_{n,j}(\theta_n+\larhon)+\rho D_{P_n,j}(\bar\theta_n)\lambda+\pi_{1,j}^*\}(1+\eta_{n,j}(\theta_n+\larhon))\le \inf_{\lambda \in B^d_{n,\rho}}\hat{c}_n(\theta_n+\larhon), \forall j    \end{Bmatrix}
	 \label{eq:heuristic_3}
\end{align}
For each $\lambda\in\R^d$, define
\begin{align}
u_{n,j,\theta_n}(\lambda)&\equiv	\bigl\{\mathbb G_{n,j}(\theta_n+\tfrac{\lambda \rho}{\sqrt{n}})+\rho D_{P_n,j}(\bar{\theta}_n)\lambda+\pi^*_{1,j}\bigr\}\bigr(1+\eta_{n,j}(\theta_n+\tfrac{\lambda \rho}{\sqrt{n}})\bigl),\label{eq:uj}
\end{align}
where under Assumption \ref{as:correlation}-\ref{as:correlation_pair} when $\pi_{1,j}^*=0$ and $\pi_{1,j+R_1}^*=0$ the substitutions of equation \eqref{eq:replace_CT_LB} with equation \eqref{eq:replace_CT_mix_LB} and of equation \eqref{eq:replace_CT_UB} with equation \eqref{eq:replace_CT_mix_UB} have been performed. Let 
\begin{align}
U_n(\theta_n,c)\equiv	\big\{\lambda\in B^d_{n,\rho}: p^\prime \lambda = 0 \cap u_{n,j,\theta_n}(\lambda)\le c, \: \forall j=1,\dots,J\big\}, \label{eq:set_U_NL}
\end{align}
and define 
 \begin{align}
\hat c_{n,\rho} \equiv \inf_{\lambda \in B^d_{n,\rho}} \hat{c}_n(\theta+\tlarhon). \label{eq:c*}
 \end{align}
Then by \eqref{eq:heuristic_3} and the definition of $U_n$, we obtain
\begin{equation}
P_{n}(p^\prime \theta_{n} \in CI_{n}) \ge P_{n}\left(U_{n}(\theta_{n},\hat{c}_{n,\rho})\neq \emptyset\right).	\label{eq:cover3a}
\end{equation}
 By passing to a further subsequence, we may assume that
\begin{align}
	D_{P_{n}}(\theta_{n})\to D, 
	\label{eq:val3_0}
\end{align}
for some $J\times d$ matrix $D$ such that $\|D\|\le M$ and $\Omega_{P_n}\stackrel{u}{\to}\Omega$ for some correlation matrix $\Omega$. By Lemma 2 in \cite{Andrews_Guggenberger2009bET} and Assumption \ref{as:bcs1} (i), uniformly in $\lambda \in B^d$, $\mathbb G_{n}(\theta_n+\larhon)\stackrel{d}{\to} \HH$ for a normal random vector with the correlation matrix $\Omega$. 
By Lemma \ref{lem:val},
\begin{align}
\liminf_{n \to \infty}&P_{n}\left(U_{n}(\theta_{n},\hat{c}_{n,\rho})\neq \emptyset\right)\ge 1-\alpha \label{eq:cover4}.
\end{align}
The conclusion of the theorem then follows from \eqref{eq:cover2}, \eqref{eq:cover3}, \eqref{eq:cover3a}, and \eqref{eq:cover4}.
\qed
\bigskip

\noindent\textbf{Proof of Theorem \ref{thm:validity}-\ref{cor:thm:validity}.}

The result follows immediately from the same steps as in the proof of Theorem \ref{thm:validity}-\ref{thm:validity:basic}.
\qed
\bigskip

\noindent\textbf{Proof of Theorem \ref{thm:validity}-\ref{thm:nonlinear}}

The argument of proof is the same as for Theorem \ref{thm:validity}-\ref{thm:validity:basic}, with the following modification.
Take $(P_n,\theta_n)$ as defined following equation \eqref{eq:pi_def}. Then $f(\theta_n)$ is covered when
\begin{align*}
	 &~\begin{Bmatrix} \inf f(\vartheta)  &   \\ \text{s.t. } \, \vartheta \in \Theta, & \frac{\sqrt n \bar{m}_{n,j}(\vartheta)}{\hat\sigma_{n,j}(\vartheta)}\le  \hat{c}^f_n(\vartheta),  \forall j \end{Bmatrix} \le  f(\theta_n) \notag \le  \begin{Bmatrix} \sup f(\vartheta)  &   \\ \text{s.t. } \, \vartheta \in \Theta, & \frac{\sqrt n \bar{m}_{n,j}(\vartheta)}{\hat\sigma_{n,j}(\vartheta)}\le  \hat{c}^f_n(\vartheta), \forall j \end{Bmatrix}\\
	 \Longleftrightarrow &~ \begin{Bmatrix} \inf_{\lambda} \nabla f(\tilde{\theta}_n)\lambda  &   \\ \text{s.t. } \lambda\in \frac{\sqrt n}{\rho }(\Theta-\theta_n), & \frac{\sqrt n\bar{m}_{n,j}(\theta_n+\larhon)}{\hat\sigma_{n,j}(\theta_n+\larhon)}\le  \hat{c}^f_n(\theta_n+\larhon), \forall j \end{Bmatrix} \leq 0\notag\\
	 &\hspace{0.5in}\leq\begin{Bmatrix} \sup_{\lambda} \nabla f(\tilde{\theta}_n)\lambda  &   \\ \text{s.t. } \lambda\in \frac{\sqrt n}{\rho }(\Theta-\theta_n), & \frac{\sqrt n\bar{m}_{n,j}(\theta_n+\larhon)}{\hat\sigma_{n,j}(\theta_n+\larhon)}\le  \hat{c}^f_n(\theta_n+\larhon), \forall j \end{Bmatrix}, 
	 \end{align*}
where we took a mean value expansion yielding 
\begin{align}
f(\theta_n+\tlarhon)=f(\theta_n)+\frac{\rho }{\sqrt{n}} \nabla f(\tilde{\theta}_n)\lambda,\label{eq:MVE_f}
\end{align}	 
for $\tilde{\theta}_n$ a mean value that lies componentwise between $\theta_n$ and $\theta_n+\larhon$, and we used that the sign of the last term in \eqref{eq:MVE_f} is the same as the sign of $\nabla f(\tilde{\theta}_n)\lambda$. With the objective function in \eqref{eq:MVE_f} so redefined, all expression in the proof of Theorem \ref{thm:validity}-\ref{thm:validity:basic} up to \eqref{eq:uj} continue to be valid.
We can then redefine the set $U_n(\theta_n,c)$ in \eqref{eq:set_U_NL} as

\begin{align*}
U_n(\theta_n,c)\equiv	\big\{\lambda\in B^d_{n,\rho}: \|\nabla f(\tilde{\theta}_n)\|^{-1}\nabla f(\tilde{\theta}_n)\lambda = 0 \cap u_{n,j,\theta_n}(\lambda)\le c, \: \forall j=1,\dots,J\big\}.
\end{align*}
Replace $p^\prime$ with $\|\nabla f(\tilde{\theta}_n)\|^{-1}\nabla f(\tilde{\theta}_n)$ in all expressions involving the set $U_n(\theta_n,\hat{c}_{n,\rho}^{f}(\theta_n))$, and replace $p^\prime$ with $\|\nabla f(\theta_n)^\prime\|^{-1}\nabla f(\theta_n^\prime)$ in all expressions for the sets $V^I_n(\theta_n^\prime,\hat{c}^f_n(\theta_n^\prime))$, and in all the almost sure representation counterparts of these sets. Observe that we can select a convergent subsequence from $\{\|\nabla f(\theta_n)^\prime\|^{-1}\nabla f(\theta_n^\prime)\}$ that converges to some $p$ in the unit sphere, so that the form of $\mathfrak W(c_{\pi^*})$ in \eqref{eq:Wdelta} is unchanged. This yields the result, noting that by the assumption $\Vert \nabla f(\tilde{\theta}_n)-\nabla f(\theta_n^\prime)\Vert = O_\cP(\rho /\sqrt{n})$
\qed
\color{black}

\subsubsection{Proof of Theorem \ref{thm:validity}-\ref{thm:validity:basic} with High Level Assumption \ref{ass:continuity_limit_cov} Replacing Assumption \ref{as:correlation}, and Dropping the $\rho$-Box Constraints Under Assumption \ref{ass:continuity_limit_cov_II}}
\label{app:proofs_alt_cont}
\begin{lemma}
\label{lem:validity_ass_cont}
Suppose that Assumption \ref{as:momP_AS}, \ref{as:GMS}, \ref{as:momP_KMS} and \ref{as:bcs1} hold.
\begin{enumerate}[label=(\Roman*)]
\item \label{lem:validity_cont_ass} Let also Assumption \ref{ass:continuity_limit_cov} hold. Let $0<\alpha < 1/2$. Then,
\begin{align*}
\liminf_{n\to\infty}\inf_{P\in\mathcal P}\inf_{\theta\in\Theta_I(P)}P(p'\theta\in CI_n)\ge 1-\alpha.
\end{align*}	
\item \label{lem:norho}
Let also Assumption \ref{ass:continuity_limit_cov_II} and either Assumption \ref{as:correlation} or \ref{ass:continuity_limit_cov} hold. Let $\hat{c}_n=\inf\{c\in\mathbb R_+:P^*(\{\Lambda_n^b (\theta ,+\infty ,c)\cap \{p^{\prime }\lambda =0\}\}\neq \emptyset)\ge 1-\alpha\}$, where $\Lambda_n^b$ is defined in equation \eqref{eq:Lambda_n} and $CI_n\equiv [-s(-p,\mathcal C_n(\hat{c}_n)),s(p,\mathcal C_n(\hat{c}_n))]$ with $s(q,\mathcal C_n(\hat{c}_n)),q\in \{p,-p\}$ defined in equation \eqref{eq:CI}. Then
\begin{align*}
\liminf_{n\to\infty}\inf_{P\in\mathcal P}\inf_{\theta\in\Theta_I(P)}P(p'\theta\in CI_n)\ge 1-\alpha.
\end{align*}
\end{enumerate} 	
\end{lemma}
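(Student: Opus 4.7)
My overall strategy is to reuse the subsequence architecture of the proof of Theorem \ref{thm:validity} and isolate the exact points where Assumption \ref{as:correlation} was invoked, replacing those arguments with inputs supplied directly by Assumption \ref{ass:continuity_limit_cov}. Thus I would fix a sequence $\{P_n,\theta_n\}$ achieving the $\liminf$ coverage, pass to the usual further subsequence along which $\kappa_n^{-1}\sqrt n\gamma_{1,P_n,j}(\theta_n)\to\pi_{1,j}$, $\Omega_{P_n}\uni\Omega$, and $D_{P_n}(\theta_n)\to D$, and maintain the coverage lower bound $P_n(p'\theta_n\in CI_n)\ge P_n(U_n(\theta_n,\hat c_{n,\rho})\neq\emptyset)$ derived in Step~1 of Theorem \ref{thm:validity} (including, when applicable, the Step~2 replacement of paired inequalities by their weighted sum, which is purely a conservative distortion and does not require Assumption \ref{as:correlation}).

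\textbf{Part (I).} The key observation is that Assumption \ref{ass:continuity_limit_cov}-\ref{ass:continuity_limit_cov_I} is exactly what Lemmas \ref{lem:empt} and \ref{lem:cbd} delivered: it rules out the two pathologies that prevent $\hat c_n(\theta_n')$ from converging usefully, namely a discontinuity of $\Pr(\mathfrak W(\cdot)\neq\emptyset)$ at $c_{\pi^*}$ in the case $c_{\pi^*}>0$, and the case $c_{\pi^*}=0$ where coverage has to be verified directly at the critical level zero. I would split accordingly. When $c_{\pi^*}>0$, use the almost-sure-representation coupling of the bootstrap empirical process (Lemma \ref{cor:asrep}) together with Lemma \ref{lem:res} to obtain that the criterion functions defining $V_n^I$ and $U_n$ both converge uniformly on $\lambda\in B^d$ to those defining $\mathfrak W$; then invoke the strict-monotonicity-and-continuity hypothesis on $\Pr(\mathfrak W(c)\neq\emptyset)$ at $c=c_{\pi^*}$ to conclude, mimicking the proof of Lemma \ref{lem:cv_convergence}, that $c_n^I(\theta_n')\stackrel{P_n}{\to}c_{\pi^*}$ and hence $\hat c_{n,\rho}\ge c_n^I(\theta_n')-o_{\mathcal P}(1)$; an application of Lemma \ref{lem:UnonEmpty} (whose proof itself only needs the continuity at $c_{\pi^*}$ that is now assumed) closes the argument. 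When $c_{\pi^*}=0$, the assumption furnishes the conclusion $\liminf P_n(U_n(\theta_n,0)\neq\emptyset)\ge 1-\alpha$ directly, and since $\hat c_{n,\rho}\ge 0$ monotonicity yields the same bound for $\hat c_{n,\rho}$ in place of $0$.

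\textbf{Part (II).} Here the goal is to show that enforcing $\lambda\in\rho B^d$ in the definitions of $\Lambda_n^b$ and of $U_n$ is inessential once the population limit set is stochastically bounded at the (loose) Bonferroni level. The plan is: (i) show that the Bonferroni critical value $\bar c=\Phi^{-1}(1-\alpha/J)$ is asymptotically a valid critical value for $\Lambda_n^b(\theta_n,+\infty,\cdot)$, so that $\hat c_n(\theta_n)\le\bar c+o_{\mathcal P}(1)$; (ii) use Assumption \ref{ass:continuity_limit_cov}-\ref{ass:continuity_limit_cov_II} to pick, for any prescribed $\eta>0$, a constant $M_\eta$ such that with $P_n$-probability at least $1-\eta$ the set $\bar{\mathfrak W}(\bar c)$ is contained in the ball of radius $M_\eta$, and transfer this bound to the bootstrap and sample constraint sets by the same coupling and uniform-convergence machinery used in Lemma \ref{lem:res}; (iii) choose $\rho^\star$ so large that $\rho^\star B^d\supset M_\eta B^d$, deduce that on this high-probability event $\Lambda_n^b(\theta_n,+\infty,c)=\Lambda_n^b(\theta_n,\rho^\star,c)$ and $U_n$ with or without the $\rho^\star$-box coincide for all $c\le \bar c+o_{\mathcal P}(1)$, and then conclude by applying either Theorem \ref{thm:validity} (if Assumption \ref{as:correlation} is maintained) or Part (I) (if only Assumption \ref{ass:continuity_limit_cov}-\ref{ass:continuity_limit_cov_I} is maintained) with $\rho^\star$ in place of $\rho$; finally let $\eta\downarrow 0$.

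\textbf{Main obstacle.} The delicate step is Part (II)(ii): I need to upgrade the population-level stochastic boundedness of $\bar{\mathfrak W}(\bar c)$ to a matching boundedness of the feasible set defining $\hat c_n(\theta_n)$ at $\rho=+\infty$, uniformly in $P\in\mathcal P$ along the subsequence. The natural route is the almost-sure representation for the bootstrap process together with the uniform convergence of $\hat D_{n,j}$ to $D_{P_n,j}$ and of $\varphi_j(\hat\xi_{n,j})$ to $\pi^*_{1,j}$, but care is required because $\lambda$ ranges over an unbounded set: I would cut $\{\lambda:\|\lambda\|>M_\eta\}$ from the feasible set by producing, for each $j$ with $\pi^*_{1,j}=-\infty$ or with a component of $D_j$ bounded away from zero, a binding constraint that cannot hold far from the origin, and handle the residual directions (those that are ``flat'' in every inequality) via Assumption \ref{ass:continuity_limit_cov}-\ref{ass:continuity_limit_cov_II} applied to $\bar{\mathfrak W}(\bar c)$ itself. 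Everything else is a fairly direct translation of the arguments already in Theorem \ref{thm:validity} and in Lemmas \ref{lem:val}--\ref{lem:cv_convergence}.
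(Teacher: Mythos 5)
Your Part (I) matches the paper's approach essentially step for step: you correctly identify that the work done by the key Lemmas (\ref{lem:empt}, \ref{lem:UnonEmpty}, \ref{lem:cv_convergence}) can be re-derived from the continuity-and-strict-increase hypothesis when $c_{\pi^*}>0$, and that when $c_{\pi^*}=0$ the assumption furnishes the conclusion directly; the paper also observes that Lemmas \ref{lem:exist_sol}, \ref{lem:cbd}, \ref{lem:pair} drop out entirely, which is the same point you make in slightly different words.

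For Part (II) your plan is close in spirit but genuinely different in mechanics, and the difference matters. The paper does \emph{not} select a large fixed $\rho^\star$ and then invoke Theorem \ref{thm:validity} with $\rho^\star$; instead it re-runs the argument of Theorem \ref{thm:validity} and Lemma \ref{lem:val} directly, substituting the random set $\bar{\mathfrak{W}}^*(\bar c)$ (the a.s.\ representation of the population limit, bounded by Assumption \ref{ass:continuity_limit_cov}-\ref{ass:continuity_limit_cov_II}) for the deterministic box $B^d_{n,\rho}$ throughout, observing that for $c\le\bar c$ the nominal constraint $\lambda\in\bar{\mathfrak{W}}^*(\bar c)$ cannot bind in the definition of $\mathfrak{W}^*(c)$, and then propagating stochastic boundedness to $\tilde V_n^b$, $\tilde V_n^I$ via an argument about basic solutions of the relevant LPs together with Lemma \ref{lem:exist_sol}. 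Your route of choosing $\rho^\star$ large and appealing to Theorem \ref{thm:validity} has two rough spots you should be explicit about: (a) $M_\eta$, and hence $\rho^\star$, depends on the subsequence limits $(D,\Omega,\pi^*)$, so you are invoking the theorem with a sequence-dependent tuning parameter — this can be made rigorous only because the argument is confined to the fixed minimizing subsequence, but that needs to be stated; and (b) the ``transfer'' step in your (ii) is the crux, and your ``Main obstacle'' paragraph gestures at the right idea (bound the LP in every direction $t\perp p$), but the specific mechanism you propose — producing for each $j$ with $\pi^*_{1,j}=-\infty$ a constraint that cannot hold far from the origin — is off-target: constraints with $\pi^*_{1,j}=-\infty$ are asymptotically slack and do \emph{not} bound the feasible set. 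The right mechanism, as the paper notes, is that $\max_{\lambda\in\bar{\mathfrak{W}}(c)}t'\lambda$ is attained at a basic solution whose defining gradient submatrix has determinant bounded away from zero with high probability (Lemma \ref{lem:exist_sol}), so that the matching bootstrap basic solution is $O_{\mathcal P}(1)$. Fix that mechanism and make the subsequence logic explicit and your Part (II) proof is salvageable, but as written it has a material gap.
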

\begin{proof} We establish each part of the Lemma separately.

 \textbf{Part \ref{lem:validity_cont_ass}.}
This part of the lemma replaces Assumptions \ref{as:correlation} with Assumption \ref{ass:continuity_limit_cov}. Hence we establish the result by showing that all claims that were made under Assumption \ref{as:correlation} remain valid under Assumption \ref{ass:continuity_limit_cov}. We proceed in steps.\smallskip

\noindent \underline{Step 1. Revisiting the proof of Lemma \ref{lem:empt}, equation \eqref{eq:Wdelta_lb_c}.}

Let $\cJ^*$ be as defined in \eqref{eq:Jstar}. If $\cJ^*=\emptyset$ we immediately have that Lemma \ref{lem:empt} continues to hold. Hence we assume that $\cJ^*\neq \emptyset$. To keep the notation simple, below we argue as if all $j=1,\dots,J$ belong to $\cJ^*$.

Consider the case that $c_{\pi^*}>0$. For some $c_{\pi^*}>\delta>0$, let
	\begin{align}
\mathfrak{W}(c-\delta)
\equiv \big\{ \lambda\in \mathfrak B^d_\rho: p^\prime \lambda =0 &\cap \mathfrak{w}_{j}(\lambda)\le c-\delta , \: \forall j=1,\dots,J\big\}, \label{eq:Wdelta_ass_cont}
	\end{align}
	where we emphasize that the set $\mathfrak{W}(c-\delta)$ is obtained by a $\delta$-contraction of all constraints, including those indexed by $j=J_1+1,\dots,J$. By Assumption \ref{ass:continuity_limit_cov}, for any $\eta>0$ there exists a $\delta$ such that
	\begin{align*}
	\eta \ge \left\vert \mathrm{Pr}\left(\mathfrak{W}(c_{\pi^*})\neq \emptyset \right)-\mathrm{Pr}\left(\mathfrak{W}(c_{\pi^*}-\delta)\neq \emptyset \right)\right\vert = \mathrm{Pr}\left(\{\mathfrak{W}(c_{\pi^*})\neq \emptyset\} \cap  \{\mathfrak{W}(c_{\pi^*}-\delta)=\emptyset\}\right),\\
	\eta \ge \left\vert \mathrm{Pr}\left(\mathfrak{W}(c_{\pi^*}+\delta)\neq \emptyset \right)-\mathrm{Pr}\left(\mathfrak{W}(c_{\pi^*})\neq \emptyset \right)\right\vert = \mathrm{Pr}\left(\{\mathfrak{W}(c_{\pi^*}+\delta)\neq \emptyset\} \cap  \{\mathfrak{W}(c_{\pi^*})=\emptyset\}\right).
	\end{align*}
	The result follows.
\smallskip
	
	\noindent \underline{Step 2. Revisiting the proof of Lemma \ref{lem:UnonEmpty}.}
	
	Case 1 of Lemma \ref{lem:UnonEmpty} is unaltered. Case 2 of Lemma \ref{lem:UnonEmpty} follows from the same argument as used in Case 1 of Lemma \ref{lem:UnonEmpty}, because under Assumption \ref{ass:continuity_limit_cov} as shown in step 1 of this proof all inequalities are tightened. In Case 3 of Lemma \ref{lem:UnonEmpty} the result in \eqref{eq:cover4} holds automatically by Assumption \ref{ass:continuity_limit_cov}-(ii). (As a remark, Lemmas \ref{lem:exist_sol}-\ref{lem:cbd} are no longer needed to establish Lemma \ref{lem:UnonEmpty}.)\smallskip
	
	\noindent \underline{Step 3. Revisiting the proof of Lemma \ref{lem:cv_convergence}.}
	Under Assumption \ref{ass:continuity_limit_cov} we do not need to merge paired inequalities. Hence, part (iii) of Lemma \ref{lem:cv_convergence} holds automatically because $\varphi^*_j(\xi) \le \varphi_j(\xi)$ for any $j$ and $\xi$. We are left to establish parts (i) and (ii) of Lemma \ref{lem:cv_convergence}. These follow immediately, because Lemma \ref{lem:empt} remains valid as shown in step 1 and by Assumption \ref{ass:continuity_limit_cov}, $\Pr(\mathfrak{W}(c)\neq \emptyset)$ is strictly increasing at $c=c_{\pi^*}$ if $c_{\pi^*}>0$. (As a remark, Lemma \ref{lem:pair} is no longer needed to establish Lemma \ref{lem:cv_convergence}.)\smallskip 

In summary, the desired result follows by applying Lemma \ref{lem:val} in the proof of Theorem \ref{thm:validity}-\ref{thm:validity:basic} as Lemmas \ref{lem:UnonEmpty}, \ref{lem:cv_convergence} and \ref{lem:empt} remain valid,  Lemmas \ref{lem:res}, \ref{lem:Jstar}, \ref{lem:eta_conv} and the Lemmas in Appendix \ref{app:asrep} are unaffected, and Lemmas \ref{lem:exist_sol}, \ref{lem:cbd}, \ref{lem:pair} are no longer needed.

\textbf{Part \ref{lem:norho}.} This is established by adapting the proof of Theorem \ref{thm:validity}-\ref{thm:validity:basic} as follows:

In the main proof, we pass to an a.s. representation early on, so that $\mathfrak{W}$ realizes jointly with other random variables (we denote almost sure representations adding a superscript ``$^*$" on the original variable). At the same time, we entirely drop $\rho$. This means that algebraic expressions, e.g. in the main proof, simplify as if $\rho=1$, but it also removes any constraints along the lines of $\lambda \in B^d_{n,\rho}$ in equation \eqref{eq:cover_part2}. Indeed, \eqref{eq:cover_part2} is replaced by: 
\begin{align*}
\dots \Leftarrow	 &\begin{Bmatrix} \inf_{\lambda} p^\prime\lambda     \\ \text{s.t. } \lambda\in \bar{\mathfrak{W}}^*(\bar{c}),  \\ \{\mathbb G^*_{n,j}(\theta_n+\lambda/\sqrt{n})+D_{P_n,j}(\bar\theta_n)\lambda+\sqrt{n} \gamma_{1,P_n,j}(\theta_n)\}(1+\eta_{n,j}(\theta_n+\lambda/\sqrt{n}))\le  \hat{c}_n(\theta_n+\lambda/\sqrt{n}), \forall j    \end{Bmatrix} \leq 0 \\
	 &\hspace{0.5in}\leq\begin{Bmatrix} \sup_{\lambda} p'\lambda    \\ \text{s.t. } \lambda\in \bar{\mathfrak{W}}^*(\bar{c}),  \\ \{\mathbb G^*_{n,j}(\theta_n+\lambda/\sqrt{n})+ D_{P_n,j}(\bar\theta_n)\lambda+\sqrt{n} \gamma_{1,P_n,j}(\theta_n)\}(1+\eta_{n,j}(\theta_n+\lambda/\sqrt{n}))\le  \hat{c}_n(\theta_n+\lambda/\sqrt{n}), \forall j    \end{Bmatrix},
	 \end{align*}
yielding a new definition of the set $U_n^*$ as
\begin{align*}
U_n^*(\theta_n,c)\equiv	\big\{\lambda\in \bar{\mathfrak{W}}^*(\bar{c}): p^\prime \lambda = 0 \cap u^*_{n,j,\theta_n}(\lambda)\le c, \: \forall j=1,\dots,J\big\}.
\end{align*}
Subsequent uses of $\rho$ in the main proof use that $\Vert \lambda \Vert \leq \sqrt{d}\rho=O_\cP(1)$. For example, consider the argument following equation \eqref{eq:val6} or the argument just preceding equation \eqref{eq:cover4}, and so on. All these continue to go through because $\bar{\mathfrak{W}}^*(\bar{c})=O(1)$ by assumption.

Similar uses occur in Lemma \ref{lem:val}. The next major adaptation is that in \eqref{eq:set_W_Lpop_star} and \eqref{eq:val3}: we again drop $\rho$ but nominally introduce the constraint that $\lambda \in \bar{\mathfrak{W}}^*(\bar{c})$. However, for $c \leq \bar{c}$, this condition cannot constrain $\mathfrak{W}^*(c)$, and so we can as well drop it: The modified $\mathfrak{W}^*(c)$ equals $\bar{\mathfrak{W}}^*(c)$.

Next we argue that Lemma \ref{lem:exist_sol} continues to hold, now claimed for $\bar{\mathfrak{W}}^*$. To verify that this is the case, replace $B^d$ with $\bar{\mathfrak{W}}(\bar{c})$ throughout in Lemma \ref{lem:exist_sol}. This requires straightforward adaptation of algebra as $\bar{\mathfrak{W}}(\bar{c})$ is only stochastically and not deterministically bounded.

Finally, in Lemma \ref{lem:cv_convergence} we remove the $\rho$-constraint from $V_n^b$ and $V_n^I$ without replacement, and note that the lemma is now claimed for $\theta_n' \in \theta +\|\bar{\mathfrak{W}}(\bar c)\|_H /\sqrt{n} B^d$. Recall that in the lemma the a.s. representation of a set $A$ is denoted by $\tilde A$, and with some abuse of notation let the a.s. representation of $\bar{\mathfrak{W}}$ be denoted $\widetilde{ \bar{\mathfrak{W}}}$. Now we compare $\tilde V_n^b$ and $\tilde V_n^I$ with $\widetilde{ \bar{\mathfrak{W}}}$. To ensure that $\lambda$ is uniformly stochastically bounded in expressions like \eqref{eq:use-of-lambda}, we verify that the modified $\tilde V_n^b$ and $\tilde V_n^I$ inherit the property in Assumption \ref{ass:continuity_limit_cov_II}.
To see this, fix any unit vector $t \perp p$ and notice that any $t=\lambda /\| \lambda \|$ for $\lambda \in \widetilde{ \bar{\mathfrak{W}}}(c)$ or for $\lambda \in \tilde V_n^b(\theta_n',c)$ or for $\lambda \in \tilde V_n^I(\theta_n',c)$, $0<c\le \bar c$, satisfies this condition. By Assumption \ref{ass:continuity_limit_cov_II} and the Cauchy-Schwarz inequality, $\max_{\lambda \in \widetilde{ \bar{\mathfrak{W}}}(c)}t'\lambda=O(1)$ for any $c \le \bar{c}$. Since the value of this program is necessarily attained by a basic solution whose associated gradients span $t$, it must be the case that such solution is itself $O(1)$. 
Formally, let $C$ be the index set characterizing the solution, $\HH^C_i$ be the vector of realizations $\HH^j_i$ corresponding to $j \in C$, and $K^C(\theta_n')$ the matrix that stacks the corresponding gradients; then $(K^C(\theta_n'))^{-1}(\bar{c}\bm{1}-\HH^C_i)=O(1)$. By Lemma \ref{lem:exist_sol} and the fact that $\hat D_n(\theta_n')\stackrel{P}{\to} D$ by Assumption \ref{as:momP_KMS}, we then also have that $(\hat{K}^C(\theta_n'))^{-1}(\bar{c}\bm{1}-\mathbb{G}^b_{n,j})=O_\cP(1)$, and so for $c \leq \bar{c}$, $V^b$ is bounded in this same direction. It follows that, by similar reasoning to the preceding paragraph, the comparison between $V_n^I(\theta_n',c)$ and $\bar{\mathfrak{W}}(c)$ in Lemma \ref{lem:cv_convergence} goes through.
\end{proof}

\subsubsection{An Extension of Theorem \ref{thm:validity}}
\label{cor:math:proj}
In this subsection, we establish that, under the assumptions of Theorem \ref{thm:validity}, we actually have
\begin{equation}
\liminf_{n\to\infty}\inf_{P\in\mathcal P}	\inf_{\theta \in \Theta_I(P)} P(p'\theta \in \{p'\vartheta:\vartheta \in \cC_n(\hat{c}_n)\}) \geq 1-\alpha.
\end{equation} 
In words, the mathematical projection of $ \cC_n(\hat{c}_n)$, which will asymptotically pick up gaps in the projection of $\Theta_I$, is a uniformly asymptotically valid confidence region. This strengthens Theorem \ref{thm:validity} because $\{p'\vartheta:\vartheta \in \cC_n(\hat{c}_n)\} \subseteq CI_n$.

To prove this extension, we modify the proof of Theorem \ref{thm:validity} after \eqref{eq:def_pi_star} as follows: The projection of $\theta_n$ is covered when
\begin{align}
& ~\exists \vartheta \in \Theta: p'\vartheta=p'\theta_n, \tfrac{\sqrt n \bar{m}_{n,j}(\vartheta)}{\hat\sigma_{n,j}(\vartheta)}\le  \hat{c}_n(\vartheta),  \forall j \\
\Longleftrightarrow &~\exists \lambda \in \tfrac{\sqrt n}{\rho }(\Theta-\theta_n): p'\lambda=0, \tfrac{\sqrt n \bar{m}_{n,j}(\theta_n+\larhon)}{\hat\sigma_{n,j}(\theta_n+\larhon)}\le  \hat{c}_n(\theta_n+\tlarhon),  \forall j \\
\Longleftrightarrow &~\exists \lambda \in \tfrac{\sqrt n}{\rho }(\Theta-\theta_n): \\
 & ~ p'\lambda=0, \bigl(\mathbb G_{n,j}(\theta_n+\tlarhon)+\rho  D_{P_n,j}(\bar\theta_n)\lambda+\sqrt{n} \gamma_{1,P_n,j}(\theta_n)\bigr)\bigl(1+\eta_{n,j}(\theta_n+\tlarhon)\bigr)\le  \hat{c}_n(\theta_n+\tlarhon), \forall j \label{eq:coverage_ext}  
	 \end{align}
	 where the last line corresponds to \eqref{eq:cover_part1} and intermediate steps that are exactly analogous to the previous proof were skipped. Subsequent proof steps go through as before until, comparing \eqref{eq:set_U_NL} to \eqref{eq:coverage_ext}, we find (compare to \eqref{eq:cover3a}, noting the change from inequality to equality)
	 \begin{equation}
P_n\bigl(p'\theta_n \in \{p'\vartheta:\vartheta \in \cC_n(\hat{c}_n)\}\bigr)  = P_{n}\bigl(U_{n}(\theta_{n},\hat{c}_{n,\rho})\neq \emptyset\bigr).	\label{eq:cover3a_ext}
\end{equation}
The proof then continues as before.

\section{Auxiliary Lemmas}
\label{app:Lemma}

\subsection{Lemmas Used to Prove Theorem \ref{thm:validity}}
Throughout this Appendix, we let $(P_n,\theta_n)\in \{(P,\theta):P \in \cP,\theta \in \Theta_I(P)\}$ be a subsequence as defined in the proof of Theorem \ref{thm:validity}-\ref{thm:validity:basic}. That is, along $(P_n,\theta_n)$, one has
\begin{align}
\kappa_n^{-1}\sqrt{n}\gamma_{1,P_n,j}(\theta_n) &\to \pi_{1j} \in \R_{[-\infty]},~j=1,\dots,J,\label{eq:subseq1}\\
\Omega_{P_n} &\uni \Omega,\label{eq:subseq2}\\
D_{P_n}(\theta_n) &\to D.\label{eq:subseq3}
\end{align}
Fix $c \ge 0$.
For each $\lambda\in\R^d$ and $\theta \in \thetnprime$, let
\begin{align}
\mathfrak{w}_{j}(\lambda)&\equiv \HH_{j}+\rho D_{j}\lambda+\pi^*_{1,j},\label{eq:wj}
\end{align}
where $\pi_{1,j}^*$ is defined in \eqref{eq:def_pi_star} and we used Lemma \ref{lem:Jstar}. Under Assumption \ref{as:correlation}-\ref{as:correlation_pair} if 
\begin{align}
\pi_{1,j}^*=0=\pi_{1,j+R_1}^*,\label{eq:paired_bind}
\end{align}
we replace the constraints
\begin{align}
 \HH_{j}+\rho D_{j}\lambda &\le c, \label{eq:paired_bind_R2L}\\
\HH_{j+R_1}+\rho D_{j+R_1}\lambda &\le c, \label{eq:paired_bind_R2U}	
\end{align}
with 
\begin{align}
\mu_j(\theta)\{ \HH_{j}+\rho D_{j}\lambda\} - \mu_{j+R_1}(\theta)\{\HH_{j+R_1}+\rho D_{j+R_1}\lambda\} &\le c,\label{eq:paired_bind_R4L}\\
-\mu_j(\theta)\{ \HH_{j}+\rho D_{j}\lambda\} + \mu_{j+R_1}(\theta)\{\HH_{j+R_1}+\rho D_{j+R_1}\lambda\} &\le c,\label{eq:paired_bind_R4U}	
\end{align}
where
\begin{align}
\mu_j(\theta) &=\left\{\begin{matrix}
 1 & \mathrm{if}~ \gamma_{1,P_n,j}(\theta)=0=\gamma_{1,P_n,j+R_1}(\theta),\\
\frac{\gamma_{1,P_n,j+R_1}(\theta)}{\gamma_{1,P_n,j+R_1}(\theta)+\gamma_{1,P_n,j}(\theta)} & \mathrm{otherwise},
\end{matrix}\right. \label{eq:weight_j}\\
\mu_{j+R_1}(\theta)&=\left\{\begin{matrix}
 0 & \mathrm{if}~ \gamma_{1,P_n,j}(\theta)=0=\gamma_{1,P_n,j+R_1}(\theta),\\
\frac{\gamma_{1,P_n,j}(\theta)}{\gamma_{1,P_n,j+R_1}(\theta)+\gamma_{1,P_n,j}(\theta)} & \mathrm{otherwise},
\end{matrix}\right. \label{eq:weight_j_pl_J11}
\end{align}

When Assumption \ref{as:correlation}-\ref{as:correlation_pair} is invoked with hard-threshold GMS, replace constraints $j$ and $j+R_1$ in the definition of $\Lambda_n^b (\theta'_n, \rho  ,c),\theta'_n\in \thetnprime$ in equation \eqref{eq:Lambda_n} as described on p.\pageref{verbal_def:paired_ineq} of the paper; when it is invoked with a GMS function $\varphi$ that is smooth in its argument, replace them, respectively, with 
\begin{align}
&\hat\mu_{n,j}(\theta'_n)\{\mathbb{G}_{n,j}^{b}(\theta'_n)+\hat{D}
_{n,j}(\theta'_n)\lambda\} - \hat\mu_{n,j+R_1}(\theta'_n)\{\mathbb{G}_{n,j+R_1}^{b}(\theta'_n)+\hat{D}
_{n,j+R_1}(\theta'_n)\lambda \} + \varphi_j(\hat{\xi}_{n,j}(\theta'_n))\le c, \label{eq:paired_bind_Lboot_smooth}\\
-&\hat\mu_{n,j}(\theta'_n)\{\mathbb{G}_{n,j}^{b}(\theta'_n)+\hat{D}
_{n,j}(\theta'_n)\lambda\} + \hat\mu_{n,j+R_1}(\theta'_n)\{\mathbb{G}_{n,j+R_1}^{b}(\theta'_n)+\hat{D}
_{n,j+R_1}(\theta'_n)\lambda \} +\varphi_{j+R_1}(\hat{\xi}_{n,j+R_1}(\theta'_n))\le c, \label{eq:paired_bind_Uboot_smooth}
\end{align}
where 
\begin{align}
\hat\mu_{n,j+R_1}(\theta'_n) &=\min\left\{\max\left(0,\frac{\frac{\bar m_{n,j}(\theta'_n)}{\hat\sigma_{n,j}(\theta'_n)}}{\frac{\bar m_{n,j+R_1}(\theta'_n)}{\hat\sigma_{n,j+R_1}(\theta'_n)}+\frac{\bar m_{n,j}(\theta'_n)}{\hat\sigma_{n,j}(\theta'_n)}}\right),1\right\},\label{eq:mu_hat_j}\\
\hat\mu_{n,j}(\theta'_n)&=
1-\hat\mu_{n,j+R_1}(\theta'_n). \label{eq:mu_hat_j_pl_J11}
\end{align}
\color{black}

Let $\mathfrak B^d_\rho=\lim_{n\to\infty}B^d_{n,\rho}.$
Let the intersection of $\{\lambda\in \mathfrak B^d_\rho:~p^\prime\lambda=0\}$ with the level set associated with the so defined function  $\mathfrak w_{j}(\lambda)$ be
\begin{align}
\mathfrak W(c)\equiv \big\{ \lambda\in \mathfrak B^d_\rho: p^\prime \lambda = 0 \cap \mathfrak w_{j}(\lambda)\le c, \: \forall j=1,\dots,J\big\} \label{eq:set_W_Lpop}.
\end{align}
Due to the substitutions in equations \eqref{eq:paired_bind_R2L}-\eqref{eq:paired_bind_R4U}, the paired inequalities (i.e., inequalities for which \eqref{eq:paired_bind} holds under Assumption \ref{as:correlation}-\ref{as:correlation_pair}) are now genuine equalities relaxed by $c$. With some abuse of notation, we index them among the $j=J_1+1,\dots,J$.
With that convention, for given $\delta \in \R$, define
	\begin{align}
\mathfrak W^{\delta}(c)
\equiv \big\{ \lambda\in \mathfrak B^d_\rho: p^\prime \lambda =0 &\cap \mathfrak w_{j}(\lambda)\le c+\delta , \: \forall j=1,\dots,J_1, \notag\\
&\cap \mathfrak w_{j}(\lambda)\le c , \: \forall j=J_1+1,\dots,J\big\}. \label{eq:Wdelta}
	\end{align}
Define the $(J+2d+2) \times d$ matrix
\begin{align}
K_P(\theta,\rho ) \equiv 
\begin{bmatrix}
[\rho D_{P,j}(\theta)]_{j=1}^{J_1+J_2}\\
[-\rho D_{P,j-J_2}(\theta)]_{j=J_1+J_2+1}^J\\
I_d\\
-I_d\\
p^\prime\\
-p^\prime
\end{bmatrix}. 
\end{align}
Given a square matrix $A$, we let $\eig(A)$ denote its smallest eigenvalue. 
 In all Lemmas below, we assume $\alpha<1/2$.

\bigskip
\begin{lemma}\label{lem:val}
Let Assumptions \ref{as:momP_AS}, \ref{as:GMS}, \ref{as:correlation}, \ref{as:momP_KMS}, and \ref{as:bcs1} hold. Let $\{P_n,\theta_n\}$ be a sequence such that $P_n\in\mathcal P$ and $\theta_n \in \Theta_I(P_n)$ for all $n$ and $\kappa_n^{-1}\sqrt{n}\gamma_{1,P_n,j}(\theta_n) \to \pi_{1j} \in \R_{[-\infty]},~j=1,\dots,J,$
$\Omega_{P_n} \uni \Omega,$ and
$D_{P_n}(\theta_n) \to D$.
Then,
\begin{align}
\liminf_{n \to \infty} P_n\left( U_n(\theta_n,\hat{c}_{n,\rho})\neq \emptyset \right) \ge 1-\alpha.
\label{eq:validity}
\end{align}
\end{lemma}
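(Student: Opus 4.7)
My plan is to lower-bound $P_n(U_n(\theta_n,\hat{c}_{n,\rho})\neq \emptyset)$ by the non-emptiness probability of the limit random polytope $\mathfrak W(c_{\pi^*})$, which is at least $1-\alpha$ by the very definition of $c_{\pi^*}$. I proceed in three steps: (i) dominate $\hat c_n$ from below by the ``infeasible'' critical value $c^I_n$ built with the oracle GMS $\varphi^*_j$ in place of $\varphi_j(\hat\xi_{n,j}(\cdot))$; (ii) pass to an almost sure representation and use Lemma \ref{lem:res} together with Lemma \ref{lem:UnonEmpty} to match the sample and limit non-emptiness events; and (iii) let a contraction parameter vanish.

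For step (i), since $\varphi^*_j \leq \varphi_j(\hat\xi_{n,j}(\theta'))$ in all cases (equal when $\pi_{1,j}=0$, and $\varphi^*_j=-\infty$ otherwise), the bootstrap feasible sets satisfy $V^b_n(\theta',c)\subseteq V^I_n(\theta',c)$, yielding $c^I_n(\theta')\leq \hat c_n(\theta')$; taking infima over the $\rho/\sqrt n$-box gives
\begin{align*}
\hat c_{n,\rho} \;\geq\; \inf_{\lambda\in B^d_{n,\rho}} c^I_n(\theta_n+\lambda\rho/\sqrt n).
\end{align*}
By Lemma \ref{lem:cv_convergence}, $c^I_n(\theta'_n) \to_{P_n} c_{\pi^*}$ for any selection $\theta'_n$ from the $\rho/\sqrt n$-box; the $\varrho_{P_n}$-equicontinuity of $\mathbb G_n,\mathbb G^b_n$ from Assumption \ref{as:bcs1} together with the Lipschitz continuity of $D_P$ from Assumption \ref{as:momP_KMS} upgrade this to uniformity across the shrinking box, so the displayed infimum also converges in $P_n$-probability to $c_{\pi^*}$. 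Hence for every $\epsilon>0$, $P_n(\hat c_{n,\rho}\geq c_{\pi^*}-\epsilon)\to 1$.

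For step (ii), via Lemma \ref{cor:asrep} (Appendix \ref{app:asrep}) I realize $(\mathbb G_n,\hat D_n,\eta_n,\mathbb G^b_n)$ on a common probability space on which they converge almost surely to $(\HH,D,0,\HH)$. On that space Lemma \ref{lem:res} yields $\sup_{\lambda\in B^d}\big|\max_j u_{n,j,\theta_n}(\lambda)-\max_j \mathfrak w_j(\lambda)\big|=o_{P_n}(1)$, and Lemma \ref{lem:UnonEmpty} therefore gives
\begin{align*}
\liminf_n P_n\big(U_n(\theta_n,c_{\pi^*}-\epsilon)\neq \emptyset\big) \;\geq\; \mathrm{Pr}\big(\mathfrak W^{-\epsilon}(c_{\pi^*})\neq \emptyset\big).
\end{align*}
Combining this bound with step (i) and the monotonicity of $c\mapsto U_n(\theta_n,c)$ yields $\liminf_n P_n(U_n(\theta_n,\hat c_{n,\rho})\neq \emptyset)\geq \mathrm{Pr}(\mathfrak W^{-\epsilon}(c_{\pi^*})\neq \emptyset)$. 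Letting $\epsilon\downarrow 0$, Lemma \ref{lem:empt} delivers $\mathrm{Pr}(\mathfrak W^{-\epsilon}(c_{\pi^*})\neq \emptyset)\to \mathrm{Pr}(\mathfrak W(c_{\pi^*})\neq \emptyset)\geq 1-\alpha$, the last inequality being the definition of $c_{\pi^*}$.

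The main obstacle is the uniformity claim in step (i): $c^I_n(\theta')$ is defined implicitly as a quantile of a bootstrap probability, so upgrading the pointwise convergence $c^I_n(\theta'_n)\to_{P_n} c_{\pi^*}$ to uniformity across the shrinking $\rho/\sqrt n$-box is nontrivial and ultimately rests on the equicontinuity properties in Assumptions \ref{as:bcs1} and \ref{as:momP_KMS}. A secondary difficulty is the boundary case $c_{\pi^*}=0$, where the contraction in step (ii) degenerates (one cannot push $c$ below $0$); here Lemma \ref{lem:cbd} precludes this case whenever equality constraints are sufficiently binding, and Lemma \ref{lem:exist_sol} is needed to rule out rank-deficient configurations of active gradients on the event $\{\mathfrak W(0)\neq\emptyset\}$. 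Finally, under Assumption \ref{as:correlation}-\ref{as:correlation_pair} the functions $u_{n,j,\theta_n}$ and $\mathfrak w_j$ are built with the merged-paired-inequality weights $\hat\mu_{n,j}$ and $\mu_j$ from equations \eqref{eq:paired_bind_Lboot_smooth}--\eqref{eq:paired_bind_Uboot_smooth}; the consistency $\hat\mu_{n,j}\to_{P_n}\mu_j$ required for Lemma \ref{lem:res} to apply in step (ii) is supplied by Lemma \ref{lem:pair} together with the variance-estimator convergence in Lemma \ref{lem:eta_conv}.
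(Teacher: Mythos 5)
Your high-level architecture matches the paper's (dominate $\hat c_{n,\rho}$ from below by an infeasible critical value, pass to an almost-sure representation, invoke Lemmas \ref{lem:res}, \ref{lem:UnonEmpty}, \ref{lem:empt}), but there are two substantive gaps.

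First, your step (i) makes the claim $V^b_n(\theta',c) \subseteq V^I_n(\theta',c)$, and hence $\hat c_n(\theta') \geq c^I_n(\theta')$, \emph{deterministically} from $\varphi^*_j \le \varphi_j$. This holds under Assumption \ref{as:correlation}-\ref{as:correlation_base}, but fails under Assumption \ref{as:correlation}-\ref{as:correlation_pair}: when paired inequalities are merged (equations \eqref{eq:paired_bind_Lboot_smooth}--\eqref{eq:paired_bind_Uboot_smooth}), the constraint sets are not nested because the $(j{+}R_1)$-th constraint in $\Lambda^b_n$ is replaced by the negated $j$-th constraint, which is not a relaxation of the corresponding constraint in $V^I_n$. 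The paper's Lemma \ref{lem:cv_convergence}(iii) therefore delivers only the weaker $\hat c_n(\theta') \geq c^I_n(\theta') + o_{P_n}(1)$, and the proof of that part introduces an intermediate critical value $c^{IP}_n$ and uses Lemma \ref{lem:pair}(3) to show $c^{IP}_n - c^I_n \to_{P_n} 0$. You mention Lemma \ref{lem:pair} at the end, but only in the context of verifying the premises of Lemma \ref{lem:res}, not to repair the domination step.

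Second, and more fundamentally, you replace the paper's comparison of $U_n(\theta_n, c^I_{n,\rho})$ (with a \emph{random}, $P_n$-convergent critical value) against $\mathfrak W(c_{\pi^*})$ by a comparison of $U_n(\theta_n, c_{\pi^*}-\epsilon)$ (with a \emph{fixed, deterministic} level) against $\mathfrak W^{-\epsilon}(c_{\pi^*})$, then let $\epsilon\downarrow 0$. This route breaks down in two places. (a) The set $\mathfrak W^{-\epsilon}(c_{\pi^*})$ from \eqref{eq:Wdelta} tightens only the inequality constraints by $\epsilon$ and leaves the two-sided constraints derived from equalities at level $c_{\pi^*}$; after passing to the almost-sure representation and using $\sup_\lambda|u_{n,j}-\mathfrak w_j|\to 0$, a point $\lambda^*\in\mathfrak W^{-\epsilon}(c_{\pi^*})$ satisfies the sample equality constraints only up to $c_{\pi^*}+o(1)$, not up to $c_{\pi^*}-\epsilon$, so it need not lie in $U_n(\theta_n,c_{\pi^*}-\epsilon)$ and your displayed inequality does not follow from Lemma \ref{lem:UnonEmpty} (which compares $U_n^*$ at the random level $c_n^*\to c_{\pi^*}$, not at a fixed contracted level). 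You could switch to the alternative definition \eqref{eq:Wdelta_lb_c} that contracts all constraints, but that version is only covered by Lemma \ref{lem:empt}(ii) when $c\geq \underline c>0$. (b) When $c_{\pi^*}=0$ and $J_2<d$ the contraction degenerates entirely: $\hat c_{n,\rho}\ge -\epsilon$ carries no information and there is no room to tighten below zero. You flag this but resolve it only by gesturing at Lemma \ref{lem:cbd} (which only covers $J_2\geq d$) and Lemma \ref{lem:exist_sol}. The paper avoids the problem by never reducing to a fixed level: it sets $c^I_n\equiv 0$ when $c_{\pi^*}=0$, keeps the random $c_n^*$ in the almost-sure representation, and handles $1\leq J_2<d$ with $c_{\pi^*}=0$ by the contraction-mapping/fixed-point argument inside Lemma \ref{lem:UnonEmpty}, Case 3. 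That argument is essential and is not subsumed by your $\epsilon$-contraction step.
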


\begin{proof}
We consider a subsequence along which $ \liminf_{n\to\infty}P_n(U_{n}(\theta_{n},\hat{c}_{n,\rho}\ne\emptyset)$ is achieved as a limit. For notational simplicity, we use $\{n\}$ for this subsequence below.
	
Below, we construct a sequence of critical values  such that
\begin{align}
	\hat c_n(\theta_n')\ge c^{I}_n(\theta_n')+ o_{P_n}(1),\label{eq:chat_cI}
\end{align} 
and $c^{I}_n(\theta_n')\stackrel{P_n}{\to} c_{\pi^*}$ for any $\theta'_n\in \thetnprime$. The construction is as follows. When $c_{\pi^*}=0$, let $c^{I}_n(\theta_n')=0$ for all $\theta'_n\in \thetnprime$, and hence $c^{I}_n(\theta_n')\stackrel{P_n}{\to} c_{\pi^*}$. If $c_{\pi^*}>0$, let $c^I_n(\theta_n)\equiv \inf\{c\in\R_+:P^*_n(V^I_n(\theta_n,c))\ge 1-\alpha\}$, where $V^I_n$ is defined as in Lemma \ref{lem:cv_convergence}. By Lemma \ref{lem:cv_convergence} (iii), this critical value sequence satisfies \eqref{eq:chat_cI} with probability approaching 1.
Further, by Lemma \ref{lem:cv_convergence} (ii), $c^{I}_n(\theta_n')\stackrel{P_n}{\to} c_{\pi^*}$ for any $\theta'_n\in \thetnprime$.

For each $\theta\in\Theta$,  let
\begin{align}
	c^{I}_{n,\rho}(\theta)\equiv\inf_{\lambda\in B^d_{n,\rho}}c^I_n(\theta+\tlarhon).
\end{align}
Since the $o_{P_n}(1)$ term in \eqref{eq:chat_cI} does not affect the argument below, we redefine $c^{I}_{n,\rho}(\theta_n)$ as $c^{I}_{n,\rho}(\theta_n)+o_{P_n}(1)$.
By \eqref{eq:chat_cI} and simple addition and subtraction,
\begin{align}
P_{n}\Big(U_{n}(\theta_{n},\hat{c}_{n,\rho}(\theta_{n}))\neq \emptyset\Big) &
\ge P_{n}\Big(U_{n}(\theta_{n},c^{I}_{n,\rho}(\theta_{n}))\neq \emptyset\Big)\notag\\
&=\Pr(\mathfrak{W}(c_{\pi^*})\ne\emptyset)+\Big[ P_n\Big(U_{n}(\theta_{n},c^{I}_{n,\rho}(\theta_{n}))\ne\emptyset\Big)-	\Pr\Big(\mathfrak{W}(c_{\pi^*})\ne\emptyset\Big)\Big].\label{eq:val_3_1}
\end{align}
As previously argued, $\mathbb G_{n}(\theta_n+\larhon)\stackrel{d}{\to} \HH$. Moreover, by Lemma~\ref{lem:eta_conv}, 
$\sup_{\theta\in\Theta}\|\eta_{n}(\theta)\|\stackrel{p}{\to} 0$ uniformly in $\cP$, and by Lemma \ref{lem:cv_convergence}, $c^I_{n,\rho}(\theta_n)\stackrel{p}{\to}c_{\pi^*}$.
 Therefore, uniformly in $\lambda\in B^d$, the sequence $\{(\mathbb G_{n}(\theta_n+\larhon),\eta_{n}(\theta_{n}+\larhon),c^{I}_{n,\rho}(\theta_n))\}$ satisfies 
\begin{align}
	\bigl(\mathbb G_{n}(\theta_n+\tlarhon),	\eta_{n}(\theta_{n}+\tlarhon),c_{n,\rho}^{I}(\theta_n)\bigr)\stackrel{d}{\to} (\HH,0,c_{\pi^*}).
\end{align}
In what follows, using Lemma 1.10.4 in \cite{Vaart_Wellner2000aBK} we take $(\G^*_{n}(\theta_n+\larhon),\eta^*_{n}, c^*_{n})$ to be the almost sure representation of $(\mathbb G_{n}(\theta_n+\larhon),\eta_{n}(\theta_{n}+\larhon),c_{n,\rho}^{I}(\theta_n))$ defined on some probability space $(\Omega,\mathcal F,\mathbf P)$ such that
 $(\mathbb G^*_{n}(\theta_n+\larhon),\eta^*_{n}, c^*_{n})\stackrel{a.s.}{\to}(\HH^*,0, c_{\pi^*})$, where $\HH^*\stackrel{d}{=}\HH$.

For each $\lambda\in\R^d$, we define analogs to the quantities in \eqref{eq:uj} and \eqref{eq:wj} as
\begin{align}
u^*_{n,j,\theta_n}(\lambda)&\equiv	\bigl\{\mathbb G^*_{n,j}(\theta_n+\tlarhon)+\rho D_{P_{n},j}(\bar\theta_n)\lambda+\pi^*_{1,j}\bigr\}(1+\eta^*_{n,j}),\label{eq:ujstar}\\
\mathfrak{w}^*_{j}(\lambda)&\equiv \HH^*_{j}+\rho D_{j}\lambda+\pi^*_{1,j}.\label{eq:wjstar}
\end{align}
 where we used that by Lemma \ref{lem:Jstar}, $\kappa_n^{-1}\sqrt{n} \gamma_{1,P,j}(\theta_n)-\kappa_n^{-1}\sqrt{n} \gamma_{1,P,j}(\theta_n^\prime)=o(1)$ uniformly over $\theta_n^\prime \in \thetnprime$ and therefore $\pi_{1,j}^*$ is constant over this neighborhood, and we applied a similar replacement as described in equations \eqref{eq:paired_bind_R2L}-\eqref{eq:paired_bind_R4U} for the case that $\pi_{1,j}^*=0=\pi_{1,j+R_1}^*$.
Similarly, we define analogs to the sets in \eqref{eq:set_U_NL} and \eqref{eq:set_W_Lpop} as
\begin{align}
U^*_{n}(\theta_{n},c_{n}^{*})&\equiv	\big\{\lambda\in B^d_{n,\rho}: p^\prime \lambda = 0 \cap u^*_{n,j,\theta_n}(\lambda)\le c_{n}^{*}, \: \forall j=1,\dots,J\big\}, \label{eq:set_U_NL_star}\\
\mathfrak W^*(c_{\pi^*})&\equiv \big\{ \lambda\in \mathfrak B^d_\rho: p^\prime \lambda = 0 \cap \mathfrak{w}^*_{j}(\lambda)\le c_{\pi^*},~\forall j=1,\dots,J\big\}.\label{eq:set_W_Lpop_star}
\end{align}
 It then follows that equation (\ref{eq:val_3_1}) can be rewritten as
\begin{align}
P_{n}\Big(U_{n}(\theta_{n},\hat{c}_{n,\rho}(\theta_{n}))\neq \emptyset\Big)\ge \mathbf P(\mathfrak{W}^*(c_{\pi^*})\ne\emptyset)+\Big[ \mathbf P\Big(U^*_{n}(\theta_{n},c^*_n)\ne\emptyset\Big)-	\mathbf P\Big(\mathfrak{W}^*(c_{\pi^*})\ne\emptyset\Big)\Big].\label{eq:val3}
\end{align} 
By the definition of $c_{\pi^*}$, we have $\mathbf P(\mathfrak{W}^*(c_{\pi^*})\ne\emptyset)\ge 1-\alpha$. Therefore, we are left to show that the second term on the right hand side of \eqref{eq:val3} tends to 0 as $n\to\infty$.

 Define 
 \begin{equation}
 \label{eq:Jstar}
\mathcal J^*\equiv\{j=1,\cdots,J:\pi^*_{1,j}=0\} .
 \end{equation}
\noindent \textbf{Case 1.} Suppose first that $\mathcal J^*= \emptyset$, which implies $J_2=0$ and $\pi^*_{1,j}=-\infty$ for all $j.$
Then we have
\begin{align}
U^*_{n}(\theta_{n},c^{*}_{n})=\{\lambda \in  B^d_{n,\rho}:p^\prime\lambda =0\},~~~	\mathfrak{W}^*(c_{\pi^*})=\{\lambda \in  \mathfrak B^d_\rho:p^\prime\lambda =0\},
\end{align}
with probability 1, and hence 
\begin{align}
	\mathbf{P}\Big( \{U^*_{n}(\theta_{n},c^{*}_{n})\neq \emptyset \} \cap \{ \mathfrak{W}^*(c_{\pi^*})\neq \emptyset\}  \Big)=1.\label{eq:val9}
\end{align}
This in turn implies that
\begin{align}
 \Big| \mathbf{P}\Big(U^*_{n}(\theta_{n},c^{*}_{n})\neq \emptyset \} \Big)
- \mathbf{P}\Big(\mathfrak{W}^*(c_{\pi^*})\neq \emptyset\} \Big)\Big|=0,\label{eq:val10}
\end{align}
where we used $|\mathbf P(A)-\mathbf P(B)|\le \mathbf P(A\Delta B)\le 1-\mathbf P(A\cap B)$ for any pair of events $A$ and $B.$ Hence, the term in the square brackets in \eqref{eq:val3} is 0.

\noindent \textbf{Case 2.} Now consider the case that $\cJ^*\neq \emptyset$. We show that the term in  the square brackets in \eqref{eq:val3} converges to 0.
To that end, note that for any events $A,B$,
\begin{align}
\Big\vert\mathbf{P}(A\neq \emptyset) - \mathbf{P}(B\neq \emptyset)  \Big\vert
\le \Big\vert &\mathbf{P}(\{A= \emptyset \} \cap \{ B \neq \emptyset\})+
\mathbf{P}(\{A\neq \emptyset \} \cap \{ B =\emptyset\})\Big\vert \label{eq:cover5}
\end{align}
Hence, we aim to establish that for $A=U^*_{n}(\theta_{n},c^{*}_{n})$, $B= \mathfrak{W}^*(c_{\pi^*})$, the right hand side of equation (\ref{eq:cover5}) converges to zero. But this is guaranteed by Lemma \ref{lem:UnonEmpty}. 
Therefore, the conclusion of the lemma follows.
\end{proof}

\begin{lemma}\label{lem:UnonEmpty}
Let Assumptions \ref{as:momP_AS}, \ref{as:GMS}, \ref{as:correlation}, \ref{as:momP_KMS}, and \ref{as:bcs1} hold. Let $(P_n,\theta_n)$ have the almost sure representations given in Lemma \ref{lem:val}, and let $\mathcal J^*$ be defined as in \eqref{eq:Jstar}. Assume that $\mathcal J^* \neq \emptyset$.
Then for any $\eta>0$, there exists  $N \in \N$ such that 
\begin{align}
&\mathbf{P}\Big( \{U^*_{n}(\theta_{n},c^{*}_{n})\neq\emptyset \} \cap \{ \mathfrak{W}^*(c_{\pi^*})= \emptyset \}  \Big)
\le \eta/2,\label{UW_nonempty1}\\
&\mathbf{P}\Big( \{U^*_{n}(\theta_{n},c^{*}_{n})= \emptyset \} \cap \{ \mathfrak{W}^*(c_{\pi^*})\neq \emptyset \} \Big)
\le \eta/2,\label{UW_nonempty2}
\end{align}
for all $n\ge N$, where the sets in the above expressions are defined in equations \eqref{eq:set_U_NL_star} and \eqref{eq:set_W_Lpop_star}.
\end{lemma}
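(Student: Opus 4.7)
The plan is to reduce both bounds to a single mechanism: pair the uniform criterion-function convergence from Lemma \ref{lem:res}, which on the almost-sure probability space gives
\[
\Delta_{n} \;\equiv\; \sup_{\lambda\in B^{d}}\Big|\max_{j}\bigl(u^{*}_{n,j,\theta_{n}}(\lambda)-c^{*}_{n}\bigr)\;-\;\max_{j}\bigl(\mathfrak{w}^{*}_{j}(\lambda)-c_{\pi^{*}}\bigr)\Big| \;\xrightarrow{\ a.s.\ } \;0,
\]
with the stability of $\mathfrak{W}^{*}(\cdot)$ at $c_{\pi^{*}}$ furnished by Lemma \ref{lem:empt}: for any $\eta>0$ we may pick $\delta>0$ such that both $\mathbf{P}\bigl(\{\mathfrak{W}^{*}(c_{\pi^{*}})\neq\emptyset\}\cap\{\mathfrak{W}^{*,-\delta}(c_{\pi^{*}})=\emptyset\}\bigr)<\eta/4$ and, after applying the same Lemma at $c=c_{\pi^{*}}+\delta$, $\mathbf{P}\bigl(\{\mathfrak{W}^{*,\delta}(c_{\pi^{*}})\neq\emptyset\}\cap\{\mathfrak{W}^{*}(c_{\pi^{*}})=\emptyset\}\bigr)<\eta/4$. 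The Hausdorff convergence $B^{d}_{n,\rho}\to\mathfrak{B}^{d}_{\rho}$, a consequence of Assumption \ref{as:momP_AS}-(a) together with $\theta_{n}\in\Theta$, will be used throughout to transport candidate points between the two feasibility domains.

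\textbf{Bound \eqref{UW_nonempty1}.} I fix $\delta$ as above and restrict to the event $\{\Delta_{n}\le\delta/2\}$, whose complement has vanishing probability by Lemma \ref{lem:res}. On this event, any $\lambda_{n}\in U^{*}_{n}(\theta_{n},c^{*}_{n})$ satisfies $\max_{j}\mathfrak{w}^{*}_{j}(\lambda_{n})\le c_{\pi^{*}}+\delta/2$. For the pure inequality indices $j\le J_{1}$ this directly places $\lambda_{n}$ in $\mathfrak{W}^{*,\delta}(c_{\pi^{*}})$; for the equality-type indices $j>J_{1}$ (including the paired inequalities substituted in through \eqref{eq:paired_bind_R4L}--\eqref{eq:paired_bind_R4U}) I invoke Lemma \ref{lem:pair} so that paired local-to-binding inequalities collapse to a single equality at the limit, and appeal to Lemma \ref{lem:exist_sol} to discard the $o_{\mathbf{P}}(1)$-probability event in which the active equality-gradients are near-dependent; then the $\delta/2$ slack on the equality-type constraints can be absorbed by a perturbation of $\lambda_{n}$ along the null space of the gradient matrix. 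Combined with a small Hausdorff correction to move the resulting point into $\mathfrak{B}^{d}_{\rho}$, this produces an element of $\mathfrak{W}^{*,\delta}(c_{\pi^{*}})$, and Lemma \ref{lem:empt} then yields $\mathbf{P}\bigl(\{U^{*}_{n}(\theta_{n},c^{*}_{n})\neq\emptyset\}\cap\{\mathfrak{W}^{*}(c_{\pi^{*}})=\emptyset\}\bigr)\le \mathbf{P}(\Delta_{n}>\delta/2) + \eta/4 \le \eta/2$ for $n$ large.

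\textbf{Bound \eqref{UW_nonempty2}.} I use the contraction direction of Lemma \ref{lem:empt} to obtain, off an event of probability at most $\eta/4$, a witness $\tilde\lambda\in\mathfrak{W}^{*,-\delta}(c_{\pi^{*}})$. Because $\mathfrak{B}^{d}_{\rho}\subseteq\liminf_{n}B^{d}_{n,\rho}$, for $n$ large $\tilde\lambda\in B^{d}_{n,\rho}$. On $\{\Delta_{n}\le\delta/2\}$ the uniform bound delivers
\[
\max_{j}u^{*}_{n,j,\theta_{n}}(\tilde\lambda)-c^{*}_{n} \;\le\; \max_{j}\mathfrak{w}^{*}_{j}(\tilde\lambda)-c_{\pi^{*}}+\delta/2 \;\le\; -\delta+\delta/2\;<\;0,
\]
so $\tilde\lambda\in U^{*}_{n}(\theta_{n},c^{*}_{n})$. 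Summing the two excluded events bounds the left-hand side of \eqref{UW_nonempty2} by $\eta/2$.

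\textbf{Main obstacle.} The delicate case is $c_{\pi^{*}}=0$, where the \textquotedblleft$-\delta$\textquotedblright{} contraction of Lemma \ref{lem:empt} is vacuous unless a strictly feasible witness exists. I dispose of this in two sub-cases: Lemma \ref{lem:cbd} rules out $c_{\pi^{*}}=0$ whenever the paired/equality constraints over-identify the direction $p$; and in the complementary \textquotedblleft strict feasibility\textquotedblright{} case an interior point of $\mathfrak{W}^{*}(0)\cap\{p'\lambda=0\}$ is produced using Assumption \ref{as:correlation} and Lemma \ref{lem:exist_sol}, after which the proof of \eqref{UW_nonempty2} goes through with the interior witness replacing $\tilde\lambda$. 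A secondary, largely bookkeeping issue is that $\mathfrak{W}^{\delta}(c)$ in \eqref{eq:Wdelta} relaxes only the pure inequalities and not the substituted paired constraints at $j>J_{1}$; Lemma \ref{lem:pair} is what justifies propagating the $\delta/2$ slack on those constraints into an admissible perturbation rather than a violation, and will be the most technical step in the full write-up.
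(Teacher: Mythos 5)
The proposal correctly identifies the overall skeleton — Lemma \ref{lem:res} to control the discrepancy $\Delta_n$, Lemma \ref{lem:empt} to control the stability of $\mathfrak{W}^*(\cdot)$ under small perturbations of the level $c$, and the inclusion $A_n^c\subseteq L_n$ — and the treatment of \eqref{UW_nonempty1} is in the right direction (though over-engineered: the paper simply defines $\mathfrak{W}^{*,+\delta}$ by relaxing \emph{all} constraints indexed by $\cJ^*$, so no perturbation along the null space of the equality gradients is needed there; Lemmas \ref{lem:pair} and \ref{lem:exist_sol} are not invoked for \eqref{UW_nonempty1}).

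The genuine gap is in \eqref{UW_nonempty2}. Your display
\[
\max_{j}u^{*}_{n,j,\theta_{n}}(\tilde\lambda)-c^{*}_{n}\;\le\;\max_{j}\mathfrak{w}^{*}_{j}(\tilde\lambda)-c_{\pi^{*}}+\delta/2\;\le\;-\delta+\delta/2<0
\]
is false for the equality-type indices when $\tilde\lambda$ is taken from $\mathfrak{W}^{*,-\delta}(c_{\pi^{*}})$ with the tightening of \eqref{eq:Wdelta}: that set contracts only $j\le J_1$, so on the remaining constraints you only know $\mathfrak{w}^{*}_{j}(\tilde\lambda)\le c_{\pi^{*}}$, and the bound becomes $\le\delta/2$, not $<0$. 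To tighten \emph{all} constraints by $\delta$ one must use Lemma \ref{lem:empt}(ii), which requires $c\ge\underline{c}>0$. That covers $J_2=0$ and the case $c_{\pi^*}>0$ (via Lemma \ref{lem:cbd} when $J_2\ge d$, or directly when $1\le J_2<d$ and $c_{\pi^*}>0$), but it fails precisely where you locate the ``main obstacle'': $1\le J_2<d$ with $c_{\pi^*}=0$. There your proposed remedy — producing an interior point of $\mathfrak{W}^{*}(0)\cap\{p'\lambda=0\}$ — cannot work: with $J_2\ge 1$ equalities and $c_{\pi^*}=0$, each pair $(j,j+J_2)$ collapses to $\mathfrak{w}^*_j(\lambda)=0$, so $\mathfrak{W}^{*}(0)\cap\{p'\lambda=0\}$ is contained in an affine subspace of dimension $\le d-1-J_2$ and has no interior (even relative to $\{p'\lambda=0\}$). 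Calling the remaining step ``largely bookkeeping'' is the crux of the underestimate: the paper's proof of this sub-case is a substantive, roughly two-page argument that selects a basic solution $\lambda^{\cC}_n$ of the polyhedron, defines a map $\phi_n(\lambda)=\big(K^{\cC}_{P_n}(\bar\theta(\theta_n,\lambda),\rho)\big)^{-1}g^{\cC}_n(\theta_n+\lambda\rho/\sqrt n)$, shows via \eqref{eq:contraction}--\eqref{eq:fixed_point} that $\phi_n$ is a contraction with a fixed point $\lambda^f_n$, and then uses Lemma \ref{lem:exist_sol} together with \eqref{eq:unif_conv_phi}--\eqref{eq:se_u} to place $\lambda^f_n$ inside $U^*_n(\theta_n,c^*_n)$. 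Lemma \ref{lem:pair} plays only a supporting role (merging paired inequalities and controlling $\tilde\mu-\mu$); the load-bearing tools are the contraction-mapping fixed-point theorem and the determinant bound from Lemma \ref{lem:exist_sol}, neither of which your outline deploys for this purpose.
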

\begin{proof}
We begin by observing that for $j \notin \cJ^*$, $\pi^*_{1,j}=-\infty$, and therefore the corresponding inequalities 
\begin{align*}
 \left(\mathbb G^*_{n,j}(\theta_n+\tlarhon)+\rho D_{P_{n},j}(\bar\theta_n)\lambda+\pi^*_{1,j}\right)(1+\eta^*_{n,j}) \le c_{n}^{*},\\  
 \HH^*_{j}+\rho D_{j}\lambda+\pi^*_{1,j}\le c_{\pi^*}
\end{align*}
are satisfied with probability approaching one by similar arguments as in \eqref{eq:pi_star_gamma_vanishes}. Hence, we can redefine the sets of interest as
\begin{align}
U^*_{n}(\theta_{n},c_{n}^{*})&\equiv	\big\{\lambda\in B^d_{n,\rho}: p^\prime \lambda = 0 \cap u^*_{n,j,\theta_n}(\lambda)\le c_{n}^{*}, \: \forall j \in \cJ^*\big\}, \label{eq:set_U_NL_star_new}\\
 \mathfrak W^*(c_{\pi^*})&\equiv \big\{ \lambda\in \mathfrak B^d_\rho: p^\prime \lambda = 0 \cap \mathfrak{w}^*_{j}(\lambda)\le c_{\pi^*},~\forall j\in\mathcal J^*\big\}.\label{eq:set_W_Lpop_star_new}
\end{align}
We first show \eqref{UW_nonempty1}.
For this, we start by defining the events
\begin{align}
A_{ n} \equiv \left\{\sup_{\lambda \in  B^d} \max_{j \in \cJ^*}\left\vert (u^*_{n,j,\theta_n}(\lambda)-c^*_n)-( \mathfrak{w}^*_{j}(\lambda)-c_{\pi^*}) \right\vert \ge\delta  \right\}\label{eq:int2-2_2}.
\end{align}
By Lemma \ref{lem:res}, using the assumption that $\cJ^* \neq \emptyset$, for any $\eta>0$ there exists $N \in \N$ such that
\begin{align}
\mathbf{P}(A_{n})< \eta/2,~\forall n \ge N.\label{eq:Aln}
\end{align}
Define the sets of $\lambda$s, $U_n^{*,+\delta}$ and $\mathfrak{W}^{*,+\delta}$ by relaxing the constraints shaping  $U_n^*$ and $\mathfrak{W}^{*}$ by $\delta$:
\begin{align}
U_n^{*,+\delta}(\theta_n,c) &\equiv \{\lambda\in  B^d_{n,\rho}: p^\prime \lambda =0 \cap u^*_{n,j,\theta_n}(\lambda)\le c+\delta,~j \in \cJ^*\},\\
\mathfrak W^{*,+\delta}(c)&\equiv \{\lambda\in  \mathfrak B^d_\rho: p^\prime \lambda =0 \cap \mathfrak{w}^*_{j}(\lambda)\le c+\delta,~j \in \cJ^*\}.
\end{align}
Compared to the set in equation \eqref{eq:Wdelta}, here we replace $u^*_{n,j,\theta_n}(\lambda)$ for $u_{n,j,\theta_n}(\lambda)$ and $\mathfrak{w}^*_{j}(\lambda)$ for $\mathfrak{w}_{j}(\lambda)$, we retain only constraints in $\cJ^*$, and we relax all such constraints by $\delta>0$ instead of relaxing only those in $\{1,\dots,J_1\}$.
Next, define the event $L_{ n}\equiv\{ U_n^{*}(\theta_n,c^*_n)\subset \mathfrak W^{*,+\delta}(c_{\pi^*})\}$ and note that $A^c_{n}\subseteq L_{n}$. 

We may then bound the left hand side of \eqref{UW_nonempty1} as
\begin{align}
\mathbf{P}\Big(\{ U^*_{n}(\theta_n,c^*_n)\ne \emptyset \} \cap \{\mathfrak W^{*}(c_{\pi^*})= \emptyset \}\Big)
&\le \mathbf{P}\Big( \{ U^{*}_{n}(\theta_n,c^*_n)\ne \emptyset \} \cap \{\mathfrak W^{*,+\delta}(c_{\pi^*})=\emptyset \}\Big)\notag\\
&+\mathbf{P}\Big(\{\mathfrak W^{*,+\delta}(c_{\pi^*})\ne \emptyset \}\cap \{\mathfrak W^{*}(c_{\pi^*})=\emptyset \}\Big), \label{eq:VWres1}
\end{align}
where we used $P(A\cap B)\le P(A\cap C)+P(B\cap C^c)$ for any events $A,B,$ and $C.$ The first term on the right hand side of \eqref{eq:VWres1} can further be bounded as
\begin{align}
\mathbf{P}\Big( \{ U^{*}_{n}(\theta_n,c^*_n)\ne \emptyset \} \cap \{\mathfrak W^{*,+\delta}(c_{\pi^*})=\emptyset \}\Big)
&\le \mathbf{P}\Big(\{U^{*}_{n}(\theta_n,c^*_n)\not\subseteq\mathfrak W^{*,+\delta}(c_{\pi^*}) \} \Big) \notag\\
&= \mathbf {P}(L_{n}^c)\le\mathbf{P}(A_{n})<\eta/2,~\forall n\ge N~, \label{eq:Ato0_1}
\end{align}
where the penultimate inequality follows from $A^c_{n}\subseteq L_{n}$ as argued above, and the last inequality follows from \eqref{eq:Aln}.
For the second term on the left hand side of \eqref{eq:VWres1}, by  Lemma \ref{lem:empt}, there exists $N'\in\mathbb N$ such that
\begin{align}
\mathbf{P}\Big(\{\mathfrak W^{*,+\delta}(c_{\pi^*})\ne \emptyset \}\cap \{\mathfrak W^{*}(c_{\pi^*})=\emptyset \}\Big) \le \eta/2, ~\forall n\ge N'.\label{eq:Ato0_2}
\end{align}
Hence,  \eqref{UW_nonempty1} follows from \eqref{eq:VWres1}, \eqref{eq:Ato0_1}, and \eqref{eq:Ato0_2}.

To establish \eqref{UW_nonempty2}, we distinguish three cases.

\noindent \textbf{Case 1.} Suppose first that $J_2 = 0$ (recalling that under Assumption \ref{as:correlation}-\ref{as:correlation_pair} this means that there is no $j=1,\dots,R_1$ such that $\pi_{1,j}^*=0=\pi_{1,j+R_1}^*$), and hence one has only moment inequalities. In this case, by \eqref{eq:set_U_NL_star_new} and \eqref{eq:set_W_Lpop_star_new}, one may write
\begin{align}
	U^*_n(\theta_n,c) &\equiv \big\{\lambda\in B^d_{n,\rho}: p^\prime \lambda = 0 \cap  u^*_{n,j,\theta_n}(\lambda) \le c,~j\in \cJ^*  \big\},\\
	\mathfrak W^{*,-\delta}(c) &\equiv \big\{\lambda\in \mathfrak B^d_\rho: p^\prime \lambda =0 \cap \mathfrak{w}^*_{j}(\lambda)\le c-\delta,~j\in \cJ^*  \big\},
	\label{eq:int2-6}
\end{align}
where $\mathfrak W^{*,-\delta}$, $\delta>0$, is obtained by tightening the inequality constraints shaping $\mathfrak W^{*}$. Define the event 
\begin{align}
	R_{2 n}\equiv\{\mathfrak W^{*,-\delta}(c_{\pi^*})\subset U^*_{n}(\theta_{n},c^{*}_{n})\},\label{eq:int2-7}
\end{align}
and note that $A_{n}^c \subseteq R_{2 n}$. The result in equation \eqref{UW_nonempty2} then follows by Lemma \ref{lem:empt} using again similar steps to \eqref{eq:VWres1}-\eqref{eq:Ato0_2}.

\noindent \textbf{Case 2.} Next suppose that $J_2\ge d$. In this case,  we define $\mathfrak W^{*,-\delta}$ to be the set obtained by tightening by $\delta$ the inequality constraints as well as each of the two opposing inequalities obtained from the equality constraints.
That is, 
\begin{align}
	\mathfrak W^{*,-\delta}(c_{\pi^*}) &\equiv \{\lambda\in  \mathfrak B^d_\rho: p^\prime \lambda =0 \cap \mathfrak{w}^*_{j}(\lambda)\le c-\delta,~j \in \cJ^*\},\label{eq:int2-8}
\end{align}
that is, the same set as in \eqref{eq:Wdelta_lb_c} with $\mathfrak{w}^*_{j}(\lambda)$ replacing $\mathfrak{w}_{j}(\lambda)$ and defining the set using only inequalities in $\cJ^*$.
Note that, by Lemma \ref{lem:cbd}, there exists $N \in \N$ such that for all $n \ge N$ $c^I_n(\theta)$ is bounded from below by some $\underline{c}>0$ with probability approaching one uniformly in $P \in \cP$ and $\theta \in \Theta_I(P)$. This ensures  $c_{\pi^*}$ is bounded from below by $\underline{c}>0$.
This in turn allows us to construct a non-empty tightened constraint set with probability approaching 1.
Namely, for $\delta<\underline{c}$,  $\mathfrak W^{*,-\delta}(c_{\pi^*})$ is nonempty with probability approaching 1 by Lemma \ref{lem:empt},  and hence its superset $\mathfrak W^{*}(c_{\pi^*})$  is also non-empty with probability approaching 1. However, note that $A_{n}^c \subseteq R_{2 n}$, where $R_{2 n}$ is in \eqref{eq:int2-7} now defined using the tightened constraint set $\mathfrak W^{*,-\delta}(c_{\pi^*}) $ being defined as in \eqref{eq:int2-8}, and therefore the same argument as in the previous case applies.
	
\noindent \textbf{Case 3.} Finally, suppose that $1\le J_2 <d$. Recall that, with probability 1 (under $\mathbf P$),
\begin{align}
c_{\pi^*}=\lim_{n \to \infty}c^*_{n},\label{eq:tilde_c_lim}
\end{align}
and note that by construction $c_{\pi^*}\ge0.$ 
Consider first the case that $c_{\pi^*}>0$. Then, by taking $\delta< c_{\pi^*}$, the argument in Case 2 applies.

Next consider the case that $c_{\pi^*}=0$.  
Observe that
\begin{align}
&~\mathbf{P}\Big( \{U^*_{n}(\theta_{n},c^{*}_{n})= \emptyset \} \cap \{ \mathfrak W^*(c_{\pi^*})\neq \emptyset \} \Big) \\
\le &~ \mathbf{P}\Big( \{U^*_{n}(\theta_{n},c^{*}_{n})= \emptyset \} \cap \{ \mathfrak W^{*,-\delta}(0)\neq \emptyset \} \Big) +\mathbf{P}\Big(\{\mathfrak W^{*,-\delta}(0)= \emptyset \} \cap \{ \mathfrak W^*(0)\neq \emptyset \} \Big),
\label{eq:W_lim_U1}
\end{align}
with $\mathfrak W^{*,-\delta}(0)$ defined as in \eqref{eq:Wdelta} with $c=0$ and with $\mathfrak{w}^*_{j}(\lambda)$ replacing $\mathfrak w_{j}(\lambda)$.
By Lemma \ref{lem:empt}, for any $\eta>0$ there exists $\delta>0$ and $N \in \N$ such that
\begin{align}
\mathbf{P}\Big(\{\mathfrak W^{*,-\delta}(0)= \emptyset \} \cap \{\mathfrak W^*(0)\neq \emptyset \} \Big)<\eta/3 \text{  for all } n\ge N.\label{eq:final2}
\end{align}
Therefore, the second term on the right hand side of \eqref{eq:W_lim_U1} can be made arbitrarily small.

We now consider the first term  on the right hand side of \eqref{eq:W_lim_U1}. 
Let $g$ be a $J+2d+2$ vector with
 \begin{align}
 g_{j} =\left\{ \begin{array}{lll}
   - \HH_j, &  j\in \mathcal J^*,\\
  0, & j\in\{1,\cdots,J\}\setminus \mathcal J^*,\\
  1, &  j=J+1,\dots,J+2d,\\
 0, & j=J+2d+1,J+2d+2,
 	\end{array} 
 	\right. \label{eq:g}
 \end{align}
where we used that $\pi_{1,j}^*=0$ for $j\in\cJ^*$ and where the last assignment is without loss of generality because of the considerations leading to the sets in \eqref{eq:set_U_NL_star_new}-\eqref{eq:set_W_Lpop_star_new}. 

For a given set $C \subset \{1,\dots,J+2d+2\}$, let the vector $g^C$ collect the entries of $g^C$ corresponding to indices in $C$. 
Let \begin{align}
K  \equiv 
\begin{bmatrix}
[\rho D_{j}]_{j=1}^{J_1+J_2}\\
[-\rho D_{j-J_2}]_{j=J_1+J_2+1}^J\\
I_d\\
-I_d\\
p^\prime\\
-p^\prime
\end{bmatrix}. 
\end{align}
Let the matrix $K^C$ collect the rows of $K$ corresponding to indices in $C$.

Let $\widetilde{\cC}$ collect all size $d$ subsets $C$ of $\{1,...,J+2d+2\}$ ordered
lexicographically by their smallest, then second smallest, etc. elements.
Let the random variable $\cC$  equal the first element of $\widetilde{\cC}$ s.t. $\det K^C\neq 0$ and $\lambda^{C}=( K^C)
^{-1}g^C\in \mathfrak W^{* ,-\delta }(0)$ if such an
element exists; else, let $\cC=\{J+1,...,J+d\}$ and $\lambda^{C}= \mathbf{1}_d$, where $\mathbf{1}_d$ denotes a $d$ vector with each entry equal to $1$. Recall that $\mathfrak W^{* ,-\delta }(0)$ is a (possibly empty) measurable random polyhedron in a compact subset of $\mathbb{R}^{d}$, see, e.g., \cite[Definition 1.1.1]{mo1}. Thus, if $\mathfrak W^{* ,-\delta }(0)\neq \emptyset$, then $\mathfrak W^{* ,-\delta }(0)$ has extreme points, each of which is characterized as the intersection of $d$ (not necessarily unique) linearly independent constraints interpreted
as equalities. Therefore, $\mathfrak W^{* ,-\delta }(0)\neq
\emptyset $ implies that $\lambda ^{\cC}\in \mathfrak W^{* ,-\delta }(0)$ and therefore also that $\cC\subset \cJ^* \cup \{J+1,\dots,J+2d+2\}$. Note that the associated random vector $\lambda^{\cC}$ is a measurable selection of a random closed set that equals $\mathfrak W^{*,-\delta}(0)$ if $\mathfrak W^{*,-\delta}(0)\neq \emptyset$ and equals $\mathfrak B^d_\rho$ otherwise, see, e.g., \cite[Definition 1.2.2]{mo1}.

Lemma \ref{lem:exist_sol} establishes that for any $\eta >0$, there exist $\varepsilon_\eta>0$
and $N$ s.t. $n\geq N$ implies%
\begin{align}
\mathbf{P} \left( \mathfrak W^{*,-\delta}(0) \neq
\emptyset ,\left\vert \det K^{\cC}\right\vert \leq \varepsilon_\eta\right) \leq \eta,\label{eq:det_bound}
\end{align}
 which in turn, given our definition of $\cC$,  yields that there is $M>0$ and $N$ such that
\begin{align}
\mathbf P\Big(\big|\det \left(K^{\cC}\right)^{-1} \big|\le M\Big)\ge 1-\eta,~\forall n\ge N. \label{eq:det_inv_Op1}
\end{align}

Let $g_n$ be a $J+2d+2$ vector with
\begin{align}
\label{eq:tilde_g}
g_{n,j}(\theta+\lambda/\sqrt{n})\equiv \left\{ \begin{array}{llll}
c^{*}_{n}/(1+\eta^*_{n,j})-\G^*_{n,j}(\theta+\larhon) &  \mathrm{if}~j \in \cJ^*,\\
0, & \mathrm{if}~j\in\{1,\cdots,J\}\setminus\cJ^*,\\
1, &  \mathrm{if}~j=J+1,\dots,J+2d,\\
0, & \mathrm{if}~j=J+2d+1,J+2d+2,\\
	\end{array} 
	\right. 
\end{align}
using again that $\pi_{1,j}^*=0$ for $j \in \cJ^*$. For each $P\in\mathcal P$, let
\begin{align}
K_P(\theta,\rho) \equiv 
\begin{bmatrix}
[\rho D_{P,j}(\theta)]_{j=1}^{J_1+J_2}\\
[-\rho D_{P,j-J_2}(\theta)]_{j=J_1+J_2+1}^J\\
I_d\\
-I_d\\
p^\prime\\
-p^\prime
\end{bmatrix}. \label{eq:KP}
\end{align}
For each $n$ and $\lambda \in B^d$, define the mapping $\phi_{n}:B^d\to \R^d_{[\pm \infty]}$ by
\begin{align}
\label{eq:phi_fp}
\phi_{n}(\lambda) \equiv \left(K_{P_{n}}^{\cC}(\bar\theta(\theta_{n},\lambda),\rho )\right)^{-1} g^{\cC}_{n}(\theta_{n} + \tlarhon),
\end{align}
where the notation $\bar\theta(\theta_n,\lambda)$ emphasizes that $\bar{\theta}$ depends on $\theta_n$ and $\lambda$ because it lies component-wise between $\theta_n$ and $\theta_n + \larhon$.
We show that $\phi_{n}$ is a contraction mapping and hence has a fixed point.

For any $\lambda,\lambda^\prime \in B^d$ write
\begin{align}
\Vert \phi_{n}(\lambda) - \phi_{n}(\lambda^\prime) \Vert \notag&= \Big\Vert \big(K_{P_{n}}^{\cC}(\bar\theta(\theta_{n},\lambda),\rho )\big)^{-1}g^{\cC}_{n}(\theta_{n}+\tlarhon)- \big(K_{P_{n}}^{\cC}(\bar\theta(\theta_{n},\lambda^\prime),\rho )\big)^{-1}g^{\cC}_{n}(\theta_{n}+\tfrac{\lambda^\prime \rho }{\sqrt{n}}) \Big\Vert \notag\\
&\le \Big\Vert \big(K_{P_{n}}^{\cC}(\bar\theta(\theta_{n},\lambda),\rho )\big)^{-1} \Big\Vert_2 \Big\Vert g^{\cC}_{n}(\theta_{n}+\tlarhon)- g^{\cC}_{n}(\theta_{n}+\tfrac{\lambda^\prime \rho }{\sqrt{n}}) \Big\Vert \notag\\
&\qquad+\Big\Vert \big(K_{P_{n}}^{\cC}(\bar\theta(\theta_{n},\lambda),\rho )\big)^{-1}- \big(K_{P_{n}}^{\cC}(\bar\theta(\theta_{n},\lambda^\prime),\rho )\big)^{-1}\Big\Vert_2 \Big\Vert g^{\cC}_{n}(\theta_{n}+\tfrac{\lambda^\prime \rho }{\sqrt{n}}) \Big\Vert,\label{eq:phi_decomp}
\end{align}
where $\|\cdot\|_2$ denotes the spectral norm (induced by the Euclidean norm).

By Assumption \ref{as:bcs1} (ii), for any $\eta>0$, $k>0$, there is $N\in\mathbb N$ such that
\begin{align}
&~\mathbf P\left(\left\Vert g^{\cC}_{n}(\theta_{n}+\tlarhon)- g^{\cC}_{n}(\theta_{n}+\tfrac{\lambda^\prime \rho }{\sqrt{n}}) \right\Vert\le k\|\lambda-\lambda'\|\right) \\ 
=&~\mathbf P\left(\left\|\mathbb G^{*,{\cC}}_{n}(\theta_{n}+\tlarhon)-\mathbb G^{*,{\cC}}_{n}(\theta_{n}+\tfrac{\lambda^\prime \rho }{\sqrt{n}})\right\|\le k\|\lambda-\lambda'\|\right)\ge 1-\eta,~\forall n\ge N.\label{eq:g_gprimeOrootn}
\end{align}
Moreover, by arguing as in equation \eqref{eq:pi_star_gamma_vanishes}, for any $\eta$ there exist $0<L<\infty$ and $N\in\mathbb N$ such that $\forall n\ge N$
\begin{align}
\mathbf P\left(\sup_{\lambda^\prime \in B^d}\left\Vert g^{\cC}_{n}(\theta_{n}+\tfrac{\lambda^\prime \rho }{\sqrt{n}}) \right\Vert\le L\right) \ge 1-\eta.\label{eq:g_O1}
\end{align}
For any invertible matrix $K$, $\|K^{-1}\|_2=\left(\min \{\sqrt{\alpha}:~\alpha ~\text{ is an eigenvalue of } KK^\prime \}\right)^{-1}$.
Hence, by the proof of Lemma \ref{lem:exist_sol} and the definition of $\cC$, for any $\eta>0$, there exist $0<L<\infty$ and $N\in\mathbb N$ such that
\begin{align}
\mathbf P\big(\big\Vert \big(K^{\cC} \big)^{-1} \big\Vert_2 \le L \big)\ge  1-\eta,~\forall n\ge N,\label{eq:boundKn_inv2}
\end{align}
By \cite[ch. 5.8]{HornJohnson}, for any invertible matrices $K,\tilde K$ such that $\|\tilde K^{-1}(K-\tilde K)\|_2<1$, 
\begin{align}
	\| K^{-1}-\tilde K^{-1}\|_2\le \frac{\|\tilde K^{-1}(K-\tilde K)\|_2}{1-\|\tilde K^{-1}(K-\tilde K)\|_2}\|\tilde K^{-1}\|_2.
	\label{eq:boundKn_inv2a}
\end{align}
By the assumption that $D_{P_n}(\theta_n)\to D$ and Assumption \ref{as:momP_KMS}, for any $\eta>0$, there exists $N\in\mathbb N$ such that
\begin{align}
\sup_{\lambda\in B^d}\|K_{P_{n}}^{\cC}(\bar\theta(\theta_{n},\lambda),\rho )-K^{\cC}\|_2 \le \eta,~\forall n\ge N.
\end{align}  
By \eqref{eq:boundKn_inv2a}, the definition of the spectral norm, and the triangle inequality, for any $\eta>0$, there exist $0<L_1,L_2<\infty$ and $N\in\mathbb N$ such that
\begin{align}
&~\mathbf P \big(\sup_{\lambda\in B^d}\big\Vert \big(K_{P_{n}}^{\cC}(\bar\theta(\theta_{n},\lambda),\rho ) \big)^{-1} \big\Vert_2 \le 2L_1\big)\notag\\
\ge&~ \mathbf P\big(\big\Vert \big(K^{\cC} \big)^{-1} \big\Vert_2+\sup_{\lambda\in B^d}\|K_{P_{n}}^{\cC}(\bar\theta(\theta_{n},\lambda),\rho )^{-1}-(K^{\cC})^{-1}\|_2 \le 2L_1 \big)\notag	\\
\ge &~\mathbf P\Bigg(\big\Vert \big(K^{\cC} \big)^{-1} \big\Vert_2\le L_1 , \frac{\Vert \big( K^{\cC}\big)^{-1}\Vert_2^2}{1-\|\big( K^{\cC}\big)^{-1}(K_{P_{n}}^{\cC}(\bar\theta(\theta_{n},\lambda),\rho )-K^{\cC})\|_2}\le L_2,~\sup_{\lambda\in B^d}\|K_{P_{n}}^{\cC}(\bar\theta(\theta_{n},\lambda),\rho )-K^{\cC}\|_2 \le \frac{L_1}{L_2} \Bigg)\notag\\
\ge & ~ 1-2\eta,~\forall n\ge N,	\label{eq:boundKn_inv2b}
\end{align}
Again by applying \eqref{eq:boundKn_inv2a}, for any $k>0$, there exists $N\in\mathbb N$ such that
\begin{align}
	&~\mathbf P\big(\big\Vert \big(K_{P_{n}}^{\cC}(\bar\theta(\theta_{n},\lambda))\big)^{-1}- \big(K_{P_{n}}^{\cC}(\bar\theta(\theta_{n},\lambda^\prime))\big)^{-1}\big\Vert_2\le k\|\lambda-\lambda'\|\big)
	\\
	\ge&~ \mathbf P\Big(\sup_{\lambda\in B^d}\big\Vert \big(K_{P_{n}}^{\cC}(\bar\theta(\theta_{n},\lambda)) \big)^{-1} \big\Vert_2^2 M \rho \|\bar\theta(\theta_{n},\lambda)-\bar\theta(\theta_{n},\lambda^\prime)\|\le k\|\lambda-\lambda'\| \Big)\ge 1-\eta,~\forall n\ge N,\label{eq:boundKn_inv2c}
\end{align}
where the first inequality follows from $\|K_{P_{n}}^{\cC}(\bar\theta(\theta_{n},\lambda))-K_{P_{n}}^{\cC}(\bar\theta(\theta_{n},\lambda'))\|_2\le M\rho \|\bar\theta(\theta_{n},\lambda)-\bar\theta(\theta_{n},\lambda^\prime)\|\le M\rho ^2/\sqrt n\|\lambda-\lambda'\|$ by Assumption \ref{as:momP_KMS} (ii), and the last inequality follows from \eqref{eq:boundKn_inv2b}.

By \eqref{eq:phi_decomp}-\eqref{eq:g_O1} and \eqref{eq:boundKn_inv2b}-\eqref{eq:boundKn_inv2c}, it then follows that
 there exists $\beta \in [0,1)$ such that for any $\eta>0$, there exists $N \in \N$ such that
\begin{align}
\mathbf{P}\left(\vert \phi_{n}(\lambda) - \phi_{n}(\lambda^\prime) \vert \le \beta \Vert \lambda - \lambda^\prime \Vert, ~~\forall \lambda,\lambda^\prime \in B^d\right) \ge 1-\eta, ~\forall n\ge N. \label{eq:contraction}
\end{align}
This implies that with probability approaching 1, each $\phi_{n}(\cdot)$ is a contraction, and therefore by the Contraction Mapping Theorem it has a fixed point (e.g., \cite[Theorem 1.3]{Pata}). This in turn implies that for any $\eta>0$ there exists a $N \in \N$ such that
\begin{align}
\mathbf{P}\left(\exists \lambda^f_{n}: \lambda^f_{n} = \phi_{n}(\lambda^f_{n})  \right) \ge 1-\eta, ~\forall n\ge N. \label{eq:fixed_point}
\end{align}

Next, define the mapping
\begin{equation}
\psi_{n}(\lambda) \equiv \left( K^{\cC} \right)^{-1}g^{\cC}. \label{eq:limit_map}
\end{equation}
This map is constant in $\lambda$ and hence is uniformly continuous and a contraction with Lipschitz constant equal to zero. It therefore has $\lambda^{\cC}_{n}$ as its fixed point. Moreover, by \eqref{eq:phi_fp} and \eqref{eq:limit_map} arguing as in \eqref{eq:phi_decomp}, it follows that for any $\lambda\in B^d$,
\begin{align}
\Vert \psi_{n}(\lambda) -\phi_{n}(\lambda)\Vert&\le 	\Big\Vert \big(K_{P_{n}}^{\cC}(\bar\theta(\theta_{n},\lambda),\rho )\big)^{-1} \Big\Vert_2 \Big\Vert g^{\cC}- g^{\cC}_{n}(\theta_{n}+\tlarhon) \Big\Vert 
+\Big\Vert \big(K^{\cC}\big)^{-1}- \big(K_{P_{n}}^{\cC}(\bar\theta(\theta_{n},\lambda),\rho )\big)^{-1}\Big\Vert_2 \big\Vert g^{\cC} \big\Vert.
\end{align}
By \eqref{eq:g} and \eqref{eq:tilde_g} 
\begin{align}
 \Big\Vert g^{\cC}- g^{\cC}_{n}(\theta_{n}+\tlarhon) \Big\Vert
 &\le \max_{j \in \cJ^*}|- \HH_j^*-c^{*}_{n}/(1+\eta^*_{n,j})+\G^*_{n,j}(\theta_{n}+\tlarhon)|\notag\\
 &\le \max_{j \in \cJ^*}|\HH_j^* -\G^*_{n,j}(\theta_{n}+\tlarhon)|+\max_{j \in \cJ^*}|c^{*}_{n}/(1+\eta^*_{n,j})|.\label{eq:psiphi1}
\end{align}
We note that when Assumption \ref{as:correlation}-\ref{as:correlation_pair} is used, for each $j=1,\dots,R_1$ such that $\pi^*_{1,j}=0=\pi^*_{1,j+R_1}$ we have that $\vert \tilde{\mu}_j -\mu_j \vert = o_\cP(1)$ because $\sup_{\theta \in \Theta}\vert\eta_j(\theta)\vert=o_\cP(1)$, where $\tilde{\mu}_j $ and $\mu_j$ were defined in \eqref{eq:tilde_weight_j}-\eqref{eq:tilde_weight_j_pl_J11} and \eqref{eq:weight_j}-\eqref{eq:weight_j_pl_J11} respectively.
Moreover,  $\G^*_{n,j}(\theta_{n}+\larhon)\stackrel{a.s.}{\to}\mathbb \HH^*$  and \eqref{eq:tilde_c_lim} implies $c^{*}_{n} \to 0$ so that we have 
\begin{align}
\sup_{\lambda\in B^d} \Bigl\Vert g^{\cC}- g^{\cC}_{n}(\theta_{n}+\tlarhon) \Bigr\Vert\stackrel{a.s.}{\to}0.	\label{eq:psiphi2}
\end{align}
Further, by \eqref{eq:boundKn_inv2a}, $D_{P_n}\to D$ and, Assumption \ref{as:momP_KMS}-(ii), for any $\eta>0$, there exists $N\in\mathbb N$ such that
\begin{align}
\sup_{\lambda\in B^d}	\Big\Vert \big(K^{\cC}\big)^{-1}- \big(K_{P_{n}}^{\cC}(\bar\theta(\theta_{n},\lambda),\rho )\big)^{-1}\Big\Vert_2 \le \eta,~\forall n\ge N.\label{eq:psiphi3}
\end{align}
In sum, by \eqref{eq:g_O1}, \eqref{eq:boundKn_inv2b}, and \eqref{eq:psiphi1}-\eqref{eq:psiphi3}, for any $\eta,\nu>0$, there exists $N\ge\mathbb N$ such that
\begin{align}
\mathbf{P}\left(\sup_{\lambda \in B^d}\Vert \psi_{n}(\lambda) -\phi_{n}(\lambda)\Vert<\nu\right)\ge 1-\eta,~\forall n\ge \mathbb N.\label{eq:unif_conv_phi}
\end{align}  
Hence, for a specific choice of $\nu=\kappa(1-\beta)$, where $\beta$ is defined in equation \eqref{eq:contraction}, we have that $\sup_{\lambda \in B^d}\Vert \psi_{n}(\lambda) -\phi_{n}(\lambda)\Vert<\kappa(1-\beta)$ implies
\begin{align}
\Vert \lambda^{\cC}_{n} - \lambda^f_{n} \Vert &= \Vert \psi_{n}(\lambda^{\cC}_{n}) -\phi_{n}(\lambda^f_{n})\Vert\notag\\
&\le \Vert \psi_{n}(\lambda^{\cC}_{n}) -\phi_{n}(\lambda^{\cC}_{n})\Vert+\Vert \phi_{n}(\lambda^{\cC}_{n}) -\phi_{n}(\lambda^f_{n})\Vert\notag\\
&\le \kappa(1-\beta) + \beta \Vert \lambda^{\cC}_{n} - \lambda^f_{n} \Vert
\end{align}
Rearranging terms, we obtain $\Vert \lambda^{\cC}_{n} - \lambda^f_{n} \Vert \le \kappa$. 
Note that by Assumptions \ref{as:momP_KMS} (i) and \ref{as:bcs1} (i),  for any $\delta>0$, there exists $\kappa_\delta>0$ and $N\in\mathbb N$ such that 
\begin{align}
\mathbf P\Big(\sup_{\|\lambda-\lambda'\|\le \kappa_\delta}	|u^*_{n,j,\theta_{n}}(\lambda)-u^*_{n,j,\theta_{n}}(\lambda')|<\delta\Big)\ge 1-\eta,~\forall n\ge \mathbb N.	\label{eq:se_u}
\end{align}
For $\lambda^{\cC}_{n}\in \mathfrak W^{*,-\delta}(0)$, one has
\begin{align}
	\mathfrak w^*_{j}(\lambda^{\cC}_{n})+\delta\le 0,~j \in \{1,\cdots, J_1\} \cap \cJ^*.\label{eq:wconst}
\end{align}
Hence, by \eqref{eq:Aln}, \eqref{eq:tilde_c_lim}, and \eqref{eq:se_u}-\eqref{eq:wconst}, $\Vert \lambda^{\cC}_{n} - \lambda^f_{n} \Vert \le \kappa_{\delta/4}$, for each $j \in \{1,\cdots, J_1\} \cap \cJ^*$ we have
\begin{align}
	u^*_{n,j,\theta_{n}}(\lambda^f_{n})-c^{*}_{n}(\theta_n)\le u^*_{n,j,\theta_{n}}(\lambda^{\cC}_{n})-c^{*}_{n}(\theta_n)+\delta/4 \le \mathfrak w^*_{j}(\lambda^{\cC}_{n})+\delta/2 \le 0.
\end{align}
For $j \in \{J_1+1,\cdots, 2J_2\} \cap \cJ^*$, the inequalities hold by construction given the definition of ${\cC}$.

 In sum, for any $\eta>0$ there exists $\delta > 0$ and  $N \in \N$ such that for all $n \ge N$ we have
 \begin{align}
\mathbf{P}\Big( \{U^*_{n}(\theta_{n},c^{*}_{n})= \emptyset \} \cap \{\mathfrak W^{*,-\delta}(0)\neq \emptyset \} \Big)
\le \mathbf{P}	\Big(\nexists \lambda^f_{n}\in U^*_{n}(\theta_{n},c^{*}_{n}), \exists \lambda^{\cC}_{n}\in \mathfrak W^{*,-\delta}(0) \Big)& \notag \\
\le \mathbf{P}	\left(\left\{\sup_{\lambda \in  B^d}\Vert \psi_{n}(\lambda) -\phi_{n}(\lambda)\Vert<\kappa_\delta(1-\beta) \cap A_{n} \right\}^c \right)\le \eta/3,&\label{eq:final1}
 \end{align}
where $A^c$ denotes the complement of the set $A$, and the last inequality follows from \eqref{eq:Aln} and \eqref{eq:unif_conv_phi}. 
\end{proof}

\begin{lemma}\label{lem:cv_convergence}
Suppose Assumptions \ref{as:momP_AS}, \ref{as:GMS},  \ref{as:correlation}, \ref{as:momP_KMS}, and \ref{as:bcs1} hold.
Let	$\{P_n,\theta_n\} \in \{(P,\theta):P \in \cP,\theta \in \Theta_I(P)\}$ be a sequence satisfying \eqref{eq:subseq1}-\eqref{eq:subseq3}. 
For each $j$, let
\begin{align}
v^I_{n,j,\theta_n}(\lambda)&\equiv	\mathbb G^b_{n,j}(\theta_n)+\rho\hat{D}_{n,j}(\theta_n)\lambda+\varphi^*_j(\hat{\xi}_{n,j}(\theta_n)),\label{eq:vIj_var}\\
\mathfrak{w}_{j}(\lambda)&\equiv \HH_{j}+\rho D_{j}\lambda+\pi^*_{1,j},
\end{align}
where 
\begin{align}
\varphi^*_j(\xi)=\begin{cases}
	\varphi_j(\xi)&\pi_{1,j}=0\\
	-\infty&\pi_{1,j}<0\\
	0&j=J_1+1,\cdots,J.
\end{cases}\label{eq:greedy_gms}	
\end{align}
For each $c\ge0$, define
\begin{align}
V^I_n(\theta_n,c)&\equiv \{\lambda\in B^d_{n,\rho}:p'\lambda=0\cap v^I_{n,j,\theta_n}(\lambda)\le c,j=1,\cdots,J\},\label{eq:VIn}\\
\mathfrak{W}(c)&\equiv \big\{ \lambda\in \mathfrak B^d_\rho: p^\prime \lambda = 0 \cap \mathfrak w_{j}(\lambda)\le c, \: \forall j=1,\dots,J\big\}.
\end{align}
We then let $c^I_n(\theta_n)\equiv \inf\{c\in\R_+:P^*_n(V^I_n(\theta_n,c)\neq \emptyset)\ge 1-\alpha\}$ and $c_{\pi^*}\equiv \inf\{c\in\mathbb R_+:\Pr(\mathfrak{W}(c)\ne\emptyset)\ge 1-\alpha\}$.

Then, 
(i) for any $c>0$ and $\{\theta'_n\}\subset\Theta$ such that $\theta'_n\in \thetnprime$ for all $n$, 
\begin{align}
P^*_n(V^I_n(\theta'_n,c)\ne\emptyset)-	\Pr(\mathfrak{W}(c)\ne\emptyset)\to 0,
\end{align}	
with probability approaching 1; 

\noindent
(ii)  If $c_{\pi^*}>0$, $c^I_n(\theta'_n)\stackrel{P_n}{\to}c_{\pi^*};$ 

\noindent
(iii) For any $\{\theta'_n\}\subset\Theta$ such that $\theta'_n\in \thetnprime$ for all $n$,
\begin{align}
\hat c_n(\theta'_n)\ge c^I_n(\theta'_n)+o_{P_n}(1).	
\end{align}
\end{lemma}

\begin{proof}
	Throughout, let $c>0$ and let $\{\theta'_n\}\subset\Theta$ be a sequence such that $\theta'_n\in \thetnprime$ for all $n$.
By Lemma \ref{lem:boot_cons}, 
 in $l^\infty(\Theta)$ uniformly in $\cP$ conditional on $\{X_i\}_{i=1}^\infty$, and by Assumption \ref{as:momP_KMS} $\|\hat D_n(\theta_n')-D_{P_n}(\theta_n)\|\stackrel{p}{\to} 0$.
 Further, by Lemma \ref{lem:Jstar}, $\hat{\xi}_{n,j}(\theta_n')\stackrel{P_n}{\to}\pi_{1,j}.$ Therefore,
\begin{align}
(\mathbb G^b_{n}(\theta'_n),\hat D_{n}(\theta_{n}'), \hat\xi_{n}(\theta_n'))|\{X_i\}_{i=1}^\infty\stackrel{d}{\to}(\HH,D, \pi_1).\label{eq:asrep_invoke}
\end{align}
for almost all sample paths $\{X_i\}_{i=1}^\infty$. By Lemma \ref{cor:asrep}, conditional on the sample path, there exists an almost sure representation $(\tilde{\mathbb G}^{b}_{n}(\theta'_n),\tilde D_n,\tilde{\xi}_{n})$ of $(\mathbb G^b_{n}(\theta'_n),\hat{D}_{n}(\theta'_n),\hat{\xi}_{n}(\theta'_n))$ defined on another probability space $(\tilde \Omega,\tilde{\mathcal F},\tilde{\mathbf P})$ such that $(\tilde{\mathbb G}^{b}_{n}(\theta'_n),\tilde D_n,\tilde{\xi}_{n})\stackrel{d}{=}(\mathbb G^b_{n}(\theta'_n),\hat{D}_{n}(\theta'_n),\hat{\xi}_{n}(\theta'_n))$ conditional on the sample path. In particular, conditional on the sample, $(\hat{D}_{n}(\theta'_n),\hat{\xi}_{n}(\theta'_n))$ are non-stochastic. Therefore, we set $(\tilde D_n,\tilde{\xi}_{n})\stackrel{}{=}(\hat{D}_{n}(\theta'_n),\hat{\xi}_{n}(\theta'_n))$, $\tilde{\mathbf P}-a.s$. 
The almost sure representation satisfies
$(\tilde{\mathbb G}^b_{n}(\theta'_n),\tilde D_{n},\tilde{\xi}_{n,j})\stackrel{a.s.}{\to} (\tilde\HH,D, \pi_1)$ for almost all sample paths, where $\tilde\HH\stackrel{d}{=}\HH$. The almost sure representation $(\tilde{\mathbb G}^b_{n},\tilde D_n,\tilde{\xi}_{n})$ is defined for each sample path $x^\infty=\{x_i\}_{i=1}^\infty$, but we suppress its dependence on $x^\infty$ for notational simplicity (see Appendix \ref{app:asrep} for details).
Using this representation, define
\begin{align}
\tilde v^I_{n,j,\theta'_n}(\lambda)\equiv	\tilde{\mathbb G}^b_{n,j}(\theta'_n)+\rho\tilde D_n\lambda+\varphi^*_j(\tilde{\xi}_{n,j}),\label{cv_conv1}
\end{align}and
\begin{align}
\tilde{\mathfrak{w}}_{j}(\lambda)&\equiv \tilde \HH_j+ \rho D_{j}\lambda+\pi^*_{1,j},\label{cv_conv2}
\end{align}
where $\tilde{\mathbb Z}\stackrel{d}{=}\mathbb Z$, and $\tilde{\mathbb G}^b_{n}(\theta'_n)\to \tilde{\mathbb Z}, \tilde{\mathbf P}-a.s.$ conditional on $\{X_i\}_{i=1}^\infty$.
With this construction, one may write
\begin{multline}
|P^*_n(V^I_n(\theta'_n,c)\ne \emptyset)-	\Pr(\mathfrak{W}(c)\ne\emptyset)|=|\tilde{\mathbf P}(\tilde V^I_n(\theta'_n,c)\ne\emptyset)-	\tilde{\mathbf P}(\tilde{\mathfrak{W}}(c)\ne\emptyset)|\label{eq:cv_conv3}\\
\le |\tilde{\mathbf P}(\tilde V^I_n(\theta'_n,c)=\emptyset\cap\tilde{\mathfrak{W}}(c)\ne\emptyset)+\tilde{\mathbf P}(\tilde V^I_n(\theta'_n,c)\ne \emptyset\cap\tilde{\mathfrak{W}}(c)=\emptyset)|,
\end{multline}	
where the inequality is due to \eqref{eq:cover5}.
First,  we  bound  the first term on the right hand side of \eqref{eq:cv_conv3}. Note that
\begin{align}
\tilde{\mathbf P}(\tilde V^I_n(\theta'_n,c)=\emptyset\cap\tilde{\mathfrak{W}}(c)\ne\emptyset)
\le 	\tilde{\mathbf P}(\tilde V^{I,+\delta}_n(\theta'_n,c)=\emptyset\cap\tilde{\mathfrak{W}}(c)\ne\emptyset)+\tilde{\mathbf P}(\tilde V^{I,+\delta}_n(\theta'_n,c)\ne\emptyset\cap\tilde V^I_n(\theta'_n,c)=\emptyset),\label{eq:cv_conv4}
\end{align}
where $\tilde V^{I,+\delta}_n$ is defined as
\begin{align}
\tilde V^{I,+\delta}_n\equiv \Big\{\lambda\in B^d_{n,\rho}:p'\lambda=0\cap\tilde v^I_{n,j,\theta'_n}(\lambda)\le c+\delta,j\in\mathcal J^*\Big\}.
\end{align}
Let 
\begin{align}
A_n\equiv\Big\{\tilde\omega\in\tilde\Omega:\sup_{\lambda\in B^d}\max_{j\in\mathcal J^*}|\tilde v^I_{n,j,\theta'_n}(\lambda)-\tilde{\mathfrak{w}}_{j}(\lambda)|\ge\delta\Big\}.\label{eq:cv_conv5}
\end{align}
Let 
\begin{align}
E\equiv \{\{x_i\}_{i=1}^\infty:\|\hat D_{n}(\theta'_n)-D\|<\eta,\max_{j\in\mathcal J^*}|\varphi_j^*(\hat{\xi}_{n,j}(\theta'_n))-\pi^*_{1,j}|<\eta\}.\label{eq:cv_conv6}	
\end{align}
Note that, $P_n(E)\ge 1-\eta$ for all $n$ sufficiently large by Assumption \ref{as:momP_KMS} and Lemma \ref{lem:Jstar}. On $E$, we therefore have $\|\tilde D_n-D\|<\eta$ and $\max_{j\in\mathcal J^*}|\tilde{\xi}_{n,j}-\pi^*_{1,j}|<\eta$, $\tilde{\mathbf P}-a.s.$ 
Below, we condition on  $\{X_i\}_{i=1}^\infty\in E$. For any $j\in\mathcal J^*$,
\begin{align}
|\tilde v^I_{n,j,\theta'_n}(\lambda)-\tilde{\mathfrak{w}}_{j}(\lambda)|\le |\tilde{\mathbb G}^b_{n,j}(\theta'_n)-\tilde \HH_j|+\rho\|\tilde D_{j,n} -D_{j}\|\|\lambda\|+|\varphi_j^*(\tilde{\xi}_{n,j})-\pi^*_{1,j}|\le (2+\rho)\eta,\label{eq:use-of-lambda}
\end{align}
uniformly in $\lambda\in B^d$, where we used $\tilde{\mathbb G}^b_{n}\to \tilde{\mathbb Z}, \tilde{\mathbf P}-a.s.$
Since $\eta$ can be chosen arbitrarily small, this in turn implies
\begin{align*}
\tilde{\mathbf P}\big(A_n\big)<\eta/2,
\end{align*}
for all $n$ sufficiently large. Note also that
$\sup_{\lambda\in B^d}\max_{j\in\mathcal J^*}|\tilde v^I_{n,j,\theta'_n}(\lambda)-\tilde{\mathfrak{w}}_{j}(\lambda)|<\delta$ implies $\tilde{\mathfrak{W}}(c)\subseteq\tilde V^{I,+\delta}_n(\theta'_n,c)$, and hence
 $A^c_n$ is a subset of
\begin{align}
L_n\equiv\Big\{\tilde\omega\in\tilde\Omega:\tilde{\mathfrak{W}}(c)\subseteq\tilde V^{I,+\delta}_n(\theta'_n,c)\Big\}.\label{eq:cv_conv7}
\end{align}
Using this,
\begin{align}
\tilde{\mathbf P}(\tilde V^{I,+\delta}_n(\theta'_n,c)=\emptyset\cap\tilde{\mathfrak{W}}(c)\ne\emptyset)\le \tilde{\mathbf P}(\tilde{\mathfrak{W}}(c)\not\subseteq\tilde V^{I,+\delta}_n(\theta'_n,c))
=\tilde{\mathbf P}(L^c_n)\le \tilde{\mathbf P}(A_n)<\eta/2,\label{eq:cv_conv8}
\end{align}
for all $n$ sufficiently large.
Also, by Lemma \ref{lem:empt},
\begin{align}
\tilde{\mathbf P}(\tilde V^{I,+\delta}_n(\theta'_n,c)\ne\emptyset\cap\tilde V^I_n(\theta'_n,c)=\emptyset)<\eta/2,\label{eq:cv_conv9}
\end{align}
for all $n$ sufficiently large.

Combining \eqref{eq:cv_conv4}, \eqref{eq:cv_conv5}, \eqref{eq:cv_conv8}, \eqref{eq:cv_conv9}, and using $P_n(E)\ge 1-\eta$ for all $n$, we have
\begin{equation}
\int_{E}\tilde{\mathbf P}(\tilde V^I_n(\theta'_n,c)=\emptyset\cap\tilde{\mathfrak{W}}(c)\ne\emptyset)dP_n+\int_{E^c}\tilde{\mathbf P}(\tilde V^I_n(\theta'_n,c)=\emptyset\cap\tilde{\mathfrak{W}}(c)\ne\emptyset)dP_n\le \eta(1-\eta)+\eta\le 2\eta.
\end{equation}
The second term of the right hand side of \eqref{eq:cv_conv3} can be bounded similarly. Therefore, $|P^*(V^I_n(\theta'_n,c)\ne \emptyset)-	\Pr(\mathfrak{W}(c)\ne\emptyset)|\to 0$ with probability (under $P_n$) approaching 1. This establishes the first claim.

(ii) 
By Part (i), for $c>0$, we have
\begin{align}
P^*_n(V^I_n(\theta'_n,c)\ne \emptyset)-	\mathrm{Pr}(\mathfrak{W}(c)\ne\emptyset)\to 0.	\label{eq:cv_conv2_1}
\end{align}
Fix $c>0$, and set
 \begin{align}
 g_{j} =\left\{ \begin{array}{lll}
  c - \HH_j, &  j=1,\dots,J,\\
  1, &  j=J+1,\dots,J+2d,\\
 0, & j=J+2d+1,J+2d+2.
 	\end{array} 
 	\right. \label{eq:cv_conv2_2}
 \end{align}
Mimic the argument following \eqref{eq:int1}. Then, this yields
\begin{align}
 \left\vert \mathrm{Pr}\left(\mathfrak{W}(c)\neq \emptyset \right)-\mathrm{Pr}\left(\mathfrak{W}(c-\delta)\neq \emptyset \right)\right\vert = \mathrm{Pr}\left(\{\mathfrak{W}(c)\neq \emptyset\} \cap  \{\mathfrak{W}(c-\delta)=\emptyset\}\right)\le\eta,\label{eq:cv_conv2_3a}\\
 \left\vert \mathrm{Pr}\left(\mathfrak{W}(c+\delta)\neq \emptyset \right)-\mathrm{Pr}\left(\mathfrak{W}(c)\neq \emptyset \right)\right\vert = \mathrm{Pr}\left(\{\mathfrak{W}(c+\delta)\neq \emptyset\} \cap  \{\mathfrak{W}(c)=\emptyset\}\right)\le \eta,\label{eq:cv_conv2_3b}
\end{align}
which therefore ensures that $c\mapsto \mathrm{Pr}(\mathfrak{W}(c)\ne\emptyset)$ is continuous at $c>0$.

Next, we show $c\mapsto\mathrm{Pr}\left(\mathfrak{W}(c)\neq \emptyset \right)$ is strictly increasing at any $c>0$.
For this, consider $c>0$ and $c-\delta>0$ for $\delta>0$. 
Define the $J$ vector $e$ to have elements $e_j = c - \HH_j,~j=1,\dots,J$. Suppose for simplicity that $\cJ^*$ contains the first $J^*$ inequality constraints. Let $e^{[1:J^*]}$ denote the subvector of $e$ that only contains elements corresponding to $j \in \cJ^*$, define $D^{[1:J^*,:]}$ correspondingly, and write 
\begin{align}
K=\left[ 
\begin{array}{c}
D^{[1:J^*,:]} \\ 
I_{d} \\ 
-I_{d} \\ 
p^{\prime } \\ 
-p^{\prime }%
\end{array}%
\right] ,~g=\left[ 
\begin{array}{c}
e^{[1:J^*]} \\ 
\rho \cdot \mathbf{1}_{d} \\ 
\rho \cdot \mathbf{1}_{d} \\ 
0 \\ 
0%
\end{array}%
\right] ,~\tau =\left[ 
\begin{array}{c}
\mathbf{1}_{J^*} \\ 
\mathbf{0}_{d} \\ 
\mathbf{0}_{d} \\ 
0 \\ 
0%
\end{array}%
\right] .\label{eq:Kgtau}
\end{align}%

By Farkas' lemma \citep[][Theorem 22.1]{Rockafellar1970a}  and arguing as in \eqref{eq:int4},
\begin{align}
\mathrm{Pr}\left(\{\mathfrak{W}(c)\neq \emptyset\} \cap  \{\mathfrak{W}(c-\delta)=\emptyset\}\right)=
\mathrm{Pr}\left(\{\mu'g\ge0,\forall\mu\in\mathcal M\}\cap \{\mu'(g-\delta\tau)<0,\exists\mu\in\mathcal M\}\right),\label{eq:cv_conv3_1}
\end{align}
where $\mathcal M=\{\mu\in\mathbb R^{J^*+2d+2}_+:\mu'K=0\}.$
By Minkowski-Weyl's theorem \citep[][Theorem 3.52]{Rockafellar_Wets2005aBK}, there exists $\{\nu^t\in\mathcal M,t=1,\cdots,T\}$, for which one may write
\begin{align}
\mathcal M=\{\mu:\mu=b\sum_{t=1}^Ta_t\nu^t,b>0,a_t\ge0,\sum_{t=1}^Ta_t=1\}.\label{eq:cv_conv3_2}
\end{align}
This implies
\begin{align}
\mu'g\ge0,~\forall\mu\in\mathcal M&~\Leftrightarrow~\nu^{t}{}'g\ge0, ~\forall t\in\{1,\cdots,T\}\label{eq:cv_conv3_3a}\\
\mu'(g-\delta\tau)<0,~\exists\mu\in\mathcal M&~\Leftrightarrow~\nu^{t}{}'g<\delta\nu^{t}{}'\tau,~\exists t\in\{1,\cdots,T\}.\label{eq:cv_conv3_3b}
\end{align}
Hence,
\begin{align}
\mathrm{Pr}\left(\{\mu'g\ge0,\forall\mu\in\mathcal M\}\cap \{\mu'(g-\delta\tau)<0,\exists\mu\in\mathcal M\}\right)&=\textrm{Pr}\left(0\le \nu^{s}{}'g,~0\le \nu^{t}{}'g<\delta\nu^{t}{}'\tau,~\forall s,\exists t\right)\label{eq:cv_conv3_4}
\end{align}
Note that by \eqref{eq:Kgtau}, for each $s\in\{1,\cdots,T\},$
\begin{align}
\nu^{s}{}'g&=\nu^{s,[1:J^*]}{}'(c1_{\mathcal J^*}-\HH_{\mathcal J^*})+\rho\sum_{j=J^*+1}^{J^*+2d}\nu^{s,[j]},\label{eq:cv_conv3_5a}	\\
\nu^{s}{}'\tau&=\sum_{j=1}^{J^*}\nu^{s,[j]}.\label{eq:cv_conv3_5b}
\end{align}
For each $s\in\{1,\cdots,T\}$, let
\begin{align}
h^U_s &\equiv c\sum_{j=1}^{J^*}\nu^{s,[j]}+\rho\sum_{j=J^*+1}^{J^*+2d}\nu^{s,[j]}\label{eq:cv_conv3_6a}\\
h^L_s &\equiv (c-\delta)\sum_{j=1}^{J^*}\nu^{s,[j]},\label{eq:cv_conv3_6b}
\end{align}
where $0\le h_s^L<h_s^U$ for all $s\in\{1,\cdots,T\}$ due to $0<c-\delta<c$ and $\nu^s\in\mathbb R^{J^*+2d+2}_+$.
One may therefore rewrite the probability on the right hand side of \eqref{eq:cv_conv3_4} as
\begin{equation}
\textrm{Pr}\left(0\le \nu^{s}{}'g,~0\le \nu^{t}{}'g<\delta\nu^{t}{}'\tau,~\forall s,\exists t\right)
=\textrm{Pr}\left(\nu^{s,[1:J^*]}{}'\HH_{\mathcal J^*}\le h^U_s, h_t^L<\nu^{t,[1:J^*]}{}'\HH_{\mathcal J^*} \le h_t^U~\forall s,\exists t\right)>0,\label{eq:cv_conv3_7}
\end{equation}
where the last inequality follows because $\HH_{\mathcal J^*}$'s correlation matrix $\Omega$ has an eigenvalue bounded away from 0 by Assumption \ref{as:correlation}.
By \eqref{eq:cv_conv3_1}, \eqref{eq:cv_conv3_4}, and \eqref{eq:cv_conv3_7}, $c\mapsto\mathrm{Pr}\left(\mathfrak{W}(c)\neq \emptyset \right)$ is strictly increasing at any $c>0$.

Suppose that $c_{\pi^*}>0$, then arguing as in Lemma 5.(i) of \cite{Andrews:2010aa}, we obtain $c^I_n(\theta'_n)\stackrel{P_n}{\to}c_{\pi^*}$.

(iii) Begin with observing that one can equivalently express $\hat{c}_n$ (originally defined in \eqref{eq:def:c_hat}) as  $\hat{c}_n(\theta)= \inf\{c\in\R_+:P^*_n(V^b_n(\theta,c)\neq \emptyset) \ge 1-\alpha\}$. 

Suppose first that Assumption \ref{as:correlation}-\ref{as:correlation_base} holds. In this case, there are no paired inequalities, and $V^I_n$ differs from $V_n^b$ only in terms of the function $\varphi^*_j$ in \eqref{eq:greedy_gms} used in place of the GMS function $\varphi_j$. In particular, $\varphi^*_j(\xi)\le \varphi_j(\xi)$ for any $j$ and $\xi$, and therefore $\hat c_n(\theta_n)\ge c^{I}_n(\theta_n)$ by construction.

Next, suppose  Assumption \ref{as:correlation}-\ref{as:correlation_pair} holds and $V_n^I(\theta'_n,c)$ is defined with hard threshold GMS, i.e. with GMS function $\varphi^1$ in AS. The only case that might create concern is one in which
\begin{align}
\pi_{1,j}\in [-1,0)	~&\mathrm{and}~ \pi_{1,j+R_1}=0.
\end{align} 
In this case, only the $j+R_1$-th inequality binds in the limit, but with probability approaching $1$, GMS selects both of the pair. Therefore, we have
\begin{align}
\pi^*_{1,j}=-\infty, ~&\mathrm{and}~
\pi^*_{1,j+R_1}=0,\\
\varphi_j(\hat{\xi}_{n,j}(\theta'_n))=0, ~&\mathrm{and}~
\varphi_{j+R_1}(\hat{\xi}_{n,j+R_1}(\theta'_n))=0,
\end{align}
so that in $V_n^I(\theta'_n,c)$, inequality $j+R_1$, which is
\begin{align}
\mathbb G^b_{n,j+R_1}(\theta'_n)+\rho \hat D_{n,j+R_1}(\theta'_n)\lambda \le c,
\end{align}
is replaced with inequality
\begin{align}
-\mathbb G^b_{n,j}(\theta'_n)-\rho \hat D_{n,j}(\theta'_n)\lambda \le c,
\end{align}
as explained in Section \ref{sec:AssRes}. In this case, $\hat c_n(\theta_n)\ge c^{I}_n(\theta_n)$ is not guaranteed in finite sample. However, let $v^{IP}_n$ be as in \eqref{eq:vIj_var} but replacing  $j+R_1$-th component $\mathbb G^b_{n,j+R_1}(\theta_n)+\hat{D}_{n,j+R_1}(\theta_n)\lambda+\varphi^*_{j+R_1}(\hat{\xi}_{n,j+R_1}(\theta_n))$ with $-\mathbb G^b_{n,j}(\theta_n)-\hat{D}_{n,j}(\theta_n)\lambda-\varphi^*_j(\hat{\xi}_{n,j}(\theta_n)).$ Define $V^{IP}_n$ as in \eqref{eq:VIn} but replacing $v^I_n$ with $v^{IP}_n$. Define $c^{IP}_n(\theta_n)\equiv \inf\{c\in\R_+:P^*(V^{IP}_n(\theta_n,c))\ge 1-\alpha\}$. By construction, $\hat c_n(\theta'_n)\ge c^{IP}_n(\theta'_n)$ for any $\theta'_n\in \thetnprime$. Therefore, it suffices to show that $c_n^{IP}(\theta'_n)-c_n^I(\theta'_n)\stackrel{P_n}{\to}0$.
For this, note that Lemma \ref{lem:pair}-(3) establishes 
\begin{align}
\sup_{\lambda\in B^d_{n,\rho}}\| \mathbb G^b_{n,j+R_1}(\theta'_n)+\rho \hat D_{n,j+R_1}(\theta'_n)\lambda+\mathbb G^b_{n,j}(\theta'_n)+\rho \hat D_{n,j}(\theta'_n)\lambda \| =o_{P^*}(1),\label{eq:dif_to_zeroCT}
\end{align}
for almost all sample paths $\{X_i\}_{i=1}^\infty.$  Therefore, replacing the $j+R_1$-th inequality with the $j$-th inequality in $V^{IP}_n$ is asymptotically negligible. Mimicking the arguments in Parts (i) and (ii) then yields
\begin{align}
	c^{IP}_n(\theta'_n)\stackrel{P_n}{\to}c_{\pi^*}.
\end{align}
This therefore ensures $c_n^{IP}(\theta'_n)-c_n^I(\theta'_n)\stackrel{P_n}{\to}0$.

If the set $V^I_n(\theta'_n,c)$ is defined with a GMS function satisfying Assumption \ref{as:GMS} and continuous in its argument, we can mimic the above argument using the replacements in \eqref{eq:paired_bind_Lboot_smooth}-\eqref{eq:paired_bind_Uboot_smooth} with $\hat\mu_{n,j+R_1}$ as defined in \eqref{eq:mu_hat_j} and $\hat\mu_{n,j}(\theta'_n)$ as in \eqref{eq:mu_hat_j_pl_J11}. Then when both $\pi_j \in (-\infty,0]$ and $\pi_{j+R_1}\in (-\infty,0]$ we have:
\begin{align}
\Delta(\mu,\hat{\mu})\equiv\Big\Vert  &\hat\mu_{n,j}(\theta'_n)\{\mathbb{G}_{n,j}^{b}(\theta'_n)+\rho \hat{D}
_{n,j}(\theta'_n)\lambda\} - \hat\mu_{n,j+R_1}(\theta'_n)\{\mathbb{G}_{n,j+R_1}^{b}(\theta'_n)+\rho \hat{D}
_{n,j+R_1}(\theta'_n)\lambda\}  \notag\\
-&\mu_j(\theta'_n)\{\mathbb{G}_{n,j}^{b}(\theta'_n)+\rho \hat{D}
_{n,j}(\theta'_n)\lambda\} + \mu_{j+R_1}(\theta'_n)\{\mathbb{G}_{n,j+R_1}^{b}(\theta'_n)+\rho \hat{D}
_{n,j+R_1}(\theta'_n)\lambda \}\Big\Vert = o_{\cP}(1),\notag
\end{align}
where $\mu_j,\mu_{j+R_1}$ are defined in equations \eqref{eq:weight_j}-\eqref{eq:weight_j_pl_J11} for $\theta \in \theta_n + \thetnprime$.
Replacing $\hat\mu_{n,j}=1-\hat\mu_{n,j+R_1}$ and $\mu_j=1-\mu_{j+R_1}$ in the definition of $\Delta(\mu,\hat{\mu})$, we have
\begin{align}
\Delta(\mu,\hat{\mu})
\le \big\vert  &\hat\mu_{n,j+R_1}(\theta'_n) - \mu_{j+R_1}(\theta'_n)\Big\vert \big\Vert\{\mathbb{G}_{n,j+R_1}^{b}(\theta'_n)+\rho \hat{D}
_{n,j+R_1}(\theta'_n)\lambda\} +\{\mathbb{G}_{n,j}^{b}(\theta'_n)+\rho \hat{D}
_{n,j}(\theta'_n)\lambda\} \Big\Vert .\label{eq:const_muhat_equal_mu}
\end{align}
If both $\pi_j \in (-\infty,0],\pi_{j+R_1}\in (-\infty,0]$, the result follows by the fact that $\lambda \in B^d_{n,\rho}$ and $\hat\mu_{n,j},\hat\mu_{n,j+R_1},\mu_j,\mu_{j+R_1}$ are bounded in $[0,1]$, by Lemma \ref{lem:pair}-(3)-(4), and by Assumption \ref{as:momP_KMS}-(i). The rest of the argument follows similarly as for the case of hard-threshold GMS.
\color{black}
\end{proof}

\begin{lemma}\label{lem:res}
Let Assumptions \ref{as:momP_AS}, \ref{as:GMS}, \ref{as:momP_KMS}, and \ref{as:bcs1} hold. 
Let $(P_n,\theta_n)$ be the sequence satisfying  \eqref{eq:subseq1}-\eqref{eq:subseq3}, let $\mathcal J^*$ be defined as in \eqref{eq:Jstar}, and assume that $\cJ^* \neq \emptyset$. 
	Then, for any 
	$\varepsilon,\eta>0$ and $\theta'_n\in \thetnprime$, there exists $N'\in\mathbb N$ and $N^{''} \in\mathbb N$ such that for all $n \ge \max\{N',N^{''}\}$,
\begin{align}
	\mathbf{P}\Bigg(\sup_{\lambda\in  B^d } \left\vert \max_{j=1,\cdots,J}(u^*_{n,j,\theta_{n}}(\lambda)-c_{n}^{*})-\max_{j=1,\cdots,J}(\mathfrak{w}_{j}^*(\lambda)-c_{\pi^*}) \right\vert\ge\varepsilon\Bigg)<\eta,\label{eq:res_u_w}\\
	\mathbf{\tilde{P}}\Bigg(\sup_{\lambda\in  B^d} \left\vert \max_{j=1,\cdots,J}\tilde{\mathfrak{w}}_{j}(\lambda)-\max_{j=1,\cdots,J}\tilde v^I_{n,j,\theta'_{n}(\lambda)} \right\vert\ge\varepsilon \Bigg)<\eta,~w.p. 1,\label{eq:res_w_v}
\end{align}
where the functions $u^*_n,\mathfrak w^*,\tilde v_n,\tilde{\mathfrak w}$ are defined in equations \eqref{eq:ujstar},\eqref{eq:wjstar}, \eqref{cv_conv1}, and \eqref{cv_conv2}. 
\end{lemma}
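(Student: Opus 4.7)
My plan is to reduce both suprema of the form $\sup_\lambda |\max_j f_{n,j}(\lambda) - \max_j g_j(\lambda)|$ to componentwise uniform convergence via the elementary inequality $|\max_j a_j - \max_j b_j| \le \max_j |a_j - b_j|$ (with the convention $(-\infty)-(-\infty)=0$), and then handle the finite index set $j=1,\dots,J$ by splitting according to whether $j \in \cJ^*$ (where $\pi^*_{1,j}=0$) or $j \notin \cJ^*$ (where $\pi^*_{1,j}=-\infty$). For $j \notin \cJ^*$, the additive constant $\pi^*_{1,j}=-\infty$ forces both $u^*_{n,j,\theta_n}(\lambda)$ and $\mathfrak w^*_j(\lambda)$ (and similarly both $\tilde v^I_{n,j,\theta_n'}$ and $\tilde{\mathfrak w}_j$, since $\varphi^*_j\equiv -\infty$ in that case) to equal $-\infty$, so those indices contribute identically to both maxima; since $\cJ^* \neq \emptyset$ by hypothesis, both maxima are finite and attained on $\cJ^*$ with probability approaching one.

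For \eqref{eq:res_u_w} with $j \in \cJ^*$, I would add and subtract terms to write
\begin{align*}
u^*_{n,j,\theta_n}(\lambda) - \mathfrak w^*_j(\lambda) &= \bigl(\mathbb G^*_{n,j}(\theta_n+\larhon) - \HH^*_j\bigr) + \rho\bigl(D_{P_n,j}(\bar\theta_n) - D_j\bigr)\lambda \\
&\quad + \bigl[\mathbb G^*_{n,j}(\theta_n+\larhon) + \rho D_{P_n,j}(\bar\theta_n)\lambda\bigr]\eta^*_{n,j},
\end{align*}
and bound each piece uniformly in $\lambda \in B^d$. The first piece tends to zero almost surely by the almost sure representation underlying $\mathbb G^*_n$ combined with uniform asymptotic $\varrho_{P_n}$-equicontinuity from Assumption \ref{as:bcs1}(ii) applied on the shrinking neighborhood of radius $\rho/\sqrt n$; the second by Assumption \ref{as:momP_KMS}(ii) (Lipschitz continuity of $D_{P,j}$) together with $D_{P_n}(\theta_n) \to D$ and $\|\bar\theta_n - \theta_n\| = O(\rho/\sqrt n)$; the third by Lemma \ref{lem:eta_conv} (which yields $\eta^*_{n,j} \to 0$ a.s.) combined with the uniform stochastic boundedness in $\lambda \in B^d$ of the bracketed term. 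Adjoining $|c^*_n - c_{\pi^*}| \to 0$ a.s., which holds by construction of the almost sure representation in the proof of Lemma \ref{lem:val}, delivers \eqref{eq:res_u_w}.

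For \eqref{eq:res_w_v}, the analogous decomposition for $j \in \cJ^*$ is
\begin{align*}
\tilde v^I_{n,j,\theta'_n}(\lambda) - \tilde{\mathfrak w}_j(\lambda) = \bigl(\tilde{\mathbb G}^b_{n,j}(\theta'_n) - \tilde\HH_j\bigr) + \rho\bigl(\tilde D_n - D_j\bigr)\lambda + \varphi_j(\tilde\xi_{n,j}),
\end{align*}
using that $\pi^*_{1,j}=0$ and $\varphi^*_j(\tilde\xi_{n,j})=\varphi_j(\tilde\xi_{n,j})$ on this index set. I would invoke the almost sure representation of Lemma \ref{cor:asrep} together with the asymptotic equicontinuity bound for $\tilde{\mathbb G}^b_n$ for the first piece, Assumption \ref{as:momP_KMS} together with $\|\theta'_n - \theta_n\| = O(\rho/\sqrt n)$ and $D_{P_n}(\theta_n) \to D$ for the second, and Lemma \ref{lem:Jstar} (which gives $\tilde\xi_{n,j}\to 0$ a.s.) combined with continuity of $\varphi_j$ at $0$ and $\varphi_j(0)=0$ from Assumption \ref{as:GMS} for the third.

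The main obstacle will be converting these $o_{\mathcal P}(1)$ statements into the uniform-in-$\lambda$ high-probability bounds that the $\varepsilon$-$\eta$ formulation requires. I would handle this by defining a single event $E_n$ that simultaneously controls the empirical process increment over a $\rho/\sqrt n$-neighborhood of $\theta_n$, the gradient estimator, the standard-error estimator $\eta^*_{n,j}$, and the GMS argument $\hat\xi_{n,j}(\theta_n')$, and then establishing $\mathbf P(E_n) \to 1$ in the first claim and $\tilde{\mathbf P}(E_n)\to 1$ (for $P_n$-almost every sample path) in the second. The indices $j \notin \cJ^*$ contribute an additional bookkeeping step: because the bracket $\mathbb G^*_{n,j}+\rho D_{P_n,j}(\bar\theta_n)\lambda$ is $O_{\mathcal P}(1)$ uniformly on $B^d$ while $\pi^*_{1,j}=-\infty$, these entries fall strictly below every $j\in\cJ^*$ entry with probability approaching one, justifying the reduction of the maxima to $\cJ^*$ that underlies the whole argument.
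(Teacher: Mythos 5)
Your proposal is correct and follows essentially the same route as the paper's proof: reduce the max over $j=1,\dots,J$ to $j\in\cJ^*$ using $\pi^*_{1,j}=-\infty$ for $j\notin\cJ^*$, apply $|\max_j a_j - \max_j b_j| \le \max_j|a_j-b_j|$, and then bound the componentwise difference on $\cJ^*$ via exactly the three-term decomposition you write out (empirical-process increment, gradient error, and $\eta^*_{n,j}$-scaled bracket), invoking the a.s.\ representation from Lemma \ref{lem:val}, Assumption \ref{as:momP_KMS}, and Lemma \ref{lem:eta_conv}, and the analogous decomposition for \eqref{eq:res_w_v}. One small clarification: because $u^*_{n,j,\theta_n}$ and $\mathfrak w^*_j$ both have $\pi^*_{1,j}=-\infty$ built directly into their definitions, the reduction of the maxima to $\cJ^*$ is an exact identity (on the event $\eta^*_{n,j}>-1$), not merely a "with probability approaching one" statement as your closing paragraph suggests; the paper states it as an equality of probabilities for this reason.
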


\begin{proof}
We first establish \eqref{eq:res_u_w}. By definition,
 $\pi_{1,j}^*= -\infty$ for all $j\notin\mathcal J^*$ and therefore
\begin{align}
	&~\mathbf{P}\Big(\sup_{\lambda\in  B^d }|\max_{j=1,\cdots,J}(u^*_{n,j,\theta_{n}}(\lambda)-c^{*}_{n})-\max_{j=1,\cdots,J}(\mathfrak{w}_{j}^*(\lambda)-c_{\pi^*})|\ge\varepsilon \Big)\\
	=&~\mathbf{P}\Big(\sup_{\lambda\in  B^d }|\max_{j\in\mathcal J^*}(u^*_{n,j,\theta_{n}}(\lambda)-c^{*}_{n})-\max_{j\in\mathcal J^*}(\mathfrak{w}_{j}^*(\lambda)-c_{\pi^*})|\ge\varepsilon \Big).\label{eq:res4}
\end{align}
Hence, for the conclusion of the lemma, it suffices to show, for any $\varepsilon>0$,
\[\lim_{n\to\infty}\mathbf{P}\Big(\sup_{\lambda\in  B^d }|\max_{j\in\mathcal J^*}(u^*_{n,j,\theta_{n}}(\lambda)-c^{*}_{n})-\max_{j\in\mathcal J^*}(\mathfrak{w}_{j}^*(\lambda)-c_{\pi^*})|\ge\varepsilon \Big)=0.\]

For each $\lambda\in\mathbb R^d$, define $r_{n,j,\theta_{n}}(\lambda)\equiv (u^*_{n,j,\theta_{n}}(\lambda)-c^{*}_{n})-(\mathfrak{w}_{j}^*(\lambda)-c_{n})$.
Using 
the fact that $\pi_{1,j}^*=0$ for $j \in \cJ^*$, and the triangle and Cauchy-Schwarz inequalities, for any $\lambda\in B^d\cap \frac{\sqrt{n}}{\rho }(\Theta-\theta_{n})$ and $j \in \cJ^*$, we have
\begin{align}
 |r_{n,j,\theta_{n}}(\lambda)|
 &\le |\mathbb G^*_{n,j}(\theta_{n}+\tlarhon)-\HH^*_j| +\rho \|D_{P_{n},j}(\bar\theta_{n})-D_j\|\|\lambda\|+|\mathbb G^*_{n,j}(\theta_{n}+\tlarhon)+ \rho  D_{P_{n},j}(\bar\theta_{n})\lambda|~\eta_{n,j}^* +|c^{*}_{n}-c_{\pi^*}|\notag\\
&= |\mathbb G^*_{n,j}(\theta_{n}+\tlarhon)-\HH^*_j|+o(1)+\{O_{\mathcal P}(1)+O(1)\})\eta_{n,j}^*+o_{\cP}(1)\notag\\
&=o_{\cP}(1)\label{eq:res5}
\end{align}
where the first equality follows from $\|\lambda\|\le\sqrt d$, $D_{P_n}(\bar\theta_n)\to D$ due to $D_{P_n}(\theta_n)\to D$, Assumption \ref{as:momP_KMS}-(ii), and $\bar\theta_{n}$ being a mean value between $\theta_{n}$ and $\theta_{n}+\lambda \rho /\sqrt{n}$. We also note that $\|\G_{n,j}(\theta+\lambda/\sqrt n)\|=O_{\cP}(1)$, $\|D_{P,j}(\theta)\|$ being uniformly bounded for $\theta\in\Theta_I(P)$ (Assumption \ref{as:momP_KMS}-(i)), and $c^*_n\stackrel{a.s.}{\to}c_{\pi^*}.$
The last equality follows from $\mathbb G^*_{n,j}(\theta_{n}+\larhon)-\HH^*_j\stackrel{a.s.}{\to}0$ and $\sup_{\theta\in\Theta}|\eta_{n,j}(\theta)|=o_{\mathcal P}(1)$ by Lemma \ref{lem:eta_conv}.

We note that when paired inequalities are merged, for each $j=1,\dots,R_1$ such that $\pi^*_{1,j}=0=\pi^*_{1,j+R_1}$ we have that $\vert \tilde{\mu}_j -\mu_j \vert = o_\cP(1)$ because $\sup_{\theta \in \Theta}\vert\eta_j(\theta)\vert=o_\cP(1)$, where $\tilde{\mu}_j $ and $\mu_j$ were defined in \eqref{eq:tilde_weight_j}-\eqref{eq:tilde_weight_j_pl_J11} and \eqref{eq:weight_j}-\eqref{eq:weight_j_pl_J11} respectively.

By \eqref{eq:res5} and the fact that $j \in \cJ^*$, we have
\begin{align}
	\sup_{\lambda\in  B^d }|\max_{j\in\mathcal J^*}(u^*_{n,j,\theta_{n}}(\lambda)-c^{*}_{n})-\max_{j\in\mathcal J^*}(\mathfrak{w}_{j}^*(\lambda)-c_{\pi^*})|
	\le \sup_{\lambda\in  B^d }\max_{j\in\mathcal J^*}|r_{n,j,\theta_{n}}(\lambda)|=o_{\mathcal P}(1).\label{eq:res6}
\end{align}
The conclusion of the lemma then follows from \eqref{eq:res4} and \eqref{eq:res6}.

The result in \eqref{eq:res_w_v} follows from similar arguments.
\end{proof}

\begin{lemma}\label{lem:Jstar}
Let Assumptions \ref{as:momP_AS}, \ref{as:GMS}, \ref{as:momP_KMS}, and \ref{as:bcs1} hold. 
Given a sequence $\{Q_n,\vartheta_n\} \in \{(P,\theta):P \in \cP,\theta \in \Theta_I(P)\}$ such that $\lim_{n \to \infty} \kappa_n^{-1}\sqrt{n}\gamma_{1,Q_n,j}(\vartheta_n)$ exists for each $j=1,\dots,J$, let $\chi_j(\{Q_n,\vartheta_n\})$ be a function of the sequence $\{Q_n,\vartheta_n\}$ defined as
\begin{align}
\label{eq:chi}
\chi_j(\{Q_n,\vartheta_n\}) \equiv \left\{ \begin{array}{ll}
	0, & \mathrm{if}~ \lim_{n \to \infty} \kappa_n^{-1}\sqrt{n}\gamma_{1,Q_n,j}(\vartheta_n) = 0, \\ 
	-\infty, & \mathrm{if}~ \lim_{n \to \infty} \kappa_n^{-1}\sqrt{n}\gamma_{1,Q_n,j}(\vartheta_n) < 0.
	\end{array} 
	\right. 
\end{align} 
Then for any $\theta^\prime_n \in \theta_n +\frac{\rho }{\sqrt n}B^d$ for all $n$, one has:  (i) $\kappa_n^{-1}\sqrt{n}\gamma_{1,P_n,j}(\theta_n)-\kappa_n^{-1}\sqrt{n}\gamma_{1,P_n,j}(\theta_n^\prime) =o(1)$; (ii) $\chi(\{P_n,\theta_n\})=\chi(\{P_n,\theta_n^\prime\})=\pi^*_{1,j}$; and (iii) $\kappa_n^{-1}\frac{\sqrt n \bar{m}_{n,j}(\theta^\prime_{n})}{\hat{\sigma}_{n,j}(\theta^\prime_{n})}-\kappa_n^{-1}\frac{\sqrt nE_{P_n}[m_j(X_i,\theta^\prime_{n})]}{\sigma_{P_n,j}(\theta^\prime_{n})} =o_\cP(1)$.
\end{lemma}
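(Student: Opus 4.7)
\textbf{Proof plan for Lemma \ref{lem:Jstar}.}

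\textbf{Part (i).} The plan is to exploit the mean value expansion of the studentized moment. Writing $\gamma_{1,P_n,j}(\theta_n')-\gamma_{1,P_n,j}(\theta_n)=D_{P_n,j}(\bar\theta_n)(\theta_n'-\theta_n)$ for some $\bar\theta_n$ lying componentwise between $\theta_n$ and $\theta_n'$, one immediately has $\|\theta_n'-\theta_n\|\le \rho\sqrt{d}/\sqrt n$. By Assumption~\ref{as:momP_KMS}(ii), $\|D_{P_n,j}(\bar\theta_n)\|\le \|D_{P_n,j}(\theta_n)\|+M\|\bar\theta_n-\theta_n\|\le \bar M+M\rho\sqrt d/\sqrt n$. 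Multiplying by $\kappa_n^{-1}\sqrt n$ therefore yields a bound of order $\kappa_n^{-1}(\bar M+o(1))\rho\sqrt d=o(1)$, since $\kappa_n\to\infty$ by Assumption~\ref{as:GMS}. This is the routine step; no probabilistic content enters.

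\textbf{Part (ii).} This follows directly from Part (i): the sequences $\kappa_n^{-1}\sqrt n\gamma_{1,P_n,j}(\theta_n)$ and $\kappa_n^{-1}\sqrt n\gamma_{1,P_n,j}(\theta_n')$ differ by $o(1)$, so they share the same limit $\pi_{1,j}\in\R_{[-\infty]}$. Applying the definition of $\chi_j$ in \eqref{eq:chi} and of $\pi^*_{1,j}$ in \eqref{eq:def_pi_star} gives $\chi_j(\{P_n,\theta_n\})=\chi_j(\{P_n,\theta_n'\})=\pi^*_{1,j}$.

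\textbf{Part (iii).} The plan is to add and subtract, then factor using $\eta_{n,j}$. Writing
\begin{align*}
\kappa_n^{-1}\frac{\sqrt n\bar m_{n,j}(\theta_n')}{\hat\sigma_{n,j}(\theta_n')}
=\kappa_n^{-1}\bigl(\G_{n,j}(\theta_n')+\sqrt n\gamma_{1,P_n,j}(\theta_n')\bigr)\bigl(1+\eta_{n,j}(\theta_n')\bigr),
\end{align*}
the target difference equals
\begin{align*}
\kappa_n^{-1}\G_{n,j}(\theta_n')\bigl(1+\eta_{n,j}(\theta_n')\bigr)
+\kappa_n^{-1}\sqrt n\gamma_{1,P_n,j}(\theta_n')\,\eta_{n,j}(\theta_n').
\end{align*}
The first summand is $o_\cP(1)$ because $\G_{n,j}(\theta_n')=O_\cP(1)$ by the uniform asymptotic equicontinuity and uniform Donsker property in Assumption~\ref{as:bcs1}, $\eta_{n,j}(\theta_n')=o_\cP(1)$ by Lemma~\ref{lem:eta_conv}, and $\kappa_n\to\infty$. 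For the second summand, Jensen's inequality combined with the uniform $(2+\delta)$-moment bound in Assumption~\ref{as:momP_AS}(b)(iv) yields $|\gamma_{1,P_n,j}(\theta_n')|\le M^{1/(2+\delta)}$ uniformly, so that $\kappa_n^{-1}\sqrt n\gamma_{1,P_n,j}(\theta_n')=O(\sqrt n/\kappa_n)$. Invoking Lemma~\ref{lem:eta_rate}, which delivers $\sup_{\theta\in\Theta}|\eta_{n,j}(\theta)|=O_\cP(n^{-1/2})$ uniformly over $P\in\cP$, the product is $O_\cP(\kappa_n^{-1})=o(1)$.

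\textbf{Main obstacle.} The only delicate point is Part (iii) when $\pi_{1,j}=-\infty$, because naively one multiplies an unbounded quantity $\kappa_n^{-1}\sqrt n\gamma_{1,P_n,j}$ by a vanishing $\eta_{n,j}$. The plan above resolves this by using the uniform boundedness of $|\gamma_{1,P_n,j}|$ (from the $L^{2+\delta}$ envelope) together with the sharper rate $\eta_{n,j}=O_\cP(n^{-1/2})$ from Lemma~\ref{lem:eta_rate}, so that the product is controlled by $\kappa_n^{-1}\to 0$ rather than by a product of $O(\sqrt n/\kappa_n)$ and $o_\cP(1)$. Apart from this, the argument consists of a mean value expansion and straightforward bookkeeping.
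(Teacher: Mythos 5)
Parts (i) and (ii) match the paper's argument essentially word for word: a mean value expansion of the studentized population moment, the Lipschitz bound and uniform bound on $D_{P,j}$ from Assumption~\ref{as:momP_KMS}, the localization $\|\theta_n'-\theta_n\|\le\rho/\sqrt n$, and division by $\kappa_n\to\infty$. Nothing to flag there.

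For Part (iii), your decomposition is the same one the paper uses, and the treatment of the first summand (uniform asymptotic tightness of $\G_n$ plus $\kappa_n^{-1}\to 0$) agrees. The issue is your treatment of the second summand $\kappa_n^{-1}\sqrt n\,\gamma_{1,P_n,j}(\theta_n')\,\eta_{n,j}(\theta_n')$. You invoke Lemma~\ref{lem:eta_rate} to obtain $\sup_{\theta\in\Theta}|\eta_{n,j}(\theta)|=O_\cP(n^{-1/2})$, but Lemma~\ref{lem:eta_rate}(i) carries the additional hypothesis that the class $\mathcal S_P=\{\sigma_{P,j}^{-2}(\theta)m_j^2(\cdot,\theta)\}$ is Donsker and pre-Gaussian uniformly in $P$. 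That condition is \emph{not} among Assumptions~\ref{as:momP_AS}, \ref{as:GMS}, \ref{as:momP_KMS}, \ref{as:bcs1} under which Lemma~\ref{lem:Jstar} is stated; in the paper it is verified only under Assumption~\ref{as:Lipschitz_m_over_sigma}, a Lipschitz condition introduced solely for Theorem~\ref{cor:eam_conv}. So the step that is supposed to rescue the case $\pi_{1,j}=-\infty$ rests on a lemma you are not licensed to use here. The paper's own proof controls the second summand using only $\sup_\theta|\eta_{n,j}(\theta)|=o_\cP(1)$ from Lemma~\ref{lem:eta_conv}(i) — which \emph{is} available — and this suffices whenever $\kappa_n^{-1}\sqrt n\,\gamma_{1,P_n,j}(\theta_n')=O(1)$, i.e. when $\pi_{1,j}$ is finite. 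You are right that the case $\pi_{1,j}=-\infty$ is the delicate one; but the correct resolution in that regime is not to claim an $o_\cP(1)$ \emph{difference} (which indeed can fail), but to observe that $\hat\xi_{n,j}(\theta_n')=(1+\eta_{n,j})(\kappa_n^{-1}\sqrt n\,\gamma_{1,P_n,j}+\kappa_n^{-1}\G_{n,j})$ then diverges to $-\infty$ together with its population counterpart, which is all that is needed downstream (e.g. in Lemma~\ref{lem:cv_convergence}, where Part~(iii) is invoked to obtain $\hat\xi_{n,j}(\theta_n')\stackrel{P_n}{\to}\pi_{1,j}$). In short: good instinct spotting the obstacle, but the fix via Lemma~\ref{lem:eta_rate} is outside the stated hypotheses and should be replaced either by a case split on whether $\pi_{1,j}$ is finite or by stating and proving the weaker (and actually used) convergence claim.
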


\begin{proof}
For (i), the mean value theorem yields 
\begin{multline}
\sup_{P \in \cP}\sup_{\theta \in \Theta_I(P),\theta'\in\theta+\rho /\sqrt nB^d}\Bigg\vert \frac{\sqrt{n}E_P(m_j(X,\theta))}{\kappa_n \sigma_{P,j}(\theta)}-\frac{\sqrt{n}E_P(m_j(X,\theta^\prime))}{\kappa_n \sigma_{P,j}(\theta^\prime)} \Bigg\vert \\
\le \sup_{P \in \cP}\sup_{\theta \in \Theta_I(P),\theta'\in\theta+\rho /\sqrt nB^d} \frac{\sqrt{n}\Vert D_{P,j}(\tilde{\theta})\Vert \Vert \theta^\prime-\theta \Vert}{\kappa_n} =o(1),\label{eq:pop_Jstar_1}
\end{multline}
where $\tilde{\theta}$ represents a mean value that lies componentwise between $\theta$ and $\theta'$ and where we used the fact that $D_{P,j}(\theta)$ is Lipschitz continuous and $\sup_{P \in \cP}\sup_{\theta \in \Theta_I(P)}\Vert D_{P,j}(\theta) \Vert \le \bar{M}$. Result (ii) then follows immediately from \eqref{eq:chi}. For (iii), note that 
\begin{multline}
\sup_{\theta'_n\in\theta_n+\rho /\sqrt nB^d}\Big|\kappa_n^{-1}\frac{\sqrt n \bar{m}_{n,j}(\theta_n^\prime)}{\hat{\sigma}_{n,j}(\theta_n^\prime)}-\kappa_n^{-1}\frac{\sqrt nE_{P_n}[m_j(X_i,\theta_n^\prime)]}{\sigma_{P_n,j}(\theta_n^\prime)}\Big|\\
\le \sup_{\theta_n'\in\theta_n+\rho /\sqrt nB^d} \Big|\kappa_n^{-1}\frac{\sqrt n (\bar{m}_{n,j}(\theta_n^\prime) - E_{P_n}[m_j(X_i,\theta_n^\prime)])}{\sigma_{n,j}(\theta_n^\prime)} (1+\eta_{n,j}(\theta_n^\prime))
+\kappa_n^{-1}\frac{\sqrt nE_{P_n}[m_j(X_i,\theta_n^\prime)]}{\sigma_{P_n,j}(\theta_n^\prime)}\eta_{n,j}(\theta_n^\prime) \Big|\\
\le \sup_{\theta_n'\in\theta_n+\rho /\sqrt nB^d}|\kappa_n^{-1}\mathbb G_n(\theta_n')(1+\eta_{n,j}(\theta_n^\prime))| + \Big| \frac{\sqrt nE_{P_n}[m_j(X_i,\theta_n^\prime)]}{\kappa_n\sigma_{P_n,j}(\theta_n^\prime)}\eta_{n,j}(\theta_n^\prime) \Big| =o_{\mathcal P}(1) ,	\label{eq:pop_Jstar_2}
\end{multline}
where the last equality follows from $\sup_{\theta\in\Theta}|\mathbb G_{n}(\theta)|=O_{\mathcal P}(1)$ due to asymptotic tightness of $\{\mathbb G_{n}\}$ (uniformly in $P$) by Lemma D.1 in \cite{BCS14_misp}, Theorem 3.6.1 and Lemma 1.3.8 in \cite{Vaart_Wellner2000aBK},  and $\sup_{\theta\in\Theta}|\eta_{n,j}(\theta)|=o_{\mathcal P}(1)$ by Lemma \ref{lem:eta_conv}-(i).
\end{proof}

\begin{lemma}\label{lem:empt}
Let Assumptions \ref{as:momP_AS}, \ref{as:GMS}, \ref{as:correlation}, \ref{as:momP_KMS}, and \ref{as:bcs1} hold.  
	 For any $\theta_n^\prime \in \thetnprime$,
	\begin{itemize}
	\item[(i)]
For any $\eta>0$, there exist $\delta>0$   such that
	\begin{align}
		\sup_{c \ge 0}\Pr(\{\mathfrak W(c)\ne\emptyset\}\cap \{\mathfrak W^{-\delta}(c)=\emptyset\})<\eta.\label{eq:emptiness1}
		\end{align}
Moreover, for any $\eta>0$, there exist $\delta>0$ and $N\in\mathbb N$  such that
	\begin{align}
		\sup_{c \ge 0}P_n^*(\{V^I_n(\theta_n^\prime,c)\ne\emptyset\}\cap \{V_n^{I,-\delta}(\theta_n^\prime,c)=\emptyset\})<\eta,~\forall n\ge N.\label{eq:emptiness1_boot}
		\end{align}
	\item[(ii)] Fix $\underline{c}>0$ and redefine
	\begin{align}
\mathfrak W^{-\delta}(c)
\equiv \big\{ \lambda\in \mathfrak B^d_\rho: p^\prime \lambda =0 &\cap \mathfrak w_j(\lambda)\le c-\delta , \: \forall j=1,\dots,J\big\}, \label{eq:Wdelta_lb_c}
	\end{align}
and
	\begin{align}
V_n^{I,-\delta}(\theta_n^\prime,c)
\equiv \big\{ \lambda\in B^d_{n,\rho}: p^\prime \lambda =0 &\cap v^I_{n,j,\theta_n^\prime}(\lambda)\le c-\delta, \: \forall j=1,\dots,J\big\}. \label{eq:Vdelta_lb_c}
	\end{align}
	Then for any $\eta>0$, there exists $\delta>0$   such that
	\begin{align}
		\sup_{c \ge \underline{c}}\Pr(\{\mathfrak W(c)\ne\emptyset\}\cap \{\mathfrak W^{-\delta}(c)=\emptyset\})<\eta.\label{eq:emptiness1_lb_c}
		\end{align}
		with $\mathfrak W^{-\delta}(c)$ defined in \eqref{eq:Wdelta_lb_c}.
Moreover, for any $\eta>0$, there exist $\delta>0$ and $N\in\mathbb N$  such that
	\begin{align}
		\sup_{c \ge \underline{c}}P_n^*(\{V^I_n(\theta_n^\prime,c)\ne\emptyset\}\cap \{V_n^{I,-\delta}(\theta_n^\prime,c)=\emptyset\})<\eta,~\forall n\ge N,\label{eq:emptiness1_boot_lb_c}
		\end{align}
		with $V_n^{-\delta }(\theta_n^\prime,c)$ defined in \eqref{eq:Vdelta_lb_c}.
\end{itemize}	 
\end{lemma}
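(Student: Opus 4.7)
\textbf{Proof proposal for Lemma \ref{lem:empt}.}

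The plan is to reduce non-emptiness of $\mathfrak W(c)$ to a finite system of halfspace inequalities via Farkas' lemma and Minkowski--Weyl, then exploit Assumption \ref{as:correlation} to show that each dual inequality is unlikely to hold by a margin of order $\delta$. First, write $\mathfrak W(c)$ as $\{\lambda:K\lambda\le g(c)\}$ where $K$ stacks the linearized gradients $\rho D_j$, the box constraints $\pm I_d$, and $\pm p'$ (as in \eqref{eq:Kgtau}), and $g(c)$ collects the stochastic right-hand sides $c-\pi^*_{1,j}-\HH_j$ for $j\in\mathcal J^*$, together with $\rho$'s and zeros for the deterministic constraints. By Farkas' lemma, $\mathfrak W(c)=\emptyset$ iff there exists $\mu\in\mathcal M\equiv\{\mu\ge 0:\mu'K=0\}$ with $\mu'g(c)<0$. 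By Minkowski--Weyl (cf.\ \eqref{eq:cv_conv3_2}), $\mathcal M$ is generated by finitely many rays $\nu^1,\dots,\nu^T$ depending only on $K$ (i.e.\ on $D$ and $p$, not on the noise), so $\mathfrak W(c)\ne\emptyset$ iff $\nu^{t\prime}g(c)\ge 0$ for every $t$. Let $\tau$ be the $0/1$ vector that marks which constraints are tightened (inequalities only in (i), all constraints in (ii)). Then the bad event $\{\mathfrak W(c)\ne\emptyset\}\cap\{\mathfrak W^{-\delta}(c)=\emptyset\}$ is contained in $\bigcup_{t=1}^T \{0\le \nu^{t\prime}g(c)<\delta\,\nu^{t\prime}\tau\}$.

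Next I bound each of these finitely many probabilities uniformly in $c$. Split $\nu^t=(\nu^{t,\mathcal J^*},\nu^{t,\mathrm{det}})$ into the stochastic and deterministic coordinates and write $\nu^{t\prime}g(c)=-\nu^{t,\mathcal J^*\prime}\HH_{\mathcal J^*}+a_t(c)$, where $a_t(c)$ is a deterministic affine function of $c$. The key observation, as in the strict-monotonicity step of Lemma \ref{lem:cv_convergence}(ii), is that Assumption \ref{as:correlation} forces $\mathrm{Var}(\nu^{t,\mathcal J^*\prime}\HH_{\mathcal J^*})\ge \omega\|\nu^{t,\mathcal J^*}\|^2$ whenever $\nu^{t,\mathcal J^*}\ne 0$, so $\nu^{t\prime}g(c)$ admits a bounded Lebesgue density uniformly in $c$; consequently $\Pr(0\le\nu^{t\prime}g(c)<\delta\,\nu^{t\prime}\tau)\le C\,\delta\,\nu^{t\prime}\tau$ uniformly in $c\ge 0$ (and uniformly in $c\ge\underline c$ for part (ii)). When $\nu^{t,\mathcal J^*}=0$, the coefficient $\nu^{t\prime}g(c)$ is deterministic and either non-negative for all large $c$ or identically negative, so the offending interval is empty for $\delta$ small enough (here part (ii)'s restriction $c\ge\underline c>0$ is what is needed to kill the case in which equality-type constraints are tightened and $c$ is near zero). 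Union-bounding over the finitely many rays and choosing $\delta$ small proves \eqref{eq:emptiness1} and \eqref{eq:emptiness1_lb_c}.

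For the bootstrap statements \eqref{eq:emptiness1_boot} and \eqref{eq:emptiness1_boot_lb_c}, I repeat the argument with $g(c)$ replaced by its bootstrap analog $\hat g_n(c)$ (with $\mathbb G^b_{n,j}(\theta_n')$ and $\varphi_j^*(\hat\xi_{n,j}(\theta_n'))$ in place of $\HH_j$ and $\pi^*_{1,j}$) and $K$ replaced by the estimated gradient matrix $\hat K_n$ built from $\hat D_n(\theta_n')$. Invoking Lemma \ref{lem:boot_cons} (bootstrap consistency of $\mathbb G^b_n$), Lemma \ref{lem:Jstar}(iii) (uniform closeness of $\varphi_j^*(\hat\xi_{n,j})$ to $\pi^*_{1,j}$), and Assumption \ref{as:momP_KMS}(i) ($\hat D_n\to D$ uniformly), the finite set of Minkowski--Weyl generators of the bootstrap dual cone can be taken, with $P_n$-probability approaching one, to be an $o(1)$ perturbation of the population generators $\{\nu^t\}$, and the distribution of $\hat\nu^{t\prime}\hat g_n(c)$ conditional on the data converges weakly to that of $\nu^{t\prime}g(c)$ uniformly over $c$ in the relevant range. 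Combining the anti-concentration bound above with an $o_{P_n}(1)$ coupling error then yields the same $C\delta$ bound inside the bootstrap probability, with $P_n$-probability at least $1-\eta$, giving the claimed uniform-in-$c$ bootstrap statements.

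The main obstacle I anticipate is the uniformity in $c$: naively the generators $\{\nu^t\}$ and the affine intercepts $a_t(c)$ do depend on $c$ through which faces are active, but because $K$ is independent of $c$ the set of generators is fixed and only $g(c)$ varies affinely, so a single anti-concentration inequality does the job. The second delicate point is the $c=0$ boundary in part (i): here I rely on the fact that only inequality constraints are tightened, so for $\delta$ smaller than any strictly positive slack of $\nu^{t\prime}g(0)$ produced by the deterministic $\rho$-box entries, the dangerous event is automatically controlled; this is exactly where the distinction between parts (i) and (ii) — and the need for $c\ge\underline c$ when tightening equalities — enters the argument.
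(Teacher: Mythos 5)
Your overall strategy (Farkas plus Minkowski--Weyl reduction to finitely many dual rays $\nu^t$, then a union bound and anti-concentration driven by Assumption \ref{as:correlation}) is exactly the one the paper uses, and your bootstrap treatment, and your reading of why $\underline c>0$ matters in part (ii), match the paper's.

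However, the key anti-concentration step has a genuine gap. You claim
$\mathrm{Var}(\nu^{t,\mathcal J^*\prime}\HH_{\mathcal J^*})\ge \omega\|\nu^{t,\mathcal J^*}\|^2$
whenever $\nu^{t,\mathcal J^*}\neq 0$. This is false when $J_2>0$. Each moment equality is represented by a pair of opposing inequalities with $\HH_{j+J_2}=-\HH_j$ for $j=J_1+1,\dots,J_1+J_2$, so the Gaussian contributions of a pair cancel: if $\nu^t$ puts equal weight on both halves of a pair and no weight on genuine inequalities, then $\nu^{t,\mathcal J^*\prime}\HH_{\mathcal J^*}$ is \emph{identically zero} while $\|\nu^{t,\mathcal J^*}\|>0$. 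Equivalently, the $J\times J$ correlation matrix of the split process is singular, and Assumption \ref{as:correlation} only lower-bounds eigenvalues of the $(J_1+J_2)$-dimensional collapsed correlation matrix. The correct anti-concentration bound therefore has to be stated in terms of the collapsed weight vector $\tilde\nu^t$ defined by $\tilde\nu^t_j=\nu^t_j$ for $j\le J_1$ and $\tilde\nu^t_j=\nu^t_j-\nu^t_{j+J_2}$ for $j=J_1+1,\dots,J_1+J_2$: one has $\mathrm{Var}(\nu^{t\prime}g(c))=\tilde\nu^{t\prime}\Omega\tilde\nu^t\ge\omega\|\tilde\nu^t\|^2$, and the case distinction must be on whether $\tilde\nu^t$ vanishes, not on whether $\nu^{t,\mathcal J^*}$ does.

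The repair is exactly what the paper does and your sketch almost has it: when $\tilde\nu^t=0$ but $\nu^t$ is nonzero on the stochastic coordinates (i.e.\ pure cancellation on paired equalities), then $\nu^t_j=0$ for all genuine inequalities $j\le J_1$. For part (i), $\tau$ marks only those inequalities, so $\nu^{t\prime}\tau=0$ and the event $\{0\le\nu^{t\prime}g(c)<\delta\,\nu^{t\prime}\tau\}$ is vacuously empty — no appeal to $\rho$-box slack is needed or correct. For part (ii), $\tau$ marks all $J$ constraints so $\nu^{t\prime}\tau>0$, but $\nu^{t\prime}g(c)$ is deterministic and equals $c\,\nu^{t\prime}\tau$ plus a nonnegative $\rho$-box contribution; hence for $c\ge\underline c$ the inequality $\nu^{t\prime}g(c)<\delta\,\nu^{t\prime}\tau$ fails as soon as $\delta<\underline c$. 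When $\tilde\nu^t\neq 0$ and $\nu^{t\prime}\tau>0$, some genuine inequality carries positive weight, so $\|\tilde\nu^t\|$ is bounded below relative to $\nu^{t\prime}\tau$, and the anti-concentration argument goes through with the corrected variance; when $\nu^{t\prime}\tau=0$ the event is again vacuous. With $\tilde\nu^t$ substituted for $\nu^{t,\mathcal J^*}$ and this three-way split in place, your argument coincides with the paper's.
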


\begin{proof}
We first show \eqref{eq:emptiness1}. If $\cJ^*=\emptyset$, with $\cJ^*$ as defined in \eqref{eq:Jstar}, then the result is immediate. Assume then that $\cJ^* \neq \emptyset$. Any inequality indexed by $j \notin \cJ^*$ is satisfied with probability approaching one by similar arguments as in \eqref{eq:pi_star_gamma_vanishes} (both with $c$ and with $c-\delta$). Hence, one could argue for sets $\mathfrak W(c),\mathfrak W^{-\delta}(c)$ defined as in equations \eqref{eq:set_W_Lpop} and \eqref{eq:Wdelta} but with $j \in \cJ^*$. To keep the notation simple, below we argue as if all $j=1,\dots,J$ belong to $\cJ^*$.
Let $c \ge 0$ be given. Let $g$ be a $J+2d+2$ vector with entries
\begin{align}
\label{eq:int1}
g_j =\left\{ \begin{array}{lll}
 c - \HH_j, &  j=1,\dots,J,\\
1, &  j=J+1,\dots,J+2d,\\
0, & j=J+2d+1,J+2d+2,
	\end{array} 
	\right. 
\end{align}
recalling that $\pi_{1,j}^*=0$ for $j=J_1+1,\cdots,J$. Let $\tau$ be a $(J+2d+2)$ vector with entries
	\begin{align}
	\label{eq:tau}
	\tau_j = \left\{ \begin{array}{ll}
	1, & j =1,\dots,J_1, \\ 
	0, & j =J_1+1,\dots,J+2d+2.
	\end{array} 
	\right. 
	\end{align}
	Then we can express the sets of interest as
\begin{align}
\mathfrak W(c) &= \{\lambda: K\lambda \le g\},\label{eq:W:matrix}\\
\mathfrak W^{-\delta}(c)&= \{\lambda: K\lambda \le g-\delta \tau\}.\label{eq:W_delta:matrix}
\end{align}
By Farkas' Lemma, e.g. \cite[Theorem 22.1]{Rockafellar1970a}, a solution to the system of linear inequalities in \eqref{eq:W:matrix} exists if and only if for all $\mu \in \R^{J+2d+2}_+$ such that $\mu^{\prime}K=0$, one has $\mu^{\prime}g \ge 0$. Similarly, a solution to the system of linear inequalities in \eqref{eq:W_delta:matrix}
exists if and only if for all $\mu \in \R^{J+2d+2}$ such that $\mu^{\prime}K=0$, one has $\mu^{\prime}(g-\delta \tau) \ge 0$. Define
\begin{align}
	\mathcal M\equiv \{ \mu\in\mathbb R^{J+2d+2}_+:\mu' K=0\}.\label{eq:int3}
\end{align}
Then, one may write
\begin{align}
	&~\Pr(\{\mathfrak W(c)\ne \emptyset \} \cap \{W^{-\delta}(\theta_n^\prime,c)= \emptyset \} )\notag\\
=&~\Pr(\{\mu'g\ge 0, \forall\mu\in \mathcal M\}\cap \{\mu'(g-\delta \tau)< 0, \exists\mu\in \mathcal M\})\notag \\
= &~\Pr(\{\mu'g\ge 0, \forall\mu\in \mathcal M\}\cap\{ \mu'g< \delta\mu'\tau, \exists\mu\in \mathcal M\})
\label{eq:int4}.
\end{align}
Note that the set $\mathcal M$ is a non-stochastic polyhedral cone which may change with $n$. By Minkowski-Weyl's theorem (see, e.g. \cite[Theorem 3.52]{Rockafellar_Wets2005aBK}), for each $n$ there exist $\{\nu^t\in \mathcal M,t=1,\cdots, T\}$, with $T<\infty$ a constant that depends only on $J$ and $d$, such that
any $\mu\in\mathcal M$ can be represented as


\begin{align}
\mu=b \sum_{t=1}^T a_t\nu^t,\label{eq:int5}
\end{align}
where $b>0$ and $a_t\ge 0, ~t=1,\dots,T, ~\sum_{t=1}^T a_t=1$. Hence, if $\mu\in\mathcal M$ satisfies $ \mu'g< \delta\mu'\tau$, denoting $\nu^{t \prime}$ the transpose of vector $\nu^t$, we have
\begin{align}
\sum_{t=1}^T	a_t\nu^{t \prime}g<\delta\sum_{t=1}^T	a_t\nu^{t \prime}\tau.\label{eq:int6}
\end{align}
However, due to $a_t\ge 0,\forall t$ and $\nu^t\in\mathcal M$, this means  $\nu^{t\prime}g< \delta\nu^{t \prime}\tau$ for some $t\in\{1,\dots,T\}.$ Furthermore, since $\nu^t\in\mathcal M$, we have $0\le \nu^{t\prime}g$.
Therefore,
\begin{multline}
{\Pr}\left(\{\mu'g\ge 0, \forall\mu\in \mathcal M\}\cap\{\mu'g< \delta \mu^\prime \tau, \exists\mu\in \mathcal M\}\right)\\
	\le {\Pr}\left(0\le\nu^{t\prime}g< \delta\nu^{t\prime}\tau,\exists t\in\{1,\cdots,T\}\right)
	\le \sum_{t=1}^T {\Pr}\left(0\le\nu^{t\prime}g< \delta\nu^{t\prime}\tau \right).\label{eq:int7}
	\end{multline}

\noindent \textbf{Case 1.} Consider first any $t =1,\dots,T$ such that $\nu^t$ assigns positive weight only to constraints in $\{J+1,\dots,J+2d+2\}$. Then
\begin{align*}
\nu^{t \prime} g&=\sum_{j=J+1}^{J+2d}\nu^t_{j},\\
\delta\nu^{t \prime} \tau &=\delta\sum_{j=J+1}^{J+2d+2}\nu^t_{j} \tau_j=0,
\end{align*}
where the last equality follows by \eqref{eq:tau}.
Therefore 
${\Pr}\left(0\le\nu^{t\prime}g< \delta\nu^{t\prime}\tau \right)=0$.

\noindent \textbf{Case 2.} Consider now any $t = 1,\dots,T$ such that $\nu^t$ assigns positive weight also to constraints in $\{1,\dots,J\}$. 
Recall that indices $j=J_1+1,\dots,J_1+2J_2$ correspond to moment equalities, each of which is written as two moment inequalities, therefore yielding a total of $2J_2$ inequalities with $D_{j+J_2} = - D_{j}$ for $j =J_1+1,\dots,J_1+J_2$, and:
\begin{equation}
g = \left\{\begin{array}{ll}
 c - \HH_j & j =J_1+1,\dots,J_1+J_2, \\
 c + \HH_{j-J_2} & j =J_1+J_2+1,\dots,J.
\end{array} \label{eq:int8}
\right.
\end{equation}
For each $\nu^t$, \eqref{eq:int8} implies
\begin{align}
\sum_{j=J_1+1}^{J_1+2J_2} \nu^t_j g_j &=c\sum_{j=J_1+1}^{J_1+2J_2} \nu^t_j + \sum_{j=J_1+1}^{J_1+J_2} (\nu^t_j-\nu^t_{j+J_2})\HH_j.\label{eq:int9}
\end{align}
For each $j=1,\cdots,J_1+J_2$, define
\begin{align}
	\tilde\nu^t_j\equiv\begin{cases}
	\nu^t_j&j=1,\cdots, J_1\\
	\nu^t_j-\nu^t_{j+J_2}&j=J_1+1,\cdots,J_1+J_2.
	\end{cases}.\label{eq:int10}
\end{align}
We then let $\tilde\nu^t\equiv(\tilde\nu^t_{n,1},\cdots,\tilde \nu^t_{n,J_1+J_2})'$ and have
\begin{align}
\nu^{t \prime}g=\sum_{j=1}^{J_1+J_2}\tilde\nu^t_j\HH_j+c\sum_{j=1}^J\nu^t_j+\sum_{j=J+1}^{J+2d}\nu^t_j.\label{eq:int11}
\end{align}

\noindent \textbf{Case 2-a.} Suppose  $\tilde\nu^t\ne 0$. Then, by \eqref{eq:int11}, $\frac{\nu^{t \prime}g}{\nu^{t \prime}\tau}$ is a normal random variable  with variance $(\tilde\nu^{t \prime}\tau)^{-2}\tilde\nu'^t\Omega\tilde\nu^t$.   By Assumption \ref{as:correlation}, there exists a constant $\omega>0$ such that the smallest eigenvalue of $\Omega$ is bounded from below by $\omega$ for all $\theta_n^\prime$.
Hence, letting $\|\cdot\|_p$ denote the $p$-norm in $\mathbb R^{J+2d+2}$, we have
\begin{align}
	\frac{\tilde\nu'^t\Omega\tilde\nu^t}{(\tilde\nu^{t \prime}\tau)^2}\ge
	 \frac{\omega\|\tilde\nu^t\|_2^2}{(J+2d+2)^2\|\tilde\nu^t\|_2^2}\ge  \frac{\omega}{(J+2d+2)^2}.
\end{align}
Therefore, the variance of the normal random variable in \eqref{eq:int7} is uniformly bounded away from 0, which in turn allows one to find $\delta>0$ such that
${\Pr}(0\le\frac{\nu^{t \prime}g}{\nu^{t \prime}\tau} < \delta)\le \eta/T$.

\noindent \textbf{Case 2-b.} Next, consider the case $\tilde\nu^t=0$. Because we are in the case that $\nu^t$ assigns positive weight also to constraints in $\{1,\dots,J\}$, this must be because  $\nu^t_j=0$ for all $j=1,\cdots,J_1$ and $\nu^t_j=\nu^t_{j+J_2}$ for all $j=J_1+1,\cdots,J_1+J_2$, while $\nu^t_j\neq 0$ for some $j=J_1+1,\cdots,J_1+J_2$. Then we have $\sum_{j=1}^J \nu^t_j g\ge 0$, and 
$\sum_{j=1}^J\nu_j^t\tau_j=0$ because $\tau_j=0$ for each $j=J_1+1,\dots,J$. Hence, the argument for the case that $\nu^t$ assigns positive weight only to constraints in $\{J+1,\dots,J+2d+2\}$ applies and again ${\Pr}\left(0\le\nu^{t\prime}g< \delta\nu^{t\prime}\tau \right)=0$. This establishes equation \eqref{eq:emptiness1}.

To see why equation \eqref{eq:emptiness1_boot} holds, observe that the bootstrap distribution is conditional on $X_1,\dots,X_n$. Therefore, the matrix $\hat{K}_{n}$, defined as the matrix in equation (\ref{eq:KP}) but with $\hat{D}_{n}$ replacing $D_{P}$, can be treated as nonstochastic. This implies that the set $\hat{\cM}_n$, defined as the set in equation (\ref{eq:int3}) but with $\hat{K}_{n}$ replacing $K$, can be treated as nonstochastic as well. 

By an application of Lemma D.2.8 in \cite{BCS14_misp} together with Lemma \ref{cor:asrep} (through an argument similar to that following equation \eqref{eq:asrep_invoke}), $\mathbb G^b_{n}\stackrel{d}{\to}\mathbb G_P$ in $l^\infty(\Theta)$ uniformly in $\cP$ conditional on $\{X_1,\cdots,X_n\}$, and by Assumption \ref{as:momP_KMS} $\hat D_n(\theta_n')\stackrel{P_n}{\to} D$, for almost all sample paths.
\color{black}
Set
\begin{align}
g_{P_n,j}(\theta'_n) =\left\{ \begin{array}{lll}
 c -\varphi^*_j(\xi_{n,j}(\theta'_n))- \G^b_{n,j}(\theta'_n), &  j=1,\dots,J,\\
 1, &  j=J+1,\dots,J+2d,\\
0, & j=J+2d+1,J+2d+2,
	\end{array} 
	\right. 
\end{align}
and note that $|\varphi^*_j(\xi_{n,j}(\theta'_n))|<\eta$   for all $j\in\mathcal J^*$, and $\G^b_{n,j}(\theta'_n)|\{X_i\}_{i=1}^\infty\stackrel{d}{\to} N(0,\Omega)$.
Then one can mimic the argument following \eqref{eq:int1} to conclude \eqref{eq:emptiness1_boot}.

The results in \eqref{eq:emptiness1_lb_c}-\eqref{eq:emptiness1_boot_lb_c} 
follow by similar arguments, with proper redefinition of $\tau$ in equation \eqref{eq:tau}.
\end{proof}

\begin{lemma}
\label{lem:exist_sol}
Let Assumptions \ref{as:correlation} and \ref{as:bcs1} hold. 
Let $(P_n,\theta_n)$ have the almost sure representations given in Lemma \ref{lem:val}, let $\mathcal J^*$ be defined as in \eqref{eq:Jstar}, and assume that $\cJ^* \neq \emptyset$.
Let $\widetilde{\cC}$ collect all size $d$ subsets $C$ of $\{1,...,J+2d+2\}$ ordered
lexicographically by their smallest, then second smallest, etc. elements.
Let the random variable $\cC$  equal the first element of $\widetilde{\cC}$ s.t. $\det K^C\neq 0$ and $\lambda^{C}=( K^C)
^{-1}g^C\in \mathfrak W^{* ,-\delta }(0)$ if such an
element exists; else, let $\cC=\{J+1,...,J+d\}$ and $\lambda^{C}= \mathbf{1}_d$, where $\mathbf{1}_d$ denotes a $d$ vector with each entry equal to $1$, and $K$, $g$ and $ \mathfrak W^{* ,-\delta }$ are as defined in Lemma \ref{lem:UnonEmpty}. 
Then, for any $\eta >0$, there exist $0<\varepsilon_\eta<\infty$ and $N \in \N$ s.t. $n\geq N$ implies%
\begin{equation}
\mathbf{P} \left( \mathfrak W^{*,-\delta}(0) \neq 
\emptyset ,\left\vert \det K^{\cC}\right\vert \le \varepsilon_\eta\right) \leq \eta .\label{eq:lem_det_bound}
\end{equation}
\end{lemma}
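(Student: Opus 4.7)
The proof will hinge on a key structural observation: along the fixed subsequence $(P_n,\theta_n)$ described in Lemma~\ref{lem:UnonEmpty}, the matrix $K$ defined there is fully deterministic, since its rows consist of the limiting gradients $\pm\rho D_j$, the canonical blocks $\pm I_d$, and the fixed unit direction $\pm p'$. Consequently, for every $C\in\widetilde{\cC}$ the scalar $\det K^{C}$ is a deterministic number, and the randomness entering $\cC$ acts only through the Gaussian right-hand side $g^{C}$. My plan is to exploit the finiteness of $\widetilde{\cC}$ to reduce the probabilistic claim to a purely combinatorial lower bound.

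First I would set
\[
\varepsilon_0 \;\equiv\; \min\bigl\{|\det K^{C}| : C\in\widetilde{\cC},\; \det K^{C}\neq 0\bigr\}.
\]
This minimum is taken over a finite, nonempty collection: $\widetilde{\cC}$ has cardinality at most $\binom{J+2d+2}{d}$, and it contains the block $C_0=\{J+1,\dots,J+d\}$ for which $K^{C_0}=I_d$ and $\det K^{C_0}=1$. Hence $\varepsilon_0\in(0,1]$.

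Next I would argue that on the event $\{\mathfrak{W}^{*,-\delta}(0)\neq\emptyset\}$ the lex-first valid $\cC$ appearing in the definition is always realized, so the default branch is never triggered. Indeed, $\mathfrak{W}^{*,-\delta}(0)\subseteq \mathfrak{B}^{d}_{\rho}\subseteq B^{d}$ is a bounded convex polyhedron, so whenever it is nonempty it admits at least one extreme point; by standard polyhedral theory (Minkowski--Weyl, as applied elsewhere in the proof of Lemma~\ref{lem:UnonEmpty}), every such extreme point is the unique solution of a system $K^{C}\lambda=g^{C}$ with $d$ linearly independent active rows, which is to say $\det K^{C}\neq 0$ and $(K^{C})^{-1}g^{C}\in\mathfrak{W}^{*,-\delta}(0)$. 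Therefore a lex-first valid choice exists and $\cC$ is assigned to one of these $C$.

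Combining the two steps, on $\{\mathfrak{W}^{*,-\delta}(0)\neq\emptyset\}$ one has $|\det K^{\cC}|\in\{|\det K^{C}|:C\in\widetilde{\cC},\;\det K^{C}\neq 0\}$, and in particular $|\det K^{\cC}|\geq\varepsilon_0$ deterministically. Choosing $\varepsilon_\eta=\varepsilon_0/2$ and $N=1$ then yields
\[
\mathbf{P}\bigl(\mathfrak{W}^{*,-\delta}(0)\neq\emptyset,\;|\det K^{\cC}|\leq \varepsilon_\eta\bigr)=0\leq\eta
\]
for every $\eta>0$, which gives the lemma. The only substantive step is the extreme-point characterization used to conclude that $\cC$ can be taken with $\det K^{\cC}\neq 0$ as soon as $\mathfrak{W}^{*,-\delta}(0)\neq\emptyset$; everything else is a finite minimization over $\widetilde{\cC}$ and is automatic from the deterministic nature of $K$.
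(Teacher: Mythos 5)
Your proof is correct, and it takes a genuinely different, more elementary route than the paper's. The paper bounds the event probability by first relating a small $|\det K^{\cC}|$ to a small eigenvalue of $K^{\cC}K^{\cC\prime}$ (so $\lambda^{\cC}\in B^d$ forces $|q^{\cC\prime}g^{\cC}|$ to be small), and then invokes Assumption~\ref{as:correlation} to lower-bound the variance of the Gaussian linear form $q^{\cC\prime}g^{\cC}$ and conclude that its probability of being near zero is small; this yields $\varepsilon_\eta = \bigl(\overline{\eta}\sqrt{2\omega\pi}/(2\sqrt{d})\bigr)^d$, a constant depending only on $(\eta, J, d, \omega)$ and hence \emph{not} on the particular subsequence $(P_n,\theta_n)$ or its limit $D$. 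Your approach instead exploits the determinism of $K$ along the fixed subsequence: since $K$ uses the limit gradient $D$, identity blocks, and $\pm p'$, all $\binom{J+2d+2}{d}$ values of $|\det K^C|$ are fixed numbers, so $\varepsilon_0 = \min\{|\det K^C|: C\in\widetilde{\cC},\,\det K^C\neq 0\}$ is a positive deterministic threshold and the event in \eqref{eq:lem_det_bound} has probability \emph{exactly zero} once $\varepsilon_\eta<\varepsilon_0$. The trade-off is clear: yours is shorter and does not use Assumption~\ref{as:correlation} at all, but produces an $\varepsilon_\eta$ that depends on the limit $D$; the paper's produces a subsequence-uniform constant, which matters if one ever wants a result stated uniformly over DGPs rather than along each extracted subsequence (though the lemma as stated only asserts the fixed-subsequence version, so your argument does suffice for the claim).

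One small tightening: your extreme-point step, used to rule out the default branch on $\{\mathfrak{W}^{*,-\delta}(0)\neq\emptyset\}$, is unnecessary. By the definition of $\cC$, the default $\cC=\{J+1,\dots,J+d\}$ also has $\det K^{\cC}=\det I_d=1\neq 0$, so $|\det K^{\cC}|\ge\varepsilon_0$ holds unconditionally on that event regardless of which branch fires, and the conclusion follows without any polyhedral argument.
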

\begin{proof}
We bound the probability in \eqref{eq:lem_det_bound} as follows:
\begin{eqnarray}
\mathbf{P} \left( \mathfrak W^{*,-\delta}(0) \neq \emptyset
,\left\vert \det K^{\cC}\right\vert \le \varepsilon_\eta\right) 
&\leq &\mathbf{P} \left(\exists C\in \widetilde{\mathcal{C}}:\lambda
^{C}\in B^{d},\left\vert \det K^{C}\right\vert \le \varepsilon_\eta\right)  \\
&\leq &\sum_{\substack{ C\in \widetilde{\mathcal{C}}:  \left\vert
\det K^C\right\vert \le \varepsilon_\eta}}\mathbf{P} \left( \lambda
^{C}\in B^{d}\right) \\
&\leq &\sum_{\substack{ C\in \widetilde{\mathcal{C}}:  \vert
\alpha^{C}\vert \le \varepsilon_\eta^{2/d}}}\mathbf{P} \left( \lambda
^{C}\in B^{d}\right), \label{eq:compare_Bd}
\end{eqnarray}
where $\alpha^{C}$ denote the smallest eigenvalue of $K^{C}K^{C\prime}$.
Here, the first inequality holds because $\mathfrak W^{*,-\delta}\subseteq B^{d}$ and so the event in the first probability
implies the event in the next one; the second inequality is Boolean algebra; the last inequality follows because $|\det K^C| \ge |\alpha^{C}|^{d/2}$.
Noting that $\widetilde{\mathcal{C}}$ has $\tbinom{J+2d+2}{d%
}$ elements, it suffices to show that%
\begin{equation*}
\left\vert 
\alpha^{C} \right\vert \le \varepsilon_\eta^{2/d}\Longrightarrow \mathbf{P} \left( \lambda ^{C}\in B^{d}\right) \leq \overline{%
\eta } \equiv \frac{\eta }{\tbinom{J+2d+2}{d}}.
\end{equation*}%
Thus, fix $C \in \widetilde{\mathcal{C}}$. Let $q^C$ denote the eigenvector associated with $\alpha^{C}$ and recall
that because $K^C K^{C\prime}$ is symmetric, $\Vert q^C\Vert=1$. Thus the claim is equivalent to:
\begin{equation}
\label{eq:det_bd1}
\vert q^{C \prime} K^C K^{C \prime} q^C \vert \le \varepsilon_\eta^{2/d} \implies \mathbf{P}{((K^C)^{-1} g^C \in \mathfrak B^d_\rho)} \le\overline{\eta}.
\end{equation}
Now, if $\vert q^{C \prime} K^C K^{C \prime} q^C \vert \le \varepsilon_\eta^{2/d}$ and $(K^C)^{-1} g^C \in \mathfrak B^d_\rho$, then the Cauchy-Schwarz inequality yields 
\begin{equation}
\left\vert q^{C\prime }g_{{P_n}}^{C}\right\vert =\bigl\vert q^{C\prime}K^{C}\left( K^{C}\right) ^{-1}g^{C}\bigr\vert < \sqrt{d}\varepsilon_\eta^{1/d},
\end{equation}
hence
\begin{equation}
\mathbf{P}{((K^C)^{-1} g^C \in \mathfrak B^d_\rho)} \le \mathbf{P}{\left(\vert q^{C \prime} g^C \vert < \sqrt{d}\varepsilon_\eta^{1/d}\right)}.
\end{equation}
If $q^C$ assigns non-zero weight only to non-stochastic constraints, the result follows immediately. If $q^C$ assigns non-zero weight also to stochastic constraints, Assumptions \ref{as:correlation} and \ref{as:bcs1} (iii) yield 
\begin{align}
\eig(\tilde\Omega)&\ge \omega  \notag\\
\implies Var_{\mathbf{P}}(q^{C\prime}g^C) &\ge \omega  \notag\\
\implies \mathbf{P}\left(\vert q^{C \prime} g^C \vert < \sqrt{d}\varepsilon_\eta^{1/d}\right) &= \mathbf{P}\left(- \sqrt{d}\varepsilon_\eta^{1/d} < q^{C \prime} g^C < \sqrt{d}\varepsilon_\eta^{1/d}\right) \notag\\
&< \frac{2 \sqrt{d}\varepsilon_\eta^{1/d}}{\sqrt{2 \omega \pi}},\label{eq:eig_bound}
\end{align}
where the result in \eqref{eq:eig_bound} uses that the density of a normal r.v. is maximized at the expected value. The result follows by choosing
\begin{equation*}
\varepsilon_\eta = \left(\frac{\overline{\eta}\sqrt{2 \omega \pi}}{2 \sqrt{d}}\right)^d.
\end{equation*}
\end{proof}

\begin{lemma}\label{lem:cbd}
Let Assumptions \ref{as:momP_AS}, \ref{as:GMS}, \ref{as:correlation}, \ref{as:momP_KMS}, and \ref{as:bcs1} hold.  If $J_2 \ge d$, then $\exists \underline{c} >0$ s.t.
\begin{align*}
\liminf_{n \to\infty}\inf_{P\in\mathcal P}\inf_{\theta\in \Theta_I(P)}P(c^I_{n}(\theta)\ge \underline c)=1.  
\end{align*}
\end{lemma}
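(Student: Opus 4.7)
The plan is to exhibit $\underline c>0$ and show that $P^*_n(V^I_n(\theta,\underline c)=\emptyset)\ge \alpha+\delta$ for some $\delta>0$, uniformly in $P\in\cP$ and $\theta\in\Theta_I(P)$, with sample-probability tending to one. Since $c\mapsto P^*_n(V^I_n(\theta,c)\ne\emptyset)$ is nondecreasing, this yields $c^I_n(\theta)\ge\underline c$ by the very definition of $c^I_n$. Infeasibility of the random polyhedron $V^I_n(\theta,c)$ will be certified by Farkas' lemma, exactly as in Lemma \ref{lem:empt}: letting $\hat K_n(\theta)$ and $\hat g_n(\theta,c)$ denote the sample-and-bootstrap counterparts of $K$ and $g$ entering \eqref{eq:int1} (built from $\hat D_n$, $\mathbb G^b_n$, and $\varphi^*_j(\hat\xi_{n,j})$), it suffices to produce $\mu\in\mathbb R_+^{J+2d+2}$ with $\mu'\hat K_n(\theta)=0$ and $\mu'\hat g_n(\theta,c)<0$ on a bootstrap event of probability at least $\alpha+\delta$.

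The construction of $\mu$ is where the hypothesis $J_2\ge d$ enters, and it will activate only the equality-derived inequalities and the two hyperplane constraints. Put positive weight only on the indices $J_1+1,\dots,J_1+2J_2$ and on $\pm p'\lambda\le 0$; let $\tilde\nu_k\equiv\mu_{J_1+k}-\mu_{J_1+J_2+k}$ for $k=1,\dots,J_2$ and $\tilde\nu_0\equiv\mu_{J+2d+1}-\mu_{J+2d+2}$. The kernel condition $\mu'\hat K_n(\theta)=0$ then collapses to $\sum_{k=1}^{J_2}\tilde\nu_k\,\rho\hat D_{n,J_1+k}(\theta)+\tilde\nu_0\,p'=0$, i.e., $d$ linear equations in $J_2+1\ge d+1$ unknowns; the left null space therefore has dimension at least $J_2+1-d\ge 1$, and because $p\ne 0$ every nontrivial element has $\tilde\nu=(\tilde\nu_1,\dots,\tilde\nu_{J_2})\ne 0$. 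Normalizing $\|\tilde\nu\|_2=1$ and setting $\mu_{J_1+k}=(\tilde\nu_k)_+$, $\mu_{J_1+J_2+k}=(-\tilde\nu_k)_+$ (and analogously for the sign of $\tilde\nu_0$), a direct computation using $\mathbb G^b_{n,J_1+J_2+k}=-\mathbb G^b_{n,J_1+k}$ and $\varphi^*_j(\hat\xi_{n,j})=0$ for equality indices yields $\mu'\hat g_n(\theta,c)=\|\tilde\nu\|_1\,c-\tilde\nu'\mathbb G^{b,*}_n(\theta)$, where $\mathbb G^{b,*}_n(\theta)$ stacks the bootstrap processes over $j\in\{J_1+1,\dots,J_1+J_2\}$. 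Replacing $\tilde\nu$ by $-\tilde\nu$ gives a second valid certificate, so infeasibility is implied by the event $E_n(\theta,c)\equiv\{|\tilde\nu'\mathbb G^{b,*}_n(\theta)|>\|\tilde\nu\|_1\,c\}$.

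To lower bound $P^*_n(E_n(\theta,c))$ uniformly, I would invoke the uniform bootstrap consistency used in the proof of Lemma \ref{lem:cv_convergence}: conditional on the sample, $\tilde\nu'\mathbb G^{b,*}_n(\theta)$ is approximately centered Gaussian with variance $\tilde\nu'\hat\Omega^*_n(\theta)\tilde\nu$, where $\hat\Omega^*_n$ is the empirical correlation of the equality moments and converges uniformly in $(P,\theta)$ to the corresponding principal submatrix of $\Omega_P(\theta)$. Since equality indices belong to $\cJ(P,\theta;\varepsilon)$ by construction, Assumption \ref{as:correlation} together with the fact that any principal submatrix of a PSD matrix inherits the lower bound on its smallest eigenvalue yields $\tilde\nu'\Omega^*_P(\theta)\tilde\nu\ge\omega$. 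Combined with $\|\tilde\nu\|_1\le\sqrt{J_2}\|\tilde\nu\|_2=\sqrt{J_2}$, this delivers
\[\liminf_{n\to\infty}\inf_{P,\theta}P^*_n(E_n(\theta,c))\ge 2\bigl(1-\Phi(\sqrt{J_2/\omega}\,c)\bigr),\]
which exceeds $\alpha$ for all $c\le\underline c$ with $\underline c>0$ depending only on $(\alpha,J_2,\omega)$, using $\alpha<1/2$. The main obstacle is uniformity: the direction $\tilde\nu$ is sample-dependent, but the argument is scale-free in $\tilde\nu$ (relying only on $\|\tilde\nu\|_1/\|\tilde\nu\|_2\le\sqrt{J_2}$ and the uniform eigenvalue bound), so sample-dependent normalization is harmless; what must be checked carefully is that the conditional Gaussian approximation for $\tilde\nu'\mathbb G^{b,*}_n(\theta)$ and the uniform convergence of $\hat\Omega^*_n$ hold along arbitrary subsequences $(P_n,\theta_n)\in\cP\times\Theta_I(P_n)$, which mirrors the almost-sure-representation coupling already carried out in the proofs of Lemmas \ref{lem:empt} and \ref{lem:cv_convergence}.
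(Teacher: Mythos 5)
Your proof is correct and uses a genuinely different strategy from the paper's. The paper works \emph{primally}: it discards all but $d$ of the equality-derived constraints (one sign of each of the first $d$ equalities), solves the resulting $d\times d$ linear program in closed form via the Karush-Kuhn-Tucker conditions (see equations \eqref{eq:cbd_s1}--\eqref{eq:cbd_s6}), and bounds the resulting Gaussian probability; it must then run a separate argument (Case (ii)) when the retained gradients are linearly dependent, using a separating-hyperplane idea. Your argument is \emph{dual}: you build a Farkas certificate of infeasibility supported only on the equality indices and the $\pm p'\lambda=0$ rows, exploiting that the kernel condition $\sum_k\tilde\nu_k\rho\hat D_{n,J_1+k}(\theta)+\tilde\nu_0 p'=0$ is $d$ equations in $J_2+1\ge d+1$ unknowns and hence always has a nontrivial solution with $\tilde\nu\neq 0$. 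This automatically handles linear dependence of the equality gradients (the kernel only gets larger), so no case distinction is needed. Both approaches then reduce to the same ingredients: uniform bootstrap consistency, uniform convergence of the empirical covariance, and the eigenvalue lower bound from Assumption \ref{as:correlation}, applied to a sample-dependent direction that is scale-free. The one place to be slightly careful, which you flag, is that the direction $\tilde\nu$ depends on $\hat D_n(\theta)$ but not on the bootstrap draw, so conditional on the sample it is deterministic and the bootstrap Gaussian approximation applies to a fixed linear functional; this is exactly the same coupling the paper already does in Lemmas \ref{lem:empt} and \ref{lem:cv_convergence}. Your bound $2\bigl(1-\Phi(\sqrt{J_2/\omega}\,c)\bigr)$ plays the role of the paper's $\Phi(d\omega^{-1/2}c)$; either yields a positive $\underline{c}$ since $\alpha<1/2$.
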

\begin{proof}
Fix any $c \geq 0$ and restrict attention to constraints $\{J_1+1,...,J_1+d,J_1+J_2+1,...,J_1+J_2+d\}$, i.e. the inequalities that jointly correspond to the first $d$ equalities. We separately analyze the case when (i) the corresponding estimated gradients $\{\hat{D}_{n,j}(\theta):j = J_1+1,...,J_1+d\}$ are linearly independent and (ii) they are not. If $\{\hat{D}_{n,j}(\theta):j = J_1+1,...,J_1+d\}$ converge to linearly independent limits, then only the former case occurs infinitely often; else, both may occur infinitely often, and we conduct the argument along two separate subsequences if necessary.

For the remainder of this proof, because the sequence $\{\theta_n\}$ is fixed and plays no direct role in the proof, we suppress dependence of $\hat{D}_{n,j}(\theta)$ and $\G^b_{n,j}(\theta)$ on $\theta$. Also, if $C$ is an index set picking certain constraints, then $\hat{D}_{n}^C$ is the matrix collecting the corresponding estimated gradients, and similarly for $\G_{n}^{b,C}$.

Suppose now case (i), then there exists an index set ${\bar C}\subset \{J_1+1,...,J_1+d,J_1+J_2+1,\dots,J_1+J_2+d\}$ picking one direction of each constraint s.t. $p$ is a positive linear combination of the rows of $\hat{D}_{P}^{\bar C}$. (This choice ensures that a Karush-Kuhn-Tucker condition holds, justifying the step from \eqref{eq:cbd_s1} to \eqref{eq:cbd_s3} below.) Then the  coverage probability $P^*(V^I_n(\theta,c)\ne\emptyset)$ is asymptotically bounded above by
\begin{align}
P^* \Big( \sup_{\lambda \in \rho B^d_{n,\rho}} \Big\{ p^{\prime }\lambda:\hat{D}_{n,j}\lambda \leq c-\G_{n,j}^b,j \in \cJ^{*}\Big\} \geq 0 \Big) \leq &~ P^* \Big( \sup_{\lambda \in \mathbb{R}^d} \Big\{ p^{\prime }\lambda:\hat{D}_{n,j}\lambda \leq c-\G_{n,j}^b,j \in \bar C\Big\} \geq 0 \Big) \label{eq:cbd_s1}\\
=&~ P^* \Big(p^{\prime} (\hat{D}_{n}^{\bar C})^{-1}(c\mathbf{1}_d-\G^{b,\bar C}_n) \geq 0 \Big) \label{eq:cbd_s3}\\
=&~ P^* \bigg( \frac{p^{\prime} (\hat{D}_{n}^{\bar C})^{-1}(c\mathbf{1}_d-\G^{b,\bar C}_n)}{\sqrt{p^\prime(\hat{D}_{n}^{\bar C})^{-1}\Omega_P^C (\hat{D}_{n}^{\bar C})^{-1}p}} \geq 0 \bigg)\label{eq:cbd_s4} \\
=&~ P^* \bigg( \frac{p^{\prime} adj(\hat{D}_{n}^{\bar C})(c\mathbf{1}_d-\G^{b,\bar C}_n)}{\sqrt{p^\prime(adj(\hat{D}_{n}^{\bar C})\Omega_P^C adj(\hat{D}_{n}^{\bar C})p}} \geq 0 \bigg)\label{eq:cbd_s4b} \\
=&~ \Phi \bigg(\frac{p^\prime adj(\hat{D}_{n}^{\bar C})c\mathbf{1}_d}{\sqrt{p^\prime(adj(\hat{D}_{n}^{\bar C})\Omega_P^C adj(\hat{D}_{n}^{\bar C})p}} \bigg) + o_{\cP}(1) \label{eq:cbd_s5}\\
\leq &~ \Phi (d \omega^{-1/2} c) + o_{\cP}(1).\label{eq:cbd_s6}
\end{align}
Here, \eqref{eq:cbd_s1} removes constraints and hence enlarges the feasible set; \eqref{eq:cbd_s3} solves in closed form; \eqref{eq:cbd_s4} divides through by a positive scalar; \eqref{eq:cbd_s4b} eliminates the determinant of $\hat{D}_{n}^{\bar C}$, using that rows of $\hat{D}_{n}^{\bar C}$ can always be rearranged so that the determinant is positive; \eqref{eq:cbd_s5} follows by Assumption \ref{as:bcs1}, using that the term multiplying $\G^{b,\bar C}_{n}$ is $O_{\cP}(1)$; and \eqref{eq:cbd_s6} uses that by Assumption \ref{as:correlation}, there exists a constant $\omega>0$ that does not depend on $\theta$ such that the smallest eigenvalue of $\Omega_P$ is bounded from below by $\omega$. The result follows for any choice of $\underline{c} \in (0,\Phi
	^{-1}(1-\alpha )\times  \omega^{1/2}/d)$.
	
In case (ii), there exists an index set $\bar C \subset \{J_1+2,...,J_1+d,J_1+J_2+2,...,J_1+J_2+d\}$ collecting $d-1$ or fewer linearly independent constraints s.t. $\hat{D}_{n,J_1+1}$ is a positive linear combination of the rows of $\hat{D}_{P}^{\bar C}$. (Note that $\bar C$ cannot contain $J_1+1$ or $J_1+J_2+1$.) One can then write
\begin{align}
&~P^* \Big( \sup_{\lambda \in \rho B^d_{n,\rho}} \Big\{ p^{\prime }\lambda:\hat{D}_{n,j}\lambda \leq c-\G_{n,j}^b,j \in \bar C \cup \{J_1+J_2+1\} \Big\} \geq 0 \Big) \label{eq:cbd_s7}\\
\leq &~P^* \left( \exists \lambda: \hat{D}_{n,j}\lambda \leq c-\G_{n,j}^b,j \in \bar C \cup \{J_1+J_2+1\} \right) \label{eq:cbd_s8}\\
\leq &~P^* \Big(\sup_{\lambda \in \rho B^d_{n,\rho}} \left\{ \hat{D}_{n,J_1+1}\lambda: \hat{D}_{n,j}\lambda \leq c-\G_{n,j}^b,j \in \bar C \right\} \geq \inf_{\lambda \in \rho B^d_{n,\rho}} \left\{ \hat{D}_{n,J_1+1}\lambda: \hat{D}_{n,J_1+J_2+1}\lambda \leq c-\G_{n,J_1+J_2+1}^b\right\} \Big)\label{eq:cbd_s9} \\
= &~P^* \left( \hat{D}_{n,J_1+1} \hat{D}^{\bar C \prime}_{n} (\hat{D}^{\bar C}_{n} \hat{D}^{\bar C \prime}_{n})^{-1} (c\mathbf{1}_{\bar d}-\G^{b,\bar C}_n) \geq -c+\G_{n,J_1+J_2+1}^b \right). \label{eq:cbd_s10} 
\end{align}
Here, the reasoning from \eqref{eq:cbd_s7} to \eqref{eq:cbd_s9} holds because we evaluate the probability of increasingly larger events; in particular, if the event in \eqref{eq:cbd_s9} fails, then the constraint sets corresponding to the $\sup$ and $\inf$ can be separated by a hyperplane with gradient $\hat{D}_{n,J_1+1}$ and so cannot intersect. The last step solves the optimization problems in closed form, using (for the $\sup$) that a Karush-Kuhn-Tucker condition again holds by construction and (for the $\inf$) that $\hat{D}_{n,J_1+J_2+1}=-\hat{D}_{n,J_1+1}$. Expression \eqref{eq:cbd_s10} resembles \eqref{eq:cbd_s4}, and the argument can be concluded in analogy to \eqref{eq:cbd_s4b}-\eqref{eq:cbd_s6}.
\end{proof}


\begin{lemma} \label{lem:pair}
Let Assumptions \ref{as:momP_AS}, \ref{as:GMS}, \ref{as:correlation}-\ref{as:correlation_pair}, \ref{as:momP_KMS}, and \ref{as:bcs1} hold.  
Suppose that both $\pi_{1,j}$ and $\pi_{1,j+R_1}$ are finite, with $\pi_{1,j},~j=1,\dots,J$, defined in \eqref{eq:pi_def}. Let $(P_n,\theta_n)$ be the sequence satisfying the conditions of Lemma \ref{lem:cv_convergence}.
Then for any $\theta^\prime_n \in \thetnprime$,
\begin{itemize}
	\item[(1)] $\sigma^2_{P_{n},j}(\theta^\prime_n)/\sigma^2_{P_{n},{j+R_1}}(\theta^\prime_n) \to 1$ for $j=1,\cdots,R_1.$
	\item[(2)] $Corr_{P_{n}}(m_j(X_i,\theta^\prime_n),m_{j+R_1}(X_i,\theta^\prime_n))\to -1$ for $j=1,\cdots,R_1.$
	\item[(3)]
	$|\mathbb G_{n,j}(\theta^\prime_n)+\mathbb G_{n,j+{R_1}}(\theta^\prime_n)|\stackrel{P_n}{\to}0,$ and $|\mathbb G_{n,j}^b(\theta^\prime_n)+\mathbb G_{n,j+{R_1}}^b(\theta^\prime_n)|\stackrel{P^*_n}{\to}0$ for almost all $\{X_i\}_{i=1}^\infty$.
	\item[(4)]
 $\rho \|D_{P_{n},j+R_1}(\theta^\prime_n)+D_{P_{n},j}(\theta^\prime_n)\| \to 0$.
 \end{itemize}
\end{lemma}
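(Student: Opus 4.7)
\textbf{Proof plan for Lemma~\ref{lem:pair}.} The argument pivots on the structural identity $m_{j+R_1}(X_i,\theta)=-m_j(X_i,\theta)-t_j(X_i,\theta)$ with $0\le t_j\le M$, which upon taking expectations gives $\mu_j(\theta)+\mu_{j+R_1}(\theta)=-\tau(\theta)$, where I abbreviate $\mu_\ell(\theta)=E_{P_n}[m_\ell(X,\theta)]$ and $\tau(\theta)=E_{P_n}[t_j(X,\theta)]\ge 0$. My first step is to show $\tau(\theta_n')\to 0$. Evaluating the identity at $\theta_n$, dividing the $\ell$-th term by $\sigma_{P_n,\ell}(\theta_n)$, and multiplying by $\sqrt n/\kappa_n$ yields
\[
\sigma_{P_n,j}(\theta_n)\tfrac{\sqrt n}{\kappa_n}\gamma_{1,P_n,j}(\theta_n)+\sigma_{P_n,j+R_1}(\theta_n)\tfrac{\sqrt n}{\kappa_n}\gamma_{1,P_n,j+R_1}(\theta_n)=-\tfrac{\sqrt n}{\kappa_n}\tau(\theta_n).
\]
The left side has a finite limit because $\pi_{1,j},\pi_{1,j+R_1}$ are assumed finite and $\sigma_{P_n,\cdot}$ is uniformly bounded by Assumption~\ref{as:momP_AS}(iv). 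Since $\sqrt n/\kappa_n\to\infty$ and $\tau\ge 0$, this forces $\tau(\theta_n)=O(\kappa_n/\sqrt n)\to 0$; Lipschitzness of $\tau$ (available from Assumption~\ref{as:Lipschitz_m_over_sigma}) transports the conclusion from $\theta_n$ to $\theta_n'$.

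Parts (1) and (2) follow quickly. Since $t_j\in[0,M]$, $E_{P_n}[t_j^2(X,\theta_n')]\le M\tau(\theta_n')\to 0$, hence $Var_{P_n}(t_j(X,\theta_n'))\to 0$. Expanding $\sigma_{P_n,j+R_1}^2=Var_{P_n}(m_j+t_j)=\sigma_{P_n,j}^2+Var_{P_n}(t_j)+2Cov_{P_n}(m_j,t_j)$, invoking Cauchy-Schwarz, and using the uniform lower bound $\sigma_{P,j}\ge\underline\sigma>0$ from \eqref{eq:33primeiii} gives (1). For (2), $Cov_{P_n}(m_j,m_{j+R_1})=-\sigma_{P_n,j}^2-Cov_{P_n}(m_j,t_j)$; dividing by $\sigma_{P_n,j}\sigma_{P_n,j+R_1}$ and combining (1) with $|Cov_{P_n}(m_j,t_j)|\le\sigma_{P_n,j}\sqrt{Var_{P_n}(t_j)}\to 0$ drives the correlation to $-1$.

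For (3), I would decompose
\[
\mathbb G_{n,j}(\theta_n')+\mathbb G_{n,j+R_1}(\theta_n')=\sqrt n\bigl(\bar m_{n,j}-\mu_j\bigr)\Big(\tfrac{1}{\sigma_{P_n,j}}-\tfrac{1}{\sigma_{P_n,j+R_1}}\Big)-\tfrac{\sqrt n(\bar t_{n,j}-\tau)}{\sigma_{P_n,j+R_1}},
\]
where $\bar t_{n,j}$ is the sample-mean analog of $\tau$. The first summand equals $\mathbb G_{n,j}(\theta_n')\cdot(1-\sigma_{P_n,j}/\sigma_{P_n,j+R_1})=O_{P_n}(1)\cdot o(1)=o_{P_n}(1)$ by Assumption~\ref{as:bcs1} tightness and (1); the second has variance $Var_{P_n}(t_j)/\sigma_{P_n,j+R_1}^2\to 0$ and is $o_{P_n}(1)$ by Chebyshev. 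An identical decomposition applies to $\mathbb G_{n,j}^b+\mathbb G_{n,j+R_1}^b$ upon substituting $\bar m_{n,j},\hat\sigma_{n,j}$ for $\mu_j,\sigma_{P_n,j}$: the conditional bootstrap variance of the analog of the second summand equals the sample variance of $t_j$ divided by $\hat\sigma_{n,j+R_1}^2$, which tends to zero almost surely using Lemma~\ref{lem:eta_conv}.

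Part (4) is the main obstacle. Using $\mu_{j+R_1}=-\mu_j-\tau$ one obtains
\[
D_{P_n,j}+D_{P_n,j+R_1}=\nabla\Bigl[\mu_j\Big(\tfrac{1}{\sigma_{P_n,j}}-\tfrac{1}{\sigma_{P_n,j+R_1}}\Big)\Bigr]-\nabla\Bigl[\tfrac{\tau}{\sigma_{P_n,j+R_1}}\Bigr].
\]
The first gradient is $o(1)$: applying the product rule yields pieces proportional to either $\mu_j\to 0$ or $(1/\sigma_{P_n,j}-1/\sigma_{P_n,j+R_1})\to 0$, multiplied by gradients that are uniformly bounded under Assumption~\ref{as:momP_KMS} and the lower bound on $\sigma_{P,j}$. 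The second gradient equals $\nabla\tau/\sigma_{P_n,j+R_1}-\tau\nabla\sigma_{P_n,j+R_1}/\sigma_{P_n,j+R_1}^2$; the trailing piece vanishes since $\tau\to 0$, so everything reduces to proving $\nabla\tau(\theta_n')\to 0$. I would argue this by contradiction: because $\theta_n'$ lies in the interior of $\Theta^\epsilon$ on which $\tau\ge 0$ and $\nabla\tau$ is uniformly Lipschitz with constant $L$, if $\|\nabla\tau(\theta_n')\|\ge\delta>0$ along a subsequence then a second-order Taylor expansion at $\theta_n'$ in the direction $-\nabla\tau(\theta_n')/\|\nabla\tau(\theta_n')\|$ with step $h=\delta/(2L)$ produces a nearby point at which $\tau\le\tau(\theta_n')-\delta^2/(4L)$, which is negative once $\tau(\theta_n')<\delta^2/(4L)$, contradicting $\tau\ge 0$ on $\Theta^\epsilon$. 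Since $\rho$ is a fixed constant, premultiplying by $\rho$ preserves the conclusion. The delicate issue is furnishing a uniform Lipschitz constant for $\nabla\tau$, which is not assumed directly but follows from $C^{1,1}$ smoothness of the normalized moments (Assumption~\ref{as:momP_KMS}) combined with uniform upper and lower bounds on $\sigma_{P,j}$ and its derivative; this is where I expect the bulk of the technical care.
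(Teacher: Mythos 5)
There is a real gap, and it stems from one incorrect claim and one inaccessible assumption.

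\textbf{The unbounded-scale case.} You assert that $\sigma_{P_n,\cdot}$ is uniformly bounded by Assumption \ref{as:momP_AS}(iv). That assumption bounds the $(2+\delta)$-th moment of the \emph{studentized} function $m_j/\sigma_{P,j}$, not the scale $\sigma_{P,j}$; Assumption \ref{as:momP_AS}(iii) gives only finiteness for each fixed $P$, and no hypothesis of the lemma furnishes a uniform upper bound. The paper's proof explicitly splits on whether $\kappa_n\sigma_{P_n,j}(\theta_n')/\sqrt n$ converges to zero or to a positive limit, and in the latter case $\sigma_{P_n,j}$ diverges. There, $\tau(\theta_n')=E_{P_n}[t_j]$ need not vanish (and the identity $\tfrac{\sqrt n}{\kappa_n}(\mu_j+\mu_{j+R_1})=-\tfrac{\sqrt n}{\kappa_n}\tau$ has a left side that you cannot control, because $\mu_j=\sigma_{P_n,j}\gamma_{1,P_n,j}$ may be unbounded even though $\gamma_{1,P_n,j}\to 0$). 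Instead the paper obtains $\sigma^2_{P_n,j+R_1}/\sigma^2_{P_n,j}\to 1$ because $\operatorname{Var}(t_j)$ and $\operatorname{Cov}(m_j,t_j)/\sigma_{P_n,j}$ are bounded (using $t_j\in[0,M]$) while the denominator diverges. Your decomposition for parts (1)--(3) can be rescued by tracking the two cases, but as written it only covers the bounded-$\sigma$ regime. Separately, you invoke Assumption \ref{as:Lipschitz_m_over_sigma} to move from $\theta_n$ to $\theta_n'$; that assumption is not among the lemma's hypotheses (it is the Appendix-only smoothness condition used for the E-A-M convergence result), and the paper instead carries the limits over the $\rho/\sqrt n$-box via Lemma \ref{lem:Jstar}.

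\textbf{Part (4).} This is where the approaches diverge most sharply. You reduce everything to $\nabla\tau(\theta_n')\to 0$ and then need a uniform Lipschitz constant for $\nabla\tau$, which you correctly flag as the weak link. It is in fact not available: Assumption \ref{as:momP_KMS} controls the gradient of $E_P[m_j]/\sigma_{P,j}$, and converting that to control on $\nabla E_P[t_j]$ requires derivative control on $\sigma_{P,j}$ (and an upper bound on $\sigma_{P,j}$ itself) that is not assumed. The paper avoids isolating $\tau$ entirely. It supposes $\rho(D_{P_n,j}+D_{P_n,j+R_1})\to\varsigma\ne 0$, moves a step $r_n$ of order $\rho\kappa_n^2/\sqrt n$ in the direction of that sum, and performs a mean-value expansion of the \emph{studentized} moments (where gradient Lipschitzness is assumed). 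Using that $\kappa_n/\sqrt n$ and $\rho^2\kappa_n^4/n$ are both $o(\rho\kappa_n^2/\sqrt n)$, it shows $E_{P_n}[m_j/\sigma_{P_n,j}+m_{j+R_1}/\sigma_{P_n,j+R_1}]$ becomes strictly positive at the perturbed point. But $m_{j+R_1}(X,\theta)\le -m_j(X,\theta)$ a.s.\ on $\Theta^\epsilon$, combined with part (1), forces this expectation to be nonpositive up to $o_\cP(\rho\kappa_n^2/\sqrt n)$, giving a contradiction. If you want to salvage your route you would have to reconstruct exactly the assumptions you are missing; the paper's indirect argument is the one that closes within the stated hypotheses.
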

\begin{proof}
By Lemma \ref{lem:Jstar}, for each $j$, $\lim_{n \to \infty}\kappa_n^{-1}\frac{\sqrt nE_{P_n}[m_j(X_i,\theta^\prime_{n})]}{\sigma_{P_n,j}(\theta^\prime_{n})} = \pi_{1,j} $, and hence the condition that $\pi_{1,j},\pi_{1,j+R_1}$ are finite is inherited by the limit of the corresponding sequences $\kappa_n^{-1}\frac{\sqrt nE_{P_n}[m_j(X_i,\theta^\prime_{n})]}{\sigma_{P_n,j}(\theta^\prime_{n})}$ and $\kappa_n^{-1}\frac{\sqrt nE_{P_n}[m_{j+J{11}}(X_i,\theta^\prime_{n})]}{\sigma_{P_n,j+J{11}}(\theta^\prime_{n})}$.

We first establish Claims 1 and 2. 
We consider two cases.

\noindent \textbf{Case 1.}
\begin{align}
\lim_{n \to \infty} \frac{\kappa_n}{\sqrt n}\sigma_{P_n,j}(\theta^\prime_n) > 0,
\end{align}
which implies that $\sigma_{P_n,j}(\theta^\prime_n) \to \infty$ at rate $\sqrt n/\kappa_n$ or faster.
Claim 1 then holds because
\begin{align}
\frac{\sigma^2_{P_n,j+R_1}(\theta^\prime_n)}{\sigma^2_{P_n,j}(\theta^\prime_n)}&=\frac{\sigma^2_{P_n,j}(\theta^\prime_n) + Var_{P_n}(t_j(X_i,\theta^\prime_n))+2Cov_{P_n}(m_j(X_i,\theta^\prime_n),t_j(X_i,\theta^\prime_n))}{\sigma^2_{P_n,j}(\theta^\prime_n)}\to 1,\label{decompose}
\end{align}
where the convergence follows because $Var_{P_n}(t_j(X_i,\theta^\prime_n))$ is bounded due to Assumption \ref{as:correlation}-\ref{as:correlation_pair}, 
\[|Cov_{P_n}(m_j(X_i,\theta^\prime_n),t_j(X_i,\theta^\prime_n))/\sigma^2_{P_n,j}(\theta^\prime_n)| \le (Var_{P_n}(t_j(X_i,\theta^\prime_n)))^{1/2}/\sigma_{P_n,j}(\theta^\prime_n),\]
 and the fact that $\sigma_{P_n,j}(\theta^\prime_n) \to \infty$.
A similar argument yields Claim 2.

\noindent \textbf{Case 2.}
\begin{align}
\lim_{n \to \infty}\frac{\kappa_n}{\sqrt n}\sigma_{P_n,j}(\theta^\prime_n) =0.\label{pair3}
\end{align}
In this case, $\pi_{1,j}$ being finite implies that $E_{P_n}m_j(X_i,\theta^\prime_n) \to 0.$ Again using the upper bound on $t_j(X_i,\theta^\prime_n)$ similarly to \eqref{decompose}, it also follows that 
\begin{align}
\lim_{n \to \infty}\frac{\kappa_n}{\sqrt n}\sigma_{P_n,j+R_1}(\theta^\prime_n) =0,
\end{align}
and hence that $E_{P_n}(t_j(X_i,\theta^\prime_n)) \to 0.$
We then have, using Assumption \ref{as:correlation}-\ref{as:correlation_pair} again,
\begin{align}
	Var_{P_n}(t_j(X_i,\theta^\prime_n))&=\int t_j(x,\theta^\prime_n)^2dP_n(x)-E_{P_n}[t_j(X_i,\theta^\prime_n)]^2 \notag\\
	&\le M\int t_j(x,\theta^\prime_n)dP_n(x) -E_{P_n}[t_j(X_i,\theta^\prime_n)]^2\to 0.\label{eq:pair6}
\end{align}
Hence,
\begin{align}
\frac{\sigma^2_{P_n,j+R_1}(\theta^\prime_n)}{\sigma^2_{P_n,j}(\theta^\prime_n)}&=\frac{\sigma^2_{P_n,j}(\theta^\prime_n) + Var_{P_n}(t_j(X_i,\theta^\prime_n))+2Cov_{P_n}(m_j(X_i,\theta^\prime_n),t_j(X_i,\theta^\prime_n))}{\sigma^2_{P_n,j}(\theta^\prime_n)} \notag\\
&\le \frac{\sigma^2_{P_n,j}(\theta^\prime_n) + Var_{P_n}(t_j(X_i,\theta^\prime_n))}{\sigma^2_{P_n,j}(\theta^\prime_n)}+\frac{2(Var_{P_n}(t_j(X_i,\theta^\prime_n)))^{1/2}}{\sigma_{P_n,j}(\theta^\prime_n)} \notag\\
&\to 1,\label{eq:pair6_1}
\end{align}
and the first claim follows.     

To obtain claim 2, note that 
\begin{align}
Corr_{P_n}(m_j(X_i,\theta^\prime_n),m_{j+R_1}(X_i,\theta^\prime_n))&=\frac{-\sigma^2_{P_n,j}(\theta^\prime_n)-Cov_{P_n}(m_{j}(X_i,\theta^\prime_n),t_j(X_i,\theta^\prime_n))}{\sigma_{P_n,j}(\theta^\prime_n)\sigma_{P_n,j+R_1}(\theta^\prime_n)}\notag \\
&\to -1,
\end{align}
where the result follows from \eqref{eq:pair6} and \eqref{eq:pair6_1}.

To establish Claim 3, consider $\mathbb G_{n}$ below. Note that, for $j=1,\cdots,R_1$,
\begin{align}
	\begin{bmatrix}
		\mathbb G_{n,j}(\theta^\prime_n)\\
			\mathbb G_{n,j+{R_1}}(\theta^\prime_n)
	\end{bmatrix}
=\begin{bmatrix}
		\frac{1}{\sqrt n}\frac{\sum_{i=1}^n (m_{j}(X_i,\theta^\prime_n)-E_{P_n}[m_{j}(X_i,\theta^\prime_n)])}{\sigma_{P_n,j}(\theta^\prime_n)}\\
		-\frac{1}{\sqrt n}\frac{\sum_{i=1}^n (m_{j}(X_i,\theta^\prime_n)-E_{P_n}[m_{j}(X_i,\theta^\prime_n)])+\frac{1}{\sqrt n}\sum_{i=1}^n (t_{j}(X_i,\theta^\prime_n)-E_{P_n}[t_{j}(X_i,\theta^\prime_n)])}{\sigma_{P_n,j+R_1}(\theta^\prime_n)}\label{eq:pair8}
	\end{bmatrix}.
\end{align}
Under the conditions of Case 1 above, we immediately obtain
\begin{align}
	|\mathbb G_{n,j}(\theta^\prime_n)+\mathbb G_{n,j+{R_1}}(\theta^\prime_n)|\stackrel{P_n}{\to}0.\label{eq:pair9}
\end{align}
Under the conditions in Case 2 above, $\frac{1}{\sqrt n}\sum_{i=1}^n (t_{j}(X_i,\theta^\prime_n)-E_{P_n}[t_{j}(X_i,\theta^\prime_n)]=o_{\cP}(1)$ due to the variance of this term being equal to $Var_{P_n}(t_{j}(X_i,\theta^\prime_n))\to 0$ and Chebyshev's inequality. Therefore, \eqref{eq:pair9} obtains again.
These results imply that $\HH_j+\HH_{j+R_1}=0, a.s.$ By Lemma \ref{lem:boot_cons}, $\{\mathbb G^b_n\}$ converges in law to the same limit as $\{\mathbb G_n\}$ for almost all sample paths $\{X_i\}_{i=1}^\infty$. This and \eqref{eq:pair9} then imply the second half of Claim 3.

To establish Claim 4, finiteness of $\pi_{1,j}$ and $\pi_{1,j+R_1}$ implies that 
\begin{align}
E_{P_n}\left( \frac{m_j(X,\theta^\prime_n)}{\sigma_{P_n,j}(\theta^\prime_n)} + \frac{m_{j+R_1}(X,\theta^\prime_n)}{\sigma_{P_n,j+R_1}(\theta^\prime_n)}\right) = O_\cP\left(\frac{\kappa_n}{\sqrt n}\right).
\end{align}
Define the $1 \times d$ vector
\begin{align}
q_n \equiv D_{P_n,j+R_1}(\theta^\prime_n)+D_{P_n,j}(\theta^\prime_n).
\end{align}
Suppose by contradiction that
\begin{align*}
\rho q_n \to \varsigma \neq 0,
\end{align*}
where $\Vert \varsigma \Vert$ might be infinite.
Write
\begin{align}
\tilde{r}_n = \frac{q_n^\prime}{\Vert q_n \Vert}.
\end{align}
Let
\begin{align}
r_n = \tilde{r}_n \rho  \kappa_n^2/\sqrt n.
\end{align}
Using a mean value expansion, where $\bar{\theta}_n$ and $\tilde{\theta}_n$ in the expressions below are two potentially different vectors that lie component-wise between $\theta^\prime_n$ and $\theta^\prime_n+r_n$, we obtain
\begin{align}
&~E_{P_n}\left( \frac{m_j(X,\theta^\prime_n+r_n)}{\sigma_{P_n,j}(\theta^\prime_n+r_n)} + \frac{m_{j+R_1}(X,\theta^\prime_n+r_n)}{\sigma_{P_n,j+R_1}(\theta^\prime_n+r_n)}\right) \notag \\
=&~E_{P_n}\left( \frac{m_j(X,\theta^\prime_n)}{\sigma_{P_n,j}(\theta^\prime_n)} + \frac{m_{j+R_1}(X,\theta^\prime_n)}{\sigma_{P_n,j+R_1}(\theta^\prime_n)}\right) +\bigl(D_{P_n,j}(\bar{\theta}_n)+D_{P_n,j+R_1}(\tilde{\theta}_n)\bigr)r_n \notag\\
 =&~ O_\cP(\frac{\kappa_n}{\sqrt n})+\left(D_{P_n,j}(\theta^\prime_n)+D_{P_n,j+R_1}(\theta^\prime_n)\right)r_n+\left(D_{P_n,j}(\bar{\theta}_n)-D_{P_n,j}(\theta^\prime_n)\right)r_n+\bigl(D_{P_n,j+R_1}(\tilde{\theta}_n)-D_{P_n,j+R_1}(\theta^\prime_n)\bigr)r_n \notag \\
 =&~O_\cP(\frac{\kappa_n}{\sqrt n})+ \frac{\rho \kappa_n^2}{\sqrt n} + O_\cP(\frac{\rho ^2\kappa_n^4}{n}).\label{eq:grad_conc}
\end{align}
It then follows that there exists $N \in \N$ such that for all $n \ge N$, the right hand side in \eqref{eq:grad_conc} is strictly greater than zero.

Next, observe that
\begin{align}
&~E_{P_n}\left( \frac{m_j(X,\theta^\prime_n+r_n)}{\sigma_{P_n,j}(\theta^\prime_n+r_n)} + \frac{m_{j+R_1}(X,\theta^\prime_n+r_n)}{\sigma_{P_n,j+R_1}(\theta^\prime_n+r_n)}\right) \notag\\
= &~E_{P_n}\left( \frac{m_j(X,\theta^\prime_n+r_n)}{\sigma_{P_n,j}(\theta^\prime_n+r_n)} + \frac{m_{j+R_1}(X,\theta^\prime_n+r_n)}{\sigma_{P_n,j}(\theta^\prime_n+r_n)}\right) - \left(\frac{\sigma_{P_n,j+R_1}(\theta^\prime_n+r_n)}{\sigma_{P_n,j}(\theta^\prime_n+r_n)}-1 \right) \frac{E_{P_n}(m_{j+R_1}(X,\theta^\prime_n+r_n))}{\sigma_{P_n,j+R_1}(\theta^\prime_n+r_n)}\notag\\
= &~E_{P_n}\left( \frac{m_j(X,\theta^\prime_n+r_n)}{\sigma_{P_n,j}(\theta^\prime_n+r_n)} + \frac{m_{j+R_1}(X,\theta^\prime_n+r_n)}{\sigma_{P_n,j}(\theta^\prime_n+r_n)}\right) - o_\cP(\frac{\rho \kappa_n^2}{\sqrt n}).\label{eq:grad_lhs_neg}
\end{align}
Here, the last step is established as follows.
First, using that $\sigma_{P_n,j}(\theta^\prime_n+r_n)$ is bounded away from zero for $n$ large enough by the continuity of $\sigma(\cdot)$ and Assumption \ref{as:correlation}-\ref{as:correlation_pair}, we have
\begin{align}
\frac{\sigma_{P_n,j+R_1}(\theta^\prime_n+r_n)}{\sigma_{P_n,j}(\theta^\prime_n+r_n)}-1=\frac{\sigma_{P_n,j+R_1}(\theta^\prime_n)}{\sigma_{P_n,j}(\theta^\prime_n)}-1 +o_\cP(1)=o_\cP(1),\label{eq:pair_prod1}
\end{align}
where we used Claim 1.
Second, using Assumption \ref{as:momP_KMS}, we have that
\begin{align}
\frac{E_{P_n}(m_{j+R_1}(X,\theta^\prime_n+r_n))}{\sigma_{P_n,j+R_1}(\theta^\prime_n+r_n)} = \frac{E_{P_n}(m_{j+R_1}(X,\theta^\prime_n))}{\sigma_{P_n,j+R_1}(\theta^\prime_n)}+D_{P_n,j+R_1}(\tilde{\theta}_n) r_n = O_\cP(\frac{\kappa_n}{\sqrt n})+O_\cP(\frac{\rho \kappa_n^2}{\sqrt n}).\label{eq:pair_prod2}
\end{align}
The product of \eqref{eq:pair_prod1} and \eqref{eq:pair_prod2} is therefore $o_\cP(\frac{\rho \kappa_n^2}{\sqrt n})$ and \eqref{eq:grad_lhs_neg} follows.

To conclude the argument, note that for $n$ large enough, $m_{j+R_1}(X,\theta^\prime_n+r_n) \le - m_j(X,\theta^\prime_n+r_n)$ $a.s.$ because for any $\theta_n \in \Theta_I(P_n)$ and $\theta^\prime_n \in \thetnprime$ for $n$ large enough, $\theta^\prime_n+r_n \in \Theta^\epsilon$ and Assumption \ref{as:correlation}-\ref{as:correlation_pair} applies. Therefore, there exists $N \in \N$ such that for all $n \ge N$, the left hand side in \eqref{eq:grad_conc} is strictly less than the right hand side, yielding a contradiction.
\end{proof}

Below, we let $\mathcal R_1=\{1,\cdots,R_1\}$ and $\mathcal R_2=\{R_1+1,\cdots,2R_1\}.$
\begin{lemma}\label{lem:eta_conv}
	Suppose Assumptions \ref{as:momP_AS}, \ref{as:GMS}, and \ref{as:bcs1} hold. For each $\theta\in\Theta$, let $\eta_{n,j}(\theta)=\sigma_{P,j}(\theta)/\hat\sigma_{n,j}(\theta)-1$. Then, (i) for each $j=1,\dots,J_1+J_2$
\begin{align}
	\inf_{P\in\mathcal P}P
	\Big(\sup_{\theta\in\Theta}|\eta_{n,j}(\theta)|
	\to 0\Big)=1.
\end{align}	
(ii)
For any $j=1,\dots,R_1$ 
let
\begin{align}
\hat \sigma_{n,j}^M(\theta)=\hat \sigma_{n,j+R_1}^M(\theta)\equiv\hat \mu_{n,j}(\theta)\hat\sigma_{n,j}(\theta)+(1-\hat \mu_{n,j}(\theta))\hat\sigma_{n,j+R_1}(\theta). \label{eq:def_hat_sigma_M}
\end{align} 
Let $(P_n,\theta_n)$ be a sequence such that $P_n\in\mathcal P$, $\theta_n\in\Theta$ for all $n$, and $\kappa_n^{-1}\sqrt{n}\gamma_{1,P_n,j}(\theta_n) \to \pi_{1j} \in \R_{[-\infty]}$. Let $\mathcal J^*$ be defined as in \eqref{eq:Jstar}.
Then, for any $\eta>0$, there exists $N\in\mathbb N$ such that
\begin{align}
	P_n\Big(\max_{j\in(\mathcal R_1\cup \mathcal R_2)\cap\mathcal J^*}\Big|\frac{\sigma_{P_n,j}(\theta_n)}{\hat\sigma^M_{n,j}(\theta_n)}-1\Big|>\eta\Big)<\eta
\end{align}
for all  $n\ge N$.
\end{lemma}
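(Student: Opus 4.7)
\textbf{Proof plan for Lemma \ref{lem:eta_conv}.} For part (i), the strategy is to reduce the claim to a uniform-in-$P$ law of large numbers for the studentized moment functions and their squares. Writing $\hat{\sigma}_{n,j}^2(\theta)/\sigma_{P,j}^2(\theta) = \frac{1}{n}\sum_{i=1}^n [m_j(X_i,\theta)/\sigma_{P,j}(\theta)]^2 - [\bar{m}_{n,j}(\theta)/\sigma_{P,j}(\theta)]^2$, I plan to show that both empirical averages converge to their expectations uniformly in $\theta \in \Theta$ and in $P \in \cP$. For the squared-moment class, Assumption \ref{as:momP_AS}(b)(iv) supplies a uniformly $1+\delta/2$-integrable envelope (square of $\sup_\theta |m_j/\sigma_{P,j}|$), and Assumption \ref{as:bcs1}(i)-(ii) guarantees measurability and asymptotic $\varrho_P$-equicontinuity of the studentized class. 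These jointly imply (via Theorems 2.8.1-2.8.4 of van der Vaart-Wellner, applied as in the derivation preceding Assumption \ref{as:Lipschitz_m_over_sigma} in Appendix~\ref{app:as:Lipschitz}) that the classes are uniformly Glivenko-Cantelli, which yields $\sup_{\theta \in \Theta} |\hat{\sigma}_{n,j}^2(\theta)/\sigma_{P,j}^2(\theta) - 1| \to 0$ uniformly in $P$. Since Assumption \ref{as:momP_AS}(b)(iii) guarantees $\sigma_{P,j}^2(\theta)$ is bounded away from $0$ uniformly on $\Theta$ (relative to $\hat{\sigma}_{n,j}^2$ on the event of interest), applying the continuous map $x \mapsto x^{-1/2}-1$ delivers part~(i).

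For part (ii), I combine part~(i) with Lemma~\ref{lem:pair}. Fix $j \in \cR_1 \cap \cJ^*$; under Assumption \ref{as:correlation}-\ref{as:correlation_pair} the paired index $j+R_1$ must also be in $\cJ^*$ whenever the merging substitution in \eqref{eq:paired_bind_R2L}-\eqref{eq:paired_bind_R4U} is invoked, so both $\pi_{1,j}$ and $\pi_{1,j+R_1}$ are finite (in fact equal to zero). Lemma~\ref{lem:pair}(1) then gives $\sigma_{P_n,j+R_1}(\theta_n)/\sigma_{P_n,j}(\theta_n) \to 1$. Combining with part~(i) applied along the subsequence $(P_n,\theta_n)$ yields $\hat{\sigma}_{n,j}(\theta_n)/\sigma_{P_n,j}(\theta_n) \to 1$ and $\hat{\sigma}_{n,j+R_1}(\theta_n)/\sigma_{P_n,j}(\theta_n) \to 1$ in $P_n$-probability.

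Now writing
\begin{align*}
\frac{\hat{\sigma}^M_{n,j}(\theta_n)}{\sigma_{P_n,j}(\theta_n)} = \hat{\mu}_{n,j}(\theta_n)\,\frac{\hat{\sigma}_{n,j}(\theta_n)}{\sigma_{P_n,j}(\theta_n)} + (1-\hat{\mu}_{n,j}(\theta_n))\,\frac{\hat{\sigma}_{n,j+R_1}(\theta_n)}{\sigma_{P_n,j}(\theta_n)},
\end{align*}
and noting that $\hat{\mu}_{n,j}(\theta_n) \in [0,1]$ is a deterministic (bounded) weight, a triangle-inequality bound gives
\begin{align*}
\left| \frac{\hat{\sigma}^M_{n,j}(\theta_n)}{\sigma_{P_n,j}(\theta_n)} - 1 \right| \le \left| \frac{\hat{\sigma}_{n,j}(\theta_n)}{\sigma_{P_n,j}(\theta_n)} - 1 \right| + \left| \frac{\hat{\sigma}_{n,j+R_1}(\theta_n)}{\sigma_{P_n,j}(\theta_n)} - 1 \right| = o_{\cP}(1),
\end{align*}
and inverting yields $\sigma_{P_n,j}(\theta_n)/\hat{\sigma}^M_{n,j}(\theta_n) \to 1$ in $P_n$-probability. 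The argument for $j \in \cR_2 \cap \cJ^*$ is identical since $\hat{\sigma}^M_{n,j+R_1} = \hat{\sigma}^M_{n,j}$ by definition \eqref{eq:def_hat_sigma_M}, and taking the maximum over the finite index set $(\cR_1 \cup \cR_2) \cap \cJ^*$ preserves the $o_{\cP}(1)$ rate.

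The main obstacle I anticipate is the uniform-in-$P$ aspect of part (i): one needs to verify carefully that the uniform Donsker/Glivenko-Cantelli results transfer from the single-$P$ to the uniform-in-$\cP$ setting via the uniform envelope integrability implied by Assumption \ref{as:momP_AS}(b)(iv). Once this uniform LLN is in hand, the rest of the argument is routine algebra combined with the invocation of Lemma~\ref{lem:pair}.
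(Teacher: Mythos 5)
Your part (i) follows essentially the same route as the paper (decompose $\hat\sigma_{n,j}^2/\sigma_{P,j}^2$, use the moment bound in Assumption \ref{as:momP_AS}(b)(iv) for envelopes and Assumption \ref{as:bcs1} for equicontinuity to obtain uniform Glivenko--Cantelli, then pass through $x\mapsto x^{-1/2}-1$). You are a bit terse about the almost-sure nature of the event $\{\sup_\theta|\eta_{n,j}(\theta)|\to0\}$ — the paper explicitly passes via $\sup_{m\ge n}$ and monotone continuity of probability — but that is a routine step.

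There is, however, a genuine gap in part (ii). You assert that if $j\in\cR_1\cap\cJ^*$ then $j+R_1\in\cJ^*$ as well, and you reduce the entire argument to the case where both paired indices are local-to-binding so that Lemma~\ref{lem:pair}(1) applies. That reduction is wrong. The lemma's conclusion is stated for all $j\in(\cR_1\cup\cR_2)\cap\cJ^*$, which allows $j\in\cR_1\cap\cJ^*$ with $j+R_1\notin\cJ^*$: take, e.g., $\pi_{1,j}=0$ while $E_{P_n}[t_j]$ is bounded away from zero, so that $\pi_{1,j+R_1}=-\infty$. In that case Lemma~\ref{lem:pair} does not apply (it requires both $\pi_{1,j}$ and $\pi_{1,j+R_1}$ finite), and your triangle-inequality step fails because $\hat\sigma_{n,j+R_1}(\theta_n)/\sigma_{P_n,j}(\theta_n)$ need not converge to $1$. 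The paper's proof handles this by showing $\hat\mu_{n,j}(\theta_n)\stackrel{P_n}{\to}1$ when $\pi_{1,j}\in(-\infty,0]$ and $\pi_{1,j+R_1}=-\infty$ (and symmetrically $\hat\mu_{n,j+R_1}\to1$), and by bounding $\hat\sigma_{n,j+R_1}(\theta_n)=O_{P_n}(\hat\sigma_{n,j}(\theta_n))$ via the relation $m_{j+R_1}=-m_j-t_j$, the boundedness of $t_j$, and Cauchy--Schwarz; this shows the vanishing-weight term $(1-\hat\mu_{n,j})\hat\sigma_{n,j+R_1}$ is negligible relative to $\hat\sigma_{n,j}$. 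Your proposal never establishes the limiting behavior of $\hat\mu_{n,j}(\theta_n)$ nor the $O_{P_n}$ bound on $\hat\sigma_{n,j+R_1}$, so it proves only the ``both binding'' subcase. (Also, $\hat\mu_{n,j}(\theta_n)$ is data-dependent, not deterministic; this does not break your argument but is worth fixing.)
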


\begin{proof}
We first show that, for any $\epsilon>0$ and for any $j=1,\dots,J_1+J_2$,
\begin{align}
\inf_{P\in\mathcal P}P
\Big(\sup_{m\ge n}\sup_{\theta\in\Theta}\Big|\frac{\hat\sigma_{n,j}(\theta)}{\sigma_{P,j}(\theta)}-1\Big|\le \epsilon\Big)\to 1.	\label{eq:ec0}
\end{align}
For this, define the following sets:
	\begin{align}
		\dsM_j&\equiv\{m_j(\cdot,\theta)/\sigma_{P,j}(\theta):\theta\in\Theta,P\in\mathcal P\}\label{eq:ec1}\\
		\gcM_j&\equiv\{(m_j(\cdot,\theta)/\sigma_{P,j}(\theta))^2:\theta\in\Theta,P\in\mathcal P\}.\label{eq:ec2}
	\end{align}
By Assumptions \ref{as:momP_AS}-(a), \ref{as:momP_AS} (iv), \ref{as:bcs1} (i), (iii),  and arguing as in the proof of Lemma D.2.2 (and D.2.1) in \cite{BCS14_misp}, it follows that $\gcM_j$ and $\dsM_j$ are Glivenko-Cantelli (GC) classes uniformly in $P\in\mathcal P$ \citep[in the sense of][page 167]{Vaart_Wellner2000aBK}.   

Therefore, for any $\epsilon>0$, 
\begin{align}
\inf_{P\in\mathcal P}&P\Big(	\sup_{m\ge n}\sup_{\theta\in\Theta}\Big|\frac{n^{-1}\sum_{i=1}^n m_{j}(X_i,\theta)^2}{\sigma^2_{P,j}(\theta)}-\frac{E_P[m_j(X,\theta)^2]}{\sigma^2_{P,j}(\theta)}\Big|\le \epsilon\Big)\to 1\label{eq:ec3}\\	
\inf_{P\in\mathcal P}&P\Big(\sup_{m\ge n}\sup_{\theta\in\Theta}\Big|\frac{\bar m_{n,j}(\theta)-E_P[m_j(X,\theta)]}{\sigma_{P,j}(\theta)}\Big|\le \epsilon\Big)\to 1.\label{eq:ec4}
\end{align}
Note that, by  Assumption \ref{as:momP_AS} (iv), $|E_P[m_j(X,\theta)]/\sigma_{P,j}(\theta)|\le M$  for some constant $M>0$ that does not depend on $P$ and $(x^2-y^2)\le |x+y||x-y|\le 2M|x-y|$ for all $x,y\in [-M,M]$. By \eqref{eq:ec4},   for any $\epsilon>0$, it follows that
\begin{align}
	\inf_{P\in\mathcal P}P\Big(\sup_{m\ge n}\sup_{\theta\in\Theta}\Big|\frac{\bar m_{n,j}(\theta)^2-E_P[m_j(X,\theta)]^2}{\sigma^2_{P,j}(\theta)}\Big|\le \epsilon\Big)\to 1.\label{eq:ec5}
\end{align}

By the uniform continuity of $x\mapsto \sqrt x$ on $\mathbb R_+$, for any $\epsilon>0$, there is a constant $\eta>0$ such that
\begin{align}
\Big|\frac{\hat\sigma^2_{n,j}(\theta)}{\sigma^2_{P,j}(\theta)}-1\Big|\le \eta~\Rightarrow \Big|\frac{\hat\sigma_{n,j}(\theta)}{\sigma_{P,j}(\theta)}-1\Big|\le \epsilon.\label{eq:ec6}
\end{align}	
By the definition of $\sigma^2_{P,j}(\theta)$ and the triangle inequality,
\begin{align}
\Big|\frac{\hat\sigma^2_{n,j}(\theta)}{\sigma^2_{P,j}(\theta)}-1\Big|
\le \Big|\frac{n^{-1}\sum_{i=1}^n m(X_i,\theta)^2-E[m_j(X_i,\theta)^2]}{\sigma^2_{P,j}(\theta)}	\Big|+\Big|\frac{\bar m_{n,j}(\theta)^2-E[m_j(X_i,\theta)]^2}{\sigma^2_{P,j}(\theta)}\Big|.\label{eq:ec7}
\end{align}
By \eqref{eq:ec6}-\eqref{eq:ec7}, bounding each of the terms on the right hand side of \eqref{eq:ec7} by $\eta/2$ implies $|\hat\sigma_{n,j}(\theta)/\sigma_{P,j}(\theta)-1|\le \epsilon$.
This, together with \eqref{eq:ec3} and \eqref{eq:ec5}, ensures that, for any $\epsilon>0$, \eqref{eq:ec0} holds.

Note that $|\hat\sigma_{n,j}(\theta)/\sigma_{P,j}(\theta)-1|\le \epsilon$ implies $\hat\sigma_{n,j}(\theta)>0$, and argue as in the proof of Lemma D.2.4 in \cite{BCS14_misp}  to conclude that
\begin{align}
	\inf_{P\in\mathcal P}P
	\Big(\sup_{m\ge n}\sup_{\theta\in\Theta}\Big|\frac{\sigma_{P,j}(\theta)}{\hat\sigma_{n,j}(\theta)}-1\Big|\le \epsilon\Big)\to 1.\label{eq:ec8}
\end{align}
Finally, recall that $\eta_{n,j}(\theta)=\sigma_{P,j}(\theta)/\hat\sigma_{n,j}(\theta)-1$ and note that for any $\epsilon>0$, 
\begin{align}
1&=\lim_{n\to\infty}\inf_{P\in \mathcal P}P
\Big(\sup_{m\ge n}\sup_{\theta\in\Theta}|\eta_{n,j}(\theta)|\le \epsilon\Big)\notag\\
&\le 	\inf_{P\in \mathcal P}\lim_{n\to\infty}P
\Big(\bigcap_{m\ge n}\big\{\sup_{\theta\in\Theta}|\eta_{n,j}(\theta)|\le \epsilon\big\}\Big)\notag\\
&=\inf_{P\in \mathcal P}P
\Big(\lim_{n\to\infty}\bigcap_{m\ge n}\{\sup_{\theta\in\Theta}|\eta_{n,j}(\theta)|\le \epsilon\big\}\Big)\notag\\
&=\inf_{P\in\mathcal P}P
\Big(\sup_{\theta\in\Theta}|\eta_{n,j}(\theta)|\le \epsilon, \text{ for almost all } n\Big),\label{eq:ec9}
\end{align}
where the second equality is due to the continuity of probability with respect to monotone sequences.
 Therefore, the first conclusion of the lemma follows.
 
 \vspace{0.1in}
 \noindent

 (ii) We first give the limit of $\hat \mu_{n,j}(\theta_n)$. 
 Recall the definitions of $\hat\mu_{n,j+R_1}$ and $\hat\mu_{n,j}(\theta_n )$ in \eqref{eq:mu_hat_j}-\eqref{eq:mu_hat_j_pl_J11}.

Note that 
\begin{multline}
\sup_{\theta'_n\in\theta_n+\rho /\sqrt nB^d}\Big|\kappa_n^{-1}\frac{\sqrt n \bar{m}_{n,j}(\theta_n^\prime)}{\hat{\sigma}_{n,j}(\theta_n^\prime)}-\kappa_n^{-1}\frac{\sqrt nE_{P_n}[m_j(X_i,\theta_n^\prime)]}{\sigma_{P_n,j}(\theta_n^\prime)}\Big|\\
\le \sup_{\theta_n'\in\theta_n+\rho /\sqrt nB^d} \Big|\kappa_n^{-1}\frac{\sqrt n (\bar{m}_{n,j}(\theta_n^\prime) - E_{P_n}[m_j(X_i,\theta_n^\prime)])}{\sigma_{n,j}(\theta_n^\prime)} (1+\eta_{n,j}(\theta_n^\prime))
+\kappa_n^{-1}\frac{\sqrt nE_{P_n}[m_j(X_i,\theta_n^\prime)]}{\sigma_{P_n,j}(\theta_n^\prime)}\eta_{n,j}(\theta_n^\prime) \Big|\\
\le \sup_{\theta_n'\in\theta_n+\rho /\sqrt nB^d}|\kappa_n^{-1}\mathbb G_n(\theta_n')(1+\eta_{n,j}(\theta_n^\prime))| + \Big| \frac{\sqrt nE_{P_n}[m_j(X_i,\theta_n^\prime)]}{\kappa_n\sigma_{P_n,j}(\theta_n^\prime)}\eta_{n,j}(\theta_n^\prime) \Big| =o_{\mathcal P}(1) ,
\end{multline}
where the last equality follows from $\sup_{\theta\in\Theta}|\mathbb G_{n}(\theta)|=O_{\mathcal P}(1)$ due to asymptotic tightness of $\{\mathbb G_{n}\}$ (uniformly in $P$) by Lemma D.1 in \cite{BCS14_misp}, Theorem 3.6.1 and Lemma 1.3.8 in \cite{Vaart_Wellner2000aBK},  and $\sup_{\theta\in\Theta}|\eta_{n,j}(\theta)|=o_{\mathcal P}(1)$ by part (i) of this Lemma.
\color{black}
Hence, 
\begin{align}
	\hat\mu_{n,j}(\theta_n) \stackrel{P_n}{\to}1-\min\Big\{\max(0,\frac{\pi_{1,j}}{\pi_{1,j+R_1}+\pi_{1,j}}),1\Big\},
\end{align}
unless $\pi_{1,j+R_1}+\pi_{1,j}=0$ (this case is considered later). 
This implies that if $\pi_{1,j}\in(-\infty,0]$ and $\pi_{1,j+R_1}=-\infty$, one has
 \begin{align}
\hat\mu_{n,j}(\theta_n) \stackrel{P_n}{\to} 1.\label{eq:mulim1}
 \end{align}
Similarly, if $\pi_{1,j}=-\infty$ and $\pi_{1,j+R_1}\in(-\infty,0]$, one has
 \begin{align}
\hat\mu_{n,j+R_1}(\theta_n) \stackrel{P_n}{\to} 1.\label{eq:mulim2}
 \end{align}
 Now, one may write
 \begin{align}
 \frac{\sigma_{P_n,j}(\theta_n)}{\hat\sigma^M_{n,j}(\theta_n)}-1=\frac{\sigma_{P_n,j}(\theta_n)}{\hat\sigma_{n,j}(\theta_n)}\Big(\frac{\hat\sigma_{n,j}(\theta_n)}{\hat\sigma^M_{n,j}(\theta_n)}-1\Big)+\Big(\frac{\sigma_{P_n,j}(\theta_n)}{\hat\sigma_{n,j}(\theta_n)}-1\Big)=O_{P_n}(1)\Big(\frac{\hat\sigma_{n,j}(\theta_n)}{\hat\sigma^M_{n,j}(\theta_n)}-1\Big)+o_{P_n}(1),	\label{eq:sigconv1}
 \end{align}
 where the second equality follows from the first conclusion of the lemma. Hence, for the second conclusion of the lemma, it suffices to show $\hat\sigma_{n,j}(\theta_n)/\hat\sigma^M_{n,j}(\theta_n)-1=o_\cP(1).$ For this, we consider three cases. 
 
Suppose first $j\in\mathcal R_1\cap \mathcal J^*$ and $j+R_1\notin \mathcal J^*$. Then, $\pi^*_{1,j}=0$ and $\pi^*_{1,j+R_1}=-\infty$. Then, 
\begin{align}
	\hat\sigma^M_{n,j}(\theta_n)&=\hat \mu_{n,j}(\theta_n)\hat\sigma_{n,j}(\theta_n)+(1-\hat \mu_{n,j}(\theta_n))\hat\sigma_{n,j+R_1}(\theta_n)\\
	&=(1+o_{ P_n}(1))\hat\sigma_{n,j}(\theta_n)+(1-\hat \mu_{n,j}(\theta_n))O_{P_n}(\hat\sigma_{n,j}(\theta_n)),\label{eq:sigconv2}
\end{align}
where the second equality follows from \eqref{eq:mulim1} and the fact that
\begin{multline}
	\hat\sigma_{n,j+R_1}(\theta_n)=\Big(\hat\sigma_{n,j}^2(\theta_n)+2\widehat{Cov}_n(m_j(X_i,\theta),t_j(X_i,\theta))+\widehat{Var}_{n}(t_j(X_i,\theta))\Big)^{1/2}\\
=\Big(\hat\sigma_{n,j}^2(\theta_n)+O_{P_n}(\hat\sigma_{n,j}(\theta_n))+O_{P_n}(1)\Big)^{1/2}=O_{P_n}(\hat\sigma_{n,j}(\theta_n)),
\end{multline}
where the second equality follows from,  $Var_{P_n}(t_j(X_i,\theta))$ being bounded by Assumption \ref{as:correlation}-(II) and
\begin{align}
	&\widehat{Var}_{n}(t_j(X_i,\theta))=Var_{P_n}(t_j(X_i,\theta))+o_{P_n}(1)\\
	&\widehat{Cov}_n(m_j(X_i,\theta),t_j(X_i,\theta))\le \hat \sigma_{n,j}(\theta_n)\widehat{Var}_{n}(t_j(X_i,\theta))^{1/2},
\end{align}
where the last inequality is due to the Cauchy-Schwarz inequality.

 Therefore,
\begin{align}
\frac{\hat\sigma_{n,j}(\theta_n)}{\hat\sigma^M_{n,j}(\theta_n)}-1=\frac{\hat\sigma_{n,j}(\theta_n)-\hat\sigma^M_{n,j}(\theta_n)}{\hat \sigma^M_{n,j}(\theta_n)}=
\frac{(1-\hat \mu_{n,j}(\theta_n))O_{P_n}(\hat\sigma_{n,j}(\theta_n))}{(1+o_{ P_n}(1))\hat \sigma_{n,j}(\theta_n)+(1-\hat \mu_{n,j}(\theta_n))O_{P_n}(\hat\sigma_{n,j}(\theta_n))}
=o_{P_n}(1),	\label{eq:sigconv3}
\end{align}
where we used $\hat\sigma_{n,j}^{-1}(\theta_n)=O_{P_n}(1)$ by equation \eqref{eq:33primeiii} and part (i) of the lemma. By \eqref{eq:sigconv1} and \eqref{eq:sigconv3}, $\sigma_{P_n,j}(\theta_n)/\hat\sigma^M_{n,j}(\theta_n)-1=o_{P_n}(1)$.
Using a similar argument, the same conclusion follows when $j\in\mathcal R_1,j\notin \mathcal J^*$, but $j+R_1\in\mathcal R_2\cap \mathcal J^*.$  

Now consider the case $j\in\mathcal R_1\cap\mathcal J^*$ and $j+R_1\in\mathcal R_2\cap\mathcal J^*.$ Then, $\pi^*_{1,j}=0$ and $\pi^*_{1,j+R_1}=0.$ In this case, $\hat \mu_{n,j}(\theta_n)\in[0,1]$ for all $n$ and by Lemma \ref{lem:pair} (1),
\begin{align}
	\Big|\frac{\sigma_{P_{n},j}(\theta_n)}{\sigma_{P_{n},{j+R_1}}(\theta_n)} - 1\Big|=o_{P_n}(1),~ \text{ for } j=1,\cdots,R_1,\label{eq:sigconv4}
\end{align}
and therefore,
\begin{align}
\frac{\sigma_{P_n,j}(\theta_n)}{\hat\sigma^M_{n,j}(\theta_n)}-1&=\frac{\sigma_{P_n,j}(\theta_n)-\hat\sigma^M_{n,j}(\theta_n)}{\hat\sigma^M_{n,j}(\theta_n)}\notag\\
&=\frac{[\hat \mu_{n,j}(\theta_n)+(1-\hat \mu_{n,j}(\theta_n))]\sigma_{P_n,j}(\theta_n)-[\hat \mu_{n,j}(\theta_n)\hat\sigma_{n,j}(\theta_n)+(1-\hat \mu_{n,j}(\theta_n))\hat\sigma_{n,j+R_1}(\theta_n)]}{\hat\sigma^M_{n,j}(\theta_n)}\notag\\
&=\frac{\hat \mu_{n,j}(\theta_n)[\sigma_{P_n,j}(\theta_n)-\hat\sigma_{n,j}(\theta_n)]}{\hat\sigma^M_{n,j}(\theta_n)}+\frac{(1-\hat \mu_{n,j}(\theta_n))[\sigma_{P_n,j+R_1}(\theta_n)-\hat\sigma_{n,j+R_1}(\theta_n)+o_{P_n}(1)]}{\hat\sigma^M_{n,j}(\theta_n)},\label{eq:sigconv5}
\end{align}
where the second equality follows from the definition of $\hat \sigma^M_{n,j}(\theta_n)$, and the third equality follows from \eqref{eq:sigconv4} and $\sigma_{P_n,j+R_1}$ bounded away from 0 due to \eqref{eq:33primeiii}. Note that
\begin{align}
	\frac{\hat \mu_{n,j}(\theta_n)[\sigma_{P_n,j}(\theta_n)-\hat\sigma_{n,j}(\theta_n)]}{\hat\sigma^M_{n,j}(\theta_n)}=\hat \mu_{n,j}(\theta_n)\frac{\hat\sigma_{n,j}(\theta_n)}{\hat\sigma^M_{n,j}(\theta_n)}\Big(\frac{\sigma_{P_n,j}(\theta_n)}{\hat\sigma_{n,j}(\theta_n)}-1\Big)=o_{P_n}(1),\label{eq:sigconv6}
\end{align}
where the second equality follows from the first conclusion of the lemma. Similarly,
\begin{multline}
	\frac{(1-\hat \mu_{n,j}(\theta_n))[\sigma_{P_n,j+R_1}(\theta_n)-\hat\sigma_{n,j+R_1}(\theta_n)+o_{P_n}(1)]}{\hat\sigma^M_{n,j}(\theta_n)}
	\\
	=	(1-\hat \mu_{n,j}(\theta_n))\frac{\hat\sigma_{n,j+R_1}(\theta_n)}{\hat\sigma^M_{n,j}(\theta_n)}\Big(\frac{\sigma_{P_n,j+R_1}(\theta_n)}{\hat\sigma_{n,j+R_1}(\theta_n)}-1+o_{P_n}(1)\Big)=o_{P_n}(1).\label{eq:sigconv7}
\end{multline}
By \eqref{eq:sigconv5}-\eqref{eq:sigconv7}, it follows that $\sigma_{P_n,j}(\theta_n)/\hat\sigma^M_{n,j}(\theta_n)-1=o_{P_n}(1)$. Therefore, the second conclusion holds for all subcases.
 \end{proof}
 
\subsection{Lemmas Used to Prove Theorem \ref{cor:eam_conv}}\label{sec:Lemmas_for_EAM}
Let $\{X_i^b\}_{i=1}^n$ denote a bootstrap sample drawn randomly from the empirical distribution.
Define
\begin{align}
	\mathfrak G^b_{n,j}(\theta)&\equiv \frac{1}{\sqrt n}\sum_{i=1}^n \left( m_j(X_i^b,\theta)-\bar m_n(\theta) \right)/\sigma_{P,j}(\theta)\notag\\
	&=\frac{1}{\sqrt n}\sum_{i=1}^n(M_{n,i}-1)m_j(X_i,\theta)/\sigma_{P,j}(\theta),
\end{align}
where $\{M_{n,i}\}_{i=1}^n$ denotes the multinomial weights on the original sample, and we let
 $P^*_n$ denote the conditional distribution of $\{M_{n,i}\}_{i=1}^n$ given the sample path $\{X_i\}_{i=1}^\infty$ (see Appendix \ref{app:asrep} for details on the construction of the bootstrapped empirical process).

\begin{lemma}\label{lem:se_rate}
	(i) Let $\mathcal M_P\equiv\{f:\mathcal X\to\mathbb R:f(\cdot)=\sigma_{P,j}(\theta)^{-1}m_j(\cdot,\theta),\theta\in\Theta,j=1,\cdots,J\}$ and let $F$ be its envelope. Suppose that (i) there exist constants $K,v>0$ that do not depend on $P$ such that
	\begin{align}
	\sup_Q N(\epsilon\|F\|_{L^2_Q},\mathcal M_P,L^2_Q)\le K\epsilon^{-v}	,~0<\epsilon<1,\label{eq:as_euclidean}
	\end{align}
	where the supremum is taken over all discrete distributions; (ii) There exists a positive constant $\gamma>0$ such that 
	\begin{align}
	\|(\theta_1,\tilde\theta_1)-(\theta_2,\tilde\theta_2)\|\le\delta~~\Rightarrow~~\sup_{P\in\mathcal P}\|Q_P(\theta_1,\tilde\theta_1)-Q_P(\theta_2,\tilde\theta_2)\|\le M\delta^\gamma.\label{eq:correl_holder}
	\end{align}
	Let $\delta_n$ be a positive sequence tending to 0 and let $\epsilon_n$ be a positive sequence such that $\epsilon_n/|\delta_n^\gamma\ln\delta_n|\to\infty$ as $n\to\infty$.
		Then, \begin{align}
\sup_{P\in\mathcal P}P\left(\sup_{\|\theta-\theta'\|\le \delta_n}\|\mathbb G_n(\theta)-\mathbb G_n(\theta'))\|>\epsilon_n \right)=o(1).\label{eq:sr1}
\end{align}
Further,
\begin{align}
\lim_{n\to\infty}P^*_n \left(\sup_{\|\theta-\theta'\|\le \delta_n}\|\mathfrak G^b_{n}(\theta)-\mathfrak G^b_{n}(\theta'))\|>\epsilon_n|\{X_i\}_{i=1}^\infty \right)=0.	
\end{align}
for almost all sample paths $\{X_i\}_{i=1}^\infty$ uniformly in $P\in \mathcal P$.
\end{lemma}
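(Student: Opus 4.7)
The plan is a standard uniform-in-$P$ stochastic equicontinuity argument: (\ref{eq:as_euclidean}) makes $\cM_P$ VC/Euclidean-type uniformly in $P\in\cP$, while (\ref{eq:correl_holder}) controls the intrinsic $L^2(P)$ semimetric $\varrho_P$ by the Euclidean one. First I would deduce from (\ref{eq:correl_holder}), noting that $\varrho_P^2$ is a linear combination of diagonal entries of $Q_P$, that $\varrho_P(\theta,\theta')\le C\|\theta-\theta'\|^{\gamma/2}$ uniformly over $P$, so any Euclidean $\delta_n$-ball sits inside a $\varrho_P$-ball of radius $O(\delta_n^{\gamma/2})$. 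Next I would apply a uniform-in-$P$ maximal inequality (e.g., Theorem 2.14.2 of \cite{Vaart_Wellner2000aBK}) to the symmetrized class of differences, with uniform entropy integral bounded by $J(\eta,\cM_P)\le C\eta\sqrt{v|\log\eta|}$ coming from (\ref{eq:as_euclidean}) and the uniform envelope furnished by Assumption \ref{as:momP_AS}(iv). This yields
\[
\sup_{P\in\cP}E_P\!\left[\sup_{\|\theta-\theta'\|\le\delta_n}\|\G_n(\theta)-\G_n(\theta')\|\right] \;=\; O\!\left(\sqrt{\delta_n^\gamma|\log\delta_n|}\right).
\]
A Talagrand--Bousquet concentration inequality (see \cite{boucheron2013concentration}) upgrades this expectation bound to a tail bound with sub-Gaussian scale of order $\delta_n^{\gamma/2}$ about the mean, from which the hypothesis on $\epsilon_n$ delivers (\ref{eq:sr1}).

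For the bootstrap statement, I would condition on the sample path and apply the multiplier-bootstrap analog of the same maximal inequality (e.g., via bootstrap symmetrization and Lemma 2.9.1 of \cite{Vaart_Wellner2000aBK}), giving, a.s.\ in $\{X_i\}_{i=1}^\infty$,
\[
E^*_n\!\left[\sup_{\|\theta-\theta'\|\le\delta_n}\|\mathfrak G^b_n(\theta)-\mathfrak G^b_n(\theta')\|\,\Big|\,\{X_i\}_{i=1}^\infty\right]\;\le\;C\,E_P\!\left[\sup_{\|\theta-\theta'\|\le\delta_n}\|\G_n(\theta)-\G_n(\theta')\|\right]+o(1),
\]
where the $o(1)$ term is controlled through the uniform Glivenko--Cantelli property of $\cM_P$ already used in Lemma \ref{lem:eta_conv}. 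The conditional analog of the concentration step then yields the bootstrap tail bound almost surely along the sample path, uniformly in $P\in\cP$.

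The main obstacle will be carrying the uniformity in $P\in\cP$ through every step: the envelope, the uniform entropy integral, and the constants in both the maximal and concentration inequalities must all be independent of $P$. Assumption \ref{as:momP_AS}(iv) provides an envelope $F$ with a uniform $(2+\delta)$-moment bound, (\ref{eq:as_euclidean}) controls entropy uniformly, and the standard proofs of Theorem 2.14.2 and of Bousquet's inequality produce constants depending only on $v$ and envelope moments, so this is tractable with some care. A secondary subtlety is rate matching: the $L^1$ bound from the entropy integral is of order $\sqrt{\delta_n^\gamma|\log\delta_n|}$, which exceeds $\delta_n^\gamma|\log\delta_n|$; thus a bare Markov bound is insufficient and the concentration step is exactly what makes the stated hypothesis $\epsilon_n/|\delta_n^\gamma\ln\delta_n|\to\infty$ (together with the boundedness of $\epsilon_n$ in applications) suffice.
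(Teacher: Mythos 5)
Your proposal diverges from the paper's proof in one structural and one quantitative respect, and the quantitative one creates a gap in your own argument.

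\textbf{Structure.} The paper's proof is a direct symmetrization-plus-Markov argument: Markov's inequality, Lemma 2.3.1 of \cite{Vaart_Wellner2000aBK} (symmetrization), Hoeffding's inequality to establish sub-Gaussianity of the Rademacher process under $L^2_{\hat P_n}$, the maximal inequality (Corollary 2.2.8), and then a deliberate crude bound $\sqrt{-\ln\epsilon}\le -\ln\epsilon$ inside the entropy integral, yielding $E[Z_n(\tilde\delta_n)]\lesssim \tilde\delta_n-\tilde\delta_n\ln\tilde\delta_n$ and the tail bound by Markov alone. No concentration inequality appears. The bootstrap statement is handled by the same symmetrization chain conditional on the sample, again closing with Markov. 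Your proposal adds a Talagrand/Bousquet concentration step after the entropy-integral step, which the paper does not need.

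\textbf{The quantitative point, and why your fix does not close the gap.} Your remark that the sharp entropy integral gives $E[Z_n]=O(\delta_n^{\gamma/2}\sqrt{|\log\delta_n|})$ (what you wrote as $O(\sqrt{\delta_n^\gamma|\log\delta_n|})$) rests on the relation $\varrho_P(\theta,\theta')\lesssim \|\theta-\theta'\|^{\gamma/2}$, which is indeed all that follows from (\ref{eq:correl_holder}), since $\varrho_P^2$ is linear in increments of $Q_P$. The paper, however, writes $\tilde\delta_n=O(\delta_n^\gamma)$ ``by assumption,'' i.e.\ it is effectively using the stronger Lipschitz-type relation $\varrho_P\lesssim\|\cdot\|^\gamma$ (which in its application is justified directly from Assumption \ref{as:Lipschitz_m_over_sigma}(i), not from (\ref{eq:correl_holder}) alone). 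Under $\tilde\delta_n=O(\delta_n^\gamma)$ the crude expectation bound $O(\delta_n^\gamma|\ln\delta_n|)$ matches the hypothesis $\epsilon_n/|\delta_n^\gamma\ln\delta_n|\to\infty$ exactly, and Markov suffices. Under your reading $\tilde\delta_n=O(\delta_n^{\gamma/2})$, your observation that Markov fails is correct --- but so is the concentration step. Bousquet's inequality controls the tail $P(Z_n>E[Z_n]+t)$; it cannot produce a small probability for the event $\{Z_n>\epsilon_n\}$ when $\epsilon_n<E[Z_n]$, and the hypothesis $\epsilon_n\gg\delta_n^\gamma|\ln\delta_n|$ does not imply $\epsilon_n\gtrsim\delta_n^{\gamma/2}\sqrt{|\ln\delta_n|}$ (since $\delta_n^\gamma\ll\delta_n^{\gamma/2}$). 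So the version of the hypothesis you retain is too weak for your argument as well, and the claim that concentration ``is exactly what makes the stated hypothesis suffice'' is not correct. You need to either invoke the same strengthened $\varrho_P$-to-Euclidean comparison the paper uses (after which Markov already suffices and concentration is superfluous), or change the rate hypothesis on $\epsilon_n$.
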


\begin{proof}
For the first conclusion of the lemma, it suffices to show that there is a sequence $\{\epsilon_n\}$ such that, uniformly in $P$:
\begin{align}
P\left(\sup_{\|\theta-\theta'\|\le \delta_n}\max_{j=1,\cdots,J}|\mathbb G_{n,j}(\theta)-\mathbb G_{n,j}(\theta')|>\epsilon_n \right)=o(1).
\end{align}
For this purpose, we mostly mimic the argument required to show the stochastic equicontinuity of empirical processes \citep[see e.g.][Ch.2.5]{Vaart_Wellner2000aBK}. 
Before doing so, note that, arguing as in the proof of Lemma D.1 (Part 1) in \cite{BCS14_misp}, one has
\begin{align}
\|\theta-\theta'\|\le \delta_n~~\Rightarrow~~  \varrho_P(\theta,\theta')\le \tilde\delta_n,	\label{eq:sr2}
\end{align}
where $\tilde\delta_n=O(\delta_n^\gamma)$ by assumption.
Define
\begin{align}
\mathcal M_{P,\tilde \delta_n}=\{\sigma_{P,j}(\theta)^{-1}m_j(\cdot,\theta)-\sigma_{P,j}(\theta')^{-1}m_j(\cdot,\theta')| \theta,\theta'\in\Theta,\varrho_P(\theta,\tilde\theta)<\tilde\delta_n,j=1,\cdots,J\}.	\label{eq:sr3}
\end{align}
Define $Z_n(\tilde\delta_n)\equiv \sup_{f\in\mathcal M_{\tilde\delta_n}}|\sqrt n(\mathbb P_n-P)f|.$ Then, by \eqref{eq:sr2}, one has
\begin{align}
P\Biggl(\sup_{\|\theta-\theta'\|\le \delta_n}\max_{j=1,\cdots,J}|\mathbb G_{n,j}(\theta)-\mathbb G_{n,j}(\theta'))|>\epsilon_n)\le P(Z_n(\tilde\delta_n)>\epsilon_n \Biggr).\label{eq:sr4}	
\end{align}
From here, we deal with the supremum of  empirical processes though symmetrization and an application of a maximal inequality.
By Markov's inequality and Lemma 2.3.1 (symmetrization lemma) in \cite{Vaart_Wellner2000aBK}, one has
\begin{align}
P(Z_n(\tilde\delta_n)>\epsilon_n)\le \frac{2}{\epsilon_n}E_{P\times P^W}\left[\sup_{f\in\mathcal M_{P,\tilde\delta_n}}\Bigl\vert\frac{1}{\sqrt n}\sum_{i=1}^nW_i f(X_i)\Bigr\vert\right],\label{eq:sr5}
\end{align}
where $\{W_i\}_{i=1}^n$ are i.i.d. Rademacher random variables independent of $\{X_i\}_{i=1}^\infty$ whose law is denoted by $P^W$. Now, fix the sample path $\{X_i\}_{i=1}^n$, and let $\hat P_n$ be the empirical distribution. By Hoeffding's inequality, the stochastic process $f\mapsto\{n^{-1/2}\sum_{i=1}^nW_if(X_i)\}$ is sub-Gaussian for the $L^2_{\hat P_n}$ seminorm $\|f\|_{L^2_{\hat P_n}}=(n^{-1}\sum_{i=1}^n f(X_i)^2)^{1/2}.$ By the maximal inequality (Corollary 2.2.8) and arguing as in the proof of Theorem 2.5.2 in in \cite{Vaart_Wellner2000aBK}, one then has
\begin{align}
	E_{P^W}\left[\sup_{f\in\mathcal M_{\tilde\delta_n}}\Bigl\vert\frac{1}{\sqrt n}\sum_{i=1}^nW_i f(X_i)\Bigr\vert \right]&\le K\int_0^{\tilde\delta_n}\sqrt{\ln N(\epsilon,\mathcal M_{P,\tilde\delta_n},L^2_{\hat P_n})}d\epsilon\notag\\
	&\le K\int_0^{\tilde\delta_n/\|F\|_{L^2_Q}}\sup_Q\sqrt{\ln N(\epsilon \|F\|_{L^2_Q},\mathcal M_P,L^2_Q)}d\epsilon\notag\\
	&\le K'\int_0^{\tilde\delta_n/\|F\|_{L^2_Q}}\sqrt{-v\ln\epsilon}d\epsilon,\label{eq:sr6}
\end{align}
for some $K'>0$, 
where the last inequality follows from \eqref{eq:as_euclidean}.
Note that $\sqrt{-\ln \epsilon}\le -\ln \epsilon$ for $\epsilon\le \tilde\delta_n/\|F\|_{L^2_Q}$ with $n$ sufficiently large. Hence, 
\begin{align}
E_{P^W}\left[\sup_{f\in\mathcal M_{\tilde\delta_n}}\Bigl\vert\frac{1}{\sqrt n}\sum_{i=1}^nW_i f(X_i)\Bigr\vert\right]&\le K'v^{1/2}\int_0^{\tilde\delta_n/\|F\|_{L^2_Q}} (-\ln\epsilon) d\epsilon 	=K'v^{1/2}(\tilde\delta_n-\tilde\delta_n\ln(\tilde\delta_n)).\label{eq:sr7}
\end{align}
By \eqref{eq:sr5} and taking expectations with respect to $P$ in \eqref{eq:sr7}, it follows that
\begin{align}
P(Z_n(\tilde\delta_n)>\epsilon_n)\le 2K'v^{1/2}(\tilde\delta_n-\tilde\delta_n\ln(\tilde\delta_n))/\epsilon_n=O(\delta_n^\gamma/\epsilon_n)+O(|\delta_n^\gamma\ln(\delta_n)|/\epsilon_n)=o(1),	\label{eq:sr8}
\end{align}
where the last equality follows from the rate condition on $\epsilon_n$. By \eqref{eq:sr4} and \eqref{eq:sr8}, conclude that the first claim of the lemma holds.

For the second claim, define $Z^*_n(\tilde\delta_n)\equiv \sup_{f\in\mathcal M_{\tilde\delta_n}}|\sqrt n(\hat P^*_n-\hat P_n)f|,$ where $\hat P^*_n$ is the empirical distribution of $\{X_i^b\}_{i=1}^n$. Then, by \eqref{eq:sr2}, one has
\begin{align}
P^*_n\left(\sup_{\|\theta-\theta'\|\le \delta_n}\max_{j=1,\cdots,J}|\mathfrak G^b_{n,j}(\theta)-\mathfrak G^b_{n,j}(\theta')|>\epsilon_n\Big|\{X_i\}_{i=1}^\infty\right)\le P^*_n\big(Z^*_n(\tilde\delta_n)>\epsilon_n\big|\{X_i\}_{i=1}^\infty\big).\label{eq:sr9}	
\end{align}
By Markov's inequality and Lemma 2.3.1 (symmetrization lemma) in \cite{Vaart_Wellner2000aBK}, one has
\begin{align}
P^*_n\big(Z^*_n(\tilde\delta_n)>\epsilon_n\big|\{X_i\}_{i=1}^\infty\big)&\le \frac{2}{\epsilon_n}E_{P^*_n\times P^W}\left[\sup_{f\in\mathcal M_{P,\tilde\delta_n}}\Big|\frac{1}{\sqrt n}\sum_{i=1}^nW_i f(X^b_i)\Big| \Bigg|\{X_i\}_{i=1}^\infty\right]\\
&=\frac{2}{\epsilon_n}E_{P^*_n}\left[E_{P^W}\left[\sup_{f\in\mathcal M_{P,\tilde\delta_n}}\Big| \frac{1}{\sqrt n}\sum_{i=1}^nW_i f(X^b_i)\Big| \Bigg|\{X^b_i\},\{X_i\}_{i=1}^\infty\right]\Bigg|\{X_i\}_{i=1}^\infty\right],\label{eq:sr10}
\end{align}
where $\{W_i\}_{i=1}^n$ are i.i.d. Rademacher random variables independent of $\{X_i\}_{i=1}^\infty$ and $\{M_{n,i}\}_{i=1}^n$.
Argue as in \eqref{eq:sr5}-\eqref{eq:sr8}. Then, it follows that
\begin{align*}
P^*_n(Z^*_n(\tilde\delta_n)>\epsilon_n|\{X_i\}_{i=1}^\infty)=O(\delta_n^\gamma/\epsilon_n)+O(-\delta_n^\gamma\ln(\delta_n)/\epsilon_n)=o(1),
\end{align*}
 for almost all sample paths. Hence, the second claim of the lemma follows.
\end{proof}

\begin{lemma}\label{lem:eta_rate}
Suppose Assumptions  \ref{as:momP_AS}, \ref{as:GMS}, and \ref{as:bcs1}  hold.
Let $\mathcal S_P\equiv\{f:\mathcal X\to\mathbb R:f(\cdot)=\sigma_{P,j}(\theta)^{-2}m_j^2(\cdot,\theta),\theta\in\Theta,j=1,\cdots,J\}$ and let $F$ be its envelope. 
(i) If $\mathcal S_P$ is Donsker and pre-Gaussian uniformly in $P\in\mathcal P$, then
\begin{align}
	\sup_{\theta\in\Theta}|\eta_{n,j}(\theta)|^*=O_{\mathcal P}(1/\sqrt n);
\end{align}	
(ii) If $|\sigma_{P,j}(\theta)^{-1}m_j(x,\theta)-\sigma_{P,j}(\theta')^{-1}m_j(x,\theta')|\le \bar M(x)\|\theta-\theta'\|$ with $E_P[\bar M(X)^{2}]<M$ for all $\theta,\theta'\in\Theta$, $x\in\mathcal X$, $j=1,\cdots,J$, and $P\in\mathcal P$,  then, for any $\eta>0$, there exists a constant $C>0$ such that
\begin{align}
\limsup_{n\to\infty}	\sup_{P\in\mathcal P}P\Big(\max_{j=1,\cdots,J}\sup_{\|\theta-\theta'\|<\delta}|\eta_{n,j}(\theta)-\eta_{n,j}(\theta')|>C\delta\Big)<\eta.
\end{align}
\end{lemma}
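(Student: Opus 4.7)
\textbf{Proof plan for Lemma \ref{lem:eta_rate}.}

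The plan is to reduce both statements to the study of the \emph{standardized} sample dispersion
\[
\rho_{n,j}(\theta) \;\equiv\; \hat\sigma_{n,j}(\theta)/\sigma_{P,j}(\theta),
\qquad
g_j(x,\theta) \;\equiv\; m_j(x,\theta)/\sigma_{P,j}(\theta),
\]
noting the elementary identity $\eta_{n,j}(\theta)=1/\rho_{n,j}(\theta)-1$. Since $Var_P(g_j(X,\theta))=1$, we have the convenient representation $\rho_{n,j}^2(\theta) = n^{-1}\sum_i g_j(X_i,\theta)^2 - (n^{-1}\sum_i g_j(X_i,\theta))^2$. By Assumption \ref{as:momP_AS}(b)(iv), $|E_P g_j(X,\theta)|$ and $E_P[g_j(X,\theta)^2]=1+(E_P g_j)^2$ are uniformly bounded, so after subtracting means and squaring the sample mean, $\sup_\theta|\rho_{n,j}^2(\theta)-1|$ decomposes cleanly into controllable pieces.

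For (i), I will write
\[
\rho_{n,j}^2(\theta)-1 \;=\; n^{-1/2}\mathbb G_n^{(2)}(\theta) \;+\; \bigl(E_P g_j(X,\theta)\bigr)^2 - \bigl(n^{-1}\!\sum_i g_j(X_i,\theta)\bigr)^2,
\]
where $\mathbb G_n^{(2)}$ is the empirical process indexed by $\mathcal S_P$. The uniform Donsker/pre-Gaussian hypothesis delivers $\sup_\theta|\mathbb G_n^{(2)}(\theta)|=O_\mathcal P(1)$, and the uniform Donsker property of $\{g_j(\cdot,\theta)\}$ (which follows from Assumption \ref{as:bcs1}) together with boundedness of $|E_Pg_j|$ shows the second difference is $O_\mathcal P(n^{-1/2})$. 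Hence $\sup_\theta|\rho_{n,j}^2(\theta)-1|=O_\mathcal P(n^{-1/2})$. Because $\rho_{n,j}^2\to 1$ uniformly (part of Lemma \ref{lem:eta_conv}(i)), $\rho_{n,j}$ is eventually bounded away from $0$ with inner probability one uniformly in $P$, so the elementary inequality $|\rho-1|\le |\rho^2-1|/|\rho+1|$ gives $\sup_\theta|\rho_{n,j}(\theta)-1|=O_\mathcal P(n^{-1/2})$, and inverting through $\eta_{n,j}=(1-\rho_{n,j})/\rho_{n,j}$ yields the claim.

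For (ii), the key algebraic step is
\[
|\eta_{n,j}(\theta)-\eta_{n,j}(\theta')| \;=\; \frac{|\rho_{n,j}(\theta)-\rho_{n,j}(\theta')|}{\rho_{n,j}(\theta)\rho_{n,j}(\theta')},
\]
so it suffices to Lipschitz-bound $\rho_{n,j}$ uniformly. Writing $F(x)\equiv\sup_{\theta\in\Theta}|g_j(x,\theta)|$, Assumption \ref{as:momP_AS}(b)(iv) gives $E_P[F^2]\le M^{2/(2+\delta)}$, so by Cauchy--Schwarz $E_P[F(X)\bar M(X)]\le C$ and $E_P[\bar M(X)^2]\le M$ uniformly in $P$. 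For any $\theta,\theta'$ with $\|\theta-\theta'\|\le\delta$, the Lipschitz hypothesis yields, simultaneously for all such $(\theta,\theta')$,
\[
\bigl|g_j(X_i,\theta)^2-g_j(X_i,\theta')^2\bigr| \;\le\; 2F(X_i)\bar M(X_i)\,\delta,
\qquad
|g_j(X_i,\theta)-g_j(X_i,\theta')|\le \bar M(X_i)\,\delta.
\]
Expanding $\rho_{n,j}^2(\theta)-\rho_{n,j}^2(\theta')$ as the difference of the two empirical moments above, and using $|\bar g_{n,j}(\theta)+\bar g_{n,j}(\theta')|\le 2 n^{-1}\sum_i F(X_i)$, gives
$\sup_{\|\theta-\theta'\|\le\delta}|\rho_{n,j}^2(\theta)-\rho_{n,j}^2(\theta')|\le C_n \delta$ where $C_n\equiv 2 n^{-1}\sum_i F(X_i)\bar M(X_i) + 2(n^{-1}\sum_i F(X_i))(n^{-1}\sum_i \bar M(X_i))$ satisfies $\sup_P E_P[C_n]\le \bar C$. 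Markov's inequality then controls $C_n$ in probability uniformly in $P$, and combining with the lower bound on $\rho_{n,j}$ supplied by part (i)/Lemma \ref{lem:eta_conv}(i) finishes the argument through $|\rho-\rho'|\le|\rho^2-(\rho')^2|/|\rho+\rho'|$.

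The main obstacle is ensuring that all stochastic bounds hold uniformly in $P\in\mathcal P$ and simultaneously in $(\theta,\theta')$ with $\|\theta-\theta'\|\le\delta$; this is where it matters that the Lipschitz bound in (ii) is in terms of a \emph{single} random function $\bar M(X_i)$ and that the envelope condition from Assumption \ref{as:momP_AS}(b)(iv) provides a $P$-uniform bound on $E_P[F\bar M]$, so that Markov's inequality gives a $P$-uniform probability bound. The uniform lower bound on $\rho_{n,j}$ required to pass from $\rho_{n,j}^2$ to $\rho_{n,j}$ is the other delicate point, but this is already in hand from Lemma \ref{lem:eta_conv}(i).
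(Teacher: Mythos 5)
Your proposal is correct and follows essentially the same route as the paper's proof: both parts reduce to controlling $\hat\sigma_{n,j}^2/\sigma_{P,j}^2$ (your $\rho_{n,j}^2$), using the uniform Donsker hypothesis plus the envelope moment condition from Assumption \ref{as:momP_AS}(b)(iv), then transferring the bound to $\eta_{n,j}$ by local Lipschitzness once $\rho_{n,j}$ is bounded away from zero (via Lemma \ref{lem:eta_conv}(i)). The only cosmetic differences are that you split the final map as $\rho^2\mapsto\rho\mapsto\eta$ instead of using the single locally Lipschitz function $y\mapsto y^{-1/2}-1$ as in the paper, and in part (i) you invoke uniform tightness directly to get $O_{\mathcal P}(1)$ rather than detouring through the continuous mapping theorem and continuity points of the limit process; neither affects the substance.
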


\begin{proof}
We show the claim by first showing that, for any $\delta>0$, there exist $M>0$ and $N\in\mathbb N$ such that
\begin{align}
\inf_{P\in\mathcal P}P^\infty\Big(\sup_{\theta\in\Theta}\Big|\frac{\hat\sigma_{n,j}(\theta)}{\sigma_{P,j}(\theta)}-1\Big|\le M/\sqrt n\Big)\ge 1-\delta,~\forall n\ge N.\label{eq:ec10}
\end{align}
By Assumptions  \ref{as:momP_AS} (iv), \ref{as:bcs1} and Theorem 2.8.2 in \cite{Vaart_Wellner2000aBK},  $\dsM_P$ is a  Donsker class uniformly in $P\in\mathcal P$. 
By hypothesis, $\mathcal S_P$ is a  Donsker class uniformly in $P\in\mathcal P$. 

 Therefore, by the continuous mapping theorem, for any $\epsilon>0$, 
\begin{align}
&\Big|P\Big(	\sqrt n\sup_{\theta\in\Theta}\Big|\frac{n^{-1}\sum_{i=1}^n m_{j}(X_i,\theta)^2}{\sigma_{P,j}^2(\theta)}-\frac{E_P[m_j(X,\theta)^2]}{\sigma_{P,j}^2(\theta)}\Big|\le C_1\Big)-\text{Pr}(\sup_{\theta\in\Theta}|\mathbb H_{P,j}(\theta)|\le C_1)\Big|\le \epsilon \label{eq:ec11}\\	
&\Big|P\Big(\sqrt n\sup_{\theta\in\Theta}\Big|\frac{\bar m_{n,j}(\theta)-E_P[m_j(X,\theta)]}{\sigma_{P,j}(\theta)}\Big|\le C_2\Big)-\text{Pr}(\sup_{\theta\in\Theta}|\mathbb G_{P,j}(\theta)|\le C_2)\Big|\le \epsilon .\label{eq:ec12}
\end{align}
for $n$ sufficiently large uniformly in $P\in\mathcal P$, where $\mathbb H_{P,j}$ and $\mathbb G_{P,j}$ are tight Gaussian processes, and $C_1$ and $C_2$ are the continuity points of the distributions of $\sup_{\theta\in\Theta}|\mathbb H_{P,j}(\theta)|$ and $\sup_{\theta\in\Theta}|\mathbb G_{P,j}(\theta)|$ respectively.  As in the proof of Lemma \ref{lem:eta_conv} (i), bounding each term of the right hand side of \eqref{eq:ec7} by $C_1/\sqrt n$ and $C_2/\sqrt n$ implies that  $\sup_{\theta\in\Theta}\Big|\frac{\hat\sigma_{n,j}^2(\theta)}{\sigma_{P,j}^2(\theta)}-1\Big|\le C/\sqrt n$ for some constant $C>0$.  Now choose $C_1>0$ and $C_2>0$ so that
\begin{align}
	\text{Pr}(\sup_{\theta\in\Theta}|\mathbb H_{P,j}(\theta)|\le C_1)> 1-\delta/3~\text{ and }~
	\text{Pr}(\sup_{\theta\in\Theta}|\mathbb G_{P,j}(\theta)|\le C_2)>1-\delta/3\label{eq:ec13}
\end{align}
and set $\epsilon>0$ sufficiently small so that $1-2\delta/3-2\epsilon\ge 1-\delta$. The existence of such continuity points $C_1,C_2>0$   is due to Theorem 11.1 in \cite{Davydov:aa} applied to $\sup_{\theta\in\Theta}|\mathbb H_{P,j}(\theta)|$ and $\sup_{\theta\in\Theta}|\mathbb G_{P,j}(\theta)|$ respectively.
Then, for sufficiently large $n$,
\begin{align}
	1-\delta&\le P\Big(	\sqrt n\sup_{\theta\in\Theta}\Big|\frac{n^{-1}\sum_{i=1}^n m_{j}(X_i,\theta)^2}{\sigma_{P,j}^2(\theta)}-\frac{E_P[m_j(X,\theta)^2]}{\sigma_{P,j}^2(\theta)}\Big|\le C_1,\sqrt n\sup_{\theta\in\Theta}\Big|\frac{\bar m_{n,j}(\theta)-E_P[m_j(X,\theta)]}{\sigma_{P,j}(\theta)}\Big|\le C_2\Big)\notag\\
	&\le P\Big(\sup_{\theta\in\Theta}\Big|\frac{\hat\sigma_{n,j}^2(\theta)}{\sigma_{P,j}^2(\theta)}-1\Big|\le C/\sqrt n\Big), \label{eq:ec14}
\end{align}
uniformly in $P\in\mathcal P.$ 

Next, note that, for $x>0$ and $0<\eta<1$, $|x^2-1|\le \eta$ implies $|x-1|\le 1-(1-\eta)^{1/2}\le\eta$, and hence by \eqref{eq:ec14}, for sufficiently large $n$,
\begin{align}
	1-\delta\le  P\Big(\sup_{\theta\in\Theta}\Big|\frac{\hat\sigma_{n,j}(\theta)}{\sigma_{P,j}(\theta)}-1\Big|\le C/\sqrt n\Big),\label{eq:ec15}
\end{align} 
uniformly in $P\in\mathcal P.$ 
Finally, note again that $|\hat\sigma_{n,j}(\theta)/\sigma_{P,j}(\theta)-1|\le \epsilon$ implies $\hat\sigma_{n,j}(\theta)>0$, and
by the local Lipshitz continuity of $x\mapsto 1/x$ on a neighborhood around 1, there is a constant $C'$ such that
\begin{align}
P\Big(\sup_{\theta\in\Theta}|\eta_{n,j}(\theta)|\le C'/\sqrt n\Big)	\ge 1-\delta,\label{eq:ec16}
\end{align}
uniformly in $P\in\mathcal P$ for all $n$ sufficiently large.
This establishes the first claim of the lemma.	

(ii) First, consider
\begin{align}
\frac{\hat\sigma^2_{n,j}(\theta)}{\sigma^2_{P,j}(\theta)}=n^{-1}\sum_{i=1}^n\left(\frac{m(X_i,\theta)}{\sigma_{P,j}(\theta)}\right)^2-\left(n^{-1}\sum_{i=1}^n\frac{m(X_i,\theta)}{\sigma_{P,j}(\theta)}\right)^2.\label{eq:etaLip1}
\end{align}
We claim that this function is Lipschitz with probability approaching 1. To see this, note that, for any $\theta,\theta'\in\Theta$,
\begin{align}
&~ \Bigg|n^{-1}\sum_{i=1}^n\Bigg(\frac{m(X_i,\theta)}{\sigma_{P,j}(\theta)}\Bigg)^2 -n^{-1}\sum_{i=1}^n\Bigg(\frac{m(X_i,\theta')}{\sigma_{P,j}(\theta')}\Bigg)^2\Bigg|\notag\\
=&~\Bigg|n^{-1}\sum_{i=1}^n \Bigg(\frac{m(X_i,\theta)}{\sigma_{P,j}(\theta)}+\frac{m(X_i,\theta')}{\sigma_{P,j}(\theta')}\Bigg)\Bigg(\frac{m(X_i,\theta)}{\sigma_{P,j}(\theta)}-\frac{m(X_i,\theta')}{\sigma_{P,j}(\theta')}\Bigg)\Bigg|	\notag\\
\le&~ n^{-1}\sum_{i=1}^n 2\sup_{\theta\in\Theta}\Big|\frac{m(X_i,\theta)}{\sigma_{P,j}(\theta)}\Big|\bar M(X_i)\|\theta-\theta'\|.\label{eq:etaLip2}
\end{align}
Define $B_n\equiv n^{-1}\sum_{i=1}^n 2\sup_{\theta\in\Theta}\Big|\frac{m(X_i,\theta)}{\sigma_{P,j}(\theta)}\Big|\bar M(X_i)$. By Markov and Cauchy-Schwarz inequalities,
\begin{align}
	P(B_n>C)\le \frac{E[B_n]}{C}\le\frac{2E_P\left[\sup_{\theta\in\Theta}\Big|\frac{m(X_i,\theta)}{\sigma_{P,j}(\theta)}\Big|^2 \right]^{1/2}E_P\Big[\bar M(X_i)^2 \Big]^{1/2}}{C} \le \frac{2M}{C},\label{eq:etaLip3}
\end{align}
where the third inequality is due to Assumptions \ref{as:momP_AS} (iv) and the assumption on $\bar M$. Hence, for any $\eta>0$, one may find $C>0$ such that $\sup_{P\in\mathcal P}P(B_n>C)<\eta$ for all $n$.

Similarly, for any $\theta,\theta'\in\Theta$,
\begin{align}
&~\Bigg|\Bigg(n^{-1}\sum_{i=1}^n\frac{m(X_i,\theta)}{\sigma_{P,j}(\theta)}\Bigg)^2-\Bigg(n^{-1}\sum_{i=1}^n\frac{m(X_i,\theta')}{\sigma_{P,j}(\theta')}\Bigg)^2	\Bigg|\notag\\
=&~\Bigg|n^{-1}\sum_{i=1}^n\frac{m(X_i,\theta)}{\sigma_{P,j}(\theta)}+n^{-1}\sum_{i=1}^n\frac{m(X_i,\theta')}{\sigma_{P,j}(\theta')}\Bigg|\Bigg|n^{-1}\sum_{i=1}^n\frac{m(X_i,\theta)}{\sigma_{P,j}(\theta)}-n^{-1}\sum_{i=1}^n\frac{m(X_i,\theta')}{\sigma_{P,j}(\theta')}\Bigg|\notag\\
\le&~ n^{-1}\sum_{i=1}^n 2\sup_{\theta\in\Theta}\Bigg|\frac{m(X_i,\theta)}{\sigma_{P,j}(\theta)}\Bigg|n^{-1}\sum_{i=1}^n\bar M(X_i)\|\theta-\theta'\|.\label{eq:etaLip4}
\end{align}
Define $\tilde B_n\equiv n^{-1}\sum_{i=1}^n 2\sup_{\theta\in\Theta}\Big|\frac{m(X_i,\theta)}{\sigma_{P,j}(\theta)}\Big|n^{-1}\sum_{i=1}^n\bar M(X_i)$. By Markov, Cauchy-Schwarz, and Jensen's inequalities,
\begin{multline}
	P(\tilde B_n>C)\le \frac{E[\tilde B_n]}{C}\le \frac{2E_P\Big[\Big(n^{-1}\sum\sup_{\theta\in\Theta}\Big|\frac{m(X_i,\theta)}{\sigma_{P,j}(\theta)}\Big|\Big)^2\Big]^{1/2}E_P\Big[\Big(n^{-1}\sum\bar M(X_i)\Big)^2\Big]^{1/2}}{C} \\
\le \frac{2E_P\Big[\sup_{\theta\in\Theta}\big|\frac{m(X_i,\theta)}{\sigma_{P,j}(\theta)}\big|^2\Big]^{1/2}E_P[\bar M(X_i)^2]^{1/2}}{C}	\le \frac{2M}{C},\label{eq:etaLip5}
\end{multline}
where the last inequality is due to Assumptions \ref{as:momP_AS} (iv) and the assumption on $\bar M$.  Hence, for any $\eta>0$, one may find $C>0$ such that $\sup_{P\in\mathcal P}P(\tilde B_n>C)<\eta$ for all $n$.

Finally, let $g(y)\equiv y^{-1/2}-1$ and note that $|g(y)-g(y')|\le \frac{1}{2}\sup_{\bar y\in (1-\epsilon,1+\epsilon)}|\bar y|^{-3/2}|y-y'|$ on $(1-\epsilon,1+\epsilon).$ As shown in \eqref{eq:ec15}, $\hat\sigma^2_{n,j}(\theta)/\sigma^2_{P,j}(\theta)$ converges to 1 in probability, and $g$ is locally Lipschitz on a neighborhood of 1. Combining this with 
\eqref{eq:etaLip1}-\eqref{eq:etaLip5} yields the desired result.
\end{proof}

\begin{lemma}\label{lem:correl_Lip}
Suppose Assumption  \ref{as:momP_AS} holds.
Suppose further that $|\sigma_{P,j}(\theta)^{-1}m_j(x,\theta)-\sigma_{P,j}(\theta')^{-1}m_j(x,\theta')|\le \bar M(x)\|\theta-\theta'\|$ with $E_P[\bar M(X)^{2}]<M$  for all $\theta,\theta'\in\Theta$, $x\in\mathcal X$, $j=1,\cdots,J$, and $P\in\mathcal P$.

Then, 	
\begin{align}
\sup_{P\in\mathcal P}\|Q_P(\theta_1,\tilde\theta_1)-Q_P(\theta_2,\tilde\theta_2)\|\le M\|(\theta_1,\tilde\theta_1)-(\theta_2,\tilde\theta_2)\|,	\label{eq:correl_Lip0}
\end{align}
 for some $M>0$ and for all $\theta_1,\tilde\theta_1,\theta_2,\tilde\theta_2\in \Theta$.
\end{lemma}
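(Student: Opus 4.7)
\textbf{Proof proposal for Lemma \ref{lem:correl_Lip}.} The plan is to bound each entry of the matrix difference $Q_P(\theta_1,\tilde\theta_1)-Q_P(\theta_2,\tilde\theta_2)$ separately and then aggregate. Write $f_{P,j}(x,\theta)\equiv m_j(x,\theta)/\sigma_{P,j}(\theta)$, so that the hypothesis says $|f_{P,j}(x,\theta)-f_{P,j}(x,\theta')|\le \bar M(x)\|\theta-\theta'\|$ with $E_P[\bar M(X)^2]\le M$, while Assumption \ref{as:momP_AS}(iv) yields (via Jensen) $\sup_{P\in\mathcal P}\sup_{\theta\in\Theta}E_P[f_{P,j}(X,\theta)^2]\le M'$ for some $M'<\infty$, and in particular $\sup_{P,\theta}|E_P[f_{P,j}(X,\theta)]|\le M'{}^{1/2}$.

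The $(j,k)$ entry of $Q_P(\theta,\tilde\theta)$ is $E_P[f_{P,j}(X,\theta)f_{P,k}(X,\tilde\theta)]-E_P[f_{P,j}(X,\theta)]E_P[f_{P,k}(X,\tilde\theta)]$. First I would bound the difference of the cross-moment pieces by the telescoping identity
\begin{align*}
E_P[f_{P,j}(X,\theta_1)f_{P,k}(X,\tilde\theta_1)]-E_P[f_{P,j}(X,\theta_2)f_{P,k}(X,\tilde\theta_2)] &= E_P\bigl[(f_{P,j}(X,\theta_1)-f_{P,j}(X,\theta_2))f_{P,k}(X,\tilde\theta_1)\bigr]\\
&\quad + E_P\bigl[f_{P,j}(X,\theta_2)(f_{P,k}(X,\tilde\theta_1)-f_{P,k}(X,\tilde\theta_2))\bigr].
\end{align*}
Applying Cauchy--Schwarz, the Lipschitz hypothesis, and the uniform $L^2$ bound on $f_{P,k}$ gives
$|E_P[(f_{P,j}(X,\theta_1)-f_{P,j}(X,\theta_2))f_{P,k}(X,\tilde\theta_1)]|\le M^{1/2}M'{}^{1/2}\|\theta_1-\theta_2\|$,
and similarly for the other summand with $\|\tilde\theta_1-\tilde\theta_2\|$; note the bound does not depend on $P$.

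Next I would handle the product-of-means piece by the same telescoping trick,
\begin{align*}
E_P[f_{P,j}(X,\theta_1)]E_P[f_{P,k}(X,\tilde\theta_1)]-E_P[f_{P,j}(X,\theta_2)]E_P[f_{P,k}(X,\tilde\theta_2)]
\end{align*}
split as $(E_P[f_{P,j}(X,\theta_1)]-E_P[f_{P,j}(X,\theta_2)])E_P[f_{P,k}(X,\tilde\theta_1)]+E_P[f_{P,j}(X,\theta_2)](E_P[f_{P,k}(X,\tilde\theta_1)]-E_P[f_{P,k}(X,\tilde\theta_2)])$, and bound the mean differences by $E_P[\bar M(X)]\|\theta_1-\theta_2\|\le M^{1/2}\|\theta_1-\theta_2\|$ and similarly for the $\tilde\theta$ pair, while the other factors are bounded by $M'{}^{1/2}$. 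Summing the two contributions and then summing over the $J^2$ entries with $\|\theta_1-\theta_2\|+\|\tilde\theta_1-\tilde\theta_2\|\le \sqrt{2}\|(\theta_1,\tilde\theta_1)-(\theta_2,\tilde\theta_2)\|$ gives the desired uniform Lipschitz bound.

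This argument is essentially mechanical; the only mildly delicate point is ensuring that every constant entering the Lipschitz coefficient is independent of $P\in\mathcal P$, which is secured by using the uniform envelopes from Assumption \ref{as:momP_AS}(iv) and the uniform envelope $\bar M$ from the hypothesis of the lemma itself. No other obstacle is anticipated.
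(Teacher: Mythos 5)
Your proposal is correct and follows essentially the same approach as the paper's proof: telescope the cross-moment and product-of-means pieces separately, then apply the Lipschitz hypothesis and Cauchy--Schwarz with the uniform second-moment bound from Assumption \ref{as:momP_AS}(iv). The only cosmetic difference is that the paper applies the pointwise Lipschitz bound first and then Cauchy--Schwarz to $\sup_\theta|f_{P,k}(X,\theta)|\,\bar M(X)$, whereas you apply Cauchy--Schwarz first and bound $\|f_{P,j}(\cdot,\theta_1)-f_{P,j}(\cdot,\theta_2)\|_{L^2_P}$ by the Lipschitz hypothesis directly; both give the same uniform constant.
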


\begin{proof}
Recall that 
\begin{align}
	[Q_P(\theta_1,\tilde\theta_1)]_{j,k}=E_P\Big[\frac{m_j(X_i,\theta_1)}{\sigma_{P,j}(\theta_1)}\frac{m_k(X_i,\tilde \theta_1)}{\sigma_{P,k}(\tilde \theta_1)}\Big]-E_P\Big[\frac{m_j(X_i,\theta_1)}{\sigma_{P,j}(\theta_1)}\Big]E_P\Big[\frac{m_k(X_i,\tilde\theta_1)}{\sigma_{P,k}(\tilde \theta_1)}\Big].\label{eq:correl_Lip1}
\end{align}	
For any $\theta_1,\tilde\theta_1,\theta_2,\tilde\theta_2\in \Theta$,
\begin{align}
&~\Big|E_P\Big[\frac{m_j(X_i,\theta_1)}{\sigma_{P,j}(\theta_1)}\frac{m_k(X_i,\tilde \theta_1)}{\sigma_{P,k}(\tilde \theta_1)}\Big]-E_P\Big[\frac{m_j(X_i,\theta_2)}{\sigma_{P,j}(\theta_2)}\frac{m_k(X_i,\tilde \theta_2)}{\sigma_{P,k}(\tilde \theta_2)}\Big]\Big|\notag\\
\le&~ 	\Big|E_P\Big[\Big(\frac{m_j(X_i,\theta_1)}{\sigma_{P,j}(\theta_1)}-\frac{m_j(X_i,\theta_2)}{\sigma_{P,j}(\theta_2)}\Big)\frac{m_k(X_i,\tilde \theta_2)}{\sigma_{P,k}(\tilde \theta_2)}\Big]\Big|+\Big|E_P\Big[\frac{m_j(X_i,\theta_1)}{\sigma_{P,j}(\theta_1)}\Big(\frac{m_k(X_i,\tilde \theta_1)}{\sigma_{P,k}(\tilde \theta_1)}-\frac{m_k(X_i,\tilde \theta_2)}{\sigma_{P,k}(\tilde \theta_2)}\Big)\Big]\Big|\notag\\
\le&~ E_P\Big[\sup_{\theta\in\Theta}\Big|\frac{m_k(X_i, \theta)}{\sigma_{P,k}( \theta)}\Big|\bar M(X_i)\Big]\|\theta_1-\theta_2\|+E_P\Big[\sup_{\theta\in\Theta}\Big|\frac{m_j(X_i, \theta)}{\sigma_{P,j}( \theta)}\Big|\bar M(X_i)\Big]\|\tilde\theta_1-\tilde\theta_2\|\notag\\
\le&~ M(\|\theta_1-\theta_2\|+\|\tilde\theta_1-\tilde\theta_2\|),\label{eq:correl_Lip2}
\end{align}
where the last inequality is due to the Cauchy-Schwarz inequality, Assumption \ref{as:momP_AS} (iv), and the assumption on $\bar M$.

Similarly, for any $\theta_1,\tilde\theta_1,\theta_2,\tilde\theta_2\in \Theta$,
\begin{align}
&~	\Big|E_P\Big[\frac{m_j(X_i,\theta_1)}{\sigma_{P,j}(\theta_1)}\Big]E_P\Big[\frac{m_k(X_i,\tilde\theta_1)}{\sigma_{P,k}(\tilde \theta_1)}\Big]-E_P\Big[\frac{m_j(X_i,\theta_2)}{\sigma_{P,j}(\theta_2)}\Big]E_P\Big[\frac{m_k(X_i,\tilde\theta_2)}{\sigma_{P,k}(\tilde \theta_2)}\Big]\Big|\notag\\
\le &~	\Big|E_P\Big[\frac{m_j(X_i,\theta_1)}{\sigma_{P,j}(\theta_1)}-\frac{m_j(X_i,\theta_2)}{\sigma_{P,j}(\theta_2)}\Big]\Big|\Big|E_P\Big[\frac{m_k(X_i,\tilde \theta_2)}{\sigma_{P,k}(\tilde \theta_2)}\Big]\Big|+\Big|E_P\Big[\frac{m_j(X_i,\theta_1)}{\sigma_{P,j}(\theta_1)}\Big]\Big|\Big|E_P\Big[\frac{m_k(X_i,\tilde \theta_1)}{\sigma_{P,k}(\tilde \theta_1)}-\frac{m_k(X_i,\tilde \theta_2)}{\sigma_{P,k}(\tilde \theta_2)}\Big]\Big|\notag\\
\le &~E_P\Big[\sup_{\theta\in\Theta}\Big|\frac{m_k(X_i, \theta)}{\sigma_{P,k}( \theta)}\Big|\Big]E_P[\bar M(X_i)]\|\theta_1-\theta_2\|+E_P\Big[\sup_{\theta\in\Theta}\Big|\frac{m_j(X_i, \theta)}{\sigma_{P,j}( \theta)}\Big|\Big]E_P[\bar M(X_i)]\|\tilde\theta_1-\tilde\theta_2\|\notag\\
\le &~ M(\|\theta_1-\theta_2\|+\|\tilde\theta_1-\tilde\theta_2\|),\label{eq:correl_Lip3}
\end{align}
where the last inequality is due to the Cauchy-Schwarz inequality, Assumption \ref{as:momP_AS} (iv), and the assumption on $\bar M$.
The conclusion of the lemma then follows from \eqref{eq:correl_Lip1}-\eqref{eq:correl_Lip3}.
\end{proof}

\subsection{Almost Sure Representation Lemma and Related Results}
\label{app:asrep}
In this appendix, we provide details on the almost sure representation used in Lemmas \ref{lem:cv_convergence}, \ref{lem:res}, \ref{lem:empt}, and \ref{lem:pair}. 
We start with stating a uniform version of the bootstrap consistency in \cite{Vaart_Wellner2000aBK}.
For this, we define the original sample $X^\infty=(X_1,X_2,\cdots)$ and a $n$-dimensional multinomial vector $M_n$  on a common probability space $(\mathcal X^\infty,\mathcal A^\infty,P^\infty)\times (\mathcal Z,\mathcal C,Q)$. We then view $X^{\infty}$ as the coordinate projection on the first $\infty$ coordinates of the probability space above. Similarly, we view $M_n$ as the coordinate projection on $\mathcal Z$. Here, $M_n$ follows a multinomial distribution with parameter $(n;1/n,\cdots,1/n)$ and is independent of $X^\infty.$
We then let $E_M[\cdot|X^\infty=x^\infty]$ denote the conditional expectation of $M_n$ given $X^\infty=x^\infty.$ Throughout, we let $\ell^\infty(\Theta,\mathbb R^J)$ denote uniformly bounded $\mathbb R^J$-valued functions on $\Theta$. We simply write $\ell^\infty(\Theta)$ when $J=1$. 

Using the multinomial weight, we rewrite the empirical bootstrap process as
\begin{align}
	\mathbb G^b_{n,j}(\cdot)=g_j(X^\infty,M_n)\equiv\frac{1}{\sqrt n}\sum_{i=1}^n(M_{n,i}-1)m_j(X_i,\cdot)/\hat\sigma_{n,j}(\cdot),~j=1,\cdots,J,\label{eq:defg}
\end{align}
where $g_j:\mathcal X^\infty\times \mathcal Z\to \ell^\infty(\Theta)$ is a function that maps the sample path and the multinomial weight $(X^\infty,M_n)$ to the empirical bootstrap process $\mathbb G^b_{n,j}$. 
We then let $g:\mathcal X^\infty\times \mathcal Z\to \ell^\infty(\Theta,\mathbb R^J)$ be defined by $g=(g_1,\cdots,g_J)'.$
 For any function $f:\ell^\infty(\Theta,\mathbb R^J)\to\mathbb R$, 
the conditional expectation of $f(\mathbb G^b_{n})$ given the sample path $X^\infty$ is
\begin{align}
	E_M[f(\mathbb G^b_{n})|X^\infty=x^\infty]=\int f\circ g(x^\infty,m_n)dQ(m_n),
\end{align}
where, with a slight abuse of notation, we use $Q$ for the induced law of $M_n$. 

Let $\mathcal F$ be the function space  $\{f(\cdot)=(m_1(\cdot,\theta)/\sigma_{P,1}(\theta),\cdots,m_J(\cdot,\theta)/\sigma_{P,J}(\theta)),\theta\in\Theta,P\in\mathcal P\}$. For each $j$, define a bootstrapped empirical process standardized by $\sigma_{P,j}$ as follows:
\begin{align}
	\mathfrak G^b_{n,j}(\theta)&\equiv \frac{1}{\sqrt n}\sum_{i=1}^n \left(m_j(X_i^b,\theta)-\bar m_n(\theta)\right)/\sigma_{P,j}(\theta)\notag\\
	&=\frac{1}{\sqrt n}\sum_{i=1}^n(M_{n,i}-1)m_j(X_i,\theta)/\sigma_{P,j}(\theta).
\end{align}

The following result was shown in the proof of Lemma D.2.8 in \cite{BCS14_misp}, which is a uniform version of (a part of) Theorem 3.6.2 in \cite{Vaart_Wellner2000aBK}.\ For the definition of a uniform version of Donskerness and pre-Gaussianity, we refer to \cite{Vaart_Wellner2000aBK} pages 168-169. Below, we let $P^*$ denote the outer probability of $P$ and let $T^*$ denote the minimal measurable majorant of any (not necessarily measurable) random  element $T$.
\begin{lemma}\label{lem:bootcons_standardized}
	Let $\mathcal F$ be a class of measurable functions with finite envelope function. Suppose $\mathcal F$ is such that (i) $\mathcal F$ is Donsker and pre-Gaussian uniformly in $P\in\mathcal P$; and (ii) $\sup_{P\in\mathcal P}P^*\|f-Pf\|^2_{\mathcal F}<\infty.$ Then,
\begin{align}
\sup_{h\in BL_1}|E_M[h(\mathfrak G^b_{n})|X^\infty]-E[h(\mathbb G_{P})]|\stackrel{as*}{\to}0,	
\end{align} 
uniformly in $P\in\mathcal P$.
\end{lemma}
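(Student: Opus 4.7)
The plan is to adapt the proof of Theorem 3.6.2 in \cite{Vaart_Wellner2000aBK}, which establishes bootstrap consistency in the bounded-Lipschitz metric for a fixed $P$, so that all approximations hold uniformly over $P\in\mathcal P$. The uniform Donsker and pre-Gaussian hypothesis on $\mathcal F$ plus the uniform second-moment condition (ii) are precisely what allow every pointwise step in the classical argument to be upgraded to a uniform one.

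First, I would recall the standard multiplier representation of $\mathfrak G_n^b$: conditionally on $X^\infty$, the centered multinomial weights $M_{n,i}-1$ can be Poissonized and then coupled with i.i.d.\ mean-zero weights of variance one, with the coupling error bounded by an $o_{\mathcal P}(1)$ term that depends only on the uniform envelope. Combined with assumption (ii), this reduces the problem to a conditional multiplier CLT for $n^{-1/2}\sum_i \xi_i (f(X_i)-Pf)$, where $\{\xi_i\}$ are i.i.d.\ mean-zero variance-one multipliers independent of $X^\infty$.

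Next, I would establish conditional weak convergence in $\ell^\infty(\Theta,\mathbb{R}^J)$ given $X^\infty$ in two steps. The conditional finite-dimensional convergence is handled by a Lindeberg-type argument: (ii) together with the i.i.d.\ structure gives a uniform-in-$P$ Lindeberg condition, so that the conditional covariance of $\mathfrak G_n^b$ converges to that of $\mathbb G_P$ outer almost surely, uniformly in $P$. Conditional asymptotic equicontinuity, relative to the intrinsic $\varrho_P$-semimetric, then follows from the uniform Donsker property: the uniform entropy bound yields a uniform maximal inequality for the symmetrized bootstrap process (as in Lemma \ref{lem:se_rate}), and Hoeffding's inequality combined with Borel--Cantelli transfers this equicontinuity inside the conditional probability almost surely, uniformly in $P\in\mathcal P$.

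Finally, I would convert the conditional weak convergence into the claimed $BL_1$ statement. Because the Gaussian limit $\mathbb G_P$ is tight and its law varies continuously in $P$ (via its covariance $Q_P$, which is uniformly equicontinuous by Assumption~\ref{as:bcs1}(iii)), $BL_1$ acts as a uniformity class, so conditional convergence in finite-dimensional distributions plus conditional asymptotic tightness implies $\sup_{h\in BL_1}\bigl|E_M[h(\mathfrak G_n^b)\mid X^\infty]-E[h(\mathbb G_P)]\bigr|\stackrel{as^*}{\to}0$ uniformly in $P$. The main obstacle is the last transfer: making sure that the ``outer almost surely'' holds uniformly in $P$, which requires that the exceptional sets in the maximal-inequality/Borel--Cantelli step are controlled by constants that depend only on the envelope and the uniform entropy integral, and not on the particular $P$ at which the sample path is drawn. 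This is where conditions (i) and (ii) are used in tandem, and it is exactly the step carried out in the proof of Lemma D.2.8 of \cite{BCS14_misp}, which I would invoke to conclude.
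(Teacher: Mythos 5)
Your proposal is correct and, like the paper itself, ultimately rests on the same external reference: the paper does not supply a self-contained argument for this lemma, but simply states in the sentence immediately preceding it that ``the following result was shown in the proof of Lemma D.2.8 in \cite{BCS14_misp}, which is a uniform version of (a part of) Theorem 3.6.2 in \cite{Vaart_Wellner2000aBK}.'' Your sketch fills in a roadmap of the ingredients behind that citation --- Poissonization/multiplier coupling, a conditional Lindeberg argument for the finite-dimensional distributions, and a uniform maximal inequality plus Borel--Cantelli for conditional equicontinuity --- and this tracks the structure of the uniform adaptation of Theorem 3.6.2 in \cite{Vaart_Wellner2000aBK} that BCS carry out. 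One small imprecision worth flagging: in your final step you appeal to Assumption \ref{as:bcs1}(iii) (uniform continuity of $Q_P$) to control the limiting laws, but that assumption is not among the hypotheses of this lemma, which is stated abstractly in terms of conditions (i) and (ii) only. What actually supplies the required uniform tightness of the family $\{\mathbb G_P:P\in\mathcal P\}$ and makes the conditional fidi-plus-equicontinuity argument transfer to the $BL_1$ metric uniformly over $P$ is the pre-Gaussian-uniformly-in-$P$ half of condition (i); that is the ingredient you should cite there rather than the covariance-continuity assumption.
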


The result above gives uniform consistency of the standardized bootstrap process $\mathfrak G^b_{n}$. We now extend this to the studentized bootstrap process $\mathbb G^b_{n}$. 
\begin{lemma}\label{lem:boot_cons}
Suppose Assumptions \ref{as:momP_AS}, \ref{as:GMS}, and \ref{as:bcs1} hold. Then,
\begin{align}
\sup_{h\in BL_1}|E_M[h(\mathbb G^b_{n})|X^\infty]-E[h(\mathbb G_{P})]|\stackrel{as*}{\to}0,	
\end{align} 
uniformly in $P\in\mathcal P$.
\end{lemma}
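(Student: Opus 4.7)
My plan is to reduce the claim to Lemma \ref{lem:bootcons_standardized} by showing that the studentized bootstrap process $\mathbb{G}^b_n$ is close to its standardized counterpart $\mathfrak{G}^b_n$ uniformly over $\theta\in\Theta$ and $P\in\mathcal{P}$. First, I would verify that Lemma \ref{lem:bootcons_standardized} applies to $\mathcal{F}=\{(m_1(\cdot,\theta)/\sigma_{P,1}(\theta),\dots,m_J(\cdot,\theta)/\sigma_{P,J}(\theta)):\theta\in\Theta\}$: Assumption \ref{as:bcs1}(ii) yields uniform asymptotic $\varrho_P$-equicontinuity of $\mathbb{G}_n$ and Assumption \ref{as:momP_AS}(iv) supplies an envelope with $2+\delta$ moment bounded uniformly in $P$, so $\mathcal{F}$ is Donsker and pre-Gaussian uniformly in $P\in\mathcal{P}$ and $\sup_{P\in\mathcal{P}}P^*\|f-Pf\|_{\mathcal{F}}^2<\infty$ (the verification is as in Lemma D.1 of \cite{BCS14_misp}). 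Hence
\begin{equation*}
\sup_{h\in BL_1}\bigl|E_M[h(\mathfrak{G}^b_{n})\mid X^\infty]-E[h(\mathbb{G}_{P})]\bigr|\stackrel{as*}{\to}0\quad\text{uniformly in }P\in\mathcal{P}.
\end{equation*}

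The bridge to the studentized process rests on the pointwise identity $\mathbb{G}^b_{n,j}(\theta)=\mathfrak{G}^b_{n,j}(\theta)(1+\eta_{n,j}(\theta))$, where $\eta_{n,j}(\theta)=\sigma_{P,j}(\theta)/\hat{\sigma}_{n,j}(\theta)-1$. Write $\|\cdot\|_\Theta$ for the supremum over $\theta\in\Theta$ and maximum over $j=1,\dots,J$, and set $\eta^*_n\equiv\|\eta_n\|_\Theta$. Since $\eta_n$ is measurable with respect to $X^\infty$, Lemma \ref{lem:eta_conv}(i) gives $\eta^*_n\stackrel{as*}{\to}0$ uniformly in $P\in\mathcal{P}$. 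Any $h\in BL_1$ is bounded by $1$ and $1$-Lipschitz, so $|h(x)-h(y)|\le\min\{2,\|x-y\|\}$; combined with $\|\mathbb{G}^b_n-\mathfrak{G}^b_n\|_\Theta\le\eta^*_n\|\mathfrak{G}^b_n\|_\Theta$, this yields for any $\varepsilon>0$,
\begin{equation*}
\bigl|E_M[h(\mathbb{G}^b_n)-h(\mathfrak{G}^b_n)\mid X^\infty]\bigr|\le\varepsilon+2\,P^*_n\bigl(\|\mathfrak{G}^b_n\|_\Theta>\varepsilon/\eta^*_n\bigm|X^\infty\bigr).
\end{equation*}
Combined with the preceding display via the triangle inequality, this reduces the lemma to showing that the conditional probability on the right vanishes almost surely, uniformly in $P\in\mathcal{P}$.

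The main obstacle is to transport conditional tightness of $\mathfrak{G}^b_n$ uniformly in $P$. The uniform-in-$P$ Donsker property of $\mathcal{F}$ already delivers this: by the same reasoning underlying Lemma D.2.8 of \cite{BCS14_misp} (a uniform-in-$P$ analogue of Theorem 3.6.2 of \cite{Vaart_Wellner2000aBK}), for every $\kappa>0$ there exists $K<\infty$ with $\sup_{P\in\mathcal{P}}P^\infty\bigl(P^*_n(\|\mathfrak{G}^b_n\|_\Theta>K)>\kappa\bigr)\to 0$. Fix a sample path on the almost-sure set where $\eta^*_n\to 0$; then $\varepsilon/\eta^*_n\to\infty$, so eventually $\varepsilon/\eta^*_n>K$ and $P^*_n(\|\mathfrak{G}^b_n\|_\Theta>\varepsilon/\eta^*_n\mid X^\infty)\le\kappa$ (trivially so on $\{\eta^*_n=0\}$). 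Since $\kappa$ and $\varepsilon$ can be taken arbitrarily small and the underlying almost-sure set has outer probability tending to $1$ uniformly in $P$, the right-hand side of the displayed bound converges to $0$ in the $as^*$ sense uniformly in $P\in\mathcal{P}$, which together with the first display and the triangle inequality completes the proof.
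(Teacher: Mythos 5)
Your argument follows essentially the same route as the paper: reduce to Lemma \ref{lem:bootcons_standardized} for the $\sigma_P$-standardized process $\mathfrak G^b_n$, then bridge to the studentized $\mathbb G^b_n$ via the identity $\mathbb G^b_{n,j}=\mathfrak G^b_{n,j}(1+\eta_{n,j})$ together with the uniform a.s.\ convergence $\sup_\theta|\eta_{n,j}(\theta)|\to 0$ from Lemma \ref{lem:eta_conv}, intersecting the two probability-one sets. The paper's proof compresses the boundedness/tightness step into the phrase ``$\mathfrak G^b_{n,j}$ being bounded (given $x^\infty$),'' whereas you spell it out via the $\min\{2,\|\cdot\|\}$ bound on BL$_1$ functions and a conditional-tightness estimate; this is a welcome expansion of a terse step.

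One small imprecision is worth flagging. You invoke tightness in the form $\sup_{P}P^\infty(P^*_n(\|\mathfrak G^b_n\|_\Theta>K)>\kappa)\to 0$, which is an in-probability (over $X^\infty$) statement, and then immediately ``fix a sample path on the almost-sure set where $\eta^*_n\to 0$.'' An in-probability tightness bound does not give you control of $P^*_n(\|\mathfrak G^b_n\|_\Theta>K\mid X^\infty=x^\infty)$ eventually along a \emph{fixed} $x^\infty$. The clean fix, which is implicit in the paper, is to source the conditional tightness from the a.s.\ set $A$ you already have in play from Lemma \ref{lem:bootcons_standardized}: for every $x^\infty\in A$, the conditional laws of $\mathfrak G^b_n$ converge in BL metric to the tight Gaussian process $\mathbb G_P$, and this conditional weak convergence implies the needed conditional tightness pointwise on $A$. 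Intersecting $A$ with the set $B=\{\eta^*_n\to 0\}$, on $A\cap B$ your displayed bound then tends to zero for every $\varepsilon>0$, and $\inf_P P^\infty(A\cap B)=1$ delivers the uniform-in-$P$ a.s.$^*$ conclusion. With that substitution your proof is complete and matches the paper's.
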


\begin{proof}
By Assumptions \ref{as:momP_AS} (iv) and \ref{as:bcs1}, Assumptions A.1-A.4 in \cite{BCS14_misp} hold, which in turn implies that, by their Lemma D.1.2, $\mathcal F$ is Donsker and pre-Gaussian uniformly in $P\in\mathcal P$. Further, by Assumption \ref{as:momP_AS} (iv) again, $\sup_{P\in\mathcal P}P^*\|f-Pf\|_{\mathcal F}<\infty$. Hence, by Lemma \ref{lem:bootcons_standardized},
\begin{align}
	\inf_{P\in\mathcal P}P^\infty\Big(\sup_{h\in BL_1}|E_M[h(\mathfrak G^b_{n})|X^\infty]-E[h(\mathbb G_{P})]|^*\to 0\Big)=1.\label{eq:bootcon1}
\end{align}
For later use, we define the following set of sample paths, which has probability 1 uniformly in $P\in\mathcal P$.
\begin{align}
	A\equiv \Big\{x^\infty\in\mathcal X^\infty:\sup_{h\in BL_1}|E_M[h(\mathfrak G^b_{n})|X^\infty=x^\infty]-E[h(\mathbb G_P)]|^*\to 0\Big\}.
\end{align}
Note that $\mathbb G^b_{n,j}$ and $\mathfrak G^b_{n,j}$  are related to each other by the following relationship:
\begin{align}
	\mathbb G^b_{n,j}(\theta)-\mathfrak G^b_{n,j}(\theta)=\mathfrak G^b_{n,j}(\theta)\left(\frac{\sigma_{P,j}(\theta)}{\hat\sigma_{n,j}(\theta)}-1\right)=\mathfrak G^b_{n,j}(\theta)\eta_{n,j}(\theta),~\theta\in\Theta.\label{eq:Gdiff}
\end{align}
By Assumptions \ref{as:momP_AS}, \ref{as:GMS}, and \ref{as:bcs1}, Lemma \ref{lem:eta_conv} applies. Hence,
\begin{align}
	\inf_{P\in\mathcal P}P^\infty\Big(\sup_{\theta\in\Theta}|\eta_{n,j}(\theta)|^*\to 0\Big)=1.\label{eq:etaconv_inboot}
\end{align}
Define the following set of sample paths:
\begin{align}
	B\equiv\Big\{x^\infty\in\mathcal X^\infty:\sup_{\theta\in\Theta}|\eta_{n,j}(\theta)|^*\to 0,\forall j=1,\cdots,J\Big\}.
\end{align}
For any $x^\infty\in A\cap B$, it then follows that
\begin{align}
	\sup_{h\in BL_1}\left|E_M[h(\mathbb G^b_{n})|X^\infty=x^\infty]-E[h(\mathbb G_P)]\right|^*\to 0,\label{eq:bootcon2}
\end{align}
due to \eqref{eq:bootcon1} and \eqref{eq:Gdiff}, $h$ being Lipschitz, $\mathfrak G^b_{n,j}$ being bounded (given $x^\infty$), and  $\sup_{\theta\in\Theta}|\eta_{n,j}(\theta)|^*\to 0$ for all $j$. Finally, note that $\inf_{P\in\mathcal P}P^\infty(A\cap B)=1$ due to \eqref{eq:bootcon1}, \eqref{eq:etaconv_inboot}, and De Morgan's law.
This establishes the conclusion of the lemma.	
\end{proof}

The following lemma shows that, for almost all sample path $x^\infty$, one can find an almost sure representation of the bootstrapped empirical process that is convergent.
\begin{lemma}\label{lem:asrep}
Suppose Assumptions \ref{as:momP_AS}, \ref{as:GMS}, and \ref{as:bcs1} hold. Then, for each $x^\infty\in\mathcal X^\infty$, there exists a sequence $\{\tilde G_{n,x^\infty}\in \ell(\Theta,\mathbb R^J),n\ge 1\}$ and a random element $\tilde G_{P,x^\infty}\in \ell(\Theta,\mathbb R^J)$ defined on some probability space $(\tilde\Omega,\tilde{\mathcal A},\tilde{\mathbf P})$ such that
\begin{align}
	\int h\circ g(x^\infty,m_n)dQ(m_n)&=\int h(\tilde G_{n,x^\infty}(\tilde \omega))d\tilde{\mathbf P}^*(\tilde\omega),~\forall h\in BL_1\\
	\int h(\mathbb G_P(\omega))dP(\omega)&=\int h(\tilde G_{P,x^\infty}(\tilde \omega))d\tilde{\mathbf P}^*(\tilde\omega),~\forall h\in BL_1,
\end{align}
for all $x^\infty\in C$ for some set $C\subset \mathcal X^\infty$ such that $\inf_{P\in\mathcal P}P^\infty(C)=1$ and
\begin{align}
	\inf_{P\in\mathcal P}P^\infty\Big(\big\{x^\infty\in \mathcal X^\infty:\tilde G_{n,x^\infty}\stackrel{\tilde{\mathbf P}-as*}{\to}\tilde G_{P,x^\infty}\big\}\Big)=1.\label{eq:asconv1}
\end{align}
\end{lemma}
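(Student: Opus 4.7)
The plan is to invoke the almost sure representation theorem (Theorem~1.10.4 of \cite{Vaart_Wellner2000aBK}) pointwise in the conditioning variable $x^\infty$, leveraging the conditional bootstrap consistency already established in Lemma~\ref{lem:boot_cons} together with the tightness of the limit Gaussian process $\mathbb{G}_P$.

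First, define
\begin{equation*}
C \equiv \Big\{ x^\infty \in \mathcal{X}^\infty: \sup_{h \in BL_1} \big| E_M[h(\mathbb{G}^b_n) \mid X^\infty = x^\infty] - E[h(\mathbb{G}_P)] \big|^* \to 0 \Big\}.
\end{equation*}
By Lemma~\ref{lem:boot_cons}, $\inf_{P \in \mathcal{P}} P^\infty(C) = 1$, and for every $x^\infty \in C$ the conditional law of $\mathbb{G}^b_n$ given $X^\infty = x^\infty$, namely the pushforward $g(x^\infty, \cdot)_\ast Q$ on $\ell^\infty(\Theta, \mathbb{R}^J)$, converges in distribution to the law of $\mathbb{G}_P$ in the sense of the bounded-Lipschitz metric.

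Second, under Assumption~\ref{as:bcs1} the process $\mathbb{G}_P$ is a tight, Borel measurable random element of $\ell^\infty(\Theta, \mathbb{R}^J)$ concentrated on the separable subspace of $\varrho_P$-uniformly continuous functions; this supplies the measurability and tightness hypothesis required to apply the almost sure representation theorem. Now fix $x^\infty \in C$ and apply Theorem~1.10.4 of \cite{Vaart_Wellner2000aBK} to the sequence of conditional laws $\{g(x^\infty, \cdot)_\ast Q\}_{n \geq 1}$ and to the law of $\mathbb{G}_P$. This yields a probability space $(\tilde{\Omega}, \tilde{\mathcal{A}}, \tilde{\mathbf{P}})$ (which may depend on $x^\infty$) and maps $\tilde{G}_{n, x^\infty}, \tilde{G}_{P, x^\infty}: \tilde{\Omega} \to \ell^\infty(\Theta, \mathbb{R}^J)$ such that (i) $\tilde{G}_{n, x^\infty}$ has the same law under $\tilde{\mathbf{P}}$ as $g(x^\infty, M_n)$ has under $Q$, (ii) $\tilde{G}_{P, x^\infty}$ has the same law as $\mathbb{G}_P$, and (iii) $\tilde{G}_{n, x^\infty} \to \tilde{G}_{P, x^\infty}$ in the outer $\tilde{\mathbf{P}}$-almost sure sense. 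Identities (i)--(ii) are precisely the two displayed integral equations of the lemma restricted to $h \in BL_1$, and (iii) gives membership of $x^\infty$ in the event appearing in~\eqref{eq:asconv1}. For $x^\infty \notin C$, assign arbitrary measurable representations; since~\eqref{eq:asconv1} is a statement about a $P^\infty$-probability~$1$ event, these values do not matter, and the uniform-in-$P$ conclusion $\inf_{P\in \mathcal{P}} P^\infty(\cdot) = 1$ is inherited directly from the same property of $C$.

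The main obstacle lies in ensuring that the mode of conditional convergence delivered by Lemma~\ref{lem:boot_cons} is genuinely strong enough to feed into Theorem~1.10.4, and in particular that a single exceptional null set $C^c$ can be chosen uniformly over $h \in BL_1$ rather than separately for each $h$. This issue is resolved by the fact that Lemma~\ref{lem:boot_cons} is phrased in terms of the $BL_1$-metric supremum inside the outer probability, which is exactly the formulation of conditional convergence in distribution adopted in Chapter~1.10 of \cite{Vaart_Wellner2000aBK} and under which their almost sure representation theorem is proved; once this identification is made, the remaining steps are a direct application of the cited theorem and do not require further probabilistic input beyond tightness of $\mathbb{G}_P$.
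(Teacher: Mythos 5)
Your proof is correct and follows essentially the same route as the paper: define the same high-probability set $C$ via Lemma \ref{lem:boot_cons}, invoke Theorem 1.10.4 of van der Vaart and Wellner pointwise in $x^\infty\in C$ to produce the representations, assign arbitrary values off $C$, and inherit the uniform-in-$P$ probability-one statement from $C$. The only detail you omit (and the paper includes) is the remark that the representation can be taken in the concrete form $\tilde G_{n,x^\infty}(\tilde\omega)=g(x^\infty,M_n(\phi_n(\tilde\omega)))$ for a perfect map $\phi_n$, but this is a construction detail, not a gap.
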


\begin{proof}
Define the following set of sample paths:
\begin{align}
	C\equiv \Big\{x^\infty\in\mathcal X^\infty:\sup_{h\in BL_1}|E_M[h(\mathbb G^b_{n,j})|X^\infty=x^\infty]-E[h(\mathbb G_P)]|^*\to 0\Big\}.
\end{align}
By Lemma \ref{lem:boot_cons}, 	$\inf_{P\in\mathcal P}P^\infty(C)=1$.
	
For each fixed sample path $x^\infty\in C$, consider the bootstrap empirical process $g(x^\infty,M_n)$ in \eqref{eq:defg}. This is a random element in $\ell^\infty(\Theta,\mathbb R^J)$ with a law governed by $Q$. 
For each $x^\infty\in C$, by Lemma \ref{lem:boot_cons}, 
\begin{align}
\sup_{h\in BL_1}\left|\int h\circ g(x^\infty,m_n)dQ(m_n)-E[h(\mathbb G_P)]\right|^*\to 0.
\end{align}  
Hence, by Theorem 1.10.4 in \cite{Vaart_Wellner2000aBK}, for each $x^\infty\in C$, one may find an almost sure representation $\tilde G_{n,x^\infty}$ of $g(x^\infty,M_n)$ on some probability space $(\tilde\Omega,\tilde{\mathcal A},\tilde{\mathbf P})$ such that
\begin{align}
	\int h\circ g(x^\infty,m_n)dQ(m_n)=\int h(\tilde G_{n,x^\infty}(\tilde \omega))d\tilde{\mathbf P}^*(\tilde\omega),~\forall h\in BL_1.
\end{align}
In particular, the proof of Theorem 1.10.4 in \cite{Vaart_Wellner2000aBK} (see also Addendum 1.10.5) allows us to take $\tilde G_{n,x^\infty}$ to be defined for each $\tilde\omega\in\tilde\Omega$ as
\begin{align}
\tilde G_{n,x^\infty}(\tilde \omega)=g(x^\infty,M_n(\phi_n(\tilde \omega)))	,
\end{align}
for some perfect map $\phi_n:\tilde\Omega\to \mathcal Z$ (see the construction of $\phi_\alpha$ in the middle of page 61 in VW). One may define $\tilde G_{n,x^\infty}$ arbitrarily for any $x^\infty\notin C.$
The almost sure representation $\tilde G_{P,x^\infty}$ of $\mathbb G_{P,j}$ is defined similarly.
 
By Theorem  1.10.4 in \cite{Vaart_Wellner2000aBK}, Eq. \eqref{eq:bootcon2}, and $\inf_{P\in\mathcal P}P(C)=1$, it follows that
\begin{align}
	\inf_{P\in\mathcal P}P^\infty\Big(\big\{x^\infty\in \mathcal X^\infty:\tilde G_{n,x^\infty}\stackrel{\tilde{\mathbf P}-as*}{\to}\tilde G_{P,x^\infty}\big\}\Big)=1.
\end{align}
This establishes the claim of the lemma.
\end{proof}

\begin{lemma}\label{cor:asrep}
Suppose Assumptions \ref{as:momP_AS}, \ref{as:GMS}, and \ref{as:bcs1} hold.  Let $W_n\equiv (\mathbb G^b_n,Y_n)$ be a sequence in $\mathcal W\equiv\ell(\Theta,\mathbb R^J)\times \mathbb R^{d_Y}$  such that $Y_n=\tilde g(X^\infty, M_n)$ for some map $\tilde g:\mathcal X^\infty\times \mathcal Z\to\mathbb R^{d_Y}$ and
\begin{align}
\inf_{P\in \mathcal P} P^\infty\big(\sup_{h\in BL_1}|E_M[h(W_n)|X^\infty=x^\infty]-E[h(W)]|^*\to 0\big)=1,\label{eq:Wn_cons}
\end{align}
where $W=(\mathbb G,Y)$ is a Borel measurable random element in $\mathcal W$.  

Then, for each $x^\infty\in\mathcal X^\infty$, there exists a sequence $\{W^*_{n,x^\infty}\in \mathcal W,n\ge 1\}$ and a  random element $W^*_{x^\infty}\in\mathcal W$ defined on some probability space $(\tilde\Omega,\tilde{\mathcal A},\tilde{\mathbf P})$ such that
\begin{align}
	E_M[h(W_n)|X^\infty=x^\infty]&=\int h(W^*_{n,x^\infty}(\tilde \omega))d\tilde{\mathbf P}^*(\tilde\omega),~\forall h\in BL_1\\
	E[h(W)]&=\int h(W^*_{x^\infty}(\tilde\omega))d\tilde{\mathbf P}^*(\tilde\omega),~\forall h\in BL_1,
\end{align}
for all $x^\infty\in C$ for some set $C\subset \mathcal X^\infty$ such that $\inf_{P\in\mathcal P}P^\infty(C)=1$, and
\begin{align}
	\inf_{P\in\mathcal P}P^\infty\Big(\big\{x^\infty\in \mathcal X^\infty:W^*_{n,x^\infty}\stackrel{\tilde{\mathbf P}-as*}{\to}\tilde W^*_{x^\infty}\big\}\Big)=1.\label{eq:asconv2}
\end{align}
\end{lemma}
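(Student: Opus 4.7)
The strategy mirrors the proof of Lemma \ref{lem:asrep} essentially verbatim, applied fiberwise to the enlarged process $W_n=(\mathbb G^b_n,Y_n)$ rather than to $\mathbb G^b_n$ alone. First I would define the ``good'' set of sample paths
\[
C \equiv \Big\{ x^\infty\in\cX^\infty:\sup_{h\in BL_1}\big|E_M[h(W_n)\mid X^\infty = x^\infty] - E[h(W)]\big|^* \to 0\Big\},
\]
and observe that, by the hypothesis \eqref{eq:Wn_cons}, $\inf_{P\in\cP}P^\infty(C)=1$. The role of this set is identical to the role of the set $C$ in the proof of Lemma \ref{lem:asrep}; the only difference is that here the conditional bootstrap consistency is \emph{assumed} rather than derived from Lemma \ref{lem:boot_cons}, so this first step is essentially free.

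Next, for each fixed $x^\infty \in C$, the conditional distribution of $W_n$ given $X^\infty = x^\infty$ is governed by the law $Q$ of the multinomial weights $M_n$ through the map $(g,\tilde g):\cX^\infty\times\cZ\to\cW$, and this conditional distribution converges weakly (in the bounded Lipschitz metric on $\cW$) to the Borel law of $W$. I would then invoke Theorem 1.10.4 together with Addendum 1.10.5 in \cite{Vaart_Wellner2000aBK} to obtain, on a common auxiliary probability space $(\tilde\Omega,\tilde{\cA},\tilde{\mathbf P})$, perfect maps $\phi_n:\tilde\Omega\to\cZ$ and $\phi:\tilde\Omega\to\Omega_W$ that realize the almost sure representations
\[
W^*_{n,x^\infty}(\tilde\omega) \equiv \big(g(x^\infty,M_n(\phi_n(\tilde\omega))),\,\tilde g(x^\infty,M_n(\phi_n(\tilde\omega)))\big),\qquad W^*_{x^\infty}(\tilde\omega)\equiv W(\phi(\tilde\omega)),
\]
so that $W^*_{n,x^\infty}$ has the same conditional law as $W_n$ given $X^\infty=x^\infty$, $W^*_{x^\infty}$ has the law of $W$, and $W^*_{n,x^\infty}\stackrel{\tilde{\mathbf P}-as*}{\to}W^*_{x^\infty}$. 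This is exactly the construction used in Lemma \ref{lem:asrep}, now applied to the joint map $(g,\tilde g)$ rather than to $g$ alone; the product structure of $\cW=\ell^\infty(\Theta,\R^J)\times\R^{d_Y}$ and the fact that $Y_n$ is generated from the same randomness $(X^\infty,M_n)$ as $\mathbb G^b_n$ make this a direct application.

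Finally, for $x^\infty\notin C$ I would set $W^*_{n,x^\infty}$ and $W^*_{x^\infty}$ to arbitrary fixed values. Since $\inf_{P\in\cP}P^\infty(C)=1$, the set of sample paths along which the almost sure convergence holds has outer probability $1$ uniformly in $P\in\cP$, giving \eqref{eq:asconv2}. The only conceptual subtlety, and thus the main (though mild) obstacle, is ensuring that the construction can be carried out \emph{fiberwise in $x^\infty$} while retaining the uniformity in $P$; this is handled exactly as in Lemma \ref{lem:asrep} by noting that the only input to the van der Vaart--Wellner construction is conditional weak convergence, which holds simultaneously on $C$ for every $P\in\cP$.
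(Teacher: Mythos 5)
Your proposal is correct and follows essentially the same route as the paper: the paper's own proof of Lemma~\ref{cor:asrep} simply defines the set $C$ and then states that ``the rest of the proof is the same as the one for Lemma~\ref{lem:asrep},'' which is precisely the fiberwise application of Theorem~1.10.4 and Addendum~1.10.5 of \cite{Vaart_Wellner2000aBK} that you describe. Your added remarks on the product structure of $\mathcal W$ and the joint map $(g,\tilde g)$ are accurate elaborations of what the paper leaves implicit.
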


\begin{proof}
Let $C\equiv\{x^\infty:\sup_{h\in BL_1}|E_M[h(W_n)|X^\infty=x^\infty]-E[h(W)]|^*\stackrel{}{\to} 0\}$.
The rest of the proof is the same as the one for Lemma \ref{lem:asrep} and is therefore omitted.
\end{proof}

\begin{remark}
When called by the Lemmas in Appendix \ref{app:Lemma}, Lemma \ref{cor:asrep} is applied, for example, with $Y_n=(vec(\hat D_n(\theta_n')),\hat \xi_n(\theta_n'))$ and $Y=(vec(D), \pi_1)$.
\end{remark}
\end{appendices}
\newpage

\ifx\undefined\BySame
\newcommand{\BySame}{\leavevmode\rule[.5ex]{3em}{.5pt}\ }
\fi
\ifx\undefined\textsc
\newcommand{\textsc}[1]{{\sc #1}}
\newcommand{\emph}[1]{{\em #1\/}}
\let\tmpsmall\small
\renewcommand{\small}{\tmpsmall\sc}
\fi

\end{document}